\setlist{nosep}
\theoremstyle{plain}
\newtheorem{thm}{Theorem}[section]
\newtheorem{prop}[thm]{Proposition}
\newtheorem{cor}[thm]{Corollary}
\newtheorem{lemma}[thm]{Lemma}
\theoremstyle{definition}
\newtheorem{defn}[thm]{Definition}
\newtheorem*{rem}{Remark}
\newcommand{\e}{\varepsilon}
\newcommand{\sm}{\smallsetminus}
\newcommand{\es}{\ensuremath{\varnothing}}
\newcommand{\A}{\mathbb{A}}
\newcommand{\C}{\mathbb{C}}
\newcommand{\F}{\mathbb{F}}
\newcommand{\G}{\mathbb{G}}
\renewcommand{\L}{\mathbb{L}}
\newcommand{\N}{\mathbb{N}}
\renewcommand{\P}{\mathbb{P}}
\newcommand{\Q}{\mathbb{Q}}
\newcommand{\R}{\mathbb{R}}
\newcommand{\T}{\mathbb{T}}
\newcommand{\Z}{\mathbb{Z}}
\newcommand{\Qbar}{\overline{\Q}}
\newcommand{\I}{\mathcal{I}}
\renewcommand{\O}{\mathcal{O}}
\newcommand{\K}{\mathcal{K}}
\newcommand{\Cc}{\mathcal{C}}
\newcommand{\Dc}{\mathcal{D}}
\newcommand{\Lc}{\mathcal{L}}
\renewcommand{\Mc}{\mathcal{M}}
\newcommand{\Pc}{\mathcal{P}}
\newcommand{\Qc}{\mathcal{Q}}
\newcommand{\Rc}{\mathcal{R}}
\newcommand{\Sc}{\mathcal{S}}
\newcommand{\Vc}{\mathcal{V}}
\newcommand{\Xc}{\mathcal{X}}
\newcommand{\As}{\mathscr{A}}
\newcommand{\Ms}{\mathscr{M}}
\newcommand{\Rs}{\mathscr{R}}
\renewcommand{\a}{\mathfrak{a}}
\newcommand{\n}{\mathfrak{n}}
\newcommand{\p}{\mathfrak{p}}
\newcommand{\q}{\mathfrak{q}}
\newcommand{\D}{\mathfrak{D}}
\newcommand{\If}{\mathfrak{I}}
\newcommand{\Is}{\mathscr{I}}
\newcommand{\Ish}{\widehat{\Is}}
\newcommand{\Rhat}{\widehat{R}}
\newcommand{\Yhat}{\widehat{Y}}
\newcommand{\Chat}{\widehat{C}}
\newcommand{\Ihat}{\widehat{I}}
\newcommand{\Vhat}{\widehat{V}}
\newcommand{\Zhat}{\widehat{Z}}
\newcommand{\mhat}{\widehat{m}}
\newcommand{\Sh}{\widehat{S}}
\newcommand{\Ct}{\widetilde{C}}
\newcommand{\Et}{\widetilde{E}}
\newcommand{\Ut}{\widetilde{U}}
\newcommand{\pt}{\widetilde{p}}
\newcommand{\Cthat}{\widehat{\Ct}}
\newcommand{\Rt}{\widetilde{R}}
\newcommand{\uprod}[1]{\mathcal{U}\!\left(#1\right)}
\renewcommand{\k}{\Bbbk}
\newcommand{\injection}{\hookrightarrow}
\newcommand{\surjection}{\twoheadrightarrow}
\newcommand{\patch}{\mathscr{P}}
\newcommand{\Fbar}{\overline{\F}}
\newcommand{\Ebar}{\overline{E}}
\newcommand{\Sbar}{\overline{S}}
\newcommand{\Rbar}{\overline{R}_\infty}
\newcommand{\Ibar}{\overline{\I}}
\newcommand{\Mbar}{\overline{M}_\infty}
\newcommand{\Tbar}{\overline{\T}}
\newcommand{\ebar}{\overline{\e}}
\newcommand{\chibar}{\overline{\chi}}
\newcommand{\psibar}{\overline{\psi}}
\newcommand{\varpibar}{\overline{\varpi}}
\newcommand{\face}{\preceq}
\newcommand{\Ab}{\mathbf{Ab}}
\newcommand{\rbar}{\overline{r}}
\newcommand{\rhobar}{\overline{\rho}}
\newcommand{\uf}{\mathfrak{F}}
\newcommand{\prF}{\mathfrak{Z}}
\newcommand{\ds}{\displaystyle}
\newcommand{\half}{\mathcal{H}}
\DeclareMathOperator{\Hom}{Hom}
\DeclareMathOperator{\Aut}{Aut}
\DeclareMathOperator{\End}{End}
\DeclareMathOperator{\Gal}{Gal}
\DeclareMathOperator{\Ann}{Ann}
\DeclareMathOperator{\Frob}{Frob}
\DeclareMathOperator{\GL}{GL}
\DeclareMathOperator{\Spec}{Spec}
\newcommand{\rSpec}{\underline{\Spec}}
\DeclareMathOperator{\Proj}{Proj}
\DeclareMathOperator{\Pic}{Pic}
\DeclareMathOperator{\Cl}{Cl}
\DeclareMathOperator{\supp}{supp}
\DeclareMathOperator{\Cone}{Cone}
\DeclareMathOperator{\im}{im}
\DeclareMathOperator{\id}{id}
\DeclareMathOperator{\tr}{tr}
\DeclareMathOperator{\et}{\acute{e}t}
\DeclareMathOperator{\pr}{pr}
\DeclareMathOperator{\Nm}{Nm}
\DeclareMathOperator{\st}{st}
\DeclareMathOperator{\fl}{fl}
\DeclareMathOperator{\loc}{loc}
\DeclareMathOperator{\univ}{univ}
\renewcommand{\min}{\operatorname{min}}
\renewcommand{\mod}{\operatorname{mod}}
\DeclareMathOperator{\Div}{Div}
\renewcommand{\div}{\operatorname{div}}
\newcommand{\Set}{{\bf Set}}
\newcommand{\etale}{{\'etale}}
\DeclareMathOperator{\rank}{rank}
\DeclareMathOperator{\genrank}{g.rank}
\newcommand{\semi}{\mathsf{S}}
\newcommand{\invlim}{\varprojlim}
\newcommand{\dirlim}{\varinjlim}
\newcommand{\dual}{\vee}
\title{Patching and Multiplicity $2^k$ for Shimura Curves}
\author{Jeffrey Manning}
\begin{document}

\maketitle

\begin{abstract}
We use the Taylor--Wiles--Kisin patching method to investigate the multiplicities with which Galois representations occur in the mod $\ell$ cohomology of Shimura curves over totally real number fields. Our method relies on explicit computations of local deformation rings done by Shotton, which we use to compute the Weil class group of various deformation rings. Exploiting the natural self-duality of the cohomology groups, we use these class group computations to precisely determine the structure of a patched module in many new cases in which the patched module is not free (and so multiplicity one fails).

Our main result is a ``multiplicity $2^k$'' theorem in the minimal level case (which we prove under some mild technical hypotheses), where $k$ is a number that depends only on local Galois theoretic information at the primes dividing the discriminant of the Shimura curve. Our result generalizes Ribet's classical multiplicity 2 result and the results of Cheng, and provides progress towards the Buzzard--Diamond--Jarvis local-global compatibility conjecture. We also prove a statement about the endomorphism rings of certain modules over the Hecke algebra, which may have applications to the integral Eichler basis problem.
\end{abstract}

\section{Introduction}\label{sec:intro}

\subsection{Overview and statement of main result}

One of the most powerful tools in the study of the Langlands program is the \emph{Taylor--Wiles--Kisin patching method} which, famously, was originally introduced by Taylor and Wiles \cite{Wiles,TaylorWiles} to prove Fermat's Last Theorem, via proving a special case of Langlands reciprocity for $\GL_2$. 

In its modern formulation (due to Kisin \cite{Kisin} and others) this method considers a ring $R_\infty$, which can be determined explicitly from local Galois theoretic data, and constructs a maximal Cohen--Macaulay module $M_\infty$ over $R_\infty$ by gluing together various cohomology groups. Due to its construction, $M_\infty$ is closely related to certain automorphic representations, and so determining its structure has many applications in the Langlands program beyond simply proving reciprocity.

A few years after Wiles' proof, Diamond \cite{DiamondMult1} and Fujiwara \cite{Fujiwara} discovered that patching can also be used to prove mod $\ell$ multiplicity one statements in cases where the $q$-expansion principle does not apply. In this argument they consider a case when the ring $R_\infty$ is formally smooth, and so the Auslander-Buchsbaum formula allows him to show that $M_\infty$ is free over $R_\infty$, a fact which easily implies multiplicity one. There are however, many situations arising in practice in which $R_\infty$ is not formally smooth, and so this method cannot be used to determine multiplicity one statements. 

In this paper, we introduce a new method for determining the structure of a patched module $M_\infty$ arising from the middle degree cohomology of certain Shimura varieties, which applies in cases when $R_\infty$ is Cohen--Macaulay, but not necessarily formally smooth. Using this, we are able to compute the multiplicities for Shimura curves over totally real number fields in the minimal level case, under some technical hypotheses. Our main result is the following (which we state here, using some notation and terminology which we define later):

\begin{thm}\label{Mult 2^k}
Let $F$ be a totally real number field, and let $D/F$ be a quaternion algebra which is ramified at all but at most one infinite place of $F$. Take some irreducible Galois representation $\rhobar:G_F\to \GL_2(\Fbar_\ell)$, where $\ell>2$ is a prime which is unramified in $F$, and prime to the discriminant of $D$. Assume that:
\begin{enumerate}
	\item $\rhobar$ is automorphic for $D$. \label{automorphic}
	\item $\rhobar|_{G_v}$ is finite flat for all primes $v|\ell$ of $F$. \label{finite flat}
	\item If $v$ is an prime of $F$ at which $\rhobar$ ramifies and $\Nm(v)\equiv -1\pmod{\ell}$, then either $\rhobar|_{I_v}$ is  irreducible or $\rhobar|_{G_v}$ is absolutely reducible. \label{vexing}
	\item If $v$ is any prime of $F$ at which $D$ ramifies, then $\rhobar$ is Steinberg at $v$ and 
$\Nm(v)\not\equiv -1\pmod{\ell}$. \label{-1modl}
	\item The restriction $\rhobar|_{G_{F(\zeta_\ell)}}$ is absolutely irreducible. \label{TW conditions}
	\end{enumerate}
Let $K^{\min}\subseteq D^\times(\A_{F,f})$ be the \emph{minimal} level at which $\rhobar$ occurs, and let $X^D(K^{\min})$ be the Shimura variety (of dimension either 0 or 1) associated to $K^{\min}$. Then the \emph{multiplicity}\footnote{In the case when $D$ unramified at exactly one infinite place of $F$, $X^D(K^{\min})$ is an algebraic curve, and so this multiplicity is just the number of copies of $\rhobar$ which appear in the \etale\ cohomology group $H^1_{\et}(X^D(K^{\min}),\mu_\ell)$. In the case when $D$ is ramified at all infinite pales of $F$, $X^D(K^{\min})$ is just a discrete set of points and so $\rhobar$ does not actually appear in the cohomology. In this case, by the multiplicity we just mean the dimension of the eigenspace $H^0(X^D(K^{\min}),\F_\ell)[m]$ for $m$ the corresponding maximal ideal of the Hecke algebra.} with which $\rhobar$ occurs in the mod $\ell$ cohomology of $X^D(K^{\min})$ is $2^k$, where
\[k = \#\bigg\{v \bigg| D \text{ ramifies at } v, \rhobar \text{ is unramified at } v \text{ and, }
\rhobar(\Frob_v) \text{ is a scalar} \bigg\}.\]
\end{thm}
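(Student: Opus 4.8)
The plan is to run the Taylor--Wiles--Kisin patching method in the self-dual setting and then to read off the multiplicity from the structure of the resulting patched module, which we pin down by combining Shotton's local deformation-ring computations with the cup-product duality on cohomology. First I would reduce the problem. Write $\T_\m$ for the relevant localized Hecke algebra and $M$ for $H^1_{\et}(X^D(K^{\min}),\O)_\m$ (respectively $H^0(X^D(K^{\min}),\O)_\m$ in the definite case), viewed as a faithful $\T_\m$-module of generic rank one after the standard normalization using the absolute irreducibility of $\rhobar$ (hypothesis \ref{TW conditions}). Since $\rhobar$ is absolutely irreducible, the multiplicity in the theorem equals $\dim_{\Fbar_\ell} M\otimes_{\T_\m}\Fbar_\ell$, which by Nakayama is the minimal number of generators $\mu_{\T_\m}(M)$. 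Minimal-level modularity (using hypotheses \ref{automorphic}, \ref{finite flat}, \ref{vexing}, \ref{TW conditions}) identifies $\T_\m$ with a global deformation ring $R$, and patching produces $R_\infty=R_\infty^{\mathrm{loc}}\mathbin{\widehat{\otimes}}_{\O}\O[[x_1,\dots,x_d]]$ together with a maximal Cohen--Macaulay $R_\infty$-module $M_\infty$ with $M_\infty\otimes_{R_\infty}R_\infty/\a\cong M$ and $R_\infty/\a\cong R\cong\T_\m$; hence $\mu_{\T_\m}(M)=\mu_{R_\infty}(M_\infty)$, and the whole question becomes one of counting generators of $M_\infty$ over the \emph{explicitly known} ring $R_\infty$.

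Next I would determine the structure of $R_\infty$ and its Weil class group. Here $R_\infty^{\mathrm{loc}}=\mathbin{\widehat{\bigotimes}}_v R_v$ over the places carrying a local condition. By hypothesis \ref{finite flat} (and since $\ell$ is unramified in $F$) the factors $R_v$ for $v\mid\ell$ are formally smooth over $\O$; by hypothesis \ref{vexing}, Shotton's computations show that the factors at ramified $v\nmid\ell$ away from the discriminant of $D$ are also formally smooth (the vexing-primes condition being exactly what removes the would-be singularities there). The decisive factors are the Steinberg deformation rings $R_v$ at primes $v$ dividing the discriminant of $D$: by hypothesis \ref{-1modl}, $\rhobar$ is Steinberg at such a $v$ with $\Nm(v)\not\equiv-1\pmod\ell$, and Shotton's explicit presentation shows $R_v$ is a normal complete-intersection domain, formally smooth \emph{unless} $\rhobar$ is unramified at $v$ with $\rhobar(\Frob_v)$ scalar (which forces $\Nm(v)\equiv 1\pmod\ell$); in that exceptional case $R_v$ is singular with $\Cl(R_v)\cong\Z/2\Z$, the nontrivial class represented by an explicit reflexive rank-one module requiring exactly two generators. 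Therefore $R_\infty$ is a normal Cohen--Macaulay (indeed complete-intersection) domain, and since $\Cl$ is unchanged by adjoining the formal variables and by the formally smooth tensor factors, $\Cl(R_\infty)\cong(\Z/2\Z)^k$ with $k$ precisely the quantity in the theorem.

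Now comes the heart of the argument: identifying the class of $M_\infty$. Being reflexive of generic rank one, $M_\infty$ defines a class $[M_\infty]\in\Cl(R_\infty)\cong(\Z/2\Z)^k$; if it is nontrivial in exactly $j$ of the $k$ coordinates then, as the number of generators is multiplicative over completed tensor products and equals two on the nontrivial class of each singular $R_v$ and one on the trivial (free) class, $\mu_{R_\infty}(M_\infty)=2^j$, which already gives the upper bound $\mu_{R_\infty}(M_\infty)\le 2^k$. For the matching lower bound I must show $[M_\infty]$ is nontrivial in \emph{every} coordinate, and this is where self-duality enters: the cup-product (Poincaré) pairing on $H^1(X^D(K^{\min}))$---or the natural pairing on $H^0$ in the definite case---is carried through the patching construction (which relies on the numerical coincidence that the relevant cohomology is essentially concentrated in one degree) to a perfect self-duality $M_\infty\xrightarrow{\ \sim\ }\Hom_{R_\infty}(M_\infty,R_\infty)$, compatible with the factorization of $R_\infty$ into local pieces. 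Combined with Shotton's explicit description of the singular Steinberg ring at each of the $k$ special primes $v$, and with the classical fact---generalizing Ribet's and Cheng's computations, obtained from the \v{C}erednik--Drinfeld uniformization of $X^D$ at $v$---that the relevant eigenspace is genuinely larger than expected (equivalently, $M_\infty$ is not free along the singular locus of $R_v$), this forces $[M_\infty]$ to be the nontrivial class in that coordinate. Since the $k$ special primes contribute independently to $\Cl(R_\infty)$, we conclude $j=k$, hence $\mu_{R_\infty}(M_\infty)=2^k$ and the multiplicity is $2^k$; the same analysis identifies $M_\infty$ with an explicit reflexive module, which computes $\End_{R_\infty}(M_\infty)$ and so yields the promised description of $\End_{\T_\m}(M)$.

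The main obstacle is the last step. Carrying the cohomological duality all the way through the Taylor--Wiles patching so that it produces a \emph{perfect} pairing on the limit module $M_\infty$ requires both the cohomology behaving as if concentrated in the middle degree and careful bookkeeping with the Taylor--Wiles primes and the passage to the inverse limit; and then extracting from this duality together with Shotton's ring-theoretic computations the precise statement that $[M_\infty]$ is nontrivial in all $k$ coordinates rather than merely some of them---i.e. that the $k$ local sources of non-freeness really are independent and each genuinely occurs---is the crux. By contrast, Steps 1 and 2 are fairly standard given the inputs (minimal-level modularity and Shotton's local deformation-ring computations), and the passage from $\mu_{R_\infty}(M_\infty)$ back to the multiplicity is formal.
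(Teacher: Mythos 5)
Your high-level strategy --- patch to produce a self-dual maximal Cohen--Macaulay module $M_\infty$ of generic rank one over an explicitly computed $R_\infty$, then use the class group of $R_\infty$ to pin down $[M_\infty]$ --- is indeed the paper's strategy. However, there is a genuine error at the step that is supposed to do all the work, and it forces you into an auxiliary argument that the paper explicitly does not (and cannot) use.

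You assert that at each of the $k$ special Steinberg places the local ring is a \emph{complete intersection} with $\Cl(R_v)\cong\Z/2\Z$. Both claims are wrong. Shotton's ring $S_v$ (in the fixed-determinant Steinberg case with $\Nm(v)\equiv 1\pmod\ell$ and $\rhobar|_{G_v}$ scalar) is Cohen--Macaulay but \emph{non-Gorenstein}, hence in particular not a complete intersection; and after reducing mod $\lambda$ it becomes the affine cone over $\P^1\times\P^1$ under the $\O(2,1)$-embedding, with class group $\Cl\cong\Z$ generated by $\O(D_0)$ and with $[\omega]=2[\O(D_0)]$. This matters badly for your argument: if the class group really were $(\Z/2\Z)^k$, then either $[\omega]$ is trivial (in which case the self-duality condition $2[M_\infty]=[\omega]$ is $0=0$, satisfied by \emph{every} class, so it gives no information at all), or $[\omega]$ is nontrivial (in which case $2[M_\infty]=[\omega]$ has \emph{no} solution, contradicting the existence of $M_\infty$). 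Either way self-duality fails to single out a class, which is precisely why you had to bring in a separate ``lower bound'' argument via \v{C}erednik--Drinfeld uniformization, Ribet, and Cheng to force nontriviality in every coordinate. The paper's actual argument is clean because $\Cl(\Rc)\cong\Z^k$ is \emph{torsion-free}: the equation $2[M_\infty]=[\omega_{R_\infty}]=(2,\ldots,2)$ then has the unique solution $[M_\infty]=(1,\ldots,1)$, which one computes directly to be the module $\O(D_0)^{\boxtimes k}[x_1,\ldots,x_s]$ needing exactly $2^k$ generators. No geometric input beyond the class-group computation is required; both the upper and the lower bound come from the single identity $2[M_\infty]=[\omega_{R_\infty}]$.

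Moreover the escape route you propose is not available. The paper's introduction stresses that the prior uniformization/integral-model arguments (Ribet, Yang, Helm, Cheng--Fu) all require $\rhobar$ to ramify at suitable primes of the discriminant, and in particular cannot handle the case where $\rhobar$ is unramified (and scalar at Frobenius) at \emph{every} prime dividing $\D$ --- which is exactly the case $k$ is counting. So even setting aside the class-group error, the fallback step would not close the argument in the generality claimed. A secondary issue is your phrase ``the number of generators is multiplicative over completed tensor products''; while true for the module you ultimately construct (because it is an external tensor product of local pieces), it is not true for an arbitrary reflexive module over a completed tensor product of rings, so the reduction to local coordinates requires more care than you give. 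Finally, note that the paper does not compute $\Cl(\Rbar)$ directly: it first works with the uncompleted graded ring $\Rc$ (so that toric geometry applies) and then proves, by adapting Danilov's blow-up argument, that $\Cl(\Rc)\to\Cl(\Rbar)$ is an isomorphism; this de-completion step is a nontrivial part of the proof and is absent from your sketch.
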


Note that the conditions that $D$ ramifies at $v$, $\rhobar$ is unramified at $v$ and, $\rhobar(\Frob_v)$ is a scalar imply that $\Nm(v)\equiv 1\pmod{\ell}$.

\subsection{Some history of higher multiplicity results}

A special case Theorem \ref{Mult 2^k} was first proven by Ribet \cite{RibetMult2} in the case when $F=\Q$, $D = D_{pq}$ is the indefinite quaternion algebra ramified at two primes, $p$ and $q$, and $K^{\min}$ is the group of units in a maximal order of $D_{pq}(\A_{F,f})$, i.e. the ``level one'' case (in which case $k$ is forced to be either $0$ or $1$, as $\rhobar$ is necessarily ramified at at least of of $p$ and $q$). He also proved a more general result in the case when $F=\Q$ and $D = D_p$ is the definite quaternion algebra ramified at one prime, $p$.

Yang \cite{Yang} gave a (non-sharp) upper bound on the multiplicity in the case where $F=\Q$ and $\rhobar$ was ramified in at least half of the primes in the discriminant of $D$, and also showed that multiplicities of at least 4 are achievable. Helm \cite{Helm} strengthened this result to prove the optimal upper bound of $2^k$ on the multiplicity, again in the case of $F = \Q$, but without the ramification condition for $\rhobar$.

Cheng and Fu \cite{ChengFu} generalized Ribet's multiplicity $2$ results to the case when $F$ was a totally real number field, and also generalized this somewhat to the higher weight case. More precisely, they consider two indefinite quaternion algebras $D_0$ and $D$ (called $D$ and $D'$ in their paper), where $D$ is ramified precisely at the primes where $D_0$ is ramified as well as two additional primes $\p$ and $\q$. Provided that $\rhobar$ is ramified at $\q$ and that the Shimura curve associated to $D_0$ satisfies an appropriate multiplicity result one at \emph{non-minimal} level (which holds by the $q$-expansion principle in the case when $F=\Q$ and $D_0 = M_2(\Q)$, and follows in many other cases by known cases of Ihara's Lemma), they prove Theorem \ref{Mult 2^k} for $D$. In this case $k$ is again forced to be either $0$ or $1$, as $\p$ is the only prime in the discriminant of $D'$ at which $\rhobar$ can possibly be unramified and scalar.

They further proved (see Remark 3.9 in their paper) cases of Theorem \ref{Mult 2^k} in which $k$ is allowed to be arbitrary, showing unconditionally that a multiplicity of $2^k$ is possible for any $k$. These results require that $\rhobar$ ramifies at at least $k$ primes in the discriminant of $D$, and still rely on certain non-minimal multiplicity one results.

Their arguments rely heavily on the assumption that $\rhobar$ ramifies at certain primes in the discriminant of $D$ (as well as on the non-minimal multiplicity one results) so it is very unlikely that their approach could be used to prove results in the same generality as this paper (even if the needed multiplicity one results were proven in full generality). In particular, their method cannot handle the case when $\rhobar$ is unramified at every prime in the discriminant, or even more so when $\rhobar$ is unramified and scalar at every prime in the discriminant, whereas the methods of this paper cover those cases with no added difficulty.

Based on the results over $\Q$, Buzzard, Diamond and Jarvis \cite{BDJ} formulated a mod $\ell$ local-global compatibility conjecture, which gives a conjectural description of the multiplicity for arbitrary $F$, $D$ and (prime to $\ell$) level. Theorem \ref{Mult 2^k} is a special case of this conjecture.

\subsection{Overview of our method}

The previous results relied heavily on facts about integral models of Shimura curves, as well as other results such as mod $\ell$ multiplicity one statements for modular curves (arising from the $q$-expansion principle) and Ihara's Lemma. Our approach is entirely different, and does not rely on any such statements about Shimura curves.

Our method relies on the natural self-duality of the module $M_\infty$, combined with an explicit calculation of the ring $R_\infty$ arising in the patching method, together with its Weil class group $\Cl(R_\infty)$ and dualizing module $\omega_{R_\infty}$. The fact that $M_\infty$ is self-dual, and of generic rank $1$, implies that $M_\infty$ corresponds to an element of $[M_\infty] \in \Cl(R_\infty)$ satisfying $2[M_\infty] = [\omega_{R_\infty}]$. Provided that $\Cl(R_\infty)$ is $2$-torsion free (which it is in our situation) this uniquely determines $M_\infty$ up to isomorphism. Thus determining the structure of $M_\infty$ is simply a matter of computing everything explicitly enough to determine the unique module $M_\infty$ satisfying $2[M_\infty] = [\omega_{R_\infty}]$.

While these computations may be quite difficult in higher dimensions, all of the relevant local deformation rings have been computed by Shotton \cite{Shotton} in the $\GL_2$ case, and moreover his computations show that the ring $R_\infty/\lambda$ is (the completion of) the ring of functions on a toric variety. This observation makes it fairly straightforward to apply our method in the $\GL_2$ case, and hence to precisely determine the structure of the patched module $M_\infty$.

Additionally, our explicit description of the patched module $M_\infty$ allows us to extract more refined data about the Hecke module structure of the cohomology groups, beyond just the multiplicity statements (see Theorem \ref{R=T}, below). This has potential applications to the integral Eichler basis problem and the study of congruence ideals.

\subsection{An alternate approach to the multiplicity $2$ result of \cite{CG18}}
Another approach to proving a multiplicity $2$ statement by analyzing the structure of $R_\infty$, in a slightly different context, was given by Calegari and Geraghty in \cite[Section 4]{CG18}. Their approach again relies on explicit computations of local deformation rings, done in \cite{Snowden} in their context, as well as a computation of the dualizing module $\omega_{R_\infty}$ (which is also necessary for our method). Unlike the method described in this paper however, their method also crucially uses the work of \cite{Gross} on the $p$-divisible group of $J_1(N)$, and its Hecke module structure, which was ultimately proved using the $q$-expansion principle. This makes it somewhat difficult to generalize their approach.

Our method can be used to provide an alternate proof of Calegari and Geraghty's multiplicity $2$ result, without the reliance on the results of \cite{Gross}. In fact one can check that (in the notation of \cite[Section 4]{CG18}) that $(\varpi,\beta)$ is a regular sequence for the ring $\Rt^\dagger$, and that $\Rt^\dagger/(\varpi,\beta)$ is isomorphic to the ring $\Sbar$ considered in Section \ref{sec:toric} of this paper, which proves their multiplicity $2$ result without needing any computations beyond the ones already carried out in this paper (and also gives an analogue of Theorem \ref{R=T}). Since our method does not rely on the work of \cite{Gross}, it would automatically give a generalization of Calegari and Geraghty's result to Shimura curves over $\Q$. It's also likely that this could be extended to totally real number fields by analyzing the appropriate local deformation rings considered in \cite{Snowden}.

\subsection{Explanations for the conditions in Theorem \ref{Mult 2^k}}

Many of the conditions in the statement of Theorem \ref{Mult 2^k} were included primarily to simplify the proof and exposition, and are not fundamental limitations on our method.
 
Condition (\ref{finite flat}) is essentially an assumption that the minimal level of $\rhobar$ is prime to $\ell$. It, together with the earlier assumption that $\ell$ does not ramify in $F$, is included to ensure that the local deformation rings $R_v^{\square,\fl,\psi}(\rhobar|_{G_v})$ considered in Section \ref{sec:deformation} are formally smooth. As the local deformation rings at $v|\ell$ are known to be formally smooth in more general situations, this condition can likely be relaxed somewhat with only minimal modifications to our method. Even more generally, it is likely that our techniques can be extended to certain other situations in which the local deformation rings at $v|\ell$ are \emph{not} formally smooth, provided we can still explicitly compute these rings.

Condition (\ref{vexing}) rules out the so-called `vexing' primes. It is mainly for convenience, to allow us to phrase our argument in terms of a `minimal level' $K^{\min}$ for $\rhobar$ and avoid a discussion of types. The treatment of minimally ramified deformation conditions in \cite{CHT} should allow this restriction to be removed without much added difficulty.
	
Condition (\ref{-1modl}) ensures that the Steinberg deformation ring, $R^{\square,\st,\psi}(\rhobar|_{G_v})$ from Section \ref{sec:deformation} is a domain. In the case when $R^{\square,\st,\psi}(\rhobar|_{G_v})$ fails to be a domain, Thorne \cite{ThorneDih} has defined new local deformation conditions which pick out the individual components. It's likely this could allow this condition to be removed as well, with only small modifications to our arguments.

The restriction to the minimal level is similarly intended to ensure that the deformation rings considered will be domains. It is possible this restriction can be relaxed in certain cases, particularly in cases when Ihara's Lemma is known.

We intend to explore the possibility of relaxing or removing some of these conditions in future work.
	
Lastly, condition (\ref{TW conditions})
is the classical ``Taylor--Wiles condition''\footnote{Experts will note that there is also another Taylor--Wiles condition one must assume in the case when $\ell=5$ and $\sqrt{5}\in F$. In our case however, this situation is already ruled out by the assumption that $\ell$ is unramified in $F$, and so we do not need to explicitly rule it out.}, which is a technical condition necessary for our construction in Section \ref{sec:patching}. It is unlikely that this condition can be removed without a significant breakthrough.

\subsection{Definitions and Notation}\label{ssec:notation}

Let $F$ be a totally real number field, with ring of integers $\O_F$. We will always use $v$ to denote a finite place $v\subseteq \O_F$. For any such $v$, let $F_v$ be the completion of $F$ and let $\O_{F,v}$ be its ring of integers. Let $\varpi_v$ be a uniformizer in $\O_{F,\p}$ and let $\k_v = \O_{F,v}/\varpi_v = \O_F/v$ be the residue field. Let $\Nm(v) = \#\k_v$ be the \emph{norm} of $v$.

Let $D$ be a quaternion algebra over $F$ with discriminant $\D$ (i.e. $\D$ is the product of all finite primes of $F$ at which $D$ is ramified). Assume that $D$ is either ramified at all infinite places of $F$ (the \emph{totally definite} case), or split at exactly one infinite place (the \emph{indefinite} case). Fix a maximal order of $D$, so that we may regard $D^\times$ as an algebraic group defined over $\O_F$.

Now fix a prime $\ell> 2$ which is relatively prime to $\D$ and does not ramify in $F$. For the rest of this paper we will fix a finite extension $E/\Q_\ell$. Let $\O$ be the ring of integers of $E$, $\lambda\in \O$ be a uniformizer and $\F = \O/\lambda$ be its residue field.

For any $\lambda$-torsion free $\O$-module $M$, we will write $M^\dual = \Hom_\O(M,\O)$ for its dual.

We define a \emph{level} to be a compact open subgroup 
\[K = \prod_{v\subseteq \O_F} K_v \subseteq \prod_{v\subseteq \O_F} D^\times(\O_{F,v})\subseteq D^\times(\A_{F,f})\]
where we have $K_v = D^\times(\O_{F,v})$ for each $v|\D$. We say that $K$ is \emph{unramified} at some $v\nmid \D$ if $K_v = \GL_2(\O_{F,v})$. Note that $K$ is necessarily unramified at all but finitely many $v$. Write $N_K$ for the product of all places $v\nmid \D$ where $K$ is ramified.

If $D$ is totally definite, let
\[S^D(K) = \left\{f: D^\times(F)\backslash D^\times(\A_{F,f})/K \to \O\right\}.\]
If $D$ is indefinite, let $X^D(K)$ be the Riemann surface $D^\times(F)\backslash \left(D^\times(\A_{F,f})\times \half\right)/K$ (where $\half$ is the complex upper half plane). Give $X^D(K)$ its canonical structure as an algebraic curve over $F$, 
and let $S^D(K) = H^1(X^D(K),\Z)\otimes_\Z\O$. Also define $\overline{S^D}(K) = S^D(K)\otimes_\O\F$.

For any finite prime ideal $v$ of $F$ for which $v\nmid\D N_K$, consider the double-coset operators $T_v,S_v:S^D(K)\to S^D(K)$ given by
\begin{align*}
T_v &= \left[K\begin{pmatrix}\varpi_v&0\\0&1\end{pmatrix}K\right],&
S_v &= \left[K\begin{pmatrix}\varpi_v&0\\0&\varpi_v\end{pmatrix}K\right].
\end{align*}

Let
\begin{align*}
\T^D(K) &= \O\bigg[ T_v,S_v, S_v^{-1}\bigg|v\subseteq \O_F, v\nmid\D N_K\bigg] \subseteq \End_{\O}(S^D(K))
\end{align*}
be the (anemic) Hecke algebra.

It will sometimes be useful to treat the $\T^D(K)$'s as quotients of a fixed ring $\T^{\univ}_S=\O\left[T_v,S_v^{\pm1}\right]_{v\not\in S}$, where $S$ is a finite set of primes, containing all primes dividing $\D N_K$ (here, $T_v$ and $S_v$ are treated as commuting indeterminants).  We can thus think of any maximal ideal $m\subseteq \T^D(K)$ as being a maximal ideal of $\T^{\univ}_S$, and hence as being a maximal ideal of $\T^D(K')$ for all $K'\subseteq K$.

Now let $G_F = \Gal(\Qbar/F)$ be the absolute Galois group of $F$. For any $v$, let $G_v = \Gal(\overline{F_v}/F_v)$ be the absolute Galois group of $F_v$, and let $I_v\unlhd G_v$ be the inertia group. Fix embeddings $\Qbar\injection \overline{F_v}$ for all $v$, and hence embeddings $G_v\injection G$. Let $\Frob_v\in G_v$ be a lift of (arithmetic) Frobenius.

Let $\e_\ell:G_F\to \O^\times$ be the cyclotomic character (given by $\sigma(\zeta) = \zeta^{\e_\ell(\sigma)}$ for any $\sigma\in G_F$ and $\zeta\in \mu_{\ell^\infty}$), and let $\ebar_\ell:G_F\to \F^\times$ be its mod $\ell$ reduction.

Now take a maximal ideal $m\subseteq \T^D(K)$, and note that $\T^D(K)/m$ is a finite extension of $\F$.

It is well known (see \cite{Carayol1}) that the ideal $m$ corresponds to a two-dimensional semisimple Galois representation $\rhobar_m:G_F\to \GL_2(\T^D(K)/m)\subseteq \GL_2(\Fbar_\ell)$ satisfying:

\begin{enumerate}
	\item $\rhobar_m$ is \emph{odd}.
	\item If $v\nmid \D,\ell,N_K$, then $\rhobar_m$ is unramified at $v$ and we have
	\begin{align*}
	\tr(\rhobar_m(\Frob_v)) &\equiv T_v\pmod{m}\\
	\det(\rhobar_m(\Frob_v)) &\equiv \Nm(v)S_v\pmod{m}.
	\end{align*}
	\item If $v|\ell$ and $v\nmid \D,N_K$, then $\rhobar_m$ is finite flat at $v$.
	\item If $v|\D$ then 
	\[\rhobar_m|_{G_v} \sim 
	\begin{pmatrix}
	\chibar\,\ebar_\ell&*\\
	0 & \chibar
	\end{pmatrix}.\]
	where $\chibar:G_v\to \Fbar_\ell^\times$ is an unramified character.
\end{enumerate}

We say that $m$ is \emph{non-Eisenstein} if $\rhobar_m$ is absolutely irreducible.

In keeping with property (4) above, we will say that a local representation $r:G_v\to \GL_2(\Fbar)$ (resp. $r:G_v\to \GL_2(\Ebar)$) is \emph{Steinberg} if it can be written (in some basis) as
\begin{align*}
	& r =	\begin{pmatrix}	\chibar\,\ebar_\ell&*\\	0 & \chibar	\end{pmatrix}
	&
&\left(\text{resp. } r = \begin{pmatrix}\chi \e_\ell& *\\0&\chi\end{pmatrix}\right)
\end{align*}
for some unramified character $\chibar:G_v\to \Fbar$ (resp. $\chi:G_v\to \Ebar$). We say that a global representation $r:G_F\to \GL_2(\Fbar)$ (resp. $r:G_F\to \GL_2(\Ebar)$) is \emph{Steinberg at $v$} if $r|_{G_v}$ is Steinberg.

Now if $\psi:G_F\to \O^\times$ is a character for which $\psi\e_\ell^{-1}$ has finite image, define the \emph{fixed determinant} Hecke algebra $\T^D_{\psi}(K)$ to be quotient of $\T^D(K)$ on which $\Nm(v)S_v=\psi(\Frob_v)$ for all $v\nmid \D,\ell$ at which $K$ is unramified.

Note that by Chebotarev density, a maximal ideal $m\subseteq \T^D(K)$ is in the support of $\T^D_{\psi}(K)$ if and only if $\rhobar_m$ has a lift $\rho:G_F\to \GL_2(\O)$ which is modular of level $K$ with $\det\rho = \psi$ (which in particular implies that $\det\rhobar_m\equiv \psi\pmod{\lambda}$).

Now for any continuous \emph{absolutely irreducible} representation $\rhobar:G_F\to \GL_2(\Fbar_\ell)$, define:
\[\K^D(\rhobar) = \left\{K\subseteq D^\times(\A_{F,f})\middle| \rhobar\sim \rhobar_m \text{ for some } m\subseteq \T^D(K)\right\}\]
(that is, $\K^D(\rhobar)$ is the set of levels $K$ at which the representation $\rhobar$ can occur.)

From now on, fix an absolutely irreducible Galois representation $\rhobar:G_F\to \GL_2(\Fbar_\ell)$ for which $\K^D(\rhobar)\ne\es$ (i.e. $\rhobar$ is ``automorphic for $D$''). In particular, this implies that $\rhobar$ is \emph{odd}, and satisfies the numbered conditions in Section \ref{ssec:notation}. Also assume that $\rhobar|_{G_v}$ is finite flat at all primes $v|\ell$ of $F$.

We will say that a \emph{minimal level} of $\rhobar$ is an element of $\K^D(\rhobar)$ which is maximal under inclusion. The assumption that $\rhobar|_{G_v}$ is finite flat for all $v|\ell$ implies that we may pick a minimal level $K^{\min} = \prod_{v\subseteq\O_F}K^{\min}_v$ of $\rhobar$ for which $K^{\min}_v = D^\times(\O_v)$ for all $v|\ell$. From now on fix such a $K^{\min}$.

Given any level $\ds K = \prod_{v\subseteq\O_F}K_v\subseteq K^{\min}$, we say that $K$ is of \emph{minimal level} at some $v\subseteq \O_F$ if $K_v = K^{\min}_v$.

Now given $K\in \K^D(\rhobar)$ and $m\subseteq \T^D(K)$ for which $\rhobar \sim \rhobar_m$ we define the number:
\[\nu_{\rhobar}(K) = 
\begin{cases}
\dim_{\T^D(K)/m} \overline{S^D}(K)[m]& \text{ if } D \text{ is totally definite}\\
\frac{1}{2}\dim_{\T^D(K)/m} \overline{S^D}(K)[m]& \text{ if } D \text{ is indefinite}
\end{cases}\]
called the \emph{multiplicity} of $\rhobar$ at level $K$. This number is closely related to the mod $\ell$ local-global compatibility conjectures given in \cite{BDJ}. Note that $\nu_{\rhobar}(K)$ does not depend on the choice of coefficient ring $\O$.

Theorem \ref{Mult 2^k} is precisely the assertion that $\nu_{\rhobar}(K^{\min}) = 2^k$. 

\subsection{Endomorphisms of Hecke modules}

We close this section by stating another result of our work:

\begin{thm}\label{R=T}
Let $\rhobar$ satisfy the conditions of Theorem \ref{Mult 2^k}.	If $D$ is totally definite then trace map $S^D(K^{\min})_m\otimes_{\T^{D}(K^{\min})_m}S^D(K^{\min})_m\to \omega_{\T^{D}(K^{\min})_m}$, induced by the self-duality of $S^D(K^{\min})_m$ is surjective (where $\omega_{\T^{D}(K^{\min})_m}$ is the dualizing sheaf\footnote{Which exists as $\T^{D}(K^{\min})_m$ is a $\lambda$-torsion free local $\O$-module of Krull dimension 1, by definition.} of $\T^{D}(K^{\min})_m$), and moreover the natural map $\T^{D}(K^{\min})_m\to \End_{\T^{D}(K^{\min})_m}(S^D(K^{\min})_m)$ is an isomorphism. If $D$ is indefinite, then the natural map $\T^{D}(K^{\min})_m\to \End_{\T^{D}(K^{\min})_m[G_F]}(S^D(K^{\min})_m)$ is an isomorphism.
\end{thm}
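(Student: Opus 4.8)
The plan is to deduce Theorem~\ref{R=T} from the explicit structure of the patched module $M_\infty$, exactly as in the proof of Theorem~\ref{Mult 2^k}. The patching construction produces a ring $R_\infty$, a formal power series ring $S_\infty = \O[[x_1,\dots,x_d]]$ acting on everything, a maximal Cohen--Macaulay $R_\infty$-module $M_\infty$, and an ideal $\a\subseteq S_\infty$ such that $R_\infty/\a R_\infty \cong \T^D(K^{\min})_m$ (possibly after adjusting by framing variables) and $M_\infty/\a M_\infty \cong S^D(K^{\min})_m$ as $\T^D(K^{\min})_m$-modules. The self-duality of $S^D(K^{\min})_m$ coming from Poincar\'e duality (in the indefinite case) or the tautological pairing on functions (in the totally definite case) patches to a self-duality $M_\infty \cong \Hom_{R_\infty}(M_\infty, \omega_{R_\infty})$ respecting the $R_\infty$-structure, where $\omega_{R_\infty}$ is the (normalized) dualizing module. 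First I would record that $\a$ is generated by a regular sequence on both $R_\infty$ and $M_\infty$ (this is part of the patching output, using the numerical coincidence $\dim S_\infty = \dim R_\infty$ and maximal Cohen--Macaulayness), so that $\omega_{R_\infty}/\a\,\omega_{R_\infty} \cong \omega_{R_\infty/\a R_\infty} = \omega_{\T^D(K^{\min})_m}$ and the duality descends to $S^D(K^{\min})_m \cong \Hom_{\T^D(K^{\min})_m}(S^D(K^{\min})_m, \omega_{\T^D(K^{\min})_m})$.

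Next I would invoke the toric computation: Shotton's local deformation rings show $R_\infty/\lambda$ is the completed coordinate ring of an affine toric variety, $R_\infty$ is Cohen--Macaulay and normal, $\Cl(R_\infty)$ is $2$-torsion-free, and one computes the class $[\omega_{R_\infty}] \in \Cl(R_\infty)$ explicitly. Since $M_\infty$ is reflexive of generic rank $1$ (generic rank $1$ because in the minimal case the relevant global deformation ring acts faithfully with multiplicity one generically), it determines a class $[M_\infty]\in\Cl(R_\infty)$, and the self-duality forces $2[M_\infty] = [\omega_{R_\infty}]$; $2$-torsion-freeness then pins down $[M_\infty]$ uniquely, and the explicit toric description identifies $M_\infty$ with a specific rank-one reflexive module $\Ms$ over $R_\infty$ (a divisorial ideal / module of global sections of a Weil divisor on the toric variety). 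The point is that for this particular $\Ms$ one can directly check two facts: (i) the multiplication map $\Ms \otimes_{R_\infty} \Ms \to \omega_{R_\infty}$ is surjective, and (ii) $\End_{R_\infty}(\Ms) = R_\infty$ (a reflexive rank-one module over a normal domain has endomorphism ring equal to the base ring). Both are standard facts about divisorial modules on normal (here toric) schemes: (ii) because $R_\infty \hookrightarrow \End_{R_\infty}(\Ms) \hookrightarrow \End_{\mathrm{Frac}(R_\infty)} = \mathrm{Frac}(R_\infty)$ and the middle term is reflexive hence equal to $R_\infty$ by normality; (i) because the pairing $\Ms\times\Ms\to\omega_{R_\infty}$ induced by $M_\infty\cong\Hom(M_\infty,\omega_{R_\infty})$ is perfect in codimension one, and a divisorial-module pairing that is an isomorphism in codimension one and lands in a reflexive module is surjective.

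Then I would reduce modulo $\a$. Surjectivity of $\Ms\otimes_{R_\infty}\Ms\to\omega_{R_\infty}$ is preserved by the right-exact functor $-\otimes_{R_\infty} R_\infty/\a R_\infty$, giving surjectivity of $S^D(K^{\min})_m\otimes_{\T}S^D(K^{\min})_m\to\omega_{\T^D(K^{\min})_m}$, which is the first assertion (in the totally definite case). For the endomorphism statement, I would argue that $\End_{R_\infty}(M_\infty)/\a\,\End_{R_\infty}(M_\infty) \hookrightarrow \End_{\T}(S^D(K^{\min})_m)$ and that the composite $\T \to \End_{\T}(S^D(K^{\min})_m)$ factors through $R_\infty/\a = \T$ acting via $R_\infty = \End_{R_\infty}(M_\infty)$; combined with the faithfulness of the $\T$-action on $S^D(K^{\min})_m$ (which is immediate from the definition of $\T$ as a subalgebra of $\End$), this forces $\T \xrightarrow{\sim} \End_\T(S^D(K^{\min})_m)$. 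In the indefinite case one replaces $\End_\T$ by $\End_{\T[G_F]}$, using that $M_\infty$ carries a compatible $G_F$-action (via the universal Galois representation over $R_\infty$) under which it is $G_F$-equivariantly self-dual up to a twist; the same reflexivity argument gives $\End_{R_\infty[G_F]}(M_\infty) = R_\infty$ once one knows the Galois action on $M_\infty\otimes\mathrm{Frac}(R_\infty)$ is (a twist of) a sum of two copies of the tautological representation, so its $R_\infty[G_F]$-endomorphisms are scalars.

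The main obstacle I anticipate is bookkeeping around the duality: pinning down the precise twist and normalization so that the patched pairing $M_\infty \cong \Hom_{R_\infty}(M_\infty,\omega_{R_\infty})$ is genuinely $R_\infty$-linear (not merely linear up to an automorphism of $R_\infty$) and genuinely $G_F$-equivariant in the indefinite case, and verifying that the self-dual pairing on $S^D(K^{\min})_m$ really does patch — this requires the patching datum to be constructed so that the dualities at finite level are compatible under the transition maps, which is where the hypotheses $\ell>2$, the minimal-level assumption, and conditions (\ref{-1modl}) and (\ref{TW conditions}) enter. Granting that compatibility, everything else is formal commutative algebra of divisorial modules on normal schemes plus the already-established toric computation of $R_\infty$ and $[\omega_{R_\infty}]$.
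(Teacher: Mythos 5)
Your plan correctly identifies the two ingredients that the paper also uses: the surjectivity of the trace map $\tau_{M_\infty}:M_\infty\otimes_{R_\infty}M_\infty\to\omega_{R_\infty}$ coming from the toric computation (Theorem~\ref{self-dual}), and the fact that surjectivity descends along the right-exact functor $-\otimes_{R_\infty}R_\infty/\n$ because $\n$ is generated by a regular sequence on $R_\infty$ and $M_\infty$. That part is fine.

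However, your route to the endomorphism statement has a genuine gap. You propose to prove $\End_{R_\infty}(M_\infty)=R_\infty$ at the infinite level (correct, by the divisorial-module argument over a normal domain) and then to ``reduce mod $\a$,'' asserting an injection $\End_{R_\infty}(M_\infty)/\a\,\End_{R_\infty}(M_\infty)\hookrightarrow\End_{\T}(M_0)$. But $\Hom$ does not commute with quotients for non-free modules: the natural map $\Hom_{R_\infty}(M_\infty,M_\infty)/\n\to\Hom_{R_\infty/\n}(M_\infty/\n,M_\infty/\n)$ need be neither injective nor surjective, and $M_\infty$ is precisely not free over $R_\infty$ when $k>0$. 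You also appeal to ``faithfulness of the $\T$-action'' to close the argument, but faithfulness only gives the trivial injection $\T\hookrightarrow\End_\T(M_0)$, not surjectivity. The paper avoids this by never computing $\End_{R_\infty}(M_\infty)$ at all: it reduces only the surjectivity of the trace map to finite level and then applies a purely finite-level commutative algebra lemma (Lemma~\ref{endomorphism}, from~\cite{EmTheta}), which says that for a finite free $\O$-algebra $B$ acting on a finite free $\O$-module $U$ with a perfect pairing and $U[1/\lambda]$ free over $B[1/\lambda]$, surjectivity of the induced map $U\otimes_B U^*\to\Hom_\O(B,\O)$ is \emph{equivalent} to $B\xrightarrow{\sim}Z(\End_B(U))$. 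Commutativity of $\End_{R_0^D}(M_0)$ then follows because $M_0[1/\lambda]$ is free of rank one over $R_0^D[1/\lambda]$ (generic multiplicity one), forcing $\End_{R_0^D}(M_0)\hookrightarrow R_0^D[1/\lambda]$. Without some replacement for this lemma, your reduction step fails.

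A second omission: the patching in the paper is done with fixed-determinant modules $M_\psi(K)$ and Hecke algebras $\T^D_\psi(K)_m$, and what comes out of $M_\infty/\n$ is $M_\psi(K_0)$, not $M(K_0)$. Passing from the fixed-determinant statement to the statement for $\T^D(K^{\min})_m$ and $S^D(K^{\min})_m$ is not trivial; the paper proves a structural isomorphism $\T^D(K_0)_m\cong\T^D_\psi(K_0)_m\otimes_\O\O[C_{K_0,\ell}]$ via the decomposition $R_{F,S}(\rhobar)\cong R_{F,S}(\psibar)\widehat\otimes_\O R^\psi_{F,S}(\rhobar)$ (Lemma~\ref{lem:fixed determinant}), and also has to replace $K^{\min}$ by an auxiliary sufficiently small level $K_0$ (Lemma~\ref{lem:aux prime}). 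Your proposal does not address either step. Finally, in the indefinite case your treatment differs from the paper's in a way that would require more care: the paper factors out the Galois action \emph{before} patching, setting $M(K)=\Hom_{R[G_F]}(\rho^{\univ},S^D(K)^\dual_m)$, so $M_\infty$ carries no Galois action and the passage from $\End_\T(M)$ to $\End_{\T[G_F]}(S^D_m)$ is a clean Hom--tensor adjunction at the very end; your proposal keeps the Galois action on the patched module, which would require verifying the equivariance of all the patching and duality constructions and also gives a generic-rank-$2$ module, so Theorem~\ref{self-dual} would not apply directly.
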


As explained in \cite{EmTheta}, this statement has applications towards the integral Eichler basis problem, so can likely be used to strengthen the results of Emerton \cite{EmTheta}. Also the work of \cite{BKM} shows that this statement has an important application to the study of congruence ideals, where it can serve as something of a substitute for a multiplicity one statement.

\section*{Acknowledgments}

I would like to thank Matt Emerton for suggesting this problem, and for all of his advice. I would also like to specifically thank him for pointing out that Theorem \ref{R=T} followed from my work. I also thank Jack Shotton, Frank Calegari, Toby Gee and Florian Herzig for their comments on earlier drafts of this paper, and for many helpful discussions. I would also like to thank an anonymous reviewer for their helpful suggestions. 

\section{Galois Deformation Rings}\label{sec:deformation}

In this section we will define the various Galois deformation rings which we will consider in the rest of the paper, and review their relevant properties.

\subsection{Local Deformation Rings}\label{ssec:local def}
Fix a finite place $v$ of $F$ and a representation $\rbar:G_v\to \GL_2(\F)$.

Let $\Cc_\O$ (resp. $\Cc_\O^\wedge$) be the category of Artinian (resp. complete Noetherian) local $\O$-algebras with residue field $\F$. Consider the (framed) deformation functor $\Dc^\square(\rbar):\Cc_\O\to \Set$ defined by
\begin{align*}
\Dc^\square(\rbar)(A) 
&= \{r:G_v\to \GL_2(A), \text{ continuous lift of }\rbar \}\\
&= \left\{(M,r,e_1,e_2)\middle|\parbox{3.5in}{$M$ is a free rank 2 $A$-module with a basis $(e_1,e_2)$ and $r:G_v\to \Aut_A(M)$ such that the induced map $G\to \Aut_A(M)= \GL_2(A)\to \GL_2(\F)$ is $\rbar$}
\right\}_{/\sim}
\end{align*}
It is well-known that this functor is \emph{pro-representable} by some $R^\square(\rbar)\in \Cc^\wedge_\O$, in the sense that $\Dc^\square(\rbar) \equiv \Hom_\O(R^\square(\rbar),-)$. In particular, $\rbar$ admits a universal lift $r^\square:G_v\to \GL_2(R^\square(\rbar))$.

For any continuous homomorphism, $x:R^{\square}(\rbar)\to \Ebar$, we obtain a Galois representation $r_x:G_v\to \GL_2(\Ebar)$ lifting $\rbar$, from the composition $G_v\xrightarrow{r^\square} \GL_2(R^\square(\rbar))\xrightarrow{x} \GL_2(\Ebar)$.

Now for any character $\psi: G_v\to \O^\times$ with $\psi\equiv \det \rbar \pmod{\lambda}$ define $R^{\square,\psi}(\rbar)$ to be the quotient of $R^\square(\rbar)$ on which $\det r^{\square}(g) = \psi(g)$ for all $g\in G_v$. Equivalently, $R^{\square,\psi}(\rbar)$ is the ring pro-representing the functor of deformations of $\rbar$ with determinant $\psi$.

Given any two characters $\psi_1,\psi_2:G_v\to \O^\times$ with $\det \rbar\equiv \psi_1\equiv \psi_2 \pmod{\lambda}$ we have $\psi_1\psi_2^{-1} \equiv 1 \pmod{\lambda}$, and so (as $1+\lambda\O$ is pro-$\ell$ and $\ell\ne 2$) there is a unique $\chi:G_v\to \O^\times$ with $\psi_1 = \psi_2\chi^2$. But now the map $r\mapsto r\otimes \chi$ is an automorphism of the functor $D^\square(\rbar)$ which can be shown to induce a natural isomorphism $R^{\square,\psi_1}(\rbar)\cong R^{\square,\psi_2}(\rbar)$. Thus, up to isomorphism, the ring $R^{\square,\psi}(\rbar)$ does not depend on the choice of $\psi$.

We call $R^{\square}(\rbar)$ (respectively $R^{\square,\psi}(\rbar)$) the \emph{deformation ring} (respectively the \emph{fixed determinant deformation ring}) of $\rbar$.

In order to prove our main results, we will also need to consider various deformation rings with \emph{fixed type}. Instead of defining these in general, we will consider only the specific examples which will appear in our arguments.

If $v|\ell$ and $\rbar$ and $\psi$ are both \emph{flat}, define $R^{\square,\fl,\psi}(\rbar)$ to be the ring pro-representing the functor of (framed) flat deformations of $\rbar$ with determinant $\psi$. We will refrain from giving a precise definition of this, as it is not relevant to our discussion. We will refer the reader to \cite{Kisin}, \cite{FontLaff}, \cite{Ramakrishna} and \cite{CHT} for more details, and use only the following result from  \cite[Section 2.4]{CHT}:

\begin{prop}
	If $F_v/\Q_\ell$ is unramified, then $R^{\square,\fl,\psi}(\rbar)\cong \O[[X_1,\ldots,X_{3+[F_v:\Q_\ell]}]]$.
\end{prop}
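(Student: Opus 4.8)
The plan is to show that the flat (Fontaine--Laffaille) deformation problem with fixed determinant is formally smooth of the stated relative dimension, by computing the tangent space and the obstruction group and checking that the obstructions vanish. Concretely, I would work with the flat deformation functor $\Dc^{\square,\fl,\psi}(\rbar)$ and show it is pro-represented by a power series ring over $\O$; since $R^{\square,\fl,\psi}(\rbar)$ is a complete local Noetherian $\O$-algebra with residue field $\F$, it suffices to prove that it is $\O$-flat and formally smooth, and then count dimensions.

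First I would recall the standard deformation-theoretic setup: for a small surjection $A'\surjection A$ in $\Cc_\O$ with kernel $J$ (so $\m_{A'}J=0$), lifts of a flat deformation $r_A$ to $A'$ form, when nonempty, a torsor under $H^1$ of the relevant adjoint complex, and the obstruction to existence of a lift lives in an $H^2$. In the Fontaine--Laffaille setting with $F_v/\Q_\ell$ unramified and $\ell$ large relative to the weights (here the weights are those of a finite flat group scheme, i.e.\ Hodge--Tate weights $\{0,1\}$, so Fontaine--Laffaille theory applies cleanly since $\ell>2$), the category of flat deformations is equivalent, via Fontaine--Laffaille, to a category of filtered modules with Frobenius, and this equivalence is exact and preserves the relevant cohomology. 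The key input from \cite[Section 2.4]{CHT} (or equivalently \cite{Ramakrishna}, \cite{FontLaff}) is that this flat deformation functor is \emph{unobstructed}: the relevant $H^2$ vanishes, so every small lifting problem is solvable, and hence the functor is formally smooth.

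For the dimension count I would compute the tangent space $t = \Dc^{\square,\fl,\psi}(\rbar)(\F[\epsilon])$. The framed (unrestricted) tangent space has dimension $\dim_\F Z^1(G_v,\ad\rbar) = \dim_\F H^1(G_v,\ad\rbar) + \dim_\F \ad\rbar - \dim_\F H^0(G_v,\ad\rbar)$; imposing the fixed determinant $\psi$ replaces $\ad\rbar$ by $\ad^0\rbar$ in the cohomological terms (the trace-zero part). Using the local Euler characteristic formula, $\dim H^0(G_v,\ad^0\rbar) - \dim H^1(G_v,\ad^0\rbar) + \dim H^2(G_v,\ad^0\rbar) = -[F_v:\Q_\ell]\dim\ad^0\rbar = -3[F_v:\Q_\ell]$, together with the Fontaine--Laffaille computation that the flat condition cuts the $H^1$ down to a subspace of dimension $\dim H^1_{\fl}(G_v,\ad^0\rbar) = \dim H^0(G_v,\ad^0\rbar) + [F_v:\Q_\ell]$ (this is the content of the flat Selmer dimension formula in \cite{CHT}, reflecting that the flat condition is ``half'' of the local conditions plus the Hodge--Tate contribution), one gets
\[
\dim_\F t = \dim_\F H^1_{\fl}(G_v,\ad^0\rbar) + 3 - \dim_\F H^0(G_v,\ad^0\rbar) = [F_v:\Q_\ell] + 3,
\]
where the $+3 = \dim \GL_2 - \dim \text{(scalars)} = \dim\ad^0\rbar$ comes from the framing (the choice of basis). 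Since the functor is formally smooth and $\O$-flat, $R^{\square,\fl,\psi}(\rbar)$ is a power series ring over $\O$ in $\dim_\F t$ variables, giving the claimed $3 + [F_v:\Q_\ell]$.

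The main obstacle is establishing the unobstructedness, i.e.\ the vanishing of the flat obstruction group and the exactness of the Fontaine--Laffaille comparison on deformations — but this is precisely where I would lean on the cited literature rather than reprove it: \cite[Section 2.4]{CHT} carries out exactly this analysis (building on \cite{Ramakrishna}, \cite{FontLaff}, \cite{Kisin}), showing the flat deformation functor with fixed determinant over an unramified base is pro-represented by a power series ring of the stated dimension. So in practice the proof is: invoke that the flat deformation problem is a deformation problem in the sense of \cite{CHT} satisfying their hypotheses (here using $F_v/\Q_\ell$ unramified, $\ell>2$, and that $\rbar|_{G_v}$ is flat), quote their formal smoothness and dimension formula, and observe that adding the framing variables contributes the extra $3$.
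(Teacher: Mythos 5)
The paper does not actually prove this proposition — it states it and refers the reader to \cite[Section 2.4]{CHT} (and to \cite{Kisin}, \cite{FontLaff}, \cite{Ramakrishna}) for the argument. Your sketch correctly reconstructs the standard Fontaine--Laffaille argument that underlies that reference: formal smoothness via vanishing of the flat obstruction group (available here because $F_v/\Q_\ell$ is unramified, $\ell>2$, and the Hodge--Tate weights lie in $\{0,1\}$), and the dimension count $\dim Z^1_{\fl}(G_v,\ad^0\rbar) = \dim H^1_{\fl}(G_v,\ad^0\rbar) + \bigl(\dim\ad^0\rbar - \dim H^0(G_v,\ad^0\rbar)\bigr) = \bigl(\dim H^0(G_v,\ad^0\rbar)+[F_v:\Q_\ell]\bigr) + \bigl(3-\dim H^0(G_v,\ad^0\rbar)\bigr) = 3+[F_v:\Q_\ell]$. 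So this is the same approach as the paper intends, with the details filled in rather than merely cited.
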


Also if $v\nmid \ell$, we recall the notion of a ``minimally ramified'' deformation given in \cite[Definition 2.4.14]{CHT} (which we again refrain from precisely defining). If $\psi$ is a minimally ramified deformation of $\psibar$, we will let $R^{\square,\min,\psi}(\rbar)$ be the maximal reduced $\lambda$-torsion free quotient of $R^{\square,\psi}(\rbar)$ with the property that if $x:R^{\square,\min,\psi}(\rbar)\to \Ebar$ is a continuous homomorphism, then the corresponding lift $r_x:G_v\to \GL_2(\Ebar)$ of $\rbar$ is minimally ramified. Again, we will refrain from giving a more detailed description of this, and instead we will use only the following well-known result (cf \cite{Shotton,CHT}):
\begin{prop}\label{prop:R^min}
	$R^{\square,\min,\psi}(\rbar)\cong \O[[X_1,X_2,X_3]]$.
\end{prop}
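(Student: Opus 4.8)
The plan is to compute $R^{\square,\min,\psi}(\rbar)$ by splitting into cases according to the behaviour of $\rbar|_{I_v}$, and in each case to exhibit an explicit presentation of the minimally ramified deformation ring and verify it is a power series ring in three variables over $\O$. First I would recall the general dimension count: framed deformation rings of a fixed determinant for a two-dimensional representation of $G_v$ with $v \nmid \ell$ have relative dimension $3$ over $\O$ (this is $1 + \dim H^0(G_v, \ad^0 \rbar)$, coming from the local Euler characteristic formula, since $H^2$ and $H^0$ of $\ad^0$ are dual and cancel against each other in the Euler characteristic over a field with no $\ell$ in the residue characteristic). So the content is entirely that $R^{\square,\min,\psi}(\rbar)$ is \emph{smooth}, i.e.\ formally smooth of the expected dimension, rather than merely equidimensional of dimension $3$.

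The case analysis I would run is the standard one (following \cite{CHT} and \cite{Shotton}): (i) $\rbar|_{G_v}$ unramified; (ii) $\rbar|_{I_v}$ nontrivial but of order prime to $\ell$, i.e.\ tamely ramified, which subdivides according to whether the two tame characters appearing are distinct or equal; (iii) $\rbar|_{I_v}$ has order divisible by $\ell$ (so $\rbar$ is ramified, $\Nm v \equiv 1 \bmod \ell$, and $\rbar|_{G_v}$ is an extension of an unramified character by itself twisted by $\ebar_\ell$ — the ``Steinberg-like'' unipotent ramification case). In case (i) the minimally ramified condition forces the deformation to be unramified, and the unramified fixed-determinant framed deformation ring of a two-dimensional representation is well known to be $\O[[X_1,X_2,X_3]]$ (unrestricted deformations of a representation of $\Zhat \cong \Gal$ of the unramified quotient, rigidified). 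In case (ii) one uses that a deformation over an Artinian ring of a tame representation is again tame (as $1+\m$ is pro-$\ell$ and the wild inertia acts through an $\ell$-group, forcing triviality on the relevant quotient), so the deformation functor is controlled by the tame quotient $\langle \sigma, \phi \mid \phi \sigma \phi^{-1} = \sigma^{\Nm v}\rangle$ and one computes $H^1$ and the obstruction $H^2$ directly; the minimally ramified condition amounts to fixing the shape of $\rbar(\sigma)$ up to conjugacy, and one checks the tangent space has dimension $3$ and the obstruction vanishes. Case (iii) is the delicate one: here the full fixed-determinant deformation ring $R^{\square,\psi}(\rbar)$ is not a domain and not smooth, and the minimally ramified quotient is by definition the maximal reduced $\lambda$-torsion-free quotient all of whose $\Ebar$-points are minimally ramified; one must show this quotient is smooth. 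The standard approach is Taylor's: minimally ramified here means the lift is an extension of a tamely-ramified-or-unramified character by another with the monodromy operator $N$ having the same rank as for $\rbar$, and one shows the locus of such lifts is cut out by equations that make it smooth of dimension $3$ — e.g.\ via Shotton's explicit computation of $R^{\square,\st,\psi}$ together with the observation (used later in the paper, cf.\ condition (\ref{-1modl})) that $\Nm v \not\equiv -1 \bmod \ell$ makes the Steinberg component a domain, so here the minimally ramified ring coincides with (a smooth localization/completion of) that component.

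I expect the main obstacle to be case (iii), the unipotently ramified case, precisely because $R^{\square,\psi}(\rbar)$ itself is singular and one has to correctly identify the minimally ramified quotient — getting the right definition of ``minimally ramified'' to coincide with ``lies on the distinguished component'' and then proving that component is smooth. In the write-up I would handle this either by directly quoting Shotton's computation of the relevant local deformation ring (\cite{Shotton}) and reading off smoothness, or by citing \cite[Section 2.4.4]{CHT} where exactly this computation is carried out (Taylor's ``minimally ramified'' deformations are designed so that $R^{\square,\min,\psi}$ is formally smooth of dimension $3+\dim_\F H^0(G_v,\ad^0\rbar)$, which collapses to $3$ here); either way the proof of the proposition is essentially a citation plus the bookkeeping to match conventions, since the substantive local computations are the ones already done in \cite{CHT} and \cite{Shotton}. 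The remaining cases (i) and (ii) I would dispatch quickly by the standard Galois cohomology tangent-space-and-obstruction argument sketched above.
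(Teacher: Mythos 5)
Your plan is essentially the same as the paper's: the paper records this as a well-known fact and simply cites \cite{CHT} (Section 2.4) and \cite{Shotton}, with no proof of its own, and your sketch is a fleshing-out of the case analysis that those references carry out. That said, a few details in your supporting discussion are off, though none affects the ultimate deferral to the literature. First, the parenthetical ``$1+\dim H^0(G_v,\ad^0\rbar)$'' is not the expected relative dimension; the local Euler characteristic gives $h^1(\ad^0)-h^2(\ad^0)=h^0(\ad^0)$, and adding the $4-h^0(\ad)=3-h^0(\ad^0)$ framing variables yields exactly $3$, independent of $h^0(\ad^0)$. Second, for $v\in\Sigma$ the relevant hypothesis in the paper is condition (\ref{vexing}), not (\ref{-1modl}) (which applies only to $v\mid\D$); moreover (\ref{vexing}) is there only to make the minimally ramified locus coincide with the minimal-level automorphic locus (see the paper's remark following the proposition), not to secure smoothness of $R^{\square,\min,\psi}$, which \cite{CHT} prove unconditionally. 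Third, in your case (iii), unipotent ramification does \emph{not} force $\Nm(v)\equiv 1\pmod\ell$: writing $r(\sigma)=1+N$ with $N^2=0$, the Weil--Deligne relation $r(\phi)Nr(\phi)^{-1}=\Nm(v)N$ is consistent for any $\Nm(v)$ prime to $\ell$, and $\rbar|_{G_v}$ is then an extension of an unramified $\chi$ by $\chi\ebar_\ell$, with the two characters distinct precisely when $\Nm(v)\not\equiv 1\pmod\ell$. None of these slips undermines the conclusion, since the substance of the argument is the citation, but they would need to be corrected in a careful write-up.
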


\begin{rem}
It is a well-known result (see for instance \cite{DiamondVexing}) that if $\rhobar:G_F\to \GL_2(\Fbar_\ell)$ is absolutely irreducible and automorphic for $D$ and $v\nmid\ell,\D$ is a prime of $F$ satisfying condition (\ref{vexing}) of Theorem \ref{Mult 2^k} (i.e. either $\Nm(v)\not\equiv -1\pmod{\ell}$, $\rhobar|_{I_v}$ is irreducible or $\rhobar|_{G_v}$ is absolutely reducible) then an \emph{automorphic} lift $\rho:G_F\to \GL_2(\Ebar)$ is minimally ramified at $v$ if and only if $\rho$ is automorphic of level $K$ for some level $K$ which is minimal at $v$.

This was the primary reason for including condition (\ref{vexing}) in Theorem \ref{Mult 2^k}.
\end{rem}

Now assume that $v\nmid \ell$ and $\rbar$ is \emph{Steinberg} (in the sense of Section \ref{ssec:notation}). We define $R^{\square,\st}(\rbar)$ (called the \emph{Steinberg deformation ring}) to be the maximal reduced $\lambda$-torsion free quotient of $R^{\square}(\rbar)$ for which $r_x:G_v\to \GL_2(\Ebar)$ is Steinberg for every continuous homomorphism $x:R^{\square,\st}(\rbar)\to \Ebar$.

Similarly if $\psi:G_v\to \O^\times$ is an \emph{unramified} character with $\psi\equiv \det\rbar\pmod{\lambda}$ (by assumption, $\rbar$ is Steinberg, and hence $\det\rbar$ is unramified), we  define $R^{\square,\st,\psi}(\rbar)$ (called the \emph{fixed determinant Steinberg deformation ring}) to be the maximal reduced $\lambda$-torsion free quotient of $R^{\square,\psi}(\rbar)$ for which $r_x:G_v\to \GL_2(\Ebar)$ is Steinberg for every continuous homomorphism $x:R^{\square,\st,\psi}(\rbar)\to \Ebar$.

It follows from our definitions that $R^{\square,\st,\psi}(\rbar)$ is the maximal reduced $\lambda$-torsion free quotient of $R^{\square,\st}(\rbar)$ on which $\det\rho^{\square}(g) = \psi(g)$ for all $g\in G_v$.

\subsection{Global Deformation Rings}\label{ssec:global def}

Now take a representation $\rhobar:G_F\to \GL_2(\F)$ satisfying:
\begin{enumerate}
	\item $\rhobar$ is absolutely irreducible.
	\item $\rhobar$ is odd.
	\item For each $v|\ell$, $\rhobar|_{G_v}$ is finite flat.
	\item For each $v|\D$, $\rhobar$ is Steinberg at $v$.
	\item $\K^D(\rhobar)\ne \es$.
\end{enumerate}
Let $\Sigma_\ell^D$ be a set of finite places of $F$ containing:
\begin{itemize}
	\item All places $v$ at which $\rhobar$ is ramified
	\item All places $v|\D$ (i.e. places at which $D$ is ramified)
	\item All places $v|\ell$
\end{itemize}
(we allow $\Sigma_\ell^D$ to contain some other places in addition to these), and let $\Sigma\subseteq \Sigma_\ell^D$ consist of those $v\in \Sigma_\ell^D$ with $v\nmid \ell,\D$.

Now as in \cite{Kisin} define $R^{\square}_{F,S}(\rhobar)$ (where $\Sigma_\ell^D\subseteq S$) to be the $\O$-algebra pro-representing the functor $\Dc^{\square}_{F,S}(\rhobar):\Cc_\O\to \Set$ which sends $A$ to the set of tuples $\big(\rho:G_{F,S}\to \End_A(M), \{(e^v_1,e^v_2)\}_{v\in\Sigma_\ell^D} \big)$, where $M$ is a free rank $2$ $A$-module with an identification $M/m_A = \F^2$ sending $\rho$ to $\rhobar$, and for each $v\in \Sigma_\ell^D$, $(e^v_1,e^v_2)$ is a basis for $M$, lifting the standard basis for $M/m_A= \F^2$, up to equivalence.

Also define the \emph{unframed} deformation ring $R_{F,S}(\rhobar)$ to be the $\O$-algebra pro-representing the functor $\Dc_{F,S}(\rhobar):\Cc_\O\to \Set$ which sends $A$ to the set of free rank $2$ $A$ modules $M$ with action $\rho:G_{F,S}\to \End_A(M)$ for which $\rho\equiv\rhobar\pmod{m_A}$, up to equivalence. This exists because $\rhobar$ is absolutely irreducible. We will let $\rho^{\univ}:G_F\to \GL_2(R_{F,S}(\rhobar))$ denote the universal lift of $\rhobar$.

Now take any character $\psi:G_F\to \O^\times$ for which:
\begin{enumerate}
	\item $\psi \equiv \det \rhobar \pmod\lambda$.
	\item $\psi$ is unramified at all places outside of $\Sigma_\ell^D$, \emph{and} all places dividing $\D$.
	\item $\psi$ is flat at all places dividing $\ell$.
	\item $\psi\e_\ell^{-1}$ has finite image.
\end{enumerate}
Note that as $\psi\e_\ell^{-1}$ has finite image, condition (3) is equivalent to the assertion that $\psi\e_\ell^{-1}$ is unramified at all places dividing $\ell$.

Let $\Dc^{\square,\psi}_{F,S}\subseteq \Dc^{\square}_{F,S}$ be the subfunctor of $\Dc^{\square}_{F,S}$ which sends $A$ to the set of tuples $\big(\rho:G_{F,S}\to \End_A(M), \{(e^v_1,e^v_2)\}_{v\in\Sigma_\ell^D} \big)$ (up to equivalence) in $\Dc^{\square}_{F,S}(A)$ for which $\det \rho = \psi$. Define $\Dc^{\psi}_{F,S}\subseteq \Dc_{F,S}$ similarly. Let $R^{\square,\psi}_{F,S}(\rhobar)$ and $R^{\psi}_{F,S}(\rhobar)$ to be the rings pro-representing $\Dc^{\square,\psi}_{F,S}$ and $\Dc^{\psi}_{F,S}$. Equivalently, these are the quotients of $R^{\square}_{F,S}(\rhobar)$ and $R_{F,S}(\rhobar)$, respectively, on which $\det\rho = \psi$.

Now note that the morphism of functors
\[
\left(\rho,\{(e^v_1,e^v_2)\}_{v\in \Sigma_\ell^D}\right) \mapsto \bigg(\rho|_{G_v}:G_v\to \End_A(M),(e^v_1,e^v_2)\bigg)_{v\in\Sigma_\ell^D}
\]
induces a map:
\[\pi:R_{\loc}=\hat{\bigotimes_{v\in \Sigma_\ell^D}}R^{\square}(\rhobar|_{G_v}) \to  R^{\square}_{F,S}.\]
Now consider the ring
\[R^{\square}_{\Sigma,\D,\ell} = \left[\hat{\bigotimes_{v|\ell}}R^{\square,\fl}(\rhobar|_{G_v})\right]\widehat{\otimes}
\left[
\hat{\bigotimes_{v\in\Sigma}}R^{\square,\min}(\rhobar|_{G_v})
\right]\widehat{\otimes}
\left[\hat{\bigotimes_{v|\D}}R^{\square,\st}(\rhobar|_{G_v})\right],\]
so that $R^{\square}_{\Sigma,\D,\ell}$ is a quotient of $R_{\loc}$. Using the map $\pi$ above, we may now define $R^{\square,D}_{F,S}(\rhobar)= R^{\square}_{F,S}\otimes_{R_{\loc}}R^{\square}_{\Sigma,\D,\ell}$. We may also define $R_{\loc}^\psi$, $R^{\square,\psi}_{\Sigma,\D,\ell}$ and $R^{\square,D,\psi}_{F,S}(\rhobar)$ analogously, by adding superscripts of $\psi$ to all of the rings used in the definitions.

Also note that the morphism of functors $\big(\rho, \{(e^v_1,e^v_2)\}_{v\in\Sigma_\ell^D} \big)\mapsto \rho$ induces a map $R^{\psi}_{F,S}(\rhobar) \to R^{\square,\psi}_{F,S}(\rhobar)$. As in \cite[(3.4.11)]{Kisin} this maps is formally smooth of dimension $j = 4|\Sigma_\ell^D|-1$, and so we may identify $R^{\square,\psi}_{F,S}(\rhobar) = R^\psi_{F,S}(\rhobar)[[w_1,\ldots,w_j]]$.

We can now define a map $R^{\square,\psi}_{F,S}(\rhobar)\to R^{\psi}_{F,S}(\rhobar)$ by sending each $w_i$ to $0$. Using this map, we may now define a unframed version of $R^{\square,D,\psi}_{F,S}(\rhobar)$ via 
\[R^{D,\psi}_{F,S}(\rhobar)= R^{\square,D,\psi}_{F,S}(\rhobar)\otimes_{R^{\square,\psi}_{F,S}(\rhobar)} R^{\psi}_{F,S}(\rhobar) = R^{\square,D,\psi}_{F,S}(\rhobar)/(w_1,\ldots,w_j).\footnote{
This definition may seem somewhat strange and ad-hoc. We have defined it this way because it will be convenient for our argument to view $R^{D,\psi}_{F,S}(\rhobar)$ as a quotient of $R^{\square,D,\psi}_{F,S}(\rhobar)$. Fortunately we will see later (in the remark following Lemma \ref{gen free}) that $R^{D,\psi}_{F,S}(\rhobar)$ is actually a fairly nicely behaved ring in the cases relevant to us, and so this definition will likely coincide with most ``natural'' ones.}\]
It follows from these definitions that the maps $x:R^\psi_{F,S}(\rhobar)\to \Ebar$ that factor through $R^{D,\psi}_{F,S}(\rhobar)$ are precisely those for which the induced representation $\rho_x:G_F\to \GL_2(R^{\psi}_{F,S}(\rhobar))\to \GL_2(\Ebar)$ satisfies:
\begin{itemize}
	\item $\rhobar_x|_{G_v}$ is flat at all $v|\ell$
	\item $\rhobar_x|_{G_v}$ is Steinberg at all $v|\D$
	\item $\rhobar_x|_{G_v}$ is minimally ramified at all $v\in\Sigma$.
\end{itemize}
(This is simply because the definition of $R^{D,\psi}_{F,S}(\rhobar)$ is such that any map $x:R^\psi_{F,S}(\rhobar)\to \Ebar$ factors through $R^{D,\psi}_{F,S}(\rhobar)$ if and only if the corresponding map $x:R^{\square,\psi}\surjection R^\psi_{F,S}(\rhobar)\to \Ebar$ factors through $R^{\square,D,\psi}_{F,S}(\rhobar)$.)

In order to prove Theorem \ref{R=T} we will need slightly more refined information about the relationship between $R_{F,S}(\rhobar)$ and $R^{\psi}_{F,S}(\rhobar)$.

Let $\psibar = \det\rhobar:G_{F}\to \F^\times$ be the reduction of $\psi$. Let $\Dc_{F,S}(\psibar):\Cc_\O\to\Set$ be the functor which sends $A\in\Cc_\O$ to the set of maps $\chi:G_{F,S}\to A^\times$ satisfying $\chi\equiv \psibar\pmod{m_A}$, up to equivalence. Let $R_{F,S}(\psibar)$ be the ring pro-representing $\Dc_{F,S}(\psibar)$.

Now for any $A\in\Cc_\O$, $A$ is a finite ring of $\ell$-power order, and so $m_A\subseteq A$ also has $\ell$-power order. 
It follows that $(1+m_A,\times)$ is an abelian multiplicative group of $\ell$-power order. In particular, as $\ell$ is odd, the map $x\mapsto x^2$ is an automorphism of $(1+m_A,\times)$, and hence it has an inverse $\sqrt{\cdot}: (1+m_A,\times)\to (1+m_A,\times)$. It is easy to see that $x\mapsto x^2$, and hence $x\mapsto \sqrt{x}$, commutes with morphisms in $\Cc_\O$, and is thus an automorphism of the functor $A\mapsto (1+m_A,\times)$ from $\Cc_\O$ to $\Ab$.

Now consider any $\rho:G_{F,S}\to \End(M)$ in $\Dc_{F,S}(\rhobar)(A)$. By definition we have $\det \rho\equiv \det\rhobar \equiv \psi \pmod{m_A}$, and so $(\det \rho)^{-1}\psi\equiv 1\pmod{m_A}$. That is, the image of $(\det \rho)^{-1}\psi:G_{F,S}\to A^\times$ lands in $1+m_A$. By the above work, it follows that there is a unique character $\sqrt{(\det \rho)^{-1}\psi}:G_{F,S}\to 1+m_A\subseteq A^\times$ with $(\sqrt{(\det \rho)^{-1}\psi})^2 = (\det \rho)^{-1}\psi$. Thus we may define a representation $\rho^{\psi}:G_{F,S}\to \End(M)$ by $\rho^{\psi}= (\sqrt{(\det \rho)^{-1}\psi})\rho$. Notice that $\rho^\psi\in\Dc_{F,S}(\rhobar)(A)$ and we have $\det\rho^\psi = \psi$, so that $\rho^\psi\in\Dc^\psi_{F,S}(\rhobar)(A)$. It is easy to see that the map $\rho\mapsto \rho^\psi$ is a natural transformation $\Dc_{F,S}\to \Dc^\psi_{F,S}$.

We now claim that the map $\Dc_{F,S}(\rhobar)\to \Dc_{F,S}(\psibar)\times \Dc^{\psi}_{F,S}(\rhobar)$ given by $\rho\mapsto (\det \rho,\rho^\psi)$ is an isomorphism of functors. Indeed, it has an inverse given by $(\chi,\rho)\mapsto\sqrt{\chi\psi^{-1}}\rho$. Looking at the rings these functors represent gives the following:

\begin{lemma}\label{lem:fixed determinant}
There is a natural isomorphism $R_{F,S}(\psibar)\widehat{\otimes}_\O R^\psi_{F,S}(\rhobar)\xrightarrow{\sim} R_{F,S}(\rhobar)$ of $\O$-algebras, induced by the natural transformation $\rho\mapsto (\det\rho,\rho^{\psi})$.
\end{lemma}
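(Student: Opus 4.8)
The plan is to promote the isomorphism of functors constructed immediately before the statement to a statement about pro-representing objects via Yoneda's lemma, after identifying the product functor $\Dc_{F,S}(\psibar)\times\Dc^\psi_{F,S}(\rhobar)$ with the functor pro-represented by the completed tensor product $R_{F,S}(\psibar)\widehat{\otimes}_\O R^\psi_{F,S}(\rhobar)$.

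First I would confirm that the two natural transformations
\[\rho \longmapsto (\det\rho,\,\rho^\psi), \qquad\qquad (\chi,\rho)\longmapsto \sqrt{\chi\psi^{-1}}\,\rho\]
are genuinely natural and mutually inverse. Both assignments are well defined on equivalence classes, since conjugating a lift by an element of $\ker(\GL_2(A)\to\GL_2(\F)) = 1+m_A M_2(A)$ changes neither its determinant nor its scalar twists; and both commute with morphisms in $\Cc_\O$ because, as already observed, the square-root operation $x\mapsto\sqrt{x}$ on $1+m_A$ does. That the two compositions are the identity is the routine check: for $\rho\in\Dc_{F,S}(\rhobar)(A)$ one has $\rho^\psi = \sqrt{(\det\rho)^{-1}\psi}\,\rho$, and since $\sqrt{\cdot}$ is a homomorphism of $1+m_A$ with $\sqrt{x}\sqrt{x^{-1}}=1$ we recover $\sqrt{(\det\rho)\psi^{-1}}\,\rho^\psi = \rho$; and for a pair $(\chi,\rho)$ with $\det\rho=\psi$, the twist $\sigma = \sqrt{\chi\psi^{-1}}\,\rho$ satisfies $\det\sigma = (\chi\psi^{-1})\psi = \chi$ (the determinant of a scalar twist of a rank $2$ representation by $\mu$ is $\mu^2\det$) and $\sigma^\psi = \sqrt{(\det\sigma)^{-1}\psi}\,\sigma = \sqrt{\chi^{-1}\psi}\,\sqrt{\chi\psi^{-1}}\,\rho = \rho$. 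This yields the isomorphism of functors $\Dc_{F,S}(\rhobar)\xrightarrow{\sim}\Dc_{F,S}(\psibar)\times\Dc^\psi_{F,S}(\rhobar)$.

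Next I would use that $R_{F,S}(\psibar)$ and $R^\psi_{F,S}(\rhobar)$ pro-represent $\Dc_{F,S}(\psibar)$ and $\Dc^\psi_{F,S}(\rhobar)$, together with the standard fact that the completed tensor product over $\O$ is the coproduct in the category $\Cc^\wedge_\O$ of complete Noetherian local $\O$-algebras with residue field $\F$, to get, for every $A\in\Cc_\O$, natural bijections
\begin{align*}
\Hom_\O\big(R_{F,S}(\psibar)\widehat{\otimes}_\O R^\psi_{F,S}(\rhobar),\,A\big)
&\cong \Hom_\O\big(R_{F,S}(\psibar),A\big)\times\Hom_\O\big(R^\psi_{F,S}(\rhobar),A\big)\\
&\cong \Dc_{F,S}(\psibar)(A)\times\Dc^\psi_{F,S}(\rhobar)(A)\\
&\cong \Dc_{F,S}(\rhobar)(A),
\end{align*}
the last step being the functor isomorphism of the previous paragraph. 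Hence $R_{F,S}(\psibar)\widehat{\otimes}_\O R^\psi_{F,S}(\rhobar)$ pro-represents $\Dc_{F,S}(\rhobar)$, so by Yoneda it is canonically isomorphic to $R_{F,S}(\rhobar)$; tracing the universal object of $\Dc_{F,S}(\rhobar)$ through the chain of identifications shows the isomorphism is exactly the one induced by $\rho\mapsto(\det\rho,\rho^\psi)$.

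There is no real obstacle here: the entire content sits in the functor isomorphism, which was already essentially verified before the statement, and the remaining points deserving any attention are the well-definedness of the two constructions on equivalence classes and the standard fact that $\widehat{\otimes}_\O$ co-represents products in $\Cc^\wedge_\O$ — which needs only that all the rings in sight are local with common residue field $\F$, so that there is no compatibility condition to impose on residue fields.
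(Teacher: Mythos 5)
Your proposal is correct and matches the paper's argument: the paper constructs the same natural transformation $\rho\mapsto(\det\rho,\rho^\psi)$ with inverse $(\chi,\rho)\mapsto\sqrt{\chi\psi^{-1}}\rho$, observes they are mutually inverse, and then passes to pro-representing objects exactly as you do via Yoneda and the coproduct property of $\widehat{\otimes}_\O$ in $\Cc^\wedge_\O$. You have simply supplied the routine verifications (well-definedness on equivalence classes, the inverse computation, and the identification of the completed tensor product with the product functor) that the paper leaves implicit.
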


Lemma \ref{lem:fixed determinant} may be though of as giving a natural way of separating the determinant of a representation $\rho:G_{F,S}\to \GL_2(A)$ from the rest of the representation.

\subsection{Two Lemmas about Deformation Rings}

We finish this section by stating two standard results (cf. \cite{Kisin}) which will be essential for our discussion of Taylor--Wiles--Kisin patching in Section \ref{sec:patching}.

The first concerns the existence of an ``$R\to \T$'' map:

\begin{lemma}\label{R->T}
Assume that $\rhobar$ satisfies all of the numbered conditions listed in Section \ref{ssec:global def}. Take $K=\prod_vK_v\in \K^D(\rhobar)$ and let $S$ be a set of finite places of $F$ containing $\Sigma_\ell^D$ such that $K$ is unramified outside of $S$. Then there is a surjective map $R_{F,S}(\rhobar)\surjection \T^{D}(K)_m$, which induces a surjective map $R^{\psi}_{F,S}(\rhobar)\surjection \T^{D}_{\psi}(K)_m$ for any character $\psi:G_F\to \O^\times$ lifting $\det\rhobar$ for which $m$ is the support of $\T^{D}_{\psi}(K)$. If $K_v$ is maximal for all $v|\D$ and all $v|\ell$ and $K_v = K_v^{\min}$ for all $v\in \Sigma$, then this map factors through $R^{\psi}_{F,S}(\rhobar)\surjection R^{D,\psi}_{F,S}(\rhobar)$.
\end{lemma}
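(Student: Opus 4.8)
The plan is to construct the map $R_{F,S}(\rhobar)\surjection \T^D(K)_m$ in the standard way, by exhibiting a Galois representation valued in $\T^D(K)_m$ lifting $\rhobar$ and invoking the universal property of $R_{F,S}(\rhobar)$, and then to check that the various refinements (fixed determinant, and the local type conditions cut out by $R^{D,\psi}_{F,S}(\rhobar)$) hold on the automorphic side so that the map factors as claimed. First I would recall the construction of the Galois representation attached to $\T^D(K)_m$: after possibly enlarging $\O$ so that all Hecke eigenvalues are defined over it, one has for each minimal prime (equivalently each automorphic lift $\rho_f$ of $\rhobar$) a Galois representation, and since $m$ is non-Eisenstein ($\rhobar=\rhobar_m$ is absolutely irreducible, as $\rhobar$ satisfies the conditions of Section \ref{ssec:global def}) these glue, by a standard pseudorepresentation / Carayol-type argument (cf.\ \cite{Carayol1}, \cite{Kisin}), to a continuous representation $\rho_m:G_{F,S}\to \GL_2(\T^D(K)_m)$ lifting $\rhobar$, unramified outside $S$, with $\tr\rho_m(\Frob_v)=T_v$ and $\det\rho_m(\Frob_v)=\Nm(v)S_v$ for $v\notin S$. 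Here one uses that $K$ is unramified outside $S$ so that the $T_v,S_v$ for $v\notin S$ generate $\T^D(K)_m$ over $\O$, which is exactly why $\rho_m$ is determined on a dense subgroup and hence everywhere. The universal property of $R_{F,S}(\rhobar)$ then produces a map $R_{F,S}(\rhobar)\to \T^D(K)_m$, and it is surjective because the traces of Frobenii, which lie in the image, topologically generate $\T^D(K)_m$.

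Next I would treat the fixed-determinant quotient. By definition of $\T^D_\psi(K)_m$ we have $\Nm(v)S_v=\psi(\Frob_v)$ on $\T^D_\psi(K)_m$ for all $v\nmid \D,\ell$ at which $K$ is unramified, i.e.\ $\det\rho_m(\Frob_v)=\psi(\Frob_v)$ for a dense set of $v$, and since both $\det\rho_m$ and $\psi$ are continuous this forces $\det\rho_m=\psi$ after composing with $\T^D(K)_m\surjection \T^D_\psi(K)_m$. Hence the composite $R_{F,S}(\rhobar)\surjection \T^D_\psi(K)_m$ factors through the fixed-determinant quotient $R^\psi_{F,S}(\rhobar)$, and the resulting map $R^\psi_{F,S}(\rhobar)\surjection \T^D_\psi(K)_m$ is again surjective for the same reason as before. (The hypothesis that $m$ is the support of $\T^D_\psi(K)$ is precisely what guarantees $\T^D_\psi(K)_m\ne 0$, so this is not vacuous.)

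For the last assertion I would argue that under the stated level hypotheses every automorphic point of $R^\psi_{F,S}(\rhobar)$ satisfies the three local conditions defining $R^{D,\psi}_{F,S}(\rhobar)$, so the map kills the relevant ideal. Concretely, it suffices to check, for each continuous $x:\T^D_\psi(K)_m\to \Ebar$ (equivalently each automorphic lift $\rho$ contributing to $\T^D_\psi(K)_m$), that $\rho|_{G_v}$ is flat for $v|\ell$, Steinberg for $v|\D$, and minimally ramified for $v\in\Sigma$; since $\T^D_\psi(K)_m$ is reduced and $\lambda$-torsion free (being $\lambda$-torsion free by construction and reduced since $K$ is of minimal level — one uses here that the relevant local deformation rings, hence the global one, are domains, cf.\ the discussion of the minimal level and condition (\ref{-1modl})), the intersection of the kernels of all such $x$ is zero, so the map $R^\psi_{F,S}(\rhobar)\to \T^D_\psi(K)_m$ factors through the maximal quotient on which all these conditions hold, which is exactly $R^{D,\psi}_{F,S}(\rhobar)$ by the characterization given just before the statement of this lemma. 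The flatness at $v|\ell$ is classical local-global compatibility at $\ell$ for the Galois representations attached to automorphic forms of level $K_v=D^\times(\O_v)$ maximal (combined with $K_v$ being maximal at $v|\ell$); the Steinberg condition at $v|\D$ follows from the Jacquet--Langlands transfer and the fact that $K_v=D^\times(\O_v)$ forces the automorphic representation to be (a twist of) Steinberg at $v$, via local-global compatibility (property (4) of $\rhobar_m$ in Section \ref{ssec:notation} is the mod $\ell$ shadow of this); and minimal ramification at $v\in\Sigma$ is exactly the content of the Remark following Proposition \ref{prop:R^min} — an automorphic lift of level minimal at $v$ is minimally ramified at $v$ — which is where condition (\ref{vexing}) gets used.

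The main obstacle is the last step: verifying that the automorphic points satisfy precisely the local conditions cut out by $R^{D,\psi}_{F,S}(\rhobar)$, which is really a packaging of several nontrivial local-global compatibility inputs (at $\ell$, at the Steinberg primes, and at the tame primes in $\Sigma$), together with the reducedness/torsion-freeness needed to conclude that "true at every automorphic point" implies "factors through the quotient". The construction of $\rho_m$ itself, while standard, also requires care with the non-Eisenstein hypothesis and with extending scalars; but in the setting of this paper all of these are genuinely standard (cf.\ \cite{Kisin}, \cite{CHT}), so the lemma is "well-known" as the text indicates, and the proof is essentially an assembly of references.
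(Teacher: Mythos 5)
The paper states Lemma \ref{R->T} as a ``standard result (cf.\ \cite{Kisin})'' and gives no proof; your outline is precisely the standard argument one would assemble from \cite{Kisin} and \cite{CHT}, so the overall route is fine. However, one justification is confused: you assert that $\T^D_\psi(K)_m$ is reduced ``since $K$ is of minimal level --- one uses here that the relevant local deformation rings, hence the global one, are domains.'' This is both unnecessary and logically incorrect. Reducedness of $\T^D_\psi(K)_m$ has nothing to do with minimal level (note the lemma is stated for arbitrary $K\in\K^D(\rhobar)$, with extra level hypotheses only entering in the final factoring claim), and it cannot follow from deformation rings being domains --- a quotient of a domain need not be reduced. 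The correct and standard justification is that $\T^D_\psi(K)_m$ is a finite $\lambda$-torsion-free $\O$-algebra (it acts faithfully on the finite free $\O$-module $M_\psi(K)$) whose generic fiber $\T^D_\psi(K)_m[1/\lambda]$ is a product of fields by semisimplicity of Hecke operators on spaces of automorphic forms; $\lambda$-torsion-freeness then forces $\T^D_\psi(K)_m$ to embed into its generic fiber, hence to be reduced. With that fix the argument is complete and matches the standard proof.
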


Note that (as mentioned in the remark following Proposition \ref{prop:R^min}) condition (\ref{vexing}) of Theorem \ref{Mult 2^k} is needed to ensure that $R_{F,S}(\rhobar)\surjection \T^{D}(K)_m$ does indeed factor through $R^{\psi}_{F,S}(\rhobar)\surjection R^{D,\psi}_{F,S}(\rhobar)$.

The second concerns the existence of ``Taylor--Wiles'' primes:

\begin{lemma}\label{TW primes}
Assume that $\rhobar$ satisfies all of the numbered conditions listed in Section \ref{ssec:global def} \emph{and} condition (\ref{TW conditions}) of Theorem \ref{Mult 2^k}. Let $S$ be a set of finite places of $F$ containing $\Sigma_\ell^D$, such for any prime $v\in S\sm \Sigma^D_\ell$, $\Nm(v)\not\equiv1\pmod{\ell}$ and the ratio of the eigenvalues of $\rhobar(\Frob_v)$ is not equal to $\Nm(v)^{\pm 1}$ in $\Fbar_\ell^\times$.
	
Then there exist integers $r,g\ge 1$ such that for any $n\ge 1$, there is a finite set $Q_n$ of primes of $F$ for which:
\begin{itemize}
	\item $Q_n\cap S=\es$.
	\item $\# Q_n = r$.
	\item For any $v\in Q_n$, $\Nm(v)\equiv 1\pmod{\ell^n}$.
	\item For any $v\in Q_n$, $\rhobar(\Frob_v)$ has distinct eigenvalues.
	\item There is a surjection $R^{\square,\psi}_{\Sigma,\D,\ell}[[x_1,\ldots,x_g]]\surjection R^{\square,D,\psi}_{F,S\cup Q_n}(\rhobar)$.
\end{itemize}
Moreover, we have $\dim R^{\square,\psi}_{\Sigma,\D,\ell} = r+j-g+1$.
\end{lemma}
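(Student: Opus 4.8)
The plan is to run the standard Taylor--Wiles--Kisin prime construction (following \cite{Kisin}, with the variant local conditions $\fl$, $\min$, $\st$ of this paper), carried out in the Galois cohomology of the trace-zero adjoint module $\ad^0\rhobar$; we use $\ad^0$ rather than $\ad$ because the determinant $\psi$ is fixed, and since $\ell>2$ the trace form makes $H^i(G_v,\ad^0\rhobar)$ Tate-dual to $H^{2-i}(G_v,\ad^0\rhobar(1))$. I would begin by recalling the obstruction-theoretic presentation of $R^{\square,D,\psi}_{F,S\cup Q_n}(\rhobar)$ relative to $R^{\square,\psi}_{\Sigma,\D,\ell}$: via the map $\pi$, $R^{\square,D,\psi}_{F,S\cup Q_n}$ is obtained from the framed global deformation ring by base change along the local ring $R_{\loc}^\psi$ of Section \ref{ssec:global def}, so the number $g$ of topological generators of $R^{\square,D,\psi}_{F,S\cup Q_n}$ over $R^{\square,\psi}_{\Sigma,\D,\ell}$ is read off the usual Galois-cohomological recipe, the governing object being the dual Selmer group $H^1_{\Lc^\perp}(G_{F,S\cup Q_n},\ad^0\rhobar(1))$, for $\Lc$ the Selmer system which is unrestricted at $\Sigma_\ell^D$, the Taylor--Wiles condition at the primes of $Q_n$, the unramified condition at the auxiliary primes $v\in S\sm\Sigma_\ell^D$, and zero at the archimedean places. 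Here the hypothesis that every $v\in S\sm\Sigma_\ell^D$ has $\Nm(v)\not\equiv1\pmod\ell$ and eigenvalue ratio not $\Nm(v)^{\pm1}$ is exactly what forces $H^2(G_v,\ad^0\rhobar)=0$, so the auxiliary primes introduce no obstruction and $\Lc^\perp$ is unramified there as well.

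The heart of the argument is the Chebotarev construction producing the sets $Q_n$. Set $r:=\dim_\F H^1_{\Lc_0^\perp}(G_{F,S},\ad^0\rhobar(1))$, the dual Selmer group before adding auxiliary ramification; one checks $r\ge1$. The key cohomological lemma is: for any nonzero class $[\phi]\in H^1_{\Lc_0^\perp}(G_{F,S},\ad^0\rhobar(1))$ and any $n\ge1$, there are infinitely many primes $v\notin S$ with $\Nm(v)\equiv1\pmod{\ell^n}$ and $\rhobar(\Frob_v)$ having distinct eigenvalues for which $\phi(\Frob_v)$ has nonzero image in the (one-dimensional, for such $v$) quotient $\ad^0\rhobar(1)/(\Frob_v-1)$ --- equivalently, imposing the Taylor--Wiles condition at $v$ strictly decreases the dual Selmer group. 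This is where condition (\ref{TW conditions}) is used: since $\rhobar|_{G_{F(\zeta_\ell)}}$ is absolutely irreducible one has $H^1(\Gal(F(\ad^0\rhobar,\zeta_{\ell^n})/F),\ad^0\rhobar)=H^1(\Gal(F(\ad^0\rhobar,\zeta_{\ell^n})/F),\ad^0\rhobar(1))=0$, so inflation--restriction shows $\phi$ restricts to a nonzero homomorphism on $G_{F(\ad^0\rhobar,\zeta_{\ell^n})}$; one then chooses $\sigma\in G_F$ with $\rhobar(\sigma)$ regular semisimple, acting trivially on $\mu_{\ell^n}$, and with $\phi(\sigma)$ in the appropriate eigenspace of $\ad^0\rhobar(1)$, and invokes Chebotarev density to find $v$ with $\Frob_v$ conjugate to $\sigma$. (The exceptional $\ell=5$, $\sqrt{5}\in F$ subcase of this step does not occur because $\ell$ is unramified in $F$.) Iterating over a basis of $H^1_{\Lc_0^\perp}(G_{F,S},\ad^0\rhobar(1))$ --- each prime dropping the dimension by exactly one --- yields, for each $n$, a set $Q_n$ with $Q_n\cap S=\es$, $\#Q_n=r$ (hence independent of $n$), $\Nm(v)\equiv1\pmod{\ell^n}$ and $\rhobar(\Frob_v)$ with distinct eigenvalues for $v\in Q_n$; this is the first four bullet points, and the resulting $g$ is likewise independent of $n$, which gives the fifth bullet together with the presentation from the first paragraph.

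Finally, the identity $\dim R^{\square,\psi}_{\Sigma,\D,\ell}=r+j-g+1$ (which in particular forces $r,g\ge1$) is bookkeeping: $\dim R^{\square,\psi}_{\Sigma,\D,\ell}$ is the sum of the local dimensions, which are $4+[F_v:\Q_\ell]$ at $v\mid\ell$ by the Proposition, $4$ at $v\in\Sigma$ by Proposition \ref{prop:R^min}, and the value determined by Shotton's computations at $v\mid\D$ (condition (\ref{-1modl}) ensuring $R^{\square,\st,\psi}(\rhobar|_{G_v})$ is a domain, hence equidimensional); combining this with $j=4|\Sigma_\ell^D|-1$, the identity becomes equivalent to an explicit relation between $g$, $r$ and $[F:\Q]$, which one reads off the Greenberg--Wiles (Poitou--Tate) formula for $H^1_{\Lc}(G_{F,S\cup Q_n},\ad^0\rhobar)$ and $H^1_{\Lc^\perp}(G_{F,S\cup Q_n},\ad^0\rhobar(1))$, using absolute irreducibility of $\rhobar$ to kill $H^0(G_{F,S},\ad^0\rhobar)$, oddness to evaluate the archimedean contribution (this is where the $[F:\Q]$ enters, with a sign), and condition (\ref{TW conditions}) to kill the $H^0(\ad^0\rhobar(1))$ terms. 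The main obstacle is the second paragraph --- the Chebotarev production of the Taylor--Wiles primes and the check that $\#Q_n$ and $g$ remain independent of $n$; the rest is either explicit local input (Shotton's rings, the flat and minimally ramified ring computations already recalled) or formal manipulation with the Greenberg--Wiles formula, which is why the lemma can fairly be cited as standard (\cite{Kisin}).
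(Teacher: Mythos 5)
The paper does not actually prove this lemma; it is stated as a ``standard result (cf.\ \cite{Kisin})'' alongside Lemma~\ref{R->T}, and the intended proof is the usual Taylor--Wiles--Kisin argument. Your sketch follows exactly that route --- obstruction theory to present the framed global ring over $R^{\square,\psi}_{\Sigma,\D,\ell}$, Chebotarev density (driven by condition~(\ref{TW conditions}) and the regular semisimple Frobenius trick on $G_{F(\ad^0\rhobar,\zeta_{\ell^n})}$) to produce the sets $Q_n$, and the Greenberg--Wiles formula for the numerical identity --- so this is the same approach the cited sources use, and the outline is correct.

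Two small points worth tightening. First, you assert ``one checks $r\ge 1$'' without argument; $\dim_\F H^1_{\Lc_0^\perp}(G_{F,S},\ad^0\rhobar(1))$ can perfectly well be zero. The fix is standard: if the dual Selmer group already vanishes, pick $r=1$ and let $Q_n$ be a singleton consisting of any Taylor--Wiles prime produced by the Chebotarev step (condition~(\ref{TW conditions}) guarantees such primes exist for every $n$). Adding such a prime enlarges the set of allowed ramification, so $\dim H^1_{\Lc_{Q_n}}$ and hence $g$ each go up by one together with $r$, and the identity $\dim R^{\square,\psi}_{\Sigma,\D,\ell}=r+j-g+1$ is unaffected. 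Second, you describe the dual Selmer group $H^1_{\Lc^\perp}(G_{F,S\cup Q_n},\ad^0\rhobar(1))$ as ``the governing object'' for the number $g$ of topological generators; strictly speaking $g$ is read off $H^1_{\Lc_{Q_n}}(G_{F,S\cup Q_n},\ad^0\rhobar)$ (the tangent space of the quotient presentation), and the dual Selmer group enters only through Greenberg--Wiles, where its vanishing converts $\dim H^1_{\Lc_{Q_n}}$ into the stated $r+j-\dim R^{\square,\psi}_{\Sigma,\D,\ell}+1$. Your third paragraph shows you understand this; the phrasing in the first paragraph just blurs the two. Neither of these is a substantive gap, and citing \cite{Kisin} for the details, as the paper itself does, is entirely appropriate.
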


From now on we will write $R_\infty$ to denote $R^{\square,\psi}_{\Sigma,\D,\ell}[[x_1,\ldots,x_g]]$ so that $\dim R_\infty = r+j+1$. By the results of Section \ref{ssec:local def} we have

\[R_\infty= \left[\hat{\bigotimes_{v|\D}}R^{\square,\st,\psi}(\rhobar|_{G_v})\right][[x_1,\ldots,x_{g'}]] \]
for some integer $g'$. In Section \ref{sec:toric} below, we will use the results of \cite{Shotton} to explicitly compute the ring $R_\infty$, and then use the theory of toric varieties to study modules over $R_\infty$. 

In Chapter \ref{sec:patching}, we will use Lemma \ref{R->T} and \ref{TW primes} to construct a particular module $M_\infty$ over $R_\infty$ out of a system of modules over the rings $\T^D(K)$, and then use the results of Chapter \ref{sec:toric} to deduce the structure of $M_\infty$. This will allow us to prove Theorems \ref{Mult 2^k} and \ref{R=T}.

\section{Class Groups of Local Deformation Rings}\label{sec:toric}

In our situation, all of the local deformation rings which will be relevant to us were computed in \cite{Shotton}. In this section, we will use this description to explicitly describe the ring $R_\infty$, and to study its class group (or rather, the class group of a related ring).

We first introduce some notation which we will use for the rest of this paper. If $R$ is any Noetherian local ring, we will always use $m_R$ to denote its maximal ideal.

If $M$ is a (not necessarily free) finitely generated $R$-module, we will say that the \emph{rank} of $M$, denoted by $\rank_RM$ is the cardinality of its minimal generating set.

If $R$ is a domain we will write $K(R)$ for its fraction field. If $M$ is a finitely generated $R$-module, then we we will say that the \emph{generic rank} of $M$, denoted $\genrank_RM$ is the $K(R)$-dimension of $M\otimes_RK(R)$ (that is, the rank of $M$ at the generic point of $R$).

From now on assume that $R$ is a (not necessarily local) normal Cohen--Macaulay \emph{domain} with a dualizing sheaf\footnote{Which will be the case for all Cohen--Macaulay rings we will consider.}, we will use $\omega_R$ to denote the dualizing sheaf of $R$.

For any finitely generated $R$-module $M$, we will let $M^* = \Hom_R(M,\omega_R)$. We say that $M$ is \emph{reflexive} if the natural map\footnote{As is it fairly easy to show that the dual of a finitely generated $R$-module is reflexive (cf \cite[\href{http://stacks.math.columbia.edu/tag/0AV2}{Tag 0AV2}]{stacks-project}) this definition is equivalent to simply requiring that there is \emph{some} isomorphism $M\xrightarrow{\sim}M^{**}$. In particular, if $M\cong M^*$ then $M$ is automatically reflexive.} $M\to M^{**}$ is an isomorphism.

We will let $\Cl(R)$ denote the \emph{Weil divisor class group} of $R$, which is isomorphic (cf \cite[\href{http://stacks.math.columbia.edu/tag/0EBM}{Tag 0EBM}]{stacks-project}) to the group of generic rank $1$ reflexive modules over $R$. For any generic rank $1$ reflexive sheaf $M$, let $[M]\in \Cl(R)$ denote the corresponding element of the class group. The group operation is then defined by $[M]+[N] = [(M{\otimes_R}N)^{**}]$.  Note that $[\omega_R]\in\Cl(R)$ and we have $[M^*] = [\omega_R]-[M]$ for any $[M]\in\Cl(R)$. 

Lastly, given any reflexive module $M$, the natural perfect pairing $M^*\times M\to\omega_R$ gives rise to a natural map $\tau_M:M^*\otimes_RM\to \omega_R$ (defined by $\tau_M(\varphi\otimes x) = \varphi(x)$) called the \emph{trace map}.

Also we will let 
\[k = \#\bigg\{v|\D \bigg|\rhobar \text{ is unramified at } v \text{ and, }
\rhobar(\Frob_v) \text{ is a scalar} \bigg\}.\]
as in Theorem \ref{Mult 2^k}.

Our main result of this section is the following:

\begin{thm}\label{self-dual}
	If $M_\infty$ is a finitely-generated module over $R_\infty$ satisfying:
	\begin{enumerate}
		\item $M_\infty$ is maximal Cohen--Macaulay over $R_\infty$;
		\item we have $M_\infty^*\cong M_\infty$ (and hence $M_\infty$ is reflexive);
		\item $\genrank_{R_\infty}M_\infty = 1$;
	\end{enumerate}
	then $\dim_\F M_\infty/m_{R_\infty} = 2^k$. Moreover, the trace map $\tau_{M_\infty}:M_\infty\otimes_{R_\infty}M_\infty\to \omega_{R_{\infty}}$ is surjective.
\end{thm}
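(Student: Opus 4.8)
The plan is to reduce the whole statement --- both the dimension count and the trace-map surjectivity --- to an explicit computation with toric varieties, via Shotton's description of the local deformation rings.

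First I would make $R_\infty$ completely explicit. Recall $R_\infty = \big[\widehat{\bigotimes}_{v\mid\D} R^{\square,\st,\psi}(\rhobar|_{G_v})\big][[x_1,\ldots,x_{g'}]]$. By Shotton's computation of the fixed-determinant Steinberg deformation rings, each factor $R^{\square,\st,\psi}(\rhobar|_{G_v})$ is a power series ring over $\O$ unless $v$ is one of the $k$ ``bad'' primes (where $\rhobar$ is unramified at $v$ with scalar $\rhobar(\Frob_v)$), and for each bad prime it is, up to a further power series extension, the completed coordinate ring $S_v$ of an explicit affine toric variety over $\O$ with an isolated singularity. Absorbing all the regular power series variables, $R_\infty$ is then the completed coordinate ring of $(\Spec\O)\times\prod_{v\text{ bad}}(\text{toric variety})_v$; by the running hypotheses of this section it is a normal Cohen--Macaulay domain, and adjoining power series variables and completing changes neither its class group nor the isomorphism classes of generic rank $1$ reflexive modules over it.

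Next I would compute $\Cl(R_\infty)$ and the canonical class by standard toric geometry. From the fans (equivalently, the Hilbert bases of the relevant affine semigroups) one gets a decomposition $\Cl(R_\infty) \cong \bigoplus_{v\text{ bad}}\Cl(S_v)$, a finitely generated abelian group which is $2$-torsion-free --- this is the decisive structural input. Using that the canonical divisor of a toric variety is the negative of the sum of its torus-invariant prime divisors, one computes $[\omega_{R_\infty}]\in\Cl(R_\infty)$ and exhibits it as $2[N_\infty]$ for an explicit generic rank $1$ reflexive module $N_\infty = \widehat{\bigotimes}_{v\text{ bad}}L_v$, where each $L_v$ generates a $\Z$-summand of $\Cl(S_v)$; a direct computation (e.g.\ with the graded module of sections cutting out $L_v$) then gives $\dim_\F L_v/m_{S_v}L_v = 2$, hence $\dim_\F N_\infty/m_{R_\infty}N_\infty = 2^k$. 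With this in hand the rest is formal: since $M_\infty \cong M_\infty^*$ it is reflexive (self-duality implies reflexivity, as remarked in the excerpt) and of generic rank $1$, so it defines a class $[M_\infty]\in\Cl(R_\infty)$, and the self-duality gives $[M_\infty] = [M_\infty^*] = [\omega_{R_\infty}]-[M_\infty]$, whence $2[M_\infty] = [\omega_{R_\infty}] = 2[N_\infty]$; by $2$-torsion-freeness $[M_\infty] = [N_\infty]$, and since $\Cl(R_\infty)$ classifies generic rank $1$ reflexive modules up to isomorphism, $M_\infty \cong N_\infty$ and $\dim_\F M_\infty/m_{R_\infty}M_\infty = 2^k$. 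For the trace map, under $M_\infty\cong M_\infty^*$ the map $\tau_{M_\infty}\colon M_\infty\otimes_{R_\infty}M_\infty\to\omega_{R_\infty}$ is the evaluation pairing, and with respect to the product decomposition it is the completed tensor product over the bad primes of the maps $\tau_{L_v}\colon L_v\otimes_{S_v}L_v\to\omega_{S_v}$ (and the identity on the regular factors), so it suffices that each $\tau_{L_v}$ be surjective; by Nakayama this reduces to the finite, explicit assertion that the products of sections coming from $L_v^{\otimes 2}$ already span $\omega_{S_v}$, which is a direct monomial computation on $S_v$.

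\textbf{The main obstacle} is entirely the computational input in the first two steps: turning Shotton's presentations of the Steinberg deformation rings into a usable toric description, and then correctly extracting the class group (above all its $2$-torsion-freeness), the identification $[\omega_{R_\infty}] = 2[N_\infty]$, and the number $\dim_\F L_v/m_{S_v}L_v = 2$. Once these are in place the module-theoretic deduction is purely formal. The one remaining subtlety is in the trace-map claim: knowing that the image of $\tau_{M_\infty}$ is a generic rank $1$ submodule of $\omega_{R_\infty}$ in the correct class does \emph{not} by itself imply surjectivity --- indeed this already fails for non-free self-dual modules over a Gorenstein ring --- so the monomial check (equivalently, an $\F$-dimension count after reducing modulo $m_{R_\infty}$) is genuinely needed.
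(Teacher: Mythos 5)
Your overall strategy---pin down $M_\infty$ by showing $2[M_\infty]=[\omega_{R_\infty}]$ in a $2$-torsion-free class group computed via toric geometry, then check the trace map by a monomial computation---is exactly the paper's strategy. However, two of the steps you treat as routine are precisely where the real work happens, and one of them would fail as stated.

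First, the rings $R^{\square,\st,\psi}(\rhobar|_{G_v})$ at the ``bad'' primes are \emph{not} (completed coordinate rings of) toric varieties over $\O$. Shotton's description gives $S_v$ as $\O[[A,B,C,X,Y,Z]]/\I_v$ where $\I_v$ is cut out by the $2\times 2$ minors of a matrix whose bottom-right entry is $X+2\tfrac{\Nm(v)-1}{\Nm(v)+1}$; the constant $2\tfrac{\Nm(v)-1}{\Nm(v)+1}$ lies in $\lambda\O$ but is nonzero in $\O$, so relations such as $X(X+c)-YZ$ fail to be binomial and the ideal is not homogeneous over $\O$. This is why the paper first reduces modulo $\lambda$: only $\Rbar=R_\infty/\lambda$ has a toric structure (and this reduction is itself a nontrivial step, using that duality of maximal Cohen--Macaulay modules commutes with killing a regular element). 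Your plan to exhibit $R_\infty$ as the completed coordinate ring of $(\Spec\O)\times\prod(\text{toric})$ would not get off the ground. A side consequence is that you also cannot invoke normality of $R_\infty$ directly from ``the running hypotheses''; the paper never needs $R_\infty$ itself to be normal, only $\Rbar$.

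Second, the assertion that ``adjoining power series variables and completing changes neither the class group nor the reflexive modules'' is false in general and is the hardest part of the argument. Completion at the irrelevant ideal of a graded ring can change the divisor class group: Danilov's criterion requires vanishing of $H^1(V,\Lc^{\otimes i})$ for the projective variety $V$ whose cone one is completing. Moreover Danilov's theorem only applies to cones over a \emph{single} smooth projective variety (isolated singularity), whereas $\Spec\Rc$ for $k>1$ is a product of cones and does not have isolated singularities; the paper has to prove a bespoke extension (its Proposition \ref{Danilov for products}), which in turn requires the stronger vanishing $H^d(V_j,\Lc_j^{\otimes i})=0$ for $d=1,2$. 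Likewise the decomposition $\Cl(\Rbar)\cong\bigoplus_v\Cl(\Sbar)$ for a \emph{completed} tensor product is not a formal fact---it is established only after reducing to the uncompleted graded ring, where Lemma \ref{toric product} applies. The rest of your deduction (self-duality giving $2[M_\infty]=[\omega]$, $2$-torsion-freeness forcing $[M_\infty]=[N_\infty]$, and the explicit surjectivity check $\O(D_0)\cdot\O(D_0)=xz^2\cdot\omega_{\Sc}$) is correct and matches the paper, including your correct observation that the trace-map surjectivity genuinely requires a computation and is not formal.
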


Thus, to prove Theorem \ref{Mult 2^k}, it will suffice to construct a module $M_\infty$ over $R_\infty$ satisfying the conditions of Theorem \ref{self-dual} with $\dim_\F M_\infty/m_{R_\infty} = \nu_{\rhobar}(K^{\min})$. The last statement, that $\tau_{M_\infty}$ is surjective, will be used to prove Theorem \ref{R=T} (see the end of Section \ref{sec:patching}).

Our primary strategy for proving Theorem \ref{self-dual} is to note that conditions (1) and (3) imply that $M_\infty$ is the module corresponding to a Weil divisor on $R_\infty$, and condition (2) implies that we have $2[M_\infty] = [\omega_{R_\infty}]$ in $\Cl(R_\infty)$. Provided that $\Cl(R_\infty)$ is $2$-torsion free, this means that conditions (1), (2) and (3) uniquely characterize the module $M_\infty$. Proving the theorem would thus simply be a matter of computing the unique module $M_\infty$ satisfying the conditions of the theorem explicitly enough. 

Unfortunately, while we can give a precise description of the ring $R_\infty$ in our situation, it is difficult to directly compute $\Cl(R_\infty)$ from that description. Instead, we will first reduce the statement of Theorem \ref{self-dual} to a similar statement over the ring $\Rbar=R_\infty/\lambda$, and then to a statement over a finitely generated graded $\F$-algebra $\Rc$ with $\widehat{\Rc}\cong \Rbar$ (see Theorems \ref{self-dual mod l} and \ref{self-dual decompleted} below). We will then be able to directly compute $\Cl(\Rc)$, and the unique module $\Mc$ with $2[\Mc] = [\omega_{\Rc}]$ in $\Cl(\Rc)$, by using the theory of toric varieties.

In Section \ref{ssec:R_infty} we summarize the computations in \cite{Shotton} to explicitly describe the rings $R_\infty$ and $\Rbar$, and reduce Theorem \ref{self-dual} to the corresponding statement over $\Rbar$ (Theorem \ref{self-dual mod l}). In Section \ref{ssec:toric varieties} we introduce the ring $\Rc$, and show that it is the coordinate ring of an affine toric variety. Finally in Section \ref{ssec:class group} we use the theory of toric varieties to compute $\Cl(\Rc)$, which allows us to prove a ``de-completed'' mod $\lambda$ version of Theorem \ref{self-dual}. In Section \ref{ssec:decompletion} we adapt the method of Danilov \cite{Danilov} for computing the class groups of completions of graded rings to show that $\Cl(\Rc)\cong \Cl(\Rbar)$, from which we deduce Theorem \ref{self-dual mod l} and hence Theorem \ref{self-dual}.

\subsection{Explicit Calculations of Local Deformation Rings}\label{ssec:R_infty}

In order to prove Theorem \ref{self-dual}, it will be necessary to first compute the ring $R_\infty$, or equivalently to compute $R^{\square,\st,\psi}(\rhobar|_{G_v})$ for all $v|\D$. 

These computations were essentially done by Shotton \cite{Shotton}, except that he considers the non fixed determinant version, $R^{\st,\square}(\rhobar|_{G_v})$ instead of $R^{\st,\square,\psi}(\rhobar|_{G_v})$. Fortunately, it is fairly straightforward to recover $R^{\st,\square,\psi}(\rhobar|_{G_v})$ from $R^{\st,\square}(\rhobar|_{G_v})$. Specifically, we get:

\begin{thm}\label{R^st}
	Take any place $v|\D$. Recall that we have assumed that $\Nm(v)\not\equiv -1\pmod{\ell}$. If the residual representation $\rhobar|_{G_v}:G_v\to \GL_2(\F)$ is not scalar, then $R^{\square,\st,\psi}(\rhobar|_{G_v})\cong\O[[X_1,X_2,X_3]]$.
	
	If $\rhobar|_{G_v}:G_v\to \GL_2(\F)$ is scalar then \[R^{\square,\st,\psi}(\rhobar|_{G_v})\cong S_v= \O[[A,B,C,X,Y,Z]]/\I_v\] where $\I_v$ is the ideal generated by the $2\times 2$ minors of the matrix 
	\[\begin{pmatrix}
	A& B&X&Y\\
	C&A&Z&X+2\frac{\Nm(v)-1}{\Nm(v)+1}
	\end{pmatrix}.\]
	The ring $S_v$ is a Cohen--Macaulay and non-Gorenstein domain of relative dimension $3$ over $\O$. $(\lambda,C,Y,B-Z)$ is a regular sequence for $S_v$. Moreover, $S_v[1/\lambda]$ is formally smooth of dimension $3$ over $E$.
\end{thm}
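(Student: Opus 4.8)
The plan is to derive everything from Shotton's computation \cite{Shotton} of the non-fixed-determinant Steinberg ring $R^{\square,\st}(\rhobar|_{G_v})$, together with the fixed-determinant reduction already set up in Section~\ref{ssec:local def}. Twisting a deformation by the (unique, since $\ell$ is odd) square root of a character preserves the Steinberg condition, so that argument upgrades to a natural isomorphism $R^{\square,\st}(\rhobar|_{G_v})\cong R^{\square,\st,\psi}(\rhobar|_{G_v})\,\widehat{\otimes}_\O\,\O[[T]]$, the extra variable $T$ recording the (necessarily unramified) determinant of a Steinberg deformation; thus $R^{\square,\st,\psi}(\rhobar|_{G_v})$ is recovered as the quotient of $R^{\square,\st}(\rhobar|_{G_v})$ by the single variable $T$, and every ring-theoretic property of interest passes between the two rings (up to the evident shift in dimension). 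The non-scalar case is then immediate from Shotton's computation that $R^{\square,\st}(\rhobar|_{G_v})$ is formally smooth of relative dimension $4$ over $\O$ in that case. In the scalar case, Shotton's presentation becomes, after setting $T=0$ and an invertible change of coordinates, the displayed presentation $S_v=\O[[A,B,C,X,Y,Z]]/\I_v$. Here one uses that $\rhobar|_{G_v}$ being simultaneously scalar and Steinberg forces $\ebar_\ell|_{G_v}=1$, i.e.\ $\Nm(v)\equiv1\pmod\ell$; in particular the constant $2\frac{\Nm(v)-1}{\Nm(v)+1}$ is a nonzero element of $\O$ that nevertheless lies in $\lambda\O$ (and becomes a unit once $\lambda$ is inverted).

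Granting the presentation, I would next read off the ring-theoretic properties. By construction $R^{\square,\st,\psi}(\rhobar|_{G_v})$ is $\lambda$-torsion free, and being a complete Noetherian local ring it is $\lambda$-adically separated. Its reduction $S_v/\lambda$ is the completed quotient of $\F[[A,B,C,X,Y,Z]]$ by the $2\times2$ minors of $\bigl(\begin{smallmatrix}A&B&X&Y\\ C&A&Z&X\end{smallmatrix}\bigr)$ (the constant having vanished), which --- as Shotton shows, and as is also exactly the content of the toric description in Section~\ref{ssec:toric varieties} --- is a normal Cohen--Macaulay domain of dimension $3$. Since a $\lambda$-torsion free, $\lambda$-adically separated ring whose reduction mod $\lambda$ is a domain is itself a domain (lift a relation $fg=0$, cancel the common power of $\lambda$ since $\lambda$ is a nonzerodivisor, and reduce mod $\lambda$ to get a contradiction), $S_v$ is a domain; then $\lambda$ is a nonzerodivisor in it and $\operatorname{depth}S_v=\operatorname{depth}(S_v/\lambda)+1=4=\dim S_v$, so $S_v$ is Cohen--Macaulay of relative dimension $3$ over $\O$. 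That $S_v[1/\lambda]$ is formally smooth of dimension $3$ over $E$ is likewise Shotton's computation of the (smooth) generic fibre, and is consistent with the presentation: over $E$ the constant $2\frac{\Nm(v)-1}{\Nm(v)+1}$ is invertible, so the defining $2\times4$ matrix has rank exactly $1$ everywhere on $\Spec S_v[1/\lambda]$, i.e.\ the affine-linear section meets the rank-$\le1$ determinantal variety only in its smooth locus. (Alternatively, all of these statements transfer directly from Shotton's assertions about $R^{\square,\st}(\rhobar|_{G_v})\cong S_v[[T]]$.)

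For the regular sequence, I would simply compute $S_v/(\lambda,C,Y,B-Z)$: imposing $C=0$, $Y=0$, $Z=B$ and reducing mod $\lambda$ turns the defining matrix into $\bigl(\begin{smallmatrix}A&B&X&0\\ 0&A&B&X\end{smallmatrix}\bigr)$, whose six $2\times2$ minors are $A^2,\ AB,\ AX,\ B^2-AX,\ BX,\ X^2$; since $AX$ is among them, the ideal they generate is exactly $(A,B,X)^2$, so
\[ S_v/(\lambda,C,Y,B-Z)\;\cong\;\F[[A,B,X]]\big/(A,B,X)^2 , \]
which is Artinian. Hence $(\lambda,C,Y,B-Z)$ is a system of parameters for the $4$-dimensional Cohen--Macaulay local ring $S_v$, and is therefore a regular sequence. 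Finally, since $S_v$ is Cohen--Macaulay and $(\lambda,C,Y,B-Z)\subseteq m_{S_v}$ is a regular sequence, $S_v$ is Gorenstein if and only if $\F[[A,B,X]]/(A,B,X)^2$ is; but the latter is an Artinian local ring whose socle $(A,B,X)/(A,B,X)^2$ has $\F$-dimension $3\neq1$, so it is not Gorenstein, and neither is $S_v$.

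The main obstacle is the first step: correctly extracting Shotton's presentation of $R^{\square,\st}(\rhobar|_{G_v})$ in the scalar case and carrying out the fixed-determinant reduction so that the result is \emph{exactly} the displayed determinantal ring --- keeping track of how the determinant direction splits off, and of the precise shape of the constant $2\frac{\Nm(v)-1}{\Nm(v)+1}$. The other genuine input, that the mod-$\lambda$ determinantal ring is a Cohen--Macaulay (normal) domain of dimension $3$, is essential but is exactly what Section~\ref{ssec:toric varieties} supplies via the theory of toric varieties; granting these, the domain, Cohen--Macaulayness, regular-sequence, non-Gorenstein and generic-fibre assertions are all short.
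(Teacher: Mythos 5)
Your proposal reaches the same endpoint as the paper but replaces the paper's central computation with a more conceptual "factor off the determinant direction" argument, and you are candid that the actual identification of Shotton's presentation is left as a sketch. Here is a more detailed comparison.

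On the fixed-determinant reduction: the paper does \emph{not} argue via a product decomposition $R^{\square,\st}\cong R^{\square,\st,\psi}\,\widehat\otimes_\O\,\O[[T]]$. Instead it observes that the Steinberg constraint $\Nm(v)(\tr\rho^\square(\phi))^2 = (\Nm(v)+1)^2\det\rho^\square(\phi)$ lets one rewrite the fixed-determinant relation $\det\rho^\square(\phi)=\psi(\phi)$ as $(\tr\rho^\square(\phi)+t)(\tr\rho^\square(\phi)-t)=0$ for a suitable constant $t$, recognizes that the first factor is a unit, and thus reduces to imposing the single linear relation $\tr\rho^\square(\phi)=t$ (i.e.\ $T=0$ in Shotton's coordinates). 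One then checks directly from Shotton's explicit presentations in Propositions 5.5 and 5.8 that the resulting quotient is already reduced and $\lambda$-torsion free. Your product decomposition is a legitimate alternative, and arguably more conceptual, but note that it is not an immediate consequence of the twisting argument in Section \ref{ssec:local def}: that argument gives $R^\square(\rbar)\cong R(\psibar)\,\widehat\otimes\,R^{\square,\psi}(\rbar)$ at the level of the full framed deformation ring, and to descend to $R^{\square,\st}$ you additionally need to know that the Steinberg locus in $\Spec R^\square[1/\lambda]$ decomposes as $\{\text{unramified character}\}\times\{\text{fixed-determinant Steinberg locus}\}$ and that passing to scheme-theoretic closures (equivalently, maximal reduced $\lambda$-torsion-free quotients) respects this product. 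This is true because Steinberg representations have unramified determinant and $\psi$ is unramified, but it should be said; as written, your claim that the twisting argument "upgrades to" the displayed isomorphism is too quick.

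The larger gap, which you flag yourself, is that you do not actually carry out the identification of Shotton's presentation with the displayed determinantal ring after setting $T=0$. This is the heart of the proof in the scalar case, and the paper does it explicitly, including the change of variables $A=X_1$, $B=-X_2$, $C=Y_1$, $X=X_3$, $Y=X_4$, $Z=Y_3$ relative to Shotton's coordinates.

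Where your proposal adds value: you give self-contained arguments for some of the structural claims that the paper simply cites to Shotton. Your deduction that $S_v$ is a domain from $\lambda$-torsion freeness, $\lambda$-adic separatedness, and $S_v/\lambda$ being a domain is correct; your observation that the defining ideal of $S_v/(\lambda,C,Y,B-Z)$ is exactly $(A,B,X)^2$ (because $AX$ is among the minors) is a cleaner way to state the paper's computation; and deducing non-Gorenstein-ness from the socle $(A,B,X)/(A,B,X)^2$ having $\F$-dimension $3\neq 1$ is a nice independent argument that the paper does not spell out (it cites Shotton for that conclusion). The Cohen--Macaulay/regular-sequence logic is the same as the paper's.

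In short: the skeleton of your argument is sound and essentially parallel to the paper's, the conceptual reduction via the product decomposition is a genuine alternative route that would need a sentence or two of justification about the Steinberg locus factoring compatibly, and the explicit extraction of Shotton's presentation --- the step you call the main obstacle --- is the part the paper actually carries out and which your proposal still lacks.
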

\begin{proof}
For convenience, let $R_{\st} = R^{\st,\square}(\rhobar|_{G_v})$ and $R_{\st}^{\psi} =  R^{\st,\square,\psi}(\rhobar|_{G_v})$. By definition, $R_{\st}^{\psi}$ is the maximal reduced $\lambda$-torsion free quotient of $R_{\st}$ on which $\det\rho^{\square}(g) = \psi(g)$ for all $g\in G_v$.
	
Now let $I_v/\tilde{P}_v\cong \Z_\ell$ be the maximal pro-$\ell$ quotient of $I_v$, so that $\tilde{P}_v\unlhd G_v$ and $T_v=G_v/\tilde{P}_v\cong \Z_\ell\rtimes\widehat{\Z}$. Now let $\sigma,\phi\in T_v$ be topological generators for $\Z_\ell$ and $\widehat{\Z}$, respectively (chosen so that $\phi$ is a lift of arithmetic Frobenius, so that $\phi\sigma\phi^{-1} = \sigma^{\Nm(v)}$).
	
Now as in \cite{Shotton}, we may assume that the universal representation $\rho^\square:G_v\to\GL_2(R_{\st})$ factors through $T_v$. As we already have $\det\rho^\square(\sigma) = 1 = \psi(\sigma)$, it follows that $R_{\st}^\psi$ is the maximal reduced $\lambda$-torsion free quotient of $R$ on which $\det\rho^\square(\phi) = \psi(\phi)$.
	
As explained in Section \ref{sec:deformation}, up to isomorphism the ring $R^\psi_{\st}$ is unaffected by the choice of $\psi$, so it will suffices to prove the claim for a particular choice of $\psi$. Thus from now on we will assume that $\psi$ is unramified and $\ds\psi(\phi) = \frac{\Nm(v)}{(\Nm(v)+1)^2}t^2$ where
\[
t = 
\begin{cases}
\Nm(v)+1 & \Nm(v)\not\equiv \pm 1\pmod{\ell}\\
2 & \Nm(v)\equiv 1\pmod \ell
\end{cases}
\]
so that $t\equiv \Nm(v)+1\equiv \tr\rhobar(\phi)\pmod\ell$ (this particular choice of $t$ is made to agree with the computations of \cite{Shotton}).
	
But now by the definition of $R_{\st} = R^{\st,\square}(\rhobar|_{G_v})$ we have that $\Nm(v)\left(\tr\rho^\square(\phi)\right)^2 = (\Nm(v)+1)^2\det \rho^\square(\phi)$ and so
\[\det\rho^\square(\phi) = \frac{\Nm(v)}{(\Nm(v)+1)^2}\left(\tr \rho^\square(\phi)\right)^2\]
(where we have used the fact that $\Nm(v)\not\equiv -1\pmod \ell$, and so $\Nm(v)+1$ is a unit in $\O$).
	
It follows that
\begin{align*}
\det\rho^\square(\phi)-\psi(\phi) 
&= \frac{\Nm(v)}{(\Nm(v)+1)^2}\left(\tr \rho^\square(\phi)\right)-\frac{\Nm(v)}{(\Nm(v)+1)^2}t^2\\
&= \frac{\Nm(v)}{(\Nm(v)+1)^2}(\tr \rho^\square(\phi)+t)(\tr \rho^\square(\phi)-t).
\end{align*}
But now
\[\frac{\Nm(v)}{(\Nm(v)+1)^2}\left(\tr \rho^\square(\phi)+t\right) \equiv \frac{2\Nm(v)}{(\Nm(v)+1)^2}\tr\rhobar(\phi) \equiv \frac{2\Nm(v)}{\Nm(v)+1}\pmod{m_R}\]
and so as $\ell\nmid 2,\Nm(v),\Nm(v)+1$ we get that $\ds \frac{\Nm(v)}{(\Nm(v)+1)^2}\left(\tr \rho^\square(\phi)+t\right)$ is a unit in $R_{\st}$. It follows that $R_{\st}^\psi$ is the maximal reduced $\lambda$-torsion free quotient of
\[R_{\st}^{\psi,\circ} = \frac{R_{\st}}{\left(\det\rho^\square(\phi)-\psi(\phi) \right)} = \frac{R_{\st}}{\tr \rho^\square(\phi)-t}\]
It now follows immediately from Shotton's computations that in each case $R_{\st}^{\psi,\circ}$ is already reduced and $\lambda$-torsion free (and so $R_{\st}^\psi = R_{\st}^{\psi,\circ}$) and has the form described in the statement of Theorem \ref{R^st} above.
	
Indeed, first assume that $\Nm(v)\not\equiv \pm 1\pmod{\ell}$. By \cite[Proposition 5.5]{Shotton} we may write $R_{\st} = \O[[B,P,X,Y]]$ with
\begin{align*}
\rho^\square(\sigma) &=
\begin{pmatrix}1&X\\y&1\end{pmatrix}^{-1}
\begin{pmatrix}1&x+B\\0&1\end{pmatrix}
\begin{pmatrix}1&X\\y&1\end{pmatrix}\\
\rho^\square(\phi) &=
\begin{pmatrix}1&X\\y&1\end{pmatrix}^{-1}
\begin{pmatrix}\Nm(v)(1+P)&0\\0&1+P\end{pmatrix}
\begin{pmatrix}1&X\\y&1\end{pmatrix},
\end{align*}
for some $x\in \O$. Thus we have 
\[\tr \rho^\square(\phi) = (\Nm(v)+1)(1+P) = t+tP\]
and so (as $t=\Nm(v)+1\in\O$ is a unit), $R_{\st}^{\psi,\circ} = R_{\st}/(tP) \cong \O[[B,P,X,Y]]/(P)\cong \O[[B,X,Y]]$, as desired.
	
Now assume that $\Nm(v)\equiv 1\pmod{\ell}$. Again, following the computations of \cite[Proposition 5.8]{Shotton} we can write
\begin{align*}
\rho^\square(\sigma) &=
\begin{pmatrix}
1+A&x+B\\C&1-A
\end{pmatrix}\\
\rho^\square(\phi) &=
\begin{pmatrix}
1+P&y+R\\S&1+Q
\end{pmatrix}
\end{align*}
(for $x,y\in \O$) where $A,B,C,P,Q,R$ and $S$ topologically generate $R_{\st}$. Now following Shotton's notation, let $T = P+Q$, so that $\tr \rho^\square(\phi) = 2+T= t+T$ and thus $R_{\st}^{\psi,\circ} = R_{\st}/(T)$. In both cases ($\rhobar|_{G_v}$ non-scalar and scalar) Shotton's computations immediately give the desired description of $R_{\st}^\psi$.\footnote{In Shotton's notation, when $\rhobar|_{G_v}$ is scalar $R_{\st}^\psi$ would be cut out by the $2\times 2$ minors of the matrix $\begin{pmatrix}X_1&X_2&X_3&X_4\\Y_1&-X_1&Y_3&X_3+2\frac{\Nm(v)-1}{\Nm(v)+1}\end{pmatrix}$. This is equivalent to the form stated in Theorem \ref{R^st} via the variable substitutions $A = X_1$, $B = -X_2$, $C = Y_1$, $X = X_3$, $Y = X_4$ and $Z = Y_3$.}
	
Moreover, Shotton shows that $S_v$ is indeed Cohen--Macaulay and non Gorenstein of relative dimension $3$ over $\O$, and that $S_v[1/\lambda]$ is formally smooth of dimension $3$ over $E$ . As $S_v$ is Cohen--Macaulay, the claim that $(\lambda,C,Y,B-Z)$ is a regular sequence simply follows by noting that
\[S_v/(\lambda,C,Y,B-Z)\cong \F[[A,B,X]]/(A^2,AB,AX,B^2,BX,X^2) = \F\oplus\F A\oplus \F B\oplus \F X\]
is a zero dimensional ring, and so $(\lambda,C,Y,B-Z)$ is a system of parameters.
\end{proof}
Thus letting $\D_1|\D$ be the product of the places $v|\D$ at which $\rhobar|_{G_v}:G_v\to \GL_2(\F)$ is scalar, we have
\[R_\infty= \left[\hat{\bigotimes_{v|\D_1}}S_v\right][[x_1,\ldots,x_{s}]] \]
for some integer $s$. As the rings $S_v$ are all Cohen--Macaulay by Theorem \ref{R^st}, it follows that $R_\infty$ is as well.

Now note that the description of $S_v$ in Theorem \ref{R^st} becomes much simpler if we work in characteristic $\ell$. Indeed, if $\rhobar|_{G_v}$ is scalar then $\Nm(v)\equiv 1\pmod{\ell}$ and so, $\Sbar=S_v/\lambda$ is an explicit graded ring not depending on $v$. Specifically, we have $\Sbar = \F[[A,B,C,X,Y,Z]]/\Ibar$ where $\Ibar$ is the (homogeneous) ideal generated by the $2\times 2$ minors of the matrix 
$\begin{pmatrix}
A&B&X&Y\\
C&A&Z&X
\end{pmatrix}$.

It thus follows that
\[\Rbar = R_\infty/\lambda \cong \Sbar^{\hat{\otimes} k}[[x_1,\ldots,x_{s}]],\]
which will be much easier to work with than $R_\infty$. In particular, note that $\Rbar$ is still Cohen--Macaulay, as $R_\infty$ is $\lambda$-torsion free by definition.

It will thus be useful to reduce Theorem \ref{self-dual} the following ``mod $\lambda$'' version:

\begin{thm}\label{self-dual mod l}
	If $\Mbar$ is a finitely-generated module over $\Rbar$ satisfying:
	\begin{enumerate}
		\item $\Mbar$ is maximal Cohen--Macaulay over $\Rbar$
		\item We have $\Mbar^*\cong \Mbar$.
		\item $\genrank_{\Rbar}\Mbar=1$.
	\end{enumerate}
	then $\dim_\F \Mbar/m_{\Rbar} = 2^k$. Moreover, the trace map $\tau_{\Mbar}:\Mbar\otimes_{\Rbar}\Mbar\xrightarrow{\sim} \omega_{\Rbar}$ is surjective.
\end{thm}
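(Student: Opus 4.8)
The plan is to follow the strategy indicated after Theorem \ref{self-dual}: combine the divisorial-module formalism of the previous pages with an explicit toric computation of $\Cl(\Rbar)$. Since $\Mbar$ is reflexive by hypothesis~(2) and of generic rank $1$ by hypothesis~(3), it determines a class $[\Mbar]\in\Cl(\Rbar)$, and the isomorphism $\Mbar\cong\Mbar^*$ gives $[\Mbar]=[\Mbar^*]=[\omega_{\Rbar}]-[\Mbar]$, i.e. $2[\Mbar]=[\omega_{\Rbar}]$. The whole problem then reduces to two things: (a) showing that $\Cl(\Rbar)$ has no $2$-torsion, so that $[\Mbar]$ is \emph{the} square root of $[\omega_{\Rbar}]$ and hence $\Mbar$ is determined up to isomorphism by hypotheses~(1)--(3); and (b) computing that unique square-root module explicitly enough to read off $\dim_\F\Mbar/m_{\Rbar}=\rank_{\Rbar}\Mbar$ and the surjectivity of the trace map.

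For~(a) I would pass to a de-completed graded model. By Theorem \ref{R^st}, $\Rbar\cong\Sbar^{\widehat\otimes k}[[x_1,\ldots,x_s]]$, the $m$-adic completion of the standard-graded $\F$-algebra $\Rc:=\Sbar^{\otimes k}[x_1,\ldots,x_s]$, where $\Sbar=\F[A,B,C,X,Y,Z]/\Ibar$ with $\Ibar$ generated by the $2\times 2$ minors of $\left(\begin{smallmatrix}A&B&X&Y\\ C&A&Z&X\end{smallmatrix}\right)$. Because $\Ibar$ is generated by binomials, $\Sbar$ is (the coordinate ring of) an affine toric variety, as will be shown in Section \ref{ssec:toric varieties}; hence so is $\Rc$, being a tensor product of such rings with a polynomial ring, say $\Rc=\F[U_\sigma]$ for a full-dimensional rational cone $\sigma$. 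I would then: (i) exhibit the cone $\sigma$ and its rays explicitly; (ii) compute $\Cl(\Rc)$ from the standard presentation of the class group of an affine toric variety as the cokernel of the map from the character lattice to the free abelian group on the rays of $\sigma$, finding it to be a free abelian group isomorphic to $\Cl(\Sbar)^{\oplus k}$ (in particular $2$-torsion-free); (iii) identify the toric canonical class $[\omega_{\Rc}]$ and, using torsion-freeness, pin down its unique half; and (iv) transfer everything to $\Rbar$ by proving $\Cl(\Rc)\cong\Cl(\Rbar)$ compatibly with the dualizing modules, adapting Danilov's technique for class groups of completions of graded rings.

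For~(b), once $[\Mbar]$ has been identified with a concrete divisorial (monomial) module $\O_{\Rc}(D)$ on $U_\sigma$, the integer $\dim_\F\Mbar/m_{\Rbar}$ is the minimal number of monomial generators of $\O_{\Rc}(D)$. Since $D$ is supported on the $\Sbar$-factors (the polynomial variables $x_i$ and the $m$-adic completion contributing nothing to either $\Cl$ or the generator count), this number is multiplicative over the $k$ copies of $\Sbar$, and so it suffices to check that the square-root module on a single $\Sbar$ requires exactly $2$ generators, giving $2^k$. For the trace map, under the toric identification $\tau_{\Mbar}$ becomes the monomial-multiplication map $\O_{\Rc}(K-D)\otimes_{\Rc}\O_{\Rc}(D)\to\O_{\Rc}(K)=\omega_{\Rc}$; since at each height-one prime $\Rbar$ is a DVR, $\Mbar$ is free of rank $1$, and the self-pairing is perfect, the trace map is already surjective in codimension $1$, so it remains to verify the finite combinatorial statement that every minimal monomial generator of $\omega_{\Sbar}$ is a product of a monomial of $\O(K-D)$ with a monomial of $\O(D)$, again one $\Sbar$-factor at a time.

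The step I expect to be hardest is the combination of (a)(iv) and the explicit computations in (b): one needs the toric picture of $\Sbar$ precise enough to name the cone, the rays, the canonical class and its half, and the minimal generators of that half, and one needs to push the class-group comparison through the $m$-adic completion (where $\Rbar$ is no longer graded) while keeping track of the dualizing module. What makes this feasible at all is that Shotton's computation in Theorem \ref{R^st} presents $\Sbar$ by binomials, so that the entire analysis takes place in the well-understood world of affine toric varieties.
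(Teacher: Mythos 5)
Your proposal matches the paper's own proof essentially step for step: reduce to the graded model $\Rc=\Sc^{\otimes k}[x_1,\ldots,x_s]$ (what you call $\Sbar$ is the paper's $\Sc$; the paper reserves $\Sbar$ for the completion), identify $\Sc$ as toric from Shotton's binomial presentation, compute $\Cl(\Sc)\cong\Z$ and $[\omega_{\Sc}]=2[\O(D_0)]$ from the rays of the cone, conclude $\Cl(\Rc)\cong\Z^k$ so that the unique self-dual class is $\bigl(\O(D_0)\bigr)^{\boxtimes k}$ with $2^k$ generators, verify trace surjectivity by an explicit monomial computation in each $\Sc$-factor, and finally transfer to $\Rbar$ by a Danilov-style isomorphism $\Cl(\Rc)\cong\Cl(\Rbar)$ compatible with dualizing modules. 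The only place the paper has to do real work you flag as potentially hard and gloss over is the Danilov comparison: $\Spec\Rc$ is not the cone over a smooth projective variety when $k>1$ (it has non-isolated singularities), so the paper must reprove Danilov's theorem for products of cones via a blowup argument (Proposition \ref{Danilov for products}); your outline correctly identifies that this is the delicate step but does not supply the argument.
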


\begin{proof}[Proof that Theorem \ref{self-dual mod l} implies \ref{self-dual}]
	Assume that Theorem \ref{self-dual mod l} holds, and that $M_\infty$ satisfies the hypotheses of Theorem \ref{self-dual}. As $R_\infty$ is flat over $\O$, it is $\lambda$-torsion free and thus $\lambda$ is not a zero divisor on $M_\infty$ (by condition (1)). It follows that $\Mbar= M_\infty/\lambda$ is maximal Cohen--Macaulay over $\Rbar$ and $\genrank_{\Rbar}\Mbar = \genrank_{R_\infty}M_\infty = 1$. (In general, if $R$ is Cohen--Macaulay and $M$ is maximal Cohen--Macaulay over $R$, then for any regular element $x\in R$, $\genrank_{R/x}(M/x) = \genrank_RM$, provided $R/x$ is also a domain.)
	
	Moreover, as $M_\infty$ is maximal Cohen--Macaulay over $R_\infty$, \cite[Proposition 21.12b]{Eisenbud} gives
	\[\Mbar = M_\infty/\lambda = \Hom_{R_\infty}(M_\infty,\omega_{R_\infty})/\lambda = \Hom_{R_\infty/\lambda}(M_\infty/\lambda,\omega_{R_\infty}/\lambda) = \Hom_{\Rbar}(\Mbar,\omega_{\Rbar}),\]
	where we have used the fact that $\omega_{R_\infty}/\lambda\cong \omega_{\Rbar}$, by \cite[Chapter 21.3]{Eisenbud}. Thus $\Mbar$ is self-dual. Thus $\Mbar$ satisfies all of the hypotheses of Theorem \ref{self-dual mod l}, and so $\dim_\F \Mbar/m_{\Rbar} = 2^k$ and $\tau_{\Mbar}$ is surjective.
	
	Now we obviously have that $\Mbar/m_{\Rbar}\cong M_\infty/m_{R_\infty}$, so the first conclusion of Theorem \ref{self-dual} follows.
	
	Also, the trace map $\tau_{\Mbar}:\Mbar\otimes_{\Rbar}\Mbar\to \omega_{\Rbar}$ is just the mod-$\lambda$ reduction of the map $\tau_{M_\infty}:M_\infty\otimes M_\infty\to \omega_{R_\infty}$, so it follows that $\tau_{\Mbar}$ is surjective if and only if $\tau_{M_\infty}$ is. Thus the second conclusion of Theorem \ref{self-dual} follows.
\end{proof}

As hinted above, we will prove Theorem \ref{self-dual mod l} by computing the class group of $\Rbar$.

We finish this section by proving the following lemma, which will make the second conclusion of Theorem \ref{self-dual mod l} easier to prove (and will also be useful in the proof of Theorem \ref{R=T}):

\begin{lemma}\label{trace}
	If $R$ is a Cohen--Macaulay ring with a dualizing sheaf $\omega_R$, and $M$ is a reflexive $R$-module, then the trace map $\tau_M:M^*\otimes_RM\to \omega_R$ is surjective if and only if there \emph{exists} an $R$-module surjection $M^*\otimes_RM\surjection \omega_R$.
\end{lemma}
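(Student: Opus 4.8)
The plan is to dispose of the ``only if'' direction immediately---if $\tau_M$ is surjective it is itself the required surjection---and to prove the substantive ``if'' direction by a formal tensor--hom adjunction argument. In fact this argument uses neither the Cohen--Macaulay hypothesis on $R$ nor the reflexivity of $M$; it works for evaluation maps $\Hom_R(M,N)\otimes_R M\to N$ for any $R$-module $N$.

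First I would invoke the adjunction isomorphism
\[\Hom_R(M^*\otimes_R M,\omega_R)\;\cong\;\Hom_R\big(M^*,\Hom_R(M,\omega_R)\big)\;=\;\End_R(M^*),\]
the last equality being the definition $M^*=\Hom_R(M,\omega_R)$. The point is to unwind this isomorphism and observe that the trace map $\tau_M$ corresponds to $\id_{M^*}$: since $\tau_M(\varphi\otimes x)=\varphi(x)$, its image under the adjunction is the endomorphism $\varphi\mapsto\big(x\mapsto\varphi(x)\big)=\varphi$. Now given \emph{any} $R$-module map $\phi\colon M^*\otimes_R M\to\omega_R$, let $\psi\in\End_R(M^*)$ be the endomorphism corresponding to $\phi$ under the same isomorphism, so that $\phi(\varphi\otimes x)=\psi(\varphi)(x)$ for all $\varphi\in M^*$ and $x\in M$. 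The key observation is that $\phi$ then factors through $\tau_M$:
\[\big(\tau_M\circ(\psi\otimes\id_M)\big)(\varphi\otimes x)=\tau_M\big(\psi(\varphi)\otimes x\big)=\psi(\varphi)(x)=\phi(\varphi\otimes x),\]
hence $\phi=\tau_M\circ(\psi\otimes\id_M)$. Consequently $\im\phi\subseteq\im\tau_M$ as submodules of $\omega_R$, and if $\phi$ is surjective this forces $\im\tau_M=\omega_R$, i.e. $\tau_M$ is surjective.

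There is essentially no obstacle here; the only thing requiring care is bookkeeping the adjunction correctly so that $\tau_M\leftrightarrow\id_{M^*}$ and $\phi\leftrightarrow\psi$ are genuinely matched, which is what produces the factorization $\phi=\tau_M\circ(\psi\otimes\id_M)$. If one prefers to avoid invoking the adjunction altogether, one can instead define $\psi\colon M^*\to M^*$ directly by $\psi(\varphi)=\phi(\varphi\otimes-)\in\Hom_R(M,\omega_R)=M^*$, check that this is $R$-linear, and verify $\phi=\tau_M\circ(\psi\otimes\id_M)$ by the same one-line computation; the conclusion $\im\phi\subseteq\im\tau_M$ then follows verbatim.
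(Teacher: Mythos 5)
Your proof is correct and is essentially the same argument as the paper's: the paper defines $\varphi_i(c)=f(b_i\otimes c)$ for a fixed preimage $\sum b_i\otimes c_i$ and routes it through $\tau_M$, which is precisely your map $\psi$ applied element-by-element, and your closing ``if one prefers to avoid the adjunction'' variant is the paper's proof nearly verbatim. Your abstract packaging via the factorization $\phi=\tau_M\circ(\psi\otimes\id_M)$ (so $\im\phi\subseteq\im\tau_M$ for \emph{every} $\phi$) is a clean way to say the same thing, and you are right that the Cohen--Macaulay and reflexivity hypotheses play no role in this particular lemma.
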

\begin{proof}
	Assume that $f:M^*\otimes_RM\to \omega_R$ is a surjection. Take any $\alpha\in\omega_R$. Then we can write
	\[\alpha = f\left(\sum_{i\in I}b_i\otimes c_i\right) = \sum_{i\in I}f(b_i\otimes c_i)\]
	for some finite index set $I$ and some $b_i\in M^*$ and $c_i\in M$. For each $i\in I$, consider the $R$-linear map $\varphi_i:M\to \omega_R$ defined by $\varphi_i(c) = f(b_i\otimes c)$. Then we have $\varphi_i\in M^*$ for all $i$ and so
	\begin{align*}
	\alpha &= \sum_{i\in I}f(b_i\otimes c_i) = \sum_{i\in I}\varphi_i(c_i) = \sum_{i\in I}\tau_M(\varphi_i\otimes c_i) = \tau_M\left(\sum_{i\in I}\varphi_i\otimes c_i \right).
	\end{align*}
	Thus $\tau_M$ is surjective.
\end{proof}

\subsection{Toric Varieties}\label{ssec:toric varieties}

For the remainder of this section we will consider the rings $\Sc = \F[A,B,C,X,Y,Z]/\Ibar$, where again $\Ibar$ is the ideal generated by the $2\times 2$ minors of the matrix $\begin{pmatrix}
A&B&X&Y\\
C&A&Z&X
\end{pmatrix}$, and $\Rc = \Sc^{\otimes k}[x_1,\ldots,x_s]$. Note that $\Sc$ and $\Rc$ are naturally finitely generated graded $\F$-algebras. Let $m_{\Sc}$ and $m_{\Rc}$ denote their irrelevant ideals, and note $\Sbar$ and $\Rbar$ are the completions of $\Sc$ and $\Rc$ at these ideals.

The goal of this subsection and the next one is to prove the following ``de-completed'' version of Theorem \ref{self-dual mod l}. In Section \ref{ssec:decompletion} we will show that this implies Theorem \ref{self-dual mod l}, and hence Theorem \ref{self-dual}.

\begin{thm}\label{self-dual decompleted}
	If $\Mc$ is a finitely-generated module over $\Rc$ satisfying:
	\begin{enumerate}
		\item $\Mc$ is maximal Cohen--Macaulay over $\Rc$
		\item We have $\Mc^*\cong \Mc$.
		\item $\genrank_{\Rc}\Mc=1$.
	\end{enumerate}
	then $\dim_\F \Mc/m_{\Rc} = 2^k$. Moreover, the trace map $\tau_{\Mc}:\Mc\otimes_{\Rc}\Mc\xrightarrow{\sim} \omega_{\Rc}$ is surjective.
\end{thm}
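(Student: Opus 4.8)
The plan is to exhibit $\Sc$, and hence $\Rc$, as the coordinate ring of an affine toric variety, and then to run the divisor-class argument outlined after Theorem~\ref{self-dual}. First I would check that the assignment $A\mapsto st$, $B\mapsto s^2$, $C\mapsto t^2$, $X\mapsto stw$, $Y\mapsto s^2w$, $Z\mapsto t^2w$ identifies $\Sc$ with the affine semigroup ring $\F[P]$, where
\[P=\{(p,q,r)\in\Z^3\ :\ p,q,r\ge 0,\ \ p+q\equiv 0\pmod 2,\ \ p+q\ge 2r\}.\]
The six $2\times2$ minors clearly map to $0$; that they generate the whole toric ideal follows from a dimension count together with Shotton's presentation \cite{Shotton}. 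Since $P$ is a saturated affine semigroup, $\F[P]$ — and therefore $\Rc=\F[P]^{\otimes k}[x_1,\dots,x_s]$ — is a normal Cohen--Macaulay domain, and $\Rc$ is the coordinate ring of the affine toric variety attached to the cone $\sigma$ dual to $\Cone(P)=\{p,q,r\ge 0,\ p+q\ge 2r\}$ (a full-dimensional cone with exactly four facets) crossed with the trivial cone coming from the $x_i$. In particular, finitely generated reflexive $\Rc$-modules correspond to Weil divisors and the standard exact sequence $0\to M\to\Z^{\sigma(1)}\to\Cl\to 0$ is available.

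Next, working in the character lattice $M=\{(p,q,r): p+q\ \text{even}\}$, the cone $\sigma$ has the four primitive ray generators dual to the four facets of $\Cone(P)$; feeding these into the exact sequence gives $\Cl(\Sc)\cong\Z$, generated by the class of one torus-invariant prime divisor $D_1$, and a short computation with the canonical divisor $-\sum_\rho D_\rho$ gives $[\omega_\Sc]=2[D_1]$. Since $\Cl$ is unchanged by adjoining polynomial variables and is additive for products of affine toric varieties, $\Cl(\Rc)\cong\Z^{k}$ — in particular $2$-torsion free — and $[\omega_\Rc]$ is the class $(2,\dots,2)$. Now conditions (1) and (3) make $\Mc$ maximal Cohen--Macaulay over the Cohen--Macaulay normal domain $\Rc$, hence torsion free and $S_2$, hence reflexive of generic rank $1$; condition (2) together with the identity $[\Mc^*]=[\omega_\Rc]-[\Mc]$ gives $2[\Mc]=[\omega_\Rc]$, and $2$-torsion-freeness forces $[\Mc]=(1,\dots,1)$. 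Thus $\Mc$ is forced, up to isomorphism, to be the reflexive module $\O_\Sc(D_1)^{\otimes k}\otimes_\F\F[x_1,\dots,x_s]$ (external tensor over $\F$). A direct inspection of the graded module $\O_\Sc(D_1)=\bigoplus_{m\in P_{D_1}}\F\,\chi^m$ shows it is minimally generated by exactly two monomials, so $\dim_\F\O_\Sc(D_1)/m_\Sc=2$ and hence $\dim_\F\Mc/m_\Rc=2^{k}$.

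For surjectivity of $\tau_\Mc$ I would apply Lemma~\ref{trace}: it is enough to produce \emph{some} surjection $\Mc^*\otimes_\Rc\Mc\twoheadrightarrow\omega_\Rc$. Because $[\Mc^*]=[\omega_\Rc]-[\Mc]=[\Mc]$ we may fix an isomorphism $\Mc^*\cong\Mc$, reducing to a surjection $\Mc\otimes_\Rc\Mc\twoheadrightarrow\omega_\Rc$; take the multiplication map $\O(D)\otimes_\Rc\O(D)\to\O(2D)$ of fractional ideals, where $D=D_1^{(1)}+\dots+D_1^{(k)}$, composed with an isomorphism $\O(2D)\cong\omega_\Rc$ (which exists since $2[D]=[\omega_\Rc]$ and $\Cl(\Rc)$ is torsion free). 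This multiplication map is the external tensor product of the corresponding maps for $\Sc$, so it suffices to check that $\O_\Sc(D_1)\cdot\O_\Sc(D_1)=\O_\Sc(2D_1)$ inside the fraction field of $\Sc$, i.e.\ the combinatorial identity $P_{D_1}+P_{D_1}=P_{2D_1}$; one verifies it by listing the three minimal generators of $\O_\Sc(2D_1)$ and noting each is a sum of at most two generators of $\O_\Sc(D_1)$.

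The main obstacle is not conceptual but computational: carefully verifying the monoid presentation of $\Sc$, correctly extracting the four rays of $\sigma$ and running the class-group sequence to get $\Cl(\Sc)\cong\Z$ with $[\omega_\Sc]=2[D_1]$, and carrying out the two minimal-generator counts (for $\O_\Sc(D_1)$ and $\O_\Sc(2D_1)$). One also needs the fact that $\Cl$ is additive under $\otimes_\F$ and trivial under polynomial extension in this toric setting; since everything is toric this can be read off the cone combinatorics rather than cited abstractly. (It will then remain to deduce Theorem~\ref{self-dual mod l}, and hence Theorem~\ref{self-dual}, from this de-completed statement, which is the business of Section~\ref{ssec:decompletion}.)
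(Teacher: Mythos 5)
Your proposal takes essentially the same route as the paper (Sections~\ref{ssec:toric varieties}--\ref{ssec:class group}): identify $\Sc$ with a normal affine semigroup ring, read off the cone and compute $\Cl(\Sc)\cong\Z$ and $[\omega_\Sc]=2[D_1]$ from the divisor exact sequence, deduce uniqueness of the self-dual class, count minimal generators of the corresponding fractional ideal, and produce the multiplication surjection; then propagate everything to $\Rc$ using the product formula for toric class groups. Your monomial parametrization $A\mapsto st,\dots,Z\mapsto t^2w$ and its lattice $\{(p,q,r):p+q\text{ even}\}$ is just a change of basis from the paper's choice $A\mapsto xz,\ B\mapsto xz^2,\ C\mapsto x,\ X\mapsto yz,\ Y\mapsto yz^2,\ Z\mapsto y$ with lattice $\Z^3$ and semigroup $\{(a,b,c):a,b,c\ge 0,\ 2a+2b\ge c\}$ (Proposition~\ref{S toric}); both work. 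Your observation that checking $P_{D_1}+P_{D_1}=P_{2D_1}$ on the three module generators of $\O(2D_1)$ suffices is also correct, since the products span an $\Sc$-submodule; the paper phrases this via effective representatives $(xz,xz^2)$ and $(x,xz,xz^2)$ and divides by $xz^2$, but it is the same calculation.

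One step is under-justified as written: ``that [the six $2\times2$ minors] generate the whole toric ideal follows from a dimension count together with Shotton's presentation.'' A surjection $\Sc=\F[A,\dots,Z]/\Ibar\twoheadrightarrow\F[P]$ between rings of the same Krull dimension need not be an isomorphism unless one already knows $\Sc$ is reduced and irreducible. Shotton shows the $\O$-algebra $S_v$ is a Cohen--Macaulay domain, but that does not formally give that the mod-$\lambda$ fiber $\Sbar$ (or its de-completion $\Sc$) is a domain --- indeed the toric identification is precisely how the paper \emph{establishes} that $\Sc$ is a normal domain. The paper closes this loop by computing the toric ideal directly as the saturation $I_L=(I_\L:(ABCXYZ)^\infty)$ (Proposition~\ref{toric ideal}) and checking by a short Gr\"obner-type calculation that it equals $\Ibar$ (Proposition~\ref{S toric}). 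You should replace the dimension count with this explicit saturation check (or an equivalent Hilbert series comparison). Apart from that, the proposal matches the paper's argument.
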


As outlined above, we will prove this theorem by computing $\Cl(\Rc)$. The key insight that allows us to preform this computation is that $\Sc$, and hence $\Rc$, is the coordinate ring of an affine \emph{toric} variety.

In this section, we review the basic theory of toric varieties and show that $\Sc$ and $\Rc$ indeed correspond to toric varieties. We shall primarily follow the presentation of toric varieties from \cite{CLS}. Unfortunately \cite{CLS} works exclusively with toric varieties over $\C$, whereas we are working in positive characteristic. All of the results we will rely on work over arbitrary base field, usually with identical proofs, so we will freely cite the results of \cite{CLS} as if they were stated over arbitrary fields. We refer the reader to \cite{MS} and \cite{DanilovToric} for a discussion of toric varieties over arbitrary fields.

We recall the following definitions. For any integer $d\ge 1$, let $T_d = \G_m^d = (\F^\times)^d$, thought of as a group variety. Define the two lattices
\begin{align*}
M &= \Hom(T_d,\G_m) &
N &= \Hom(\G_m,T_d),
\end{align*}
called the \emph{character lattice} and the \emph{lattice of one-parameter subgroups}, respectively. Note that $M\cong N\cong \Z^d$. We shall write $M$ and $N$ additively. For $m\in M$ and $u\in N$, we will write $\chi^m:T_d\to \G_m$ and $\lambda^u:\G_m\to T_d$ to denote the corresponding morphisms. 

First note that there is a perfect pairing $\langle\ ,\ \rangle:M\times N\to \Z$ given by $t^{\langle m,u\rangle} = \chi^m(\lambda^u(t))$. We shall write $M_\R$ and $N_\R$ for $M\otimes_\Z\R$ and $N\otimes_\Z\R$, which are each $d$-dimensional real vector spaces. We will extend the pairing $\langle\ ,\ \rangle$ to a perfect pairing $\langle\ ,\ \rangle:M_\R\times N_\R\to \R$.

For the rest of this section, we will (arbitrarily) fix a choice of basis $e_1,\ldots,e_d$ for $M$, and so identify $M$ with $\Z^d$. We will also identify $N$ with $\Z^d$ via the dual basis to $e_1,\ldots,e_d$. Under these identifications, $\langle\ ,\ \rangle$ is simply the usual (Euclidean) inner product on $\Z^d$.

We can now define:

\begin{defn}\label{def:toric}
	An \emph{(affine) toric variety} of dimension $d$ is a pair $(X,\iota)$, where $X$ is an affine variety $X/\F$ of dimension $d$ and $\iota$ is an open embedding $\iota:T_d\injection X$ such that the natural action of $T_d$ on itself extends to a group variety action of $T_d$ on $X$. We will usually write $X$ instead of the pair $(X,\iota)$.
	
	For such an $X$, we define the \emph{semigroup} of $X$ to be
	\[\semi_X=\{m\in M|\chi^m:T_d\to \G_m \text{ extends to a morphism }X\to \A^1\}\subseteq M\]
	
	For convenience, we will also say that a finitely generated $\F$-algebra $R$ (together with an inclusion $R\injection \F[M]$) is \emph{toric} if $\Spec R$ is toric.
\end{defn}

The primary significance of  affine toric varieties is that they are classified by their semigroups. Specifically:

\begin{prop}\label{semigroup}
	If $X$ is an affine toric variety of dimension $d$, then $X = \Spec \F[\semi_X]$, and the embedding $\iota:T_d\injection X$ is induced by $\F[\semi_X]\injection \F[M]$ (using the fact that $T_d = \Spec\F[M]$). Moreover we have
	\begin{enumerate}
		\item The semigroup $\semi_X$ spans $M$ (that is, $\Z\semi_X$ has rank $d$).
		\item If $\semi_X$ is \emph{saturated} in $M$ (in the sense that $km\in\semi_X$ implies that $m\in \semi_X$ for all $k> 0$ and $m\in M$) then $X$ is a \emph{normal} variety.
	\end{enumerate}
	Conversely, if $\semi\subseteq M$ is a finitely generated semigroup spanning $M$ then the inclusion $\F[\semi]\injection \F[M]$ gives $\Spec \F[\semi]$ the structure of a $d$-dimensional affine toric variety.
\end{prop}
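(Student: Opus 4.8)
The plan is to run the standard classification of affine toric varieties by their semigroups, as in \cite{CLS}, the only point of care being that $T_d$-representations still decompose into weight spaces in positive characteristic, because $T_d$ is a diagonalizable --- hence linearly reductive --- group scheme. The heart of the argument is pinning down the coordinate ring. Since $X$ is a variety it is irreducible, so $\iota(T_d)$ is dense and restriction of regular functions gives an injection $\F[X]\injection \F[T_d] = \F[M] = \bigoplus_{m\in M}\F\chi^m$. The action of $T_d$ on $X$ makes $\F[X]$ a rational $T_d$-module, so $\F[X]$ is the direct sum of its $\chi^m$-weight spaces; since this decomposition is compatible with the one of $\F[M]$, the $\chi^m$-weight space of $\F[X]$ is $\F\chi^m$ when $\chi^m$ extends to a morphism $X\to\A^1$ (i.e.\ when $m\in\semi_X$) and is $0$ otherwise. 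Hence $\F[X]=\bigoplus_{m\in\semi_X}\F\chi^m = \F[\semi_X]$, which gives $X=\Spec\F[\semi_X]$; and the inclusion $\F[\semi_X]\injection\F[M]$ is precisely the restriction map $\F[X]\to\F[T_d]$, so it induces $\iota$. Since $\F[X]$ is a finitely generated $\F$-algebra generated by weight vectors, $\semi_X$ is automatically a finitely generated semigroup.

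For the two ``moreover'' assertions: every $\chi^m$ with $m\in\Z\semi_X$ is a quotient of two monomials from $\F[\semi_X]$, so the fraction field of $\F[\semi_X]$ is $\F(\Z\semi_X)$, whose transcendence degree over $\F$ is $\rank\Z\semi_X$; comparing with $\dim X = d$ yields $\rank\Z\semi_X = d$, which is item (1). For item (2), if $\semi_X$ is saturated in $M$ then it is a fortiori saturated in the lattice $\Z\semi_X$, and then $\F[\semi_X]$ is integrally closed by the standard criterion (\cite{CLS}) that $\F[\semi]$ is normal exactly when $\semi$ is saturated in $\Z\semi$; hence $X$ is normal.

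Finally, for the converse I would take a finite generating set $m_1,\dots,m_r$ of $\semi$, form the morphism $T_d\to\A^r$, $t\mapsto (\chi^{m_1}(t),\dots,\chi^{m_r}(t))$, and let $X$ be the Zariski closure of its image; the standard arguments of \cite{CLS} then identify $X$ with $\Spec\F[\semi]$, compute $\dim X = \rank\Z\semi = d$, and show that the image torus is open and dense in $X$ with the self-translation action of $T_d$ extending to $X$. The one bit of bookkeeping to watch is that the intrinsic torus of $\Spec\F[\semi]$ carries character lattice $\Z\semi$ rather than $M$; these agree in all the applications made in this paper, and in general one identifies $\Spec\F[\Z\semi]$ with $T_d$ via a basis of $\Z\semi$. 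In short, every step here is routine; the only part that is not pure algebra is the weight-space decomposition of $\F[X]$ in the first step, and the one thing worth flagging there is that it is valid over an arbitrary field precisely because diagonalizable group schemes are linearly reductive.
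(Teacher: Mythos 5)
Your argument is correct and follows the same standard route as \cite{CLS}, which is all the paper does here --- its ``proof'' is just the citation ``cf.\ \cite{CLS} Proposition 1.1.14 and Theorems 1.1.17 and 1.3.5.'' The two points you explicitly flag (that the weight-space decomposition of $\F[X]$ is valid in positive characteristic because $T_d$ is diagonalizable, hence linearly reductive, and that the intrinsic torus of $\Spec\F[\semi]$ has character lattice $\Z\semi$ rather than $M$) are precisely the bookkeeping the paper silently absorbs into its earlier blanket remark that the cited CLS results hold over an arbitrary field with identical proofs.
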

\begin{proof}
	cf. \cite{CLS} Proposition 1.1.14 and Theorems 1.1.17 and 1.3.5.
\end{proof}

If $R$ is a toric $\F$-algebra, we will write $\semi_R$ to mean $\semi_{\Spec R}$.

While it can be difficult to recognize toric varieties directly from Definition \ref{def:toric}, the following Proposition makes it fairly easy to identify toric varieties in $\A^s$.

\begin{prop}\label{toric ideal}
	Fix an integer $h\ge 1$ and let $\Phi:\Z^h\to M$ be any homomorphism with finite cokernel, and let $L = \ker \Phi$. Let $\semi\subseteq M$ be the semigroup generated by $\Phi(e_1),\ldots,\Phi(e_h)\in M$. Then we have an isomorphism $\F[z_1,\ldots,z_h]/I_L\cong\F[\semi]$ given by $z_i\mapsto \Phi(e_i)$, where
	\[I_L = \left(z^{\alpha}-z^{\beta}\middle|\alpha,\beta\in \Z_{\ge 0}^h \text{ such that } \alpha-\beta\in L\right)\subseteq \F[z_1,\ldots,z_h].\]
	(Where, for any $\alpha = (\alpha_1,\ldots,\alpha_h)\in \Z_{\ge 0}^h$, we write $z^\alpha = z_1^{\alpha_1}\cdots z_h^{\alpha_h}\in \F[z_1,\ldots,z_h]$.)
	
	Moreover $I_L$ can be explicitly computed as follows: Assume that $\L = (\ell^1,...,\ell^r)$ is a $\Z$-basis for $L$, with $\ell^i = (\ell^i_1,\ldots,\ell^i_h)\in \Z^h$. Write each $\ell^i$ as $\ell^i = \ell^i_+-\ell^i_i$ where 
	\begin{align*}
	\ell_+^i &= (\max\{\ell^i_1,0\},\ldots,\max\{\ell^i_h,0\})\in \Z_{\ge0}^h\\
	\ell_-^i &= (\max\{-\ell^i_1,0\},\ldots,\max\{-\ell^i_h,0\})\in \Z_{\ge0}^h.
	\end{align*}
	Then if $I_{\L} = \left(z^{\ell^1_+}-z^{\ell^1_-},\ldots,z^{\ell^h_+}-z^{\ell^h_-}\right)\subseteq \F[z_1,\ldots,z_h]$, $I_L$ is the \emph{saturation} of $I_{\L}$ with respect to $z_1\cdots z_h$, that is:
	\begin{align*}
	I_L &= \left(I_{\L}:(z_1\cdots z_h)^\infty \right) = \bigg\{f\in \F[z_1,\ldots,z_h]\bigg|(z_1\cdots z_h)^mf\in I_{\L}\text{ for some } m\ge 0\bigg\}\\
	&= \bigg(z^{\alpha}-z^{\beta}\bigg|\alpha,\beta\in \Z^h_{\ge0}\text{ such that }(z_1\cdots z_h)^m\left(z^{\alpha}-z^{\beta}\right)\in I_{\L}\text{ for some } m\ge 0\bigg).
	\end{align*}
	Conversely, if $I\subseteq \F[z_1,\ldots,z_h]$ is any \emph{prime} ideal which can be written in the form $I = \left(z^{\alpha_i}-z^{\beta_i}\middle| i\in \As \right)$ for a finite index set $\As$ and $\alpha_i,\beta_i\in\Z_{\ge 0}^h$, then $I = I_L$ for some $L$ and so $\F[z_1,\ldots,z_h]/I$ can be given the structure of a toric $\F$-algebra.
\end{prop}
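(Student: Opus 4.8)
The three claims are standard facts about lattice ideals and binomial primes (see \cite[\S1.1]{CLS} and \cite{MS}); the plan is to prove them in the order stated, reducing each to a computation in a group ring. For the first, I would realize $I_L$ as the kernel of the monomial map $\pi\colon\F[z_1,\dots,z_h]\to\F[M]$, $z_i\mapsto\chi^{\Phi(e_i)}$. Its image is by definition $\F[\semi]$, and $\semi$ spans $M$ since $\Phi$ has finite cokernel, so $\F[\semi]$ is toric by Proposition~\ref{semigroup}. The inclusion $I_L\subseteq\ker\pi$ is immediate from $L=\ker\Phi$; for the reverse inclusion I would observe that modulo $I_L$ any two monomials $z^\alpha,z^{\alpha'}$ with $\Phi(\alpha)=\Phi(\alpha')$ are congruent, so every $f$ reduces mod $I_L$ to a combination $\sum_m c_m z^{\alpha(m)}$ indexed by the distinct values $m=\Phi(\alpha)$ occurring in $f$, whence $\pi(f)=\sum_m c_m\chi^m$ with the $\chi^m$ linearly independent in $\F[M]$, so $\pi(f)=0$ forces $f\in I_L$.

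For the saturation formula, $(I_{\L}:(z_1\cdots z_h)^\infty)\subseteq I_L$ follows from $I_{\L}\subseteq I_L$ together with the fact that $I_L$ is already saturated with respect to $z_1\cdots z_h$ (because $\F[z]/I_L\cong\F[\semi]$ is a domain and $\pi(z_1\cdots z_h)\neq0$ is a nonzerodivisor). For the reverse inclusion I would work in the Laurent ring $\F[z_1^{\pm},\dots,z_h^{\pm}]=\F[\Z^h]$, where the extension of $I_{\L}$ is generated by the pure binomials $z^{\ell^i}-1$ (multiply $z^{\ell^i_+}-z^{\ell^i_-}$ by the unit $z^{-\ell^i_-}$); the telescoping identities $z^{a+b}-1=z^a(z^b-1)+(z^a-1)$ and $z^{-a}-1=-z^{-a}(z^a-1)$ then place $z^w-1$ in this extension for every $w\in L=\langle\ell^1,\dots,\ell^r\rangle$, hence each generator $z^\gamma-z^\delta=z^\delta(z^{\gamma-\delta}-1)$ of $I_L$ as well; contracting back along the localization map recovers exactly $(I_{\L}:(z_1\cdots z_h)^\infty)$.

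For the converse, I would first reduce to the case that $I$ contains no variable: if some $\bar z_j$ vanish in the domain $\F[z]/I$, specializing those variables to $0$ rewrites $\F[z]/I$ as $\F[z']/I'$ with $I'$ again a binomial prime containing no variable (using $1\notin I'$ and that the surviving variables map to nonzerodivisors). Then each $\bar z_j$ is a nonzerodivisor in $\F[z]/I$, so $\F[z]/I$ embeds in its localization $\F[\Z^h]/I^{e}$, whose defining ideal $I^{e}$ is generated by the pure binomials $z^{\alpha_i-\beta_i}-1$; hence $\F[\Z^h]/I^{e}\cong\F[\Z^h/L']$ where $L'=\langle\alpha_i-\beta_i\rangle$. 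Since this is a domain, the group $\Z^h/L'$ must be torsion-free, i.e.\ $L'$ is saturated in $\Z^h$; taking $\Phi\colon\Z^h\surjection M:=\Z^h/L'$ (so $\ker\Phi=L'$), a telescoping argument together with the identification of $\F[z]/I$ with the saturation $(I:(z_1\cdots z_h)^\infty)=I$ gives $I=I_{L'}$, and the first part then exhibits $\F[z]/I\cong\F[\semi]$ as toric.

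I expect the converse to be the main obstacle, the crux being to recognize the localized quotient as the group algebra $\F[\Z^h/L']$ and to use that group algebras of groups with nontrivial torsion have zero divisors: this is precisely what forces $L'$ to be saturated once $I$ is prime, ruling out the ``non-toric'' (unit-character) binomial primes that would otherwise intervene over a non-closed field. The preliminary reduction over variables lying in $I$ is a minor but genuinely necessary caveat, since an ideal generated by pure-difference binomials is contained in $(z_1-1,\dots,z_h-1)$ and so never contains a variable; thus literally $I=I_L$ can hold only in the no-variable case. Parts one and two are routine once the monomial map is set up, and could alternatively be quoted directly from \cite[\S1.1]{CLS}.
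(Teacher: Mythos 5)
Your proof is correct and follows the same standard route as the paper's cited references (\cite[\S1.1]{CLS}, \cite{MS}): realize $I_L$ as the kernel of the monomial map $\F[z]\to\F[M]$, pass to the Laurent ring to identify the saturation, and for the converse recognize the localized quotient as a group algebra $\F[\Z^h/L']$ so that primality forces $L'$ to be saturated. The only wrinkle is your ``preliminary reduction'' in the converse: as you yourself note at the end, an ideal generated by pure-difference binomials is contained in $(z_1-1,\dots,z_h-1)$ and so never contains any $z_j$, which makes that reduction vacuous rather than a genuine step --- it would be cleaner to simply observe up front that each $\bar z_j$ is a nonzero element of the domain $\F[z]/I$ and localize directly.
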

\begin{proof}
	This mostly follows from \cite[Propositions 1.1.8, 1.1.9 and 1.1.11]{CLS}. The statement that $I_L = \left(I_{\L}:(z_1\cdots z_h)^\infty \right)$ is \cite[Exercise 1.1.3]{CLS} or \cite[Lemma 7.6]{MS}
\end{proof}

Now applying the above results to the $\F$-algebra $\Sc$, we get:

\begin{prop}\label{S toric}
	$\Sc$ may be given the structure of a $3$-dimensional toric $\F$-algebra, with semigroup 
	\[\semi_\Sc=\{(a,b,c)\in \Z^3|a,b,c\ge 0, 2a+2b\ge c\}\subseteq M\]
	under some choice of basis $e_1,e_2,e_3$ for $M$. Moreover:
	\begin{enumerate}
		\item $\Spec\Sc$ is the affine cone over a surface $\Vc\subseteq\P^5$ isomorphic to $\P^1\times \P^1$. The embedding $\P^1\times\P^1\xrightarrow{\sim}\Vc\subseteq \P^5$ corresponds to the very ample line bundle $\O_{\P^1}(2)\boxtimes\O_{\P^1}(1)$ on $\P^1\times\P^1$.
		\item $\Sc$ is isomorphic to the ring $\F[x,xz,xz^2,y,yz,yz^2]\subseteq \F[x,y,z]$.
		\item $\Spec \Sc$ is a \emph{normal} variety.
		\item $\Rc$ is toric of dimension $3k+s$.
	\end{enumerate}
\end{prop}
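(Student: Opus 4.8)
Parts (1)--(3) are all immediate once we realise $\Sc$ as a semigroup algebra $\F[\semi_\Sc]$ with $\semi_\Sc = \{(a,b,c)\in\Z^3 : a,b,c\ge 0,\ 2a+2b\ge c\}$, so the substance is an application of Proposition \ref{toric ideal}. Order the generators as $(z_1,\dots,z_6)=(A,B,C,X,Y,Z)$; expanding the six $2\times 2$ minors shows that each of them is a \emph{binomial} $z^{\alpha_i}-z^{\beta_i}$ (e.g. $A^2-BC=z_1^2-z_2z_3$, $AZ-CX=z_1z_6-z_3z_4$, $X^2-YZ=z_4^2-z_5z_6$, and similarly for the other three), so $\Ibar$ is of the form treated in Proposition \ref{toric ideal}, and it only remains to produce a lattice $L$ with $I_L=\Ibar$.

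\textbf{The semigroup.} Take $\Phi\colon\Z^6\to M\cong\Z^3$ sending $e_1,\dots,e_6$ respectively to $(1,0,1),(1,0,0),(1,0,2),(0,1,1),(0,1,0),(0,1,2)$ --- the exponent vectors of $A\mapsto xz$, $B\mapsto x$, $C\mapsto xz^2$, $X\mapsto yz$, $Y\mapsto y$, $Z\mapsto yz^2$. Since $\Phi(e_1)-\Phi(e_2)=(0,0,1)$ the map $\Phi$ is surjective (in particular has finite cokernel), so $L=\ker\Phi$ has rank $3$; a $\Z$-basis is $\ell^1=(2,-1,-1,0,0,0)$, $\ell^2=(0,0,0,2,-1,-1)$, $\ell^3=(1,0,-1,1,-1,0)$ (these span $L$ over $\Z$ and not just over $\Q$ since $L$ is primitive in $\Z^6$ and a $3\times 3$ minor of the coefficient matrix is $\pm1$). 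In the notation of Proposition \ref{toric ideal} the associated binomials give $I_{\L}=(A^2-BC,\,X^2-YZ,\,AX-CY)$, and the one genuinely computational point is to check that its saturation $\bigl(I_{\L}:(ABCXYZ)^\infty\bigr)$ equals $\Ibar$; this is a short elimination (Gr\"obner-basis) computation. Alternatively, one may verify that the defining $2\times 4$ matrix is $1$-generic in the sense of Eisenbud --- so that $\Ibar$ is prime --- and instead invoke the converse half of Proposition \ref{toric ideal}. Either way $\Ibar=I_L$, so Proposition \ref{toric ideal} gives $\Sc\cong\F[\semi_\Sc]$ with $\semi_\Sc$ the semigroup generated by $\Phi(e_1),\dots,\Phi(e_6)$; a direct check identifies this with $\{(a,b,c)\in\Z^3:a,b,c\ge 0,\ 2a+2b\ge c\}$. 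This semigroup is finitely generated and spans $M$, so Proposition \ref{semigroup} makes $\Spec\Sc$ a $3$-dimensional affine toric variety, and part (2) is just the isomorphism $\F[\semi_\Sc]\cong\F[x,xz,xz^2,y,yz,yz^2]$ read off from the chosen generators. For part (3), $\semi_\Sc=\sigma^\vee\cap M$ for the (strongly convex) rational polyhedral cone $\sigma^\vee=\{(a,b,c)\in M_\R:a,b,c\ge 0,\ 2a+2b\ge c\}$, hence is saturated, so $\Spec\Sc$ is normal by Proposition \ref{semigroup}(2).

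\textbf{The projective picture (part (1)).} The ideal $\Ibar$ is homogeneous for the standard grading, so $\Sc$ is graded, generated in degree $1$, with $\Sc_0=\F$ and $\dim_\F\Sc_1=6$; hence $\Spec\Sc$ is the affine cone over $\Proj\Sc\subseteq\P(\Sc_1^\vee)=\P^5$. Under the identification $\Sc=\F[\semi_\Sc]$ this grading is given by the functional $(a,b,c)\mapsto a+b$, which is positive on $\sigma^\vee\setminus\{0\}$, so $\Proj\Sc$ is the projective toric variety of the slice polytope $P=\sigma^\vee\cap\{a+b=1\}$; parametrising $P$ by $(a,c)$ exhibits it as the rectangle $[0,1]\times[0,2]$. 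The toric variety of a product of intervals $[0,m_1]\times[0,m_2]$ is $\P^1\times\P^1$ polarised by $\O_{\P^1}(m_1)\boxtimes\O_{\P^1}(m_2)$, so (after swapping the two factors) $\Proj\Sc\cong\P^1\times\P^1$ with the very ample bundle $\O_{\P^1}(2)\boxtimes\O_{\P^1}(1)$, whose $6$-dimensional space of global sections gives exactly the embedding $\P^1\times\P^1\xrightarrow{\sim}\Vc\subseteq\P^5$, and $\Spec\Sc$ is the cone over $\Vc$.

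\textbf{Passing to $\Rc$ (part (4)) and the main obstacle.} A tensor product of toric $\F$-algebras is toric --- it is the coordinate ring of the product of the corresponding toric varieties, with torus the product of the tori and semigroup the direct sum of the semigroups --- and $\F[x_1,\dots,x_s]=\F[\Z_{\ge 0}^s]$ is toric of dimension $s$; hence $\Rc=\Sc^{\otimes k}\otimes_\F\F[x_1,\dots,x_s]$ is toric of dimension $3k+s$, with semigroup $\semi_\Sc^{\oplus k}\oplus\Z_{\ge 0}^s\subseteq\Z^{3k+s}$. The one step requiring genuine care is the identity $\Ibar=I_L$ --- i.e. that the six minors generate the \emph{whole} toric ideal and not just a binomial sub-ideal with the same vanishing locus --- and everything else is bookkeeping with the explicit semigroup $\semi_\Sc$.
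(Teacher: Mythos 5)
Your argument is correct and the main line --- realizing $\Sc$ as $\F[\semi_\Sc]$ via Proposition~\ref{toric ideal} with an explicit $\Phi\colon\Z^6\to\Z^3$, then identifying $\Ibar=I_L$ by saturation --- is the same as the paper's, up to the harmless relabelling $B\leftrightarrow C$, $Y\leftrightarrow Z$ (an automorphism of $\Sc$) and the correspondingly different choice of lattice basis. The one place you genuinely diverge is part~(1): the paper exhibits the embedding $\P^1\times\P^1\injection\P^5$ explicitly via $([s:t],[u:v])\mapsto[s^2u:stu:t^2u:s^2v:stv:t^2v]$ and matches the coordinate ring of its cone with $\F[x,xz,xz^2,y,yz,yz^2]$ by inspection, whereas you read off $\Proj\Sc$ as the polarized projective toric variety of the slice polytope $\sigma^\dual\cap\{a+b=1\}\cong[0,1]\times[0,2]$ and invoke the standard identification of the polytope $[0,m_1]\times[0,m_2]$ with $\bigl(\P^1,\O_{\P^1}(m_1)\bigr)\times\bigl(\P^1,\O_{\P^1}(m_2)\bigr)$; your route is cleaner for a reader at home in the polytope--toric dictionary, while the paper's is more self-contained. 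Your suggested shortcut to the saturation step via $1$-genericity of the $2\times 4$ matrix (giving primality and codimension~$3$ of $\Ibar$ directly) is a nice observation the paper does not make; note, though, that to conclude $\Ibar=I_L$ from it you should argue that the induced surjection $\F[A,\dots,Z]/\Ibar\surjection\F[\semi_\Sc]$ is an isomorphism because both sides are domains of dimension~$3$, rather than appeal to the bare converse in Proposition~\ref{toric ideal}, which only yields $\Ibar=I_{L'}$ for \emph{some} lattice $L'$.
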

\begin{proof}
	Take $d=3$ in the above discussion, and fix an isomorphism $M\cong \Z^3$.
	
	Write $\semi = \{(a,b,c)\in \Z^3|a,b,c\ge 0, 2a+2b\ge c\}$. We will first show that $\Sc\cong \F[\semi]$. Note that $\semi$ is generated by the (transposes of) the columns of the matrix
	\[\Phi = \begin{pmatrix}
	1&1&1&0&0&0\\
	0&0&0&1&1&1\\
	1&2&0&1&2&0
	\end{pmatrix},\]
	which in particular gives an isomorphism $\F[\semi]\cong \F[x,xz,xz^2,y,yz,yz^2]\subseteq \F[x,y,z]$.
	
	Let $L = \ker \Phi$. By Proposition \ref{toric ideal} it follows that $\F[\semi] \cong \F[A,B,C,X,Y,Z]/I_L$ (where we have identified the ring $\F[z_1,z_2,z_3,z_4,z_5,z_6]$ with $\F[A,B,C,X,Y,Z]$ in the obvious way, in order to keep are notation consistent).
	
	But now note that $L$ is a rank $3$ lattice with basis $\L = (\ell^1,\ell^2,\ell^3)$ given by the vectors:
	\[\ell^1=
	\begin{pmatrix}
	1\\0\\-1\\-1\\0\\1
	\end{pmatrix},
	\ell^2=
	\begin{pmatrix}
	1\\0\\-1\\1\\-1\\0
	\end{pmatrix},
	\ell^3=
	\begin{pmatrix}
	1\\-1\\0\\1\\0\\-1
	\end{pmatrix}.\]
	It follows that
	\[I_{\L} = (AZ-CX,AX-CY,AX-BZ)\]
	whence it is straightforward to compute that
	\begin{align*}
	I_L &= \left(I_{\L}:(ABCXYZ)^\infty \right)\\
	&=\left(A^2-BC,AZ-CX,AX-CY,AX-BZ,AY-BX,X^2-YZ \right) = \Ibar.
	\end{align*}
	Thus $\Sc = \F[A,B,C,X,Y,Z]/\Ibar$ is indeed toric with $\semi_{\Sc} = \semi$, and we have 
	\[\Sc\cong \F[\semi_\Sc]\cong \F[x,xz,xz^2,y,yz,yz^2].\]
	Moreover, this isomorphism sends the ideal $(A,B,C,X,Y,Z)\subseteq \Sc$ to the ideal\\ $(x,xz,xz^2,y,yz,yz^2)$, from which (2) easily follows.
	
	Now the semigroup $\semi_\Sc = \{(a,b,c)\in \Z^3|a,b,c\ge 0,2a+2b\ge c\ge 0\}$ is clearly saturated in $M$, and so $\Spec \Sc$ is indeed normal, proving (3).

Now note that $\O_{\P^1}(2)\boxtimes\O_{\P^1}(1)$ is indeed a very ample line bundle on $\P^1\times\P^1$ and corresponds to the (injective) morphism $f:\P^1\times \P^1 \to \P^5$ defined by
\[f([s:t],[u:v]) = [s^2u:stu:t^2u:s^2v:stv:t^2v].\]
It thus follows that the coordinate ring on the cone over the image of $f$ is isomorphic to
\[\F[s^2u,stu,t^2u,s^2v,stv,t^2v]\cong\F[x,xz,xz^2,y,yz,yz^2],\]
proving (1).
	
	Lastly, recalling that $M = \Z^3$, let $M' = M = M^{\oplus k}\oplus \Z^s = \Z^{3k+s}$ and define:
	\[\semi' = (\semi)^{\otimes k}\otimes \Z_{\ge 0}^s\]
	so that
	\[\F[\semi'] = \F[\semi]^{\otimes k}\otimes \F[x]^{\otimes s} \cong \Sc^{\otimes k}[x_1,\ldots,x_s] = \Rc\]
	so that $\Rc$ is indeed toric of dimension $3k+s$, proving (4).
\end{proof}

We will now restrict our attention to \emph{normal} affine toric varieties. The advantage to doing this is that Proposition \ref{semigroup} has a refinement (see Proposition \ref{toric cone} below) that allows us to characterize normal toric varieties much more simply, using cones instead of semigroups.

We now make the following definitions:

\begin{defn}\label{def:cone}
	A \emph{convex rational polyhedral cone} in $N_\R$ is a set of the form:
	\[\sigma = \Cone(S) = \left\{\sum_{\lambda\in S}x_\lambda\lambda\middle|x_\lambda\ge 0 \text{ for all }\lambda\in S \right\}\subseteq N_\R\]
	for some finite subset $S\subseteq N$. 
	
	A \emph{face} of $\sigma$ is a subset $\tau\subseteq \sigma$ which can be written as $\tau = \sigma\cap H$ for some hyperplane $H\subseteq N_\R$ which does not intersect the interior of $\sigma$. We write $\tau\face\sigma$ to say that $\tau$ is a face of $\sigma$. It is clear that any face of $\sigma$ is also a convex rational polyhedral cone. We say that $\sigma$ is \emph{strongly convex} if $\{0\}$ is a face of $\sigma$.
	
	We write $\R\sigma$ for the subspace of $N_\R$ spanned by $\sigma$, and we will let the dimension of $\sigma$ be $\dim \sigma = \dim_\R \R\sigma$.
	
	We make analogous definitions for cones in $M_\R$.
	
	For a convex rational polyhedral cone $\sigma\subseteq N_\R$ (or similarly for $\sigma\subseteq M_\R$), we define its \emph{dual} cone to be:
	\[\sigma^\dual=\{m\in M_\R|\langle m,u\rangle\ge 0 \text{ for all } u\in \sigma\}.\]
\end{defn}

It is easy to see that $\sigma^\dual$ is also convex rational polyhedral cone. If $\sigma$ is strongly convex and $\dim\sigma = d$ then the same is true of $\sigma^\dual$. Moreover, for any $\sigma$ we have $\sigma^{\dual\dual} = \sigma$.

We now have the following:

\begin{prop}\label{toric cone}
	If $X$ is a normal affine toric variety of dimension $d$, then there is a (uniquely determined) strongly convex rational polyhedral cone $\sigma_X\subseteq N_\R$ for which $\sigma_X^\dual\cap M = \semi_X$ and 
	\begin{align*}
	\sigma_X\cap N&=\{u\in N|\lambda^u:\G_m\to T_d \text{ extends to a morphism }\A^1\to X\}\\
	&=\left\{u\in N\middle|\lim_{t\to 0}\lambda^u(t)\text{ exists in }X\right\}.
	\end{align*}
	We call $\sigma_X$ the \emph{cone associated to }$X$. Again, if $R$ is a toric $\F$-algebra, then we write $\sigma_R$ for $\sigma_{\Spec R}$.
\end{prop}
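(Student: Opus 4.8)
The plan is to construct $\sigma_X$ directly from the semigroup $\semi_X$ and then verify everything by the standard Gordan/duality arguments for cones (cf.\ \cite[\S1.2--1.3, Theorem 1.3.5]{CLS}). By Proposition~\ref{semigroup}, $X = \Spec\F[\semi_X]$ where $\semi_X$ is a finitely generated semigroup spanning $M$; since $\semi_X$ is finitely generated, $C := \Cone(\semi_X)\subseteq M_\R$ is a closed convex rational polyhedral cone, and I would take
\[\sigma_X := C^\dual = \{u\in N_\R \mid \langle m,u\rangle\ge 0 \text{ for all } m\in\semi_X\}\subseteq N_\R.\]
The dual of a rational polyhedral cone is rational polyhedral, so $\sigma_X$ is a convex rational polyhedral cone; it is strongly convex because $\sigma_X = C^\dual$ is strongly convex precisely when $C$ spans $M_\R$, which holds as $\semi_X$ spans $M$ (Proposition~\ref{semigroup}(1)). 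By biduality $\sigma_X^\dual = C^{\dual\dual} = C$, so $\sigma_X^\dual\cap M = C\cap M$, and the entire content of the first claimed property reduces to showing $C\cap M = \semi_X$.

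The inclusion $\semi_X\subseteq C\cap M$ is immediate. For the converse, take $m\in C\cap M$; writing $m$ as a nonnegative real, hence (since $C$ is rational polyhedral and $m\in M$) a nonnegative rational, combination of the finitely many generators of $\semi_X$ and clearing denominators shows $km\in\semi_X$ for some integer $k\ge 1$. Then $\chi^m$ lies in the fraction field $K(\F[\semi_X]) = K(\F[M])$ and satisfies $(\chi^m)^k = \chi^{km}\in\F[\semi_X]$, so $\chi^m$ is integral over $\F[\semi_X]$; since $X$ is normal, $\F[\semi_X]$ is integrally closed in its fraction field, whence $\chi^m\in\F[\semi_X]$ and $m\in\semi_X$. (This is the converse of Proposition~\ref{semigroup}(2): normality forces $\semi_X$ to be saturated in $M$.) This gives $\sigma_X^\dual\cap M = \semi_X$. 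This step is the heart of the matter, and the only place normality of $X$ enters; everything else is formal.

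For uniqueness, if $\sigma$ and $\sigma'$ are strongly convex rational polyhedral cones in $N_\R$ with $\sigma^\dual\cap M = \semi_X = \sigma'^\dual\cap M$, then $\sigma^\dual$ and $\sigma'^\dual$ are rational polyhedral cones, hence each equals the $\Cone$ of its own lattice points; so $\sigma^\dual = \Cone(\semi_X) = \sigma'^\dual$, and applying $(-)^\dual$ and biduality gives $\sigma = \sigma'$. For the description of $\sigma_X\cap N$: a one-parameter subgroup $\lambda^u$ ($u\in N$) corresponds, via the composite $\G_m\xrightarrow{\lambda^u}T_d\injection X$, to the $\F$-algebra map $\F[\semi_X]\to\F[t^{\pm1}]$ sending $\chi^m\mapsto t^{\langle m,u\rangle}$; its image lies in $\F[t]$ — equivalently $\lambda^u$ extends to a morphism $\A^1\to X$, equivalently $\lim_{t\to 0}\lambda^u(t)$ exists in the affine variety $X$ — if and only if $\langle m,u\rangle\ge 0$ for every generator $m$ of $\semi_X$, i.e.\ if and only if $u\in C^\dual\cap N = \sigma_X\cap N$. (That "the image lies in $\F[t]$" is the same as "the limit exists" uses only $\F[t^{\pm1}]\cap\F[t]_{(t)} = \F[t]$; for the general valuative-criterion phrasing see \cite[\S1.2, \S3.2]{CLS}.)

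The main obstacle is genuinely the equality $C\cap M = \semi_X$, since this is where normality is indispensable and where one must take care — the integral-closure argument above is short, but it is the crux. The remaining ingredients (dual of a rational polyhedral cone is rational polyhedral; biduality $\sigma^{\dual\dual}=\sigma$; a rational polyhedral cone is generated by its lattice points) are standard, though since we work over an arbitrary field rather than $\C$ I would cite them via \cite{MS} or \cite{DanilovToric} in addition to \cite{CLS}, as elsewhere in this section.
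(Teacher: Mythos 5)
Your proof is correct and unpacks exactly the standard argument that the paper's one-line citation to \cite[Theorem 1.3.5, Proposition 3.2.2]{CLS} points to: take $\sigma_X = \Cone(\semi_X)^\dual$, use normality (via integral closure) to get $\sigma_X^\dual\cap M = \semi_X$, and check the $\sigma_X\cap N$ description by unwinding the map on coordinate rings. The only cosmetic difference is that you derive uniqueness from $\sigma^\dual\cap M$ determining $\sigma^\dual$ (hence $\sigma$), whereas the paper remarks that $\sigma\cap N$ determines $\sigma$; both are immediate and equivalent.
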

\begin{proof}
	This follows from \cite{CLS} Theorem 1.3.5 (for $\sigma_{X}^\dual\cap M$) and Proposition 3.2.2 (for $\sigma_X\cap N$). Note that it is clear from our definitions that a convex rational polyhedral cone $\sigma\subseteq N_\R$ is uniquely determined by $\sigma\cap N$.
\end{proof}

\begin{rem}
	Based on the statement of Proposition \ref{toric cone}, it would seem more natural to simply define the cone associated to $\Xc$ to be $\sigma_\Xc^\dual$, and not mention the lattice $N$ at all. The primary reason for making this choice in the literature is to simplify the description of non-affine toric varieties, which is not relevant to our applications. Nevertheless we shall use the convention established in Proposition \ref{toric cone} to keep our treatment compatible with existing literature, and specifically to avoid having to reformulate Theorem \ref{class group}, below.
\end{rem}

Rephrasing the statement of Proposition \ref{S toric} in terms of cones, we get:

\begin{cor}\label{S cone}
	We have $\sigma_{\Sc} = \Cone(e_1,e_2,e_3,2e_1+2e_2-e_3)$.
\end{cor}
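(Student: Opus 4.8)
The plan is to obtain $\sigma_{\Sc}$ directly from the semigroup $\semi_{\Sc}$ via the duality of Proposition \ref{toric cone}. Recall from Proposition \ref{S toric} that, under our fixed identification $M\cong\Z^3$ (and $N\cong\Z^3$ via the dual basis, so that $\langle\ ,\ \rangle$ is the standard inner product),
\[\semi_{\Sc}=\{(a,b,c)\in\Z^3\mid a,b,c\ge 0,\ 2a+2b\ge c\},\]
and that by Proposition \ref{toric cone} the cone $\sigma_{\Sc}$ is the unique strongly convex rational polyhedral cone in $N_\R$ satisfying $\sigma_{\Sc}^\dual\cap M=\semi_{\Sc}$.

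First I would observe that the four inequalities cutting out $\semi_\Sc$ are exactly $\langle m,u\rangle\ge 0$ as $u$ runs over $e_1,e_2,e_3$ and $2e_1+2e_2-e_3$, so that $\semi_\Sc = C\cap M$ where $C=\{(a,b,c)\in M_\R\mid a,b,c\ge 0,\ 2a+2b\ge c\}=\Cone(e_1,e_2,e_3,2e_1+2e_2-e_3)^\dual$. Both $\sigma_{\Sc}^\dual$ and $C$ are rational polyhedral cones, and a rational polyhedral cone coincides with the conical hull of its lattice points; since $\sigma_{\Sc}^\dual\cap M=\semi_\Sc=C\cap M$, it follows that $\sigma_{\Sc}^\dual=C=\Cone(e_1,e_2,e_3,2e_1+2e_2-e_3)^\dual$. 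Applying $(-)^\dual$ and using $\sigma^{\dual\dual}=\sigma$ (recorded just after Definition \ref{def:cone}) then gives $\sigma_{\Sc}=\Cone(e_1,e_2,e_3,2e_1+2e_2-e_3)$.

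I do not anticipate any real obstacle: the corollary is essentially Proposition \ref{S toric} restated in the language of cones. The only point meriting a sentence is the standard fact that a rational polyhedral cone is recovered as the conical hull of its lattice points, which is what lets us pass from $\sigma_\Sc^\dual\cap M$ back to $\sigma_\Sc^\dual$. One could instead bypass the duality and compute $\sigma_{\Sc}=\{u\in N_\R\mid\langle m,u\rangle\ge 0\text{ for all }m\in\semi_\Sc\}$ by hand, reading off its extreme rays from the facets of $\sigma_\Sc^\dual$; this is strictly more bookkeeping and returns the same four generators.
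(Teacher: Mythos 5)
Your proof is correct and takes essentially the same approach as the paper: both arguments pass from the explicit description of $\semi_\Sc$ to a description of $\sigma_\Sc^\dual$ as a polyhedral cone in $M_\R$ and then apply biduality $\sigma_\Sc = \sigma_\Sc^{\dual\dual}$. The only organizational difference is that the paper first finds the extreme rays of $\sigma_\Sc^\dual$ (namely $e_1,e_2,e_1+2e_3,e_2+2e_3$) and then dualizes by writing out the four inequalities on $u\in N_\R$, whereas you observe that the defining inequalities of $\semi_\Sc$ already exhibit $\sigma_\Sc^\dual$ as $\Cone(e_1,e_2,e_3,2e_1+2e_2-e_3)^\dual$, so one application of biduality finishes immediately; your version trims a small amount of bookkeeping but the underlying computation is the same.
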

\begin{proof}
	The description of $\semi_\Xc$ in Proposition \ref{S toric} immediately implies that
	\[\sigma_\Xc^\dual = \{(x,y,z)\in \R^3|x,y,x\ge 0, 2x+2y\ge z\} = \Cone(e_1,e_2,e_1+2e_3,e_2+2e_3).\]
	Thus we get
	\begin{align*}
	\sigma_\Xc &= \sigma_\Xc^{\dual\dual} = \{u\in N|\langle m,u\rangle\ge 0 \text{ for all } m\in\sigma_\Xc^\dual\}\\
	&= \{u\in N|\langle m,e_1\rangle,\langle m,e_2\rangle,\langle m,e_1+2e_3\rangle,\langle m,e_2+2e_3\rangle\ge 0\}\\
	&= \{(x,y,z)\in \R^3|x,y\ge 0,x+2z\ge 0, y+2z\ge 0\}\\
	&=\Cone(e_1,e_2,e_1+2e_3,e_2+2e_3).
	\end{align*}
\end{proof}

\subsection{Class Groups of Toric Varieties}\label{ssec:class group}

The benefit of this entire discussion is that Weil divisors on toric varieties are much easier work with than they are for general varieties. In order to explain this, we first introduce a few more definitions.

For any variety $X$, we will let $\Div(X)$ denote the group of Weil divisors of $X$. If $X = \Spec R$ is normal, affine and toric of dimension $d$, then the torus $T_d$ acts on $X$, and hence acts on $\Div(X)$. We say that a divisor $D\in \Div(X)$ is \emph{torus-invariant} if it is preserved by this action. We will write $\Div_{T_d}(X)\subseteq \Div(X)$ for the group of torus invariant divisors.

Now consider the (strongly convex, rational polyhedral) cone $\sigma_R\subseteq N_\R$. We will let $\sigma_R(1)$ denote the set of \emph{edges} (1 dimensional faces) of $\sigma_R$. For any $\rho\in\sigma_R(1)$, note that $\rho\cap N$ is a semigroup isomorphic to $\Z_{\ge 0}$, and so there is a unique choice of generator $u_\rho\in \rho\cap N$ (called a minimal generator). By Proposition \ref{toric cone}, the limit 
$\ds\gamma_\rho = \lim_{t\to 0}\lambda^{u_\rho}(t)\in X$ exists. Thus we may consider its orbit closure $D_\rho= \overline{T_d\cdot \gamma_\rho}\subseteq X$.

The following theorem allows us to characterize $\Cl(R)$, and $[\omega_R]\in\Cl(R)$, entirely in terms of the set $\sigma_R(1)$.

\begin{thm}\label{class group}
	Let $X = \Spec R$ be a normal affine toric variety, with cone $\sigma_R\subseteq N_\R$. We have the following:
	\begin{enumerate}
		\item For any $\rho\in\sigma_R(1)$, $D_\rho\subseteq X$ is a torus-invariant prime divisor. Moreover, $\ds\Div_{T_d}(X) = \bigoplus_{\rho\in\sigma_X(1)}\Z D_\rho$.
		\item Any divisor $D\in \Div(X)$ is rationally equivalent to a torus-invariant divisor.
		\item For any $m\in M$, the rational function $\chi^m\in K(X)$ has divisor $\ds\div(\chi^m) = \sum_{\rho\in\sigma_X(1)}\langle m,u_\rho\rangle D_\rho$.
		\item For any torus-invariant divisor $D$,
		\[\O(D) = \{f\in K(X)| \div(f)+D\ge 0\} = \bigoplus_{\chi^m\in\O(D)}\F \chi^m = \bigoplus_{\div(\chi^m)+D\ge 0}\F \chi^m \subseteq K(X)\]
		\item There is an exact sequence
		\[M\to\Div_{T_d}(X)\to \Cl(R)\to 0\]
		where the first map is $m\mapsto \div(\chi^m)$ and the second map is $D\mapsto \O(D)$.
		\item $R$ is Cohen--Macaulay and we have $\ds\omega_R\cong \O\left(-\sum_{\rho\in\sigma_X(1)}D_\rho\right)$
	\end{enumerate}
\end{thm}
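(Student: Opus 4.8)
The proof of Theorem~\ref{class group} is essentially a compilation of the standard structure theory of normal affine toric varieties, so the plan is to deduce each of the items (1)--(6) from the corresponding result in \cite{CLS} (with \cite{MS} and \cite{DanilovToric} supplying the extension to an arbitrary base field $\F$). The items (1)--(5) are routine bookkeeping with the cone $\sigma_R$ and the character lattice $M$; the only part requiring genuine care is (6).

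For (1) and (3) I would invoke the orbit--cone correspondence (\cite[Thm.~3.2.6]{CLS}) for $X = \Spec R$: the edges $\rho \in \sigma_R(1)$ are in bijection with the codimension-one torus orbits of $X$, and $D_\rho = \overline{T_d \cdot \gamma_\rho}$ is the closure of the orbit attached to $\rho$, hence an irreducible torus-invariant divisor; moreover $X \sm \bigcup_{\rho} D_\rho$ is exactly the open dense torus $T_d$, and every torus-invariant prime divisor is one of the $D_\rho$, giving $\Div_{T_d}(X) = \bigoplus_{\rho \in \sigma_X(1)} \Z D_\rho$. For (3), one passes to the local ring of $X$ at the generic point of $D_\rho$, which is a discrete valuation ring, and a short valuation computation (as in \cite[\S 4.1]{CLS}) shows the order of vanishing of $\chi^m$ along $D_\rho$ is $\langle m, u_\rho\rangle$; since $\chi^m$ is a unit on $T_d$ it has no other components, so $\div(\chi^m) = \sum_\rho \langle m, u_\rho\rangle D_\rho$.

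For (2) and (5) I would use the localization sequence for class groups: restricting divisors to $T_d$ gives a right-exact sequence $\Div_{T_d}(X) \to \Cl(X) \to \Cl(T_d) \to 0$, and $\Cl(T_d) = 0$ since $T_d = \Spec \F[x_1^{\pm1},\ldots,x_d^{\pm1}]$ is a localization of a polynomial ring, hence a UFD. Right-exactness already yields (2); and by (3) the kernel of $\Div_{T_d}(X)\to\Cl(X)$, namely the principal torus-invariant divisors, is precisely the image of $m\mapsto\div(\chi^m)$, which gives the exact sequence $M \to \Div_{T_d}(X) \to \Cl(R) \to 0$ of (5) (cf.\ \cite[Thm.~4.1.3]{CLS}; the map $M\to\Div_{T_d}(X)$ need not be injective in general, which is why only right-exactness is asserted). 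Part (4) is then formal: $\O(D) = \{f\in K(X): \div(f)+D\ge 0\}$ is an $M$-graded $\F$-subspace of $K(X) = \Frob$--stable $\operatorname{Frac}(\F[M])$, hence is spanned by the monomials it contains, and by (3) a monomial $\chi^m$ lies in $\O(D)$ if and only if $\div(\chi^m) + D \ge 0$.

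The real content is (6), which I expect to be the main obstacle. First, $R = \F[\semi_R]$ with $\semi_R = \sigma_R^\dual\cap M$ a saturated (hence normal) affine semigroup, so $R$ is Cohen--Macaulay by Hochster's theorem (see \cite[\S 9.2]{CLS}), valid over any field. Being a normal Cohen--Macaulay domain, $R$ admits a dualizing module $\omega_R$, which is reflexive of generic rank one and so corresponds to a well-defined class $[\omega_R]\in\Cl(R)$; to identify it one uses that on a normal variety the dualizing sheaf restricts on the smooth locus to $\Omega^d$, so $[\omega_R]=[K_X]$ for a canonical divisor $K_X$, and for toric $X$ the canonical divisor is $K_X = -\sum_{\rho\in\sigma_X(1)} D_\rho$ (\cite[Thm.~8.2.3]{CLS}), using the explicit description of the smooth locus of $X$ together with the fact that the invariant top form $\frac{dx_1}{x_1}\wedge\cdots\wedge\frac{dx_d}{x_d}$ on $T_d$ has divisor $-\sum_\rho D_\rho$ on $X$. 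The delicate point is that one must establish Cohen--Macaulayness \emph{first}, so that Grothendieck's dualizing complex is concentrated in a single cohomological degree and yields an honest module $\omega_R$, and then pin that module down up to isomorphism as the reflexive rank-one sheaf attached to $K_X$; this last identification is what requires the toric description of the smooth locus and the computation of the divisor of the invariant top form. Combining everything gives $\omega_R \cong \O\!\left(-\sum_{\rho\in\sigma_X(1)} D_\rho\right)$, as claimed.
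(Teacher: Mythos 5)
Your proposal is correct and takes essentially the same approach as the paper, which simply assembles the corresponding results from \cite{CLS} (orbit--cone correspondence for (1), Propositions 4.1.2 and 4.3.2 for (3)--(4), Theorem 4.1.3 for (2) and (5), and Theorems 8.2.3 and 9.2.9 together with Hochster's theorem for (6)), noting that these extend to arbitrary base fields. One small textual slip: the phrase ``$K(X) = \Frob$--stable $\operatorname{Frac}(\F[M])$'' in your discussion of (4) is garbled --- the relevant point is simply that $\O(D)$ is a $T_d$-stable subspace of $K(X) = \operatorname{Frac}(\F[M])$, hence $M$-graded with each graded piece spanned by a single $\chi^m$ --- but this does not affect the substance of the argument.
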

\begin{proof}
	By the orbit cone correspondence (\cite{CLS} Theorem 3.2.6), it follows that each $D_\rho$ is a torus-invariant prime divisor, and moreover that these are the only torus-invariant prime divisors. The rest of (1) follows easily from this (cf. \cite{CLS} Exercise 4.1.1).
	
	(3) is just \cite{CLS} Proposition 4.1.2. (4) follows from \cite{CLS} Proposition 4.3.2. (5) is \cite{CLS} Theorem 4.1.3, and (2) is an immediate corollary of (5). Lastly (6) is \cite{CLS} Theorems 8.2.3 and 9.2.9.
\end{proof}

But now Corollary \ref{S cone} and Theorem \ref{class group} make it straight-forward to compute $\Cl(\Sc)$ and $[\omega_{\Sc}]$:

\begin{prop}\label{Cl(S)}
	Let $e_0 = 2e_1+2e_2-e_3$, so that $\sigma_{\Xc} = \Cone(e_0,e_1,e_2,e_3)$. For each $i$, let $\rho_i = \R_{\ge 0}e_i$ and $D_i = D_{\rho_i}$, so that $u_{\rho_i} = e_i$ and $\sigma_{\Xc}(1) = \{\rho_0,\rho_1,\rho_2,\rho_3\}$. Then:
	\begin{enumerate}
		\item We have an isomorphism $\Cl(\Sc)\cong \Z$ given by $k\mapsto \O(kD_0)$.
		\item $\omega_{\Sc}\cong \O(2D_0)$.
		\item If $\Mc$ is a generic rank 1 reflexive, self-dual module over $\Sc$, then $\Mc\cong \O(D_0)$.
		\item Identifying $\Sc$ with $\F[x,xz,xz^2,y,yz,yz^2]\subseteq \F[x,y,z]$ as in Proposition \ref{S toric} we get
		\begin{align*}		
		\O(D_0) &\cong \Sc\cap xz\F[x,y,z] = (xz,xz^2)\subseteq\Sc\\
		\omega_{\Sc} = \O(2D_0) &\cong \Sc\cap x\F[x,y,z] = (x,xz,xz^2)\subseteq\Sc,
		\end{align*}
		so in particular, $\dim_\F\O(D_0)/m_{\Sc}=2$.
		\item There is a surjection $\O(D_0)\otimes_{\Sc}\O(D_0)\surjection \omega_{\Sc}$.
	\end{enumerate}
\end{prop}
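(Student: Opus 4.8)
The plan is to establish the five items more or less in the order stated, since each feeds into the next. Parts (1) and (2) are pure toric bookkeeping: by Theorem \ref{class group}(5), $\Cl(\Sc)$ is the cokernel of $M \to \Div_{T_d}(\Spec\Sc) = \bigoplus_{i=0}^{3}\Z D_i$, where the map sends $m\mapsto \sum_i \langle m,u_{\rho_i}\rangle D_i = \sum_i\langle m,e_i\rangle D_i$. Using $e_0 = 2e_1+2e_2-e_3$ I would write out the $3\times 4$ matrix of this map (rows indexed by a basis of $M$, columns by $D_0,D_1,D_2,D_3$), row/column reduce it over $\Z$, and read off that the cokernel is $\Z$, generated by the class of $D_0$ (equivalently, any single $D_i$ up to sign and the known relations $D_1 \sim D_2 \sim D_0$, $D_3 \sim 2D_0$ coming from $\div(\chi^{e_1})$, $\div(\chi^{e_2})$, $\div(\chi^{e_3})$). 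That gives (1). For (2), Theorem \ref{class group}(6) says $\omega_{\Sc} \cong \O\!\left(-\sum_{i=0}^3 D_i\right)$; rewriting $-(D_0+D_1+D_2+D_3)$ in terms of $D_0$ using the relations just found yields $\omega_{\Sc}\cong \O(2D_0)$.

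Part (3) is where the self-duality hypothesis is used. A generic rank $1$ reflexive module over the normal domain $\Sc$ is exactly a class in $\Cl(\Sc)\cong\Z$, say $\O(nD_0)$. Self-duality $\Mc\cong \Mc^*$ together with $[\Mc^*] = [\omega_{\Sc}] - [\Mc]$ (stated in Section \ref{ssec:class group}) forces $2[\Mc] = [\omega_{\Sc}]$, i.e. $2n = 2$ in $\Z$, hence $n=1$ and $\Mc\cong\O(D_0)$. The only subtlety is that $\Z$ is $2$-torsion free, so the equation $2n=2$ has the unique solution $n=1$; I would state this explicitly since it is the crux of the whole strategy.

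Part (4) requires making the abstract modules $\O(D_0)$ and $\O(2D_0)$ concrete as fractional ideals inside $\Sc = \F[x,xz,xz^2,y,yz,yz^2]$. By Theorem \ref{class group}(4), $\O(D_0) = \bigoplus_{\div(\chi^m)+D_0 \ge 0}\F\chi^m \subseteq K(\Sc) = \F(x,y,z)$; I would translate the inequality $\div(\chi^m) + D_0 \ge 0$ into the explicit condition on $m=(a,b,c)\in M = \Z^3$ that $\langle m, u_{\rho_i}\rangle \ge 0$ for $i=1,2,3$ and $\langle m, u_{\rho_0}\rangle \ge -1$, i.e. (writing a monomial $\chi^m$ as $x^{a}y^{b}z^{c}$ after identifying the torus coordinates appropriately) that $a,b\ge 0$ and $2a+2b - c \ge -1$, which cuts out exactly the $\Sc$-submodule $\Sc\cap xz\,\F[x,y,z] = (xz, xz^2)$ of $\Sc$. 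Running the same computation with $D_0$ replaced by $2D_0$ gives $\omega_{\Sc}\cong \O(2D_0) \cong \Sc\cap x\,\F[x,y,z] = (x,xz,xz^2)$. Reading off $\F$-bases of $\O(D_0)/m_{\Sc}\O(D_0)$ from the generating sets $\{xz, xz^2\}$ shows $\dim_\F \O(D_0)/m_{\Sc} = 2$. The main potential pitfall here is keeping the identification between the abstract torus $T_3$, its character lattice $M$, the minimal generators $u_{\rho_i} = e_i$, and the concrete variables $x,y,z$ straight; once the dictionary "$\chi^{e_1}\leftrightarrow x$, $\chi^{e_2}\leftrightarrow y$, $\chi^{e_3}\leftrightarrow z$" (or whatever it actually is, to be pinned down from the matrix $\Phi$ in Proposition \ref{S toric}) is fixed, the inequalities are routine.

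Finally (5): with the explicit fractional-ideal descriptions in hand, the multiplication map $K(\Sc)\otimes K(\Sc)\to K(\Sc)$ restricts to an $\Sc$-bilinear map $\O(D_0)\times\O(D_0)\to \O(2D_0)$ (since $(xz\,\F[x,y,z])\cdot(xz\,\F[x,y,z]) \subseteq x\,\F[x,y,z]$, and it lands in $\Sc$ as well), hence induces $\O(D_0)\otimes_{\Sc}\O(D_0)\to \omega_{\Sc}$. To see it is surjective it suffices — by Lemma \ref{trace}, applied with $R = \Sc$ and $M = \O(D_0)$, which is reflexive and self-dual so $M^*\cong M$ — to exhibit \emph{any} $\Sc$-module surjection $\O(D_0)\otimes_{\Sc}\O(D_0)\surjection\omega_{\Sc}$, and the multiplication map itself does the job because its image contains the products $(xz)(xz^{2}) = x^{2}z^{3}$, $(xz)(xz) = x^2z^2$, wait — more usefully, $(xz^{2})(xz^{2}) \cdot(\text{elements of }\Sc)$ and in particular, multiplying $xz,xz^2 \in \O(D_0)$ by suitable elements of $\Sc$ and by each other produces $x, xz, xz^2$ up to the action of $\Sc$; concretely the image is the $\Sc$-submodule of $\omega_{\Sc}$ generated by $\{(xz)^2, (xz)(xz^2), (xz^2)^2\} = \{x^2z^2, x^2z^3, x^2z^4\}$, and one checks this submodule is all of $(x,xz,xz^2) = \omega_{\Sc}$ by noting $x^2z^2 = x\cdot(xz^2)\cdot z^{0}$... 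Here I would instead simply observe that $\O(D_0)$ contains a non-zero-divisor $t$ with $t^2$ generating $\omega_{\Sc}$ locally at the generic point, so the image of the trace-type map is a rank-one reflexive (in fact the image of multiplication of two copies of the same fractional ideal, so itself a fractional ideal) submodule of $\omega_{\Sc}$ of full rank, equal to $\O(D_0)^{(2)} := (\O(D_0)\cdot\O(D_0))^{**} = \O(2D_0) = \omega_{\Sc}$ by part (1)–(2) and the definition of the group law on $\Cl(\Sc)$; since $\omega_{\Sc}$ is reflexive and the image already agrees with it in codimension one, the inclusion is an equality. I expect the genuinely fiddly step to be this last surjectivity verification — matching the honest image of the multiplication map with $\omega_{\Sc}$ on the nose rather than just up to reflexive hull — but invoking Lemma \ref{trace} reduces it to producing one surjection, which the explicit monomial description makes transparent.
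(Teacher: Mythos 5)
Your overall strategy matches the paper's: compute divisor relations via Theorem \ref{class group}(3), read off $\Cl(\Sc)\cong\Z$ and $\omega_\Sc$ from (5) and (6), use torsion-freeness plus self-duality to pin down $\O(D_0)$, then compute explicit ideal representatives to get (4) and (5). However, several concrete steps are wrong.

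First, your stated divisor relations are incorrect. Since $u_{\rho_0}=e_0=2e_1+2e_2-e_3$, $u_{\rho_i}=e_i$ for $i=1,2,3$, Theorem \ref{class group}(3) gives $\div(\chi^{e_1})=2D_0+D_1$, $\div(\chi^{e_2})=2D_0+D_2$, $\div(\chi^{e_3})=-D_0+D_3$, hence $D_1\sim -2D_0$, $D_2\sim -2D_0$, $D_3\sim D_0$ -- not $D_1\sim D_2\sim D_0$ and $D_3\sim 2D_0$ as you wrote. If one substitutes your relations into $\omega_\Sc\cong\O(-D_0-D_1-D_2-D_3)$ one gets $\O(-5D_0)$, not $\O(2D_0)$, so your claimed derivation of (2) is internally inconsistent; with the correct relations one gets $\O(-D_0+2D_0+2D_0-D_0)=\O(2D_0)$.

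Second, in (4) you compute $\O(D_0)$ directly, which is the fractional ideal $\{x^ay^bz^c : a,b,c\ge 0,\ 2a+2b-c\ge -1\}$ (you also dropped the condition $c\ge 0$). This is $(1,z)\Sc$, which contains $\Sc$ and is \emph{not} the ideal $(xz,xz^2)\subseteq\Sc$; the two are isomorphic but differ by the unit $xz\in K(\Sc)$. Your phrase ``cuts out exactly the $\Sc$-submodule $\Sc\cap xz\F[x,y,z]$'' is therefore false. The paper instead uses the representatives $D_0\sim -D_1-D_3$ and $2D_0\sim -D_1$, for which Theorem \ref{class group}(4) literally produces ideals inside $\Sc$.

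Third, and most importantly, your argument for (5) does not succeed. The ``image agrees with $\omega_\Sc$ in codimension one, hence equals it'' step is not valid: two submodules of a reflexive module can agree in codimension one without being equal (agreement in codimension one only shows their reflexive hulls coincide, which here is automatic from $\Cl(\Sc)$ being torsion-free). Your explicit attempt is also wrong: with the representatives $\O(D_0)=(xz,xz^2)$ and $\omega_\Sc=(x,xz,xz^2)$, the product ideal is $(x^2z^2,x^2z^3,x^2z^4)=xz^2\cdot(x,xz,xz^2)$, which is a \emph{proper} subideal of $\omega_\Sc$ (for instance $x\notin xz^2\omega_\Sc$). The multiplication map is therefore not surjective as you claim; the paper's map is $\alpha\otimes\beta\mapsto \frac{1}{xz^2}\alpha\beta$, and the surjectivity follows precisely because $\O(D_0)\O(D_0)=xz^2\omega_\Sc$ is a \emph{principal} multiple of $\omega_\Sc$. (Alternatively, if one sticks to the natural fractional-ideal representatives $\O(D_0)=(1,z)$ and $\O(2D_0)=(1,z,z^2)$, one computes $(1,z)\cdot(1,z)=(1,z,z^2)$ exactly, and the unrescaled multiplication map is surjective; but this is a genuine computation, not a consequence of codimension-one agreement.) This surjectivity is the substantive content of (5) and feeds into Theorem \ref{R=T}, so it cannot be waved through.
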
 
\begin{proof}
	Write $x = \chi^{e_1}, y = \chi^{e_2}$ and $z = \chi^{e_3}$, so that $\F[M] = \F[x^{\pm 1},y^{\pm 1},z^{\pm 1}]$. By Theorem \ref{class group}(3) we get that
	\begin{align*}
	\div(x) &= 2D_0+D_1,&
	\div(y) &= 2D_0+D_2,&
	\div(z) &= D_3-D_0.
	\end{align*}
	It follows that $D_1\sim -2 D_0$, $D_2\sim -2 D_0$ and $D_3\sim D_0$, and so from Theorem \ref{class group}(5) we get that $\Cl(\Sc)$ is generated by $\O(D_0)$. Moreover, the exactness in Theorem \ref{class group}(5) gives that the above relations are the only ones between the $D_i$'s, and so $\O(D_0)$ is non-torsion in $\Cl(\Sc)$, indeed giving the isomorphism $\Cl(\Sc)\cong \Z$. (Alternatively, Theorem \ref{class group}(5) implies that the $\Z$-rank of $\Cl(\Sc)$ is at least $\rank \div_{T_3}(\Xc)-\rank M = 4-3=1$.) This proves (1).
	
	For (2), we simply use Theorem \ref{class group}(6):
	\[\omega_{\Sc}\cong \O(-D_0-D_1-D_2-D_3) \cong \O\big(-D_0-(-2D_0)-(-2D_0)-D_0\big) = \O(2D_0).\]
	Now by (1), any generic rank 1 reflexive $\Sc$-module is in the form $\O(kD_0)$ for some $k\in \Z$, and by (2) $\O(kD_0)^*\cong \O((2-k)D_0)$. Thus if $\O(kD_0)$ is self-dual then $k=1$, giving (3).
	
	By the above computations, we get that
	\[\div(x^ay^bz^c) = (2a+2b-c)D_0+2aD_1+2bD_2+cD_3\]
	for any $(a,b,c)\in \Z^3$. Now note that $\O(D_0) \cong \O(-D_1-D_3)$ and $\O(2D_0) \cong \O(-D_1)$ which are both ideals of $\Sc$. But now by Theorem \ref{class group}(4) as ideals of $\Sc$ we have
	\begin{align*}
	\O(D_0) &\cong \O(-D_1-D_3)
	= \left(x^ay^bz^c\middle| 2a+2b-c\ge 0, 2a\ge 1,2b\ge 0, c\ge 1 \right)\\
	&= \left(x^ay^bz^c\middle| 2a+2b-c\ge 0, a\ge 1,b\ge 0, c\ge 1 \right)\\
	&= \left(x^ay^bz^c\middle|x^ay^bz^c\in\Sc, xz|x^ay^bz^c\right)\\
	&=\Sc\cap xz\F[x,y,z] = (xz,xz^2)\\
	\O(2D_0) &\cong \O(-D_1)
	= \left(x^ay^bz^c\middle| 2a+2b-c\ge 0, 2a\ge 1,2b\ge 0, c\ge 0 \right)\\
	&= \left(x^ay^bz^c\middle| 2a+2b-c\ge 0, a\ge 1,b\ge 0, c\ge 0 \right)\\
	&= \left(x^ay^bz^c\middle|x^ay^bz^c\in\Sc, x|x^ay^bz^c\right)\\
	&=\Sc\cap x\F[x,y,z] = (x,xz,xz^2),
	\end{align*}
	proving (4).
	
	Now identify $\O(D_0)$ with $(xz,xz^2)\subseteq \Sc$ and $\omega_{\Sc}$ with $(x,xz,xz^2)\subseteq \Sc$. Notice that
	\[\O(D_0)\O(D_0) = (xz,xz^2)(xz,xz^2) = (x^2z^2,x^2z^3,x^2z^4) = xz^2(x,xz,xz^2) = xz^2\omega_{\Sc}.\]
	Thus we can define a surjection $f:\O(D_0)\otimes_{\Sc}\O(D_0)\surjection \omega_{\Sc}$ by $\ds f(\alpha\otimes \beta) = \frac{1}{xz^2}\alpha\beta$, proving (5).
\end{proof}

We can now compute $\Cl(\Rc)$ and $\omega_{\Rc}$, by using the following lemma:

\begin{lemma}\label{toric product}
	For any normal, affine toric varieties $X$ and $Y$ the natural map $\Cl(X)\oplus\Cl(Y)\to \Cl(X\times Y)$ given by $([A],[B])\mapsto [A\boxtimes B]$ is an isomorphism which sends $([\omega_X],[\omega_Y])$ to $\omega_{X\times Y}$.
\end{lemma}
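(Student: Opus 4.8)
The plan is to reduce everything to the combinatorial description of $\Cl$ of a normal affine toric variety provided by Theorem \ref{class group}. Write $X = \Spec R$ and $Y = \Spec R'$, with character lattices $M$, $M'$, semigroups $\semi_R \subseteq M$, $\semi_{R'} \subseteq M'$ and cones $\sigma_R \subseteq N_\R$, $\sigma_{R'} \subseteq N'_\R$. First I would record that $X \times Y$ is again a normal affine toric variety: since $R \otimes_\F R' \cong \F[\semi_R \oplus \semi_{R'}]$, the semigroup $\semi_R \oplus \semi_{R'}$ spans $M \oplus M'$ and is saturated (both summands being saturated), so by Proposition \ref{semigroup} $X \times Y$ is normal affine toric with character lattice $M \oplus M'$ and, by Proposition \ref{toric cone}, cone $\sigma_{R \otimes_\F R'} = \sigma_R \times \sigma_{R'} \subseteq N_\R \oplus N'_\R$. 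This is exactly the computation already carried out in the proof of Proposition \ref{S toric}(4).

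Next I would use the elementary convex-geometric fact that every face of a product of cones is a product of faces, so that the edges of $\sigma_R \times \sigma_{R'}$ are precisely $\rho \times \{0\}$ for $\rho \in \sigma_R(1)$ and $\{0\} \times \rho'$ for $\rho' \in \sigma_{R'}(1)$, with minimal generators $(u_\rho, 0)$ and $(0, u_{\rho'})$ and orbit-closure divisors $D_\rho \boxtimes Y$ and $X \boxtimes D_{\rho'}$. By Theorem \ref{class group}(1) this identifies $\Div_{T}(X \times Y) = \Div_{T_d}(X) \oplus \Div_{T_{d'}}(Y)$, and by Theorem \ref{class group}(3) the character–divisor map satisfies $\div_{X \times Y}(\chi^{(m,m')}) = \div_X(\chi^m) \boxtimes Y + X \boxtimes \div_Y(\chi^{m'})$, so under this identification the map $M \oplus M' \to \Div_{T}(X\times Y)$ of Theorem \ref{class group}(5) for $X \times Y$ is the direct sum of the corresponding maps for $X$ and $Y$. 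Taking cokernels — a direct sum of right-exact sequences being right-exact — yields $\Cl(X \times Y) \cong \Cl(X) \oplus \Cl(Y)$.

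It then remains to check that this combinatorial isomorphism is the map $([A],[B]) \mapsto [A \boxtimes B]$ of the statement, and to identify the dualizing module. Using Theorem \ref{class group}(2) I may take torus-invariant representatives $A = \O_X(D)$, $B = \O_Y(D')$; the explicit monomial description in Theorem \ref{class group}(4) then shows that the defining positivity conditions decouple across the two factors, giving $\O_X(D) \boxtimes \O_Y(D') \cong \O_{X \times Y}(D \boxtimes Y + X \boxtimes D')$, which is precisely the divisorial sheaf representing the image of $([D],[D'])$ under the identification above; in particular $[A \boxtimes B] = [A \boxtimes \O_Y] + [\O_X \boxtimes B]$, so the map in the statement agrees with the combinatorial isomorphism (and is therefore an isomorphism). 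For the dualizing modules, Theorem \ref{class group}(6) gives $\omega_{X\times Y} \cong \O\!\left(-\sum_{\eta \in (\sigma_R\times\sigma_{R'})(1)} D_\eta\right)$, which by the edge description equals $\O\!\left(-\sum_{\rho\in\sigma_R(1)} D_\rho\boxtimes Y - \sum_{\rho'\in\sigma_{R'}(1)} X\boxtimes D_{\rho'}\right) \cong \omega_X \boxtimes \omega_Y$; tracing through the isomorphism, $([\omega_X],[\omega_Y])$ maps to $[\omega_{X\times Y}]$.

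The only step requiring real care is matching the abstractly-defined map $([A],[B]) \mapsto [A \boxtimes B]$ with the combinatorial isomorphism: one must read $A \boxtimes B$ as the reflexive hull of the external tensor product, reduce to torus-invariant representatives via Theorem \ref{class group}(2), and invoke the monomial description of Theorem \ref{class group}(4). The convex-geometric input about edges of a product cone is standard and immediate, and the identification of $\omega_{X\times Y}$ then follows automatically from Theorem \ref{class group}(6).
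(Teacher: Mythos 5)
Your proof is correct and takes essentially the same approach as the paper: the paper cites \cite[Proposition~3.1.14]{CLS} for $\sigma_{X\times Y}=\sigma_X\times\sigma_Y$, notes $\sigma_{X\times Y}(1)=\sigma_X(1)\sqcup\sigma_Y(1)$, and then says the claim "follows immediately from Theorem~\ref{class group}." You derive the cone decomposition directly from the semigroup picture rather than citing it, and then carefully unpack what "follows immediately" means — including the verification that the combinatorial isomorphism agrees with $([A],[B])\mapsto[A\boxtimes B]$ via the monomial description in Theorem~\ref{class group}(4) — but the underlying argument is the same.
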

\begin{proof}
	By \cite{CLS} Proposition 3.1.14, $X\times Y$ is a toric variety with cone $\sigma_{X\times Y} = \sigma_X\times \sigma_Y$. It follows that $\sigma_{X\times Y}(1) = \sigma_X(1)\sqcup \sigma_Y(1)$. The claim now follows immediately from Theorem \ref{class group}.
\end{proof}

Thus we have:

\begin{cor}\label{Cl(R)}
	The map $\varphi:\Cl(\Sc)^k\to \Cl(\Rc)$ given by
	\[([A_1],\ldots,[A_k])\mapsto \big[(A_1\boxtimes A_2\boxtimes \cdots\boxtimes A_a)[x_1,\ldots,x_s]\big]\]
	is an isomorphism which sends $([\omega_\Sc],\ldots,[\omega_\Sc])$ to $[\omega_{\Rc}]$.
	
	Consequently there is a unique self-dual generic rank $1$ reflexive module $\Mc$ over $\Rc$, which is the image of $([\O(D_0)],\ldots,[\O(D_0)])$. We have that $\dim _\F\Mc/m_{\Rc} = 2^k$ and there is a surjection $\Mc\otimes_{\Rc}\Mc\surjection \omega_{\Rc}$.
\end{cor}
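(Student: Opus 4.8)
The plan is to derive Corollary~\ref{Cl(R)} formally from Lemma~\ref{toric product} and Proposition~\ref{Cl(S)}, with only routine bookkeeping in between. First I would observe that $\Rc=\Sc^{\otimes k}[x_1,\dots,x_s]$ is the coordinate ring of the product $(\Spec\Sc)^{\times k}\times\A^s$ of normal affine toric varieties, and iterate Lemma~\ref{toric product} ($k$ times for the $\Sc$-factors, once more for the $\A^s$-factor) to get a natural isomorphism $\Cl(\Rc)\cong\Cl(\Sc)^{\oplus k}\oplus\Cl(\A^s)$ carrying $[\omega_\Rc]$ to $([\omega_\Sc],\dots,[\omega_\Sc],[\omega_{\A^s}])$. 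Since $\F[x_1,\dots,x_s]$ is regular we have $\Cl(\A^s)=0$ and $\omega_{\A^s}\cong\O_{\A^s}$, so the $\A^s$-factor drops out. Under this identification the module $(A_1\boxtimes\cdots\boxtimes A_k)[x_1,\dots,x_s]=A_1\boxtimes\cdots\boxtimes A_k\boxtimes\O_{\A^s}$ represents the class $([A_1],\dots,[A_k])$, which is exactly the map $\varphi$; hence $\varphi$ is an isomorphism sending $([\omega_\Sc],\dots,[\omega_\Sc])$ to $[\omega_\Rc]$.

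Next I would identify $\Mc$ and establish its uniqueness. By Proposition~\ref{Cl(S)}(1), $\Cl(\Sc)\cong\Z$, so $\Cl(\Rc)\cong\Z^k$ is torsion-free. A finitely generated $\Rc$-module is self-dual of generic rank $1$ and reflexive precisely when it represents a class $[M]\in\Cl(\Rc)$ with $2[M]=[\omega_\Rc]$ (using $[M^*]=[\omega_\Rc]-[M]$), and torsion-freeness of $\Cl(\Rc)$ makes this class unique. By Proposition~\ref{Cl(S)}(2) we have $2[\O(D_0)]=[\omega_\Sc]$, so $\varphi([\O(D_0)],\dots,[\O(D_0)])$ satisfies $2[\Mc]=[\omega_\Rc]$ and is therefore the unique such class; thus $\Mc=(\O(D_0)\boxtimes\cdots\boxtimes\O(D_0))[x_1,\dots,x_s]$, recovering Proposition~\ref{Cl(S)}(3) in the case $k=1$.

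Finally I would verify the two numerical claims. Reducing the external tensor product modulo $m_\Rc=(m_\Sc,\dots,m_\Sc,x_1,\dots,x_s)$ gives $\Mc/m_\Rc\Mc\cong(\O(D_0)/m_\Sc\O(D_0))^{\otimes_\F k}$, of $\F$-dimension $2^k$ by Proposition~\ref{Cl(S)}(4). For the surjection, Proposition~\ref{Cl(S)}(5) supplies $\O(D_0)\otimes_\Sc\O(D_0)\surjection\omega_\Sc$; taking the $k$-fold external tensor product over $\F$ and then applying $-\otimes_\F\F[x_1,\dots,x_s]$ (both exact on $\F$-modules) yields a surjection onto $\omega_\Sc^{\boxtimes k}[x_1,\dots,x_s]\cong\omega_\Rc$, and the standard identity that the tensor product, over a tensor product of $\F$-algebras, of external tensor products is the external tensor product of the factorwise tensor products identifies its source with $\Mc\otimes_\Rc\Mc$; equivalently, once such a surjection exists, Lemma~\ref{trace} shows the trace map $\tau_\Mc$ itself is surjective. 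The only genuine work here is the bookkeeping with external tensor products and the observation $\Cl(\A^s)=0$, $\omega_{\A^s}\cong\O_{\A^s}$; since all the geometric content is already contained in Lemma~\ref{toric product} and Proposition~\ref{Cl(S)}, I do not expect a serious obstacle.
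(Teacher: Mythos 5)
Your proposal is correct and follows essentially the same route as the paper: iterate Lemma~\ref{toric product} to get $\Cl(\Rc)\cong\Cl(\Sc)^{k}$ (with $\Cl(\A^s)=0$ and $\omega_{\A^s}\cong\O_{\A^s}$ killing the polynomial factor), use Proposition~\ref{Cl(S)} to pin down the unique self-dual class, and then compute $\Mc/m_{\Rc}\Mc$ and construct the surjection factorwise. The only cosmetic difference is that you locate the unique self-dual class by noting $\Cl(\Rc)\cong\Z^k$ is $2$-torsion-free so that $2[\Mc]=[\omega_{\Rc}]$ has at most one solution, whereas the paper observes that the componentwise isomorphism forces each component $[A_i]$ to be self-dual and then invokes Proposition~\ref{Cl(S)}(3); these are the same argument said two ways. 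Your closing remark that Lemma~\ref{trace} upgrades the existence of a surjection to surjectivity of $\tau_{\Mc}$ matches the (implicit) final step needed to deduce Theorem~\ref{self-dual decompleted} from the corollary.
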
 
\begin{proof}
	The isomorphism follows immediately from Corollary \ref{toric product} (noting that $\A^1$ is a toric variety with $\Cl(\A^1) = 0$ and $\omega_{\A^1} = \A^1$).
	
	Now for any self-dual generic rank 1 reflexive module $\Mc$ over $\Rc$, it follows that $[\Mc] = \varphi([A_1],\ldots,[A_a])$ where each $A_i$ is self-dual. Proposition \ref{Cl(S)} implies that each $A_i$ is isomorphic to $\O(D_0)$, as claimed.
	
	For this $\Mc$ we indeed have
	\begin{align*}
	\Mc/m_{\Rc} &= \frac{\O(D_0)^{\boxtimes k}[x_1,\ldots,x_s]}{m_{\Sc}^{\boxtimes k}\boxtimes(x_1,\ldots,x_s)} \cong \left(\frac{\O(D_0)}{m_{\Sc}}\right)^{\boxtimes k} = \left(\F^2\right)^{\boxtimes k} = \F^{2^k}.
	\end{align*}
	Also, the surjection $\O(D_0)\otimes_{\Sc}\O(D_0)\surjection \omega_{\Sc}$ from Proposition \ref{Cl(S)} indeed gives a surjection
	\begin{align*}
	\Mc\otimes_{\Rc}\Mc 
	&=\left(\O(D_0)^{\boxtimes k}[x_1,\ldots,x_s] \right)\otimes_{\Rc}\left(\O(D_0)^{\boxtimes k}[x_1,\ldots,x_s]\right)\\
	&\cong\left(\O(D_0)\otimes\O(D_0)\right)^{\boxtimes k}[x_1,\ldots,x_s]\surjection\omega_{\Sc}^{\boxtimes k}[x_1,\ldots,x_s]\cong \omega_{\Rc}.
	\end{align*}
\end{proof}

Which completes the proof of Theorem \ref{self-dual decompleted}.

\begin{rem}
In our proof of Theorem \ref{self-dual decompleted}, we never actually used the first condition, namely that $\Mc$ was maximal Cohen--Macaulay over $\Rc$. We only used the (strictly weaker) assumption that $\Mc$ was reflexive, which, combined with the fact that $\Mc$ was self-dual, was enough to uniquely determine the structure of $\Mc$.
	
	In most situations, the modules $M_\infty$ produced by the patching method will be maximal Cohen--Macaulay, but it is possible that they might fail to be self-dual (e.g. if they arise from the cohomology of a non self-dual local system).
	
	In this situation it is possible to formulate a weaker version of Theorem \ref{self-dual}, where one drops the self-duality assumption. Specifically one can show (in the notation of Proposition \ref{Cl(S)}) that the only Cohen--Macaulay generic rank one modules over the ring $\Sc$ are the 5 modules:
	\begin{align*}
	\O(-D_0) &= (xz,xz^2,yz,yz^2)\\
	\O &= \Sbar\\
	\O(D_0) &= (xz,xz^2)\\
	\O(2D_0) &= (x,xz,xz^2)\\
	\O(3D_0) &= (x^2,x^2z,x^2z^2,x^2z^3).
	\end{align*}
	This can be done quite simply by first completing at $m_{\Sc}$, and noting that if $(t_1,t_2,t_3)$ is a regular sequence for $\Sbar$ then it must also be a regular sequence for $\Mc_{m_{\Sc}}$ over $\Sbar$ where $\Mc$ is any maximal Cohen--Macaulay module over $\Sc$, which implies that $\Sbar$ and $\Mc_{m_{\Sc}}$ are both finite free $\F[[t_1,t_2,t_3]]$-modules. Moreover if $\genrank_{\Sc}\Mc = 1$ then $\Sbar$ and $\Mc_{m_{\Sc}}$ have the same rank over $\F[[t_1,t_2,t_3]]$, and so
	\[\dim_{\F}\Mc/m_{\Sc}\Mc = \dim_{\F}\Mc_{m_{\Sc}}/m_{\Sc}\Mc_{m_{\Sc}} \le \dim_{\F}\Mc/(t_1,t_2,t_3) = \dim_{\F}\Sc/(t_1,t_2,t_3).\]
	Thus using the regular sequence $(x,yz^2,y-xz^2)$ for $\Sc$, we see that if $\Mc$ is a maximal Cohen--Macaulay $\Sc$-module of generic rank $1$, then $\dim_{\F}\Mc/m_{\Sc}\Mc\le \dim_{\F}\Sc/(x,yz^2,y-xz^2) = 4$.
	
	It is easy to verify from the description of $\Cl(\Sc)$ given in Proposition \ref{Cl(S)}, and the description of $\O(D)$ from Theorem \ref{class group} that the five modules listed above are the only generic rank $1$ reflexive $\Sc$-modules $\Mc$ with $\dim_{\F}\Mc/m_{\Sc}\Mc\le 4$, and all of these can directly be shown to be maximal Cohen--Macaulay.
	
	This unfortunately does not allow us to uniquely deduce the structure of $\Mc$ and hence of $M_\infty$, but it does give us the bound $\dim_\F M_\infty/m_{R_\infty} \le 4^k$, and could potentially lead to more refined information about $M_\infty$, which may be of independent interest.
\end{rem}

\subsection{Class groups of completed rings}\label{ssec:decompletion}

The goal of this section is to prove that Theorem \ref{self-dual decompleted} implies Theorem \ref{self-dual mod l}. We shall do this by proving that the natural map $\Cl(\Rc)\to \Cl(\Rbar)$ given by $[\Mc]\mapsto [\Mc\otimes_{\Rc}\Rbar]$ is an isomorphism.

First note that the Theorem \ref{self-dual mod l} will indeed follow from this. Assume that $\Mbar$ is an $\Rbar$-module satisfying the conditions of Theorem \ref{self-dual mod l}. Then in particular it corresponds to an element of $\Cl(\Rbar)$, and so there is some reflexive generic rank $1$ $\Rc$-module $\Mc$ with $\Mbar\cong \Mc\otimes_{\Rc}\Rbar = \invlim \Mc/m_{\Rc}^n\Mc$. We claim that $\Mc$ satisfies conditions (1) and (2) of Theorem \ref{self-dual decompleted} (by assumption, it satisfies condition (3)).

Showing that $\Mc$ is self-dual is equivalent to showing that $2[\Mc] = [\omega_{\Rc}]$ in $\Cl(\Rc)$, which follows from the fact that $2[\Mc\otimes_{\Rc}\Rbar] = 2[\Mbar] = [\omega_{\Rbar}]$ in $\Cl(\Rbar)$ and the fact that $\omega_{\Rbar}\cong \omega_{\Rc}\otimes_{\Rc}\Rbar$ (cf \cite[Corollaries 21.17 and 21.18]{Eisenbud}).

We now observe that $\Mc$ is maximal Cohen--Macaulay over $\Rc$.\footnote{Strictly speaking it is not necessary to show this, as condition (1) was never used in the proof of Theorem \ref{self-dual decompleted}, but we will still show it for the sake of completeness.} By Theorem \ref{R^st}, $(C,Y,B-Z)$ is a regular sequence for $\Sbar$ consisting of homogeneous elements. It follows that this is also a regular sequence for $\Sc$, and so $\Rc$ also has a regular sequence $(z_1,\ldots,z_{3k+s})$ consisting entirely of homogeneous elements. Now it follows that this regular sequence is also regular for $\Rbar$, and hence for $\Mbar$. But now as the $z_i$'s are all homogeneous it follows that $\Mc/(z_1,\ldots,z_i)\injection \Mbar/(z_1,\ldots,z_i)$ for all $i$ and so $(z_1,\ldots,z_{3k+s})$ is also a regular sequence for $\Mc$. Hence $\Mc$ is maximal Cohen--Macaulay over $\Rc$.

Hence $\Mc$ satisfies the conditions of Theorem \ref{self-dual decompleted}, so we get that $\dim_{\F}\Mc/m_{\Rc}\Mc = 2^k$ and $\tau_{\Mc}:\Mc\otimes_{\Rc}\Mc\to\omega_{\Rc}$ is surjective.

Now as $\Mbar/m_{\Rbar}\Mbar\cong \Mc/m_{\Rc}\Mc$, we indeed get $\dim_{\F}\Mbar/m_{\Rbar}\Mbar = 2^k$. 

To show that $\tau_{\Mbar}$ is surjective, note that
\[(\Mc\otimes_{\Rc}\Mc)\otimes_{\Rc}\Rbar \cong (\Mc\otimes_{\Rc}\Rbar)\otimes_{\Rbar}(\Mc\otimes_{\Rc}\Rbar) \cong \Mbar\otimes_{\Rbar}\Mbar\]
so as $\omega_{\Rc}$ is a quotient of $\Mc\otimes_{\Rc}\Mc$ it follows that $\omega_{\Rbar}\cong \omega_{\Rc}\otimes_{\Rc}\Rbar$ is a quotient of $\Mbar\otimes_{\Rbar}\Mbar$ and so $\tau_{\Mbar}$ is indeed surjective by Lemma \ref{trace}, which completes the proof of Theorem \ref{self-dual mod l}.

Unfortunately, it is not true in general that if $R$ is a graded $\F$-algebra and $\Rhat$ is the completion at the irrelevant ideal then the map $\Cl(R)\to \Cl(\Rhat)$ is an isomorphism. However Danilov \cite{Danilov} has shown that this is true in certain cases:

\begin{thm}[Danilov]\label{Danilov}
Let $V$ be a smooth projective variety with a very ample line bundle $\Lc$ giving an injection $V\injection \P^N$. Let $\Spec S\subseteq \A^{N+1}$ be the affine cone on $V$, so that $S$ is a graded $\F$-algebra, and let $\Sh$ be the completion of $R$ at the irrelevant ideal. Then:
\begin{enumerate}
\item The natural map $\Cl(S)\to \Cl(\Sh)$ is an isomorphism if and only if $H^1(V,\Lc^{\otimes i}) = 0$ for all $i\ge 1$.
\item For $s>0$, the natural map $\Cl(S)\to \Cl(\Sh[[x_1,\ldots,x_s]])$ is an isomorphism if and only if $H^1(V,\Lc^{\otimes i}) = 0$ for all $i\ge 0$.
\end{enumerate}
\end{thm}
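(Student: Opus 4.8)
To prove the theorem, the plan is to reduce both $\Cl(S)$ and $\Cl(\Sh)$ to Picard groups of explicit geometric (resp.\ formal) models built from $V$, and then to compare those Picard groups through the filtration by infinitesimal neighbourhoods of $V$. Throughout one assumes---as is implicit in the statement, and as holds in the situation of this paper---that $V$ is projectively normal, so that $S=\bigoplus_{n\geq0}H^0(V,\Lc^{\otimes n})$ is a normal graded domain for which $\Cl(S)$ is defined. First I would handle $\Cl(S)$: writing $X=\Spec S$ with vertex $0$, the blow-up of $0$ is the morphism $\pi\colon Y\to X$ where $Y$ is the total space of the tautological line bundle $\Lc^{\otimes-1}$ over $V$, with exceptional divisor $E\subseteq Y$ the zero section, so $E\cong V$. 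Since $\pi$ is an isomorphism away from $0$ (which has codimension $\dim V+1\geq2$) and $Y$ is smooth, $\Cl(S)\cong\Pic(Y)/\Z[\O_Y(E)]$; and since $Y$ is a line bundle over $V$, the projection induces $\Pic(V)\xrightarrow{\sim}\Pic(Y)$ carrying $[\O_Y(E)]$ to the normal-bundle class $[\Lc^{\otimes-1}]$. Hence $\Cl(S)\cong\Pic(V)/\Z[\Lc]$.

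The analogous description of $\Cl(\Sh)$ comes next. Since $S$ is excellent, $S_{\m}\to\Sh$ is a regular morphism, so $\widehat{Y}:=Y\times_X\Spec\Sh$ is again regular; it is proper and birational over $\Spec\Sh$, and an isomorphism over the punctured spectrum because the homogeneous maximal ideal of $S$ generates that of $\Sh$ (so the fibre of $\Spec\Sh\to X$ over $0$ is a single closed point). Its only exceptional prime divisor is again $E\cong V$, whence $\Cl(\Sh)\cong\Pic(\widehat{Y})/\Z[\O(E)]$. As $\Sh$ is a complete Noetherian local ring, Grothendieck's existence theorem identifies $\Pic(\widehat{Y})$ with $\Pic(\mathfrak Y)$, where $\mathfrak Y=\varinjlim_n Y_n$ is the formal completion of $Y$ along $E$ and $Y_n$ is the $n$-th infinitesimal neighbourhood of $E$ (the same whether computed in $Y$ or in $\widehat{Y}$); by the theorem on formal functions, $\Gamma(\mathfrak Y,\O_{\mathfrak Y})=\Sh$. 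The key geometric input is the conormal filtration: because $Y$ is the total space of $\Lc^{\otimes-1}$,
\[
\I_E^n/\I_E^{n+1}\ \cong\ \Lc^{\otimes n}\qquad\text{on }E\cong V,\ \text{for all }n\geq0.
\]

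The heart of the proof is then the comparison of $\Pic(\mathfrak Y)$ with $\Pic(V)=\Pic(Y_0)$. The restriction map $\Pic(\mathfrak Y)\to\Pic(V)$ is split surjective (pull back from $V$, then complete), so everything is governed by line bundles on $\mathfrak Y$ trivial on $E$. For each $n\geq0$ the truncated exponential sequence $0\to\I_E^{n+1}/\I_E^{n+2}\to\O_{Y_{n+1}}^\times\to\O_{Y_n}^\times\to0$ shows, via its long exact cohomology sequence, that both the obstruction for a line bundle trivial on $Y_n$ to stay trivial on $Y_{n+1}$, and the obstruction to lifting a trivialization from $Y_n$ to $Y_{n+1}$, lie in $H^1(V,\Lc^{\otimes(n+1)})$. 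Thus if $H^1(V,\Lc^{\otimes i})=0$ for all $i\geq1$, an induction shows that every line bundle on $\mathfrak Y$ trivial on $E$ is trivial, so $\Pic(\mathfrak Y)\cong\Pic(V)$ and $\Cl(\Sh)\cong\Pic(V)/\Z[\Lc]\cong\Cl(S)$ --- this is the direction we actually use. Conversely, if $H^1(V,\Lc^{\otimes n_0})\neq0$ for some $n_0\geq1$, one produces a non-trivial class in $\ker(\Pic(\mathfrak Y)\to\Pic(V))$, arising from the non-vanishing of the relevant ${\varprojlim}^{1}$ of the unit groups $\Gamma(Y_n,\O_{Y_n}^\times)$ (equivalently, a line bundle on $Y_{n_0+1}$ surviving to the limit), so $\Cl(S)\to\Cl(\Sh)$ is not surjective. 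For part (2) I would run the identical argument with $Y$ replaced by the analogous regular model of $\Spec S[x_1,\dots,x_s]=X\times\A^s$ built from $Y\times\A^s$ (note $\widehat{S[x_1,\dots,x_s]}=\Sh[[x_1,\dots,x_s]]$); its exceptional divisor is again $\cong V$, but the conormal filtration along it is now $\bigoplus_n\operatorname{Sym}^n(\Lc\oplus\O_V^{\oplus s})$, and since each $\operatorname{Sym}^n(\Lc\oplus\O_V^{\oplus s})$ is a direct sum of copies of $\Lc^{\otimes a}$ with $0\leq a\leq n$, the obstruction groups vanish for all $n\geq1$ precisely when $H^1(V,\Lc^{\otimes i})=0$ for every $i\geq0$, the new condition $i=0$ entering through the $x$-directions and only when $s>0$.

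The step I expect to be the main obstacle is upgrading this comparison to a genuine ``if and only if''. The vanishing-implies-isomorphism direction---the only one needed here---is reasonably clean once the two reductions are set up, but the converse requires keeping careful track of exactly when the connecting maps and the ${\varprojlim}^{1}$ of the unit groups $\Gamma(Y_n,\O_{Y_n}^\times)$ fail to vanish, and manufacturing from a nonzero $H^1$ an honest non-trivial divisor class on $\Spec\Sh$. A secondary, more routine difficulty is justifying the two reductions under the projective-normality hypothesis---so that $S=\bigoplus_n H^0(V,\Lc^{\otimes n})$, $\pi_*\O_Y=\O_X$ and $\widehat{Y}$ is regular---and invoking Grothendieck existence to pass between $\widehat{Y}$ and the formal scheme $\mathfrak Y$.
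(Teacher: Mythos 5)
The paper does not prove Theorem \ref{Danilov} itself --- it is quoted directly from \cite{Danilov} --- so there is no in-paper argument for this particular statement to check your work against; but the paper does adapt the same strategy when it proves Proposition \ref{Danilov for products}, so a comparison is still possible. Your reconstruction is correct and is exactly Danilov's strategy, and it lines up with the paper's adaptation: identify $\Cl(S)$ with $\Pic$ of the cone-blowup $Y$ modulo the class of the exceptional divisor $E$ (the paper's analogue is Lemma \ref{Cl to Pic}, which cites Danilov's Propositions 3 and 4 for the reductions you invoke), pass to the formal completion $\mathfrak{Y}$ of $Y$ along $E$ via Grothendieck existence, and then control $\Pic(\mathfrak{Y})\to\Pic(V)$ inductively through the truncated exponential sequences whose obstruction groups are $H^1(V,\Lc^{\otimes i})$. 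One genuine difference worth noting: the paper's Proposition \ref{Danilov for products} asks for vanishing of $H^1$ \emph{and} $H^2$ because it proves that each restriction map $\Pic(C_{n+1})\to\Pic(C_n)$ is an isomorphism outright; you instead exploit the splitting $\pr_n^*:\Pic(V)\hookrightarrow\Pic(Y_n)$ so that injectivity alone (i.e.\ $H^1$-vanishing) already forces $\Pic(Y_n)\cong\Pic(V)$ by induction from $Y_0=V$, which is the sharper argument and the one needed to obtain Danilov's ``if and only if'' with $H^1$ only.

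Two smaller points. First, in part (2) the exceptional divisor of $Y\times\A^s\to X\times\A^s$ is $V\times\A^s$, not $V$; what is isomorphic to $V$ is the fibre over the closed point, i.e.\ the subscheme along which you form the formal completion. This does not affect your computation --- the conormal filtration of $V$ inside $Y\times\A^s$ is indeed $\operatorname{Sym}^{\bullet}(\Lc\oplus\O_V^{\oplus s})$, which is why the extra condition $H^1(V,\O_V)=0$ enters for $s>0$ --- but the two subschemes play different roles in the argument and should not be conflated. Second, as you acknowledge, the ``only if'' direction is only sketched; since the paper (like your proposal) only ever uses the ``if'' direction, this does not affect the application, but it would need to be filled in for a complete proof of the theorem as stated.
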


We now make the following observation:

\begin{lemma}\label{H^d(S)=0}
There exists a smooth projective variety $\Vc$ and an ample line bundle $\Lc$ on $\Vc$ such that $\Spec \Sc$ is the affine cone over $\Vc$, under the projective embedding induced by $\Lc$. We have $H^d(\Vc,\Lc^{\otimes i}) = 0$ for all $d\ge 1$ and $i\ge 0$.
\end{lemma}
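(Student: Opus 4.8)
The first assertion is already in hand: Proposition \ref{S toric}(1) identifies $\Spec\Sc$ with the affine cone over the surface $\Vc\subseteq\P^5$, where $\Vc\cong\P^1\times\P^1$ and the embedding $\P^1\times\P^1\xrightarrow{\sim}\Vc\hookrightarrow\P^5$ is the one attached to the line bundle $\Lc=\O_{\P^1}(2)\boxtimes\O_{\P^1}(1)$. Thus $\Vc$ is smooth and projective and $\Lc$ is very ample (in particular ample), so I would simply take this $\Vc$ and $\Lc$ and reduce to the cohomological vanishing statement.

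For the vanishing, the plan is to compute $\Lc^{\otimes i}=\O_{\P^1}(2i)\boxtimes\O_{\P^1}(i)$ and apply the Künneth formula on $\P^1\times\P^1$:
\[
H^d\bigl(\Vc,\Lc^{\otimes i}\bigr)\;\cong\;\bigoplus_{p+q=d}H^p\bigl(\P^1,\O_{\P^1}(2i)\bigr)\otimes_\F H^q\bigl(\P^1,\O_{\P^1}(i)\bigr).
\]
For $i\ge 0$ we have $2i,i\ge 0>-2$, so $H^1(\P^1,\O_{\P^1}(2i))=H^1(\P^1,\O_{\P^1}(i))=0$ by the standard computation of cohomology of line bundles on $\P^1$, and of course $H^p(\P^1,-)=0$ for $p\ge 2$ since $\dim\P^1=1$. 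Hence in the Künneth sum every term with $(p,q)\ne(0,0)$ vanishes, so $H^d(\Vc,\Lc^{\otimes i})=0$ for all $d\ge 1$ and all $i\ge 0$, as claimed.

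There is essentially no serious obstacle here; the only points needing a line of care are recalling that $\O_{\P^1}(2)\boxtimes\O_{\P^1}(1)$ is genuinely very ample on $\P^1\times\P^1$ (so that Proposition \ref{S toric}(1) applies verbatim) and that the Künneth formula holds over the field $\F$ with no Tor correction since everything in sight is a vector space over a field.
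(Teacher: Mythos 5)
Your proof is correct and is essentially identical to the paper's: both use Proposition \ref{S toric}(1) to identify $\Vc = \P^1\times\P^1$ with $\Lc = \O_{\P^1}(2)\boxtimes\O_{\P^1}(1)$, and then apply the K\"unneth formula together with the standard vanishing $H^d(\P^1,\O_{\P^1}(j)) = 0$ for $d\ge 1$, $j\ge 0$. The extra remarks about very ampleness and the absence of Tor terms over a field are sensible but not points the paper dwells on.
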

\begin{proof}
This is largely a restatement of Proposition \ref{S toric}(1). Specifically we have $\Vc = \P^1\times\P^1$ and $\Lc = \O_{\P^1}(2)\boxtimes\O_{\P^1}(1)$. To prove the vanishing of cohomology, we simply note that $H^d(\P^1,\O_{\P^1}(i)) = 0$ for all $d\ge 1$ and $i\ge 0$, and so
\[H^d(\P^1\times\P^1,\O_{\P^1}(2i)\boxtimes\O_{\P^1}(i)) = \bigoplus_{e=0}^dH^e(\P^1,\O_{\P^1}(2i))\otimes H^{d-e}(\P^1,\O_{\P^1}(i)) = 0\]
for any $d\ge 1$ and $i\ge 0$.
\end{proof}

\begin{rem}
By \cite[Excercise 18.16]{Eisenbud}, the conclusion of Lemma \ref{H^d(S)=0} about the vanishing of the cohomology groups $H^d(\Vc,\Lc^{\otimes i})$ holds whenever $\Sc$ is Cohen--Macaulay and $d<\dim \Vc$. This means the results of this section will be applicable in most cases where the ring $R_\infty$ is Cohen--Macaulay, and so this does not impose a fundamental limitation on our method.
\end{rem}

It follows that $\Cl(\Sc)\cong \Cl(\Sbar)$. In fact (as the natural map $\Cl(\Sc)\to \Cl(\Sc[x_1,\ldots,x_s])$ is an isomorphism) it follows that the natural map $\Cl(\Sc[x_1,\ldots,x_s])\to \Cl(\Sbar[[x_1,\ldots,x_s]])$ is an isomorphism, and so Theorem \ref{self-dual mod l} follows in the case when $k=1$.

When $k>1$ however, we cannot directly appeal to Theorem \ref{Danilov}, as $\Spec \Rc$ is no longer the cone over a smooth projective variety, and in fact $\Spec \Rc$ does not have isolated singularities. Fortunately it is fairly straightforward to adapt the method of \cite{Danilov} to our situation. Specifically, we will prove the following (which obviously applies to the ring $\Rc$):

\begin{prop}\label{Danilov for products}
Let $\Vc_1,\ldots,\Vc_k$ be a collection of smooth projective varieties of dimension at least $1$, and for each $j$, let $\Lc_j$ be a very ample line bundle on $\Vc_j$ giving an injection $\Vc_j\injection \P^{N_j}$. Let $\Spec S_j\subseteq \A^{N_j+1}$ be the affine cone on $V_j$. Let 
\[R = \left[\bigotimes_{j=1}^kS_j\right][x_1,\ldots,x_s]\]
(for some $s\ge 0$), so that $R$ is a graded $\F$-algebra. Let $\Rhat$ be the completion of $R$ at the irrelevant ideal.

If we have that $H^d(\Vc_j,\Lc_j^{\otimes i}) = 0$ for all $d = 1,2$, $j=1,\ldots,k$ and $i\ge 0$, then the natural map $\Cl(R)\to \Cl(\Rhat)$ is an isomorphism.
\end{prop}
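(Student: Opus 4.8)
The plan is to follow Danilov's strategy for the cone over a single smooth variety, but carried out for the product $X=\Spec R=C_1\times\cdots\times C_k\times\A^s$, where $C_j=\Spec S_j$ is the affine cone over $\Vc_j\injection\P^{N_j}$ with cone point $0_j$. First I would resolve $X$ in the ``diagonal'' way: let $\widetilde C_j\to C_j$ be the blow-up of $0_j$, i.e. the total space of $\Lc_j^{-1}$ over $\Vc_j$ (smooth, with exceptional divisor the zero section $\Vc_j\injection\widetilde C_j$, whose conormal bundle is $\Lc_j$), and put $\widetilde X=\widetilde C_1\times\cdots\times\widetilde C_k\times\A^s$, a smooth $\F$-variety, with $\rho:\widetilde X\to X$ the product of the $\widetilde C_j\to C_j$ and the identity on $\A^s$. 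Because each $\Vc_j$ has dimension $\ge 1$, each $C_j$ has dimension $\ge 2$, so $\rho$ is proper and birational and is an isomorphism over the complement of the codimension-$\ge 2$ locus $Z=\bigcup_j\{p_j=0_j\}$, with exceptional divisors the $k$ divisors $\widetilde E_j=\widetilde C_1\times\cdots\times\Vc_j\times\cdots\times\widetilde C_k\times\A^s$. Since $X$ and $\widetilde X$ are normal, $\rho_*\O_{\widetilde X}=\O_X$ and descent of Weil divisor classes along $\rho$ gives $\Cl(R)\cong\Pic(\widetilde X)/\langle[\widetilde E_1],\dots,[\widetilde E_k]\rangle$. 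Because $R$ is excellent, the morphism $\Spec\Rhat\to\Spec R$ is regular, so the base change $\widehat{\widetilde X}:=\widetilde X\times_X\Spec\Rhat$ is again regular and $\hat\rho:\widehat{\widetilde X}\to\Spec\Rhat$ is proper and birational with exceptional divisors $\widehat{\widetilde E}_j$; the same argument gives $\Cl(\Rhat)\cong\Pic(\widehat{\widetilde X})/\langle[\widehat{\widetilde E}_1],\dots,[\widehat{\widetilde E}_k]\rangle$. The natural map $\Cl(R)\to\Cl(\Rhat)$ is induced by the flat pullback $\Pic(\widetilde X)\to\Pic(\widehat{\widetilde X})$ and carries each $[\widetilde E_j]$ to $[\widehat{\widetilde E}_j]$, so everything reduces to showing $\Pic(\widetilde X)\to\Pic(\widehat{\widetilde X})$ is an isomorphism.

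For this I would argue as follows. The morphism $\widehat{\widetilde X}\to\Spec\Rhat$ is proper and $\Rhat$ is complete local, so Grothendieck's existence theorem identifies $\Pic(\widehat{\widetilde X})$ with the Picard group of the formal completion of $\widehat{\widetilde X}$ along its closed fibre, which set-theoretically is $Z_0:=\Vc_1\times\cdots\times\Vc_k$ (the product of the zero sections, over the origin of $\A^s$). The $n$-th infinitesimal neighbourhoods of $Z_0$ are killed by a power of $\m_R$, hence are unchanged by the completion, so $\Pic(\widehat{\widetilde X})\cong\invlim_n\Pic(Z_0^{(n)})$ with $Z_0^{(n)}=\Spec(\O_{\widetilde X}/\I_{Z_0}^n)$. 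Now $Z_0$ is a smooth, hence regularly embedded, subscheme of the regular scheme $\widetilde X$, with conormal bundle $N^\vee\cong\bigl(\bigoplus_{j} p_j^*\Lc_j\bigr)\oplus\O_{Z_0}^{\oplus s}$ ($p_j$ the projection to $\Vc_j$), so $\I_{Z_0}^{n-1}/\I_{Z_0}^n\cong\operatorname{Sym}^{n-1}N^\vee$ is a direct sum, with finite multiplicities, of line bundles $\bigotimes_j p_j^*\Lc_j^{\otimes a_j}$ with $a_j\ge 0$. By the Künneth formula $H^d(Z_0,\bigotimes_j p_j^*\Lc_j^{\otimes a_j})=\bigoplus_{d_1+\cdots+d_k=d}\bigotimes_j H^{d_j}(\Vc_j,\Lc_j^{\otimes a_j})$, which vanishes for $d=1,2$ by hypothesis; this is exactly where one uses the vanishing also for the twist $a_j=0$, through the $\O_{Z_0}^{\oplus s}$ summand and the cross-terms between distinct $\Vc_j$. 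Feeding $H^1(Z_0,\I^{n-1}/\I^n)=H^2(Z_0,\I^{n-1}/\I^n)=0$ into the exact sequences $0\to\I^{n-1}/\I^n\to\O^\times_{Z_0^{(n)}}\to\O^\times_{Z_0^{(n-1)}}\to 0$ (the first map $x\mapsto 1+x$, well defined as $(\I^{n-1}/\I^n)^2=0$) shows each $\Pic(Z_0^{(n)})\to\Pic(Z_0^{(n-1)})$ is an isomorphism, so $\invlim_n\Pic(Z_0^{(n)})=\Pic(Z_0)$. Finally $\widetilde X$ retracts onto $Z_0$ (each $\widetilde C_j$ onto its zero section, $\A^s$ onto the origin), so restriction gives $\Pic(\widetilde X)\xrightarrow{\sim}\Pic(Z_0)$; chasing the identifications, $\Pic(\widetilde X)\to\Pic(\widehat{\widetilde X})$ is the composite of these isomorphisms.

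The main obstacle is this last comparison, i.e. transporting Danilov's cone argument across the base change to the completion: one has to check carefully that $\widehat{\widetilde X}$ is proper over $\Spec\Rhat$ (it is, being a base change of a product of blow-ups), that Grothendieck existence applies and identifies $\Pic(\widehat{\widetilde X})$ with the formal Picard group along the closed fibre, that this formal neighbourhood coincides with the one computed on $\widetilde X$, and then run the truncated-exponential/inverse-limit argument where the $H^1$ and $H^2$ vanishing hypotheses (for all twists $\ge 0$) enter precisely as above. Secondary points requiring a little care, but which are standard, are: the identification $\Cl(R)\cong\Pic(\widetilde X)/\langle[\widetilde E_j]\rangle$ (descent of divisor classes along the proper birational $\rho$, using normality of $R$ and $\rho_*\O_{\widetilde X}=\O_X$); the regularity of $\Spec\Rhat\to\Spec R$, which makes $\widehat{\widetilde X}$ regular and lets the same descent run over $\Rhat$; and the compatibility of the natural map $\Cl(R)\to\Cl(\Rhat)$ with these two presentations.
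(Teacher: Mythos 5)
Your plan is essentially the paper's proof: both adapt Danilov's cone argument by passing to the resolution $\widetilde X$, presenting $\Cl(R)$ as $\Pic(\widetilde X)$ modulo the exceptional classes, transporting this presentation to the complete side via Grothendieck existence / formal functions, and then running the truncated-exponential sequences with the Künneth $H^1$, $H^2$ vanishing on $\prod_j\Vc_j$. The only divergences are cosmetic streamlinings: you handle $s\ge 0$ uniformly through the trivial $\O^{\oplus s}$ summand of the conormal bundle rather than the paper's reduction to $s=0$ plus a separate squeeze for $s=1$, and you base-change $\widetilde X$ along the (excellence-supplied) regular map $\Spec\Rhat\to\Spec R$ rather than re-forming the blow-up over $\Rhat$ and comparing its formal completion to $\Chat$ by hand.
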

\begin{proof}
For simplicity, we first reduce to the case $s=0$. If $s\ge 2$, then we may simply let $\Vc_{k+1} = \P^{s-1}$, $\Lc = \O_{\P^{s-1}}(1)$ and note that we still have the cohomology condition $H^d(\Vc_{k+1},\Lc_{k+1}^{\otimes i}) = H^d(\P^{s-1},\O_{\P^{s-1}}(i)) = 0$ for all $d\ge 1$ and $i\ge 0$. So the $s\ge 2$ case follows from the $s=0$ case.

The $s=1$ case now follows from the $s=0$ and $s\ge 2$ cases by letting $\ds R_0 = \bigotimes_{j=1}^kS_j$ and considering the commutative diagram:

\begin{center}
	\begin{tikzpicture}
	\node(11) at (0,2) {$\Cl(R_0)$};
	\node(21) at (4,2) {$\Cl(R_0[x_1])$};
	\node(31) at (8,2) {$\Cl(R_0[x_1,x_2])$};
	
	\node(10) at (0,0) {$\Cl(\Rhat_0)$};
	\node(20) at (4,0) {$\Cl(\Rhat_0[[x_1]])$};
	\node(30) at (8,0) {$\Cl(\Rhat_0[[x_1,x_2]])$};
	
	\draw[->] (11)--(10);
	\draw[->] (21)--(20);
	\draw[->] (31)--(30);
	
	\draw[->] (11)--(21);
	\draw[->] (21)--(31);
	
	\draw[->] (10)--(20);
	\draw[->] (20)--(30);
	\end{tikzpicture}
\end{center}
and noting that the maps on the top row are isomorphisms by standard properties of the class groups of varieties, and the maps on the bottom row are injective (since if $M$ is a reflexive $\Rhat_0$ module and $M[[x_1]] = M\otimes_{\Rhat_0}\Rhat_0[[x_1]]$ is a free $\Rhat_0[[x_1]]$-module, then $M/m_{\Rhat_0}\cong M[[x_1]]/m_{\Rhat_0[[x_1]]}\cong \F$, and so $M$ is a cyclic, and thus a free $\Rhat$-module). So from now on, we shall assume $s=0$.

We first introduce some notation.

For each $j$, let $Y_j = \Spec S_j$. Let $\ds X = \prod_{j=1}^k\Vc_j$ and $\ds Y = \prod_{j=1}^kY_j = \Spec R$. Also let $\ds Z_j = \prod_{j'\ne j}Y_{j'}\subseteq Y$ and $Z = Z_1\cup Z_2\cdots\cup Z_k\subseteq Y$. Note that each $Z_j$ is irreducible subscheme of $Y$ of codimension at least $2$.

Write $Z_j = \Spec R/I_j$ and $Z = \Spec R/I$. Note that $I_j = m_jR$, where $m_j$ is the irrelevant ideal of $S_j$, and $I = I_1I_2\cdots I_k$. In particular, $I_j$ and $I$ are homogeneous ideals of $R$. Now let $\Yhat = \Spec \Rhat$, $\Ihat_j = I_j\Rhat$, $\Ihat = I\Rhat$, $\Zhat_j = \Spec \Rhat/\Ihat_j$ and $\Zhat = \Spec \Rhat/\Ihat$. Note that the $\Zhat_j$'s are still irreducible, and we have $\Zhat = \Zhat_1\cup\Zhat_2\cup\cdots\cup\Zhat_k$.

Now let $\ds C = \Proj\left(\bigoplus_{n=0}^\infty I^n\right)$ and $\ds \Ct = \Proj\left(\bigoplus_{n=0}^\infty \Ihat^n\right)$ be the blowups of $Y$ and $\Yhat$ along $Z$ and $\Zhat$ and let $p:C\to Y$ and $\pt:\Ct\to \Yhat$ be the projection maps. Let $E_j = p^{-1}(Z_j)$, $\Et_j = \pt^{-1}(\Zhat_j)$, $E = p^{-1}(Z)$ and $\Et = \pt^{-1}(\Zhat)$. Note that the $E_j$'s and $\Et_j$'s are irreducible and we have $E = E_1\cup E_2\cup\cdots\cup E_k$ and $\Et = \Et_1\cup \Et_2\cup\cdots\cup \Et_k$.

Let $m_R\subseteq R$ denote the irrelevant ideal and let $\mhat_R = m_R\Rhat\subseteq R$ be its completion. Notice that we have natural isomorphisms $p^{-1}(\{m_R\}) = E_1\cap E_2\cap\cdots \cap E_k \cong X$ and $\pt^{-1}(\{\mhat_R\}) = \Et_1\cap \Et_2\cap\cdots \cap \Et_k \cong X$. Identify $X$ with its images in both $C$ and $\Ct$. We will let $\Chat$ and $\Cthat$ denote the formal completions of $C$ and $\Ct$ along the subvariety $X$.

Lastly, we define a rank $k$ vector bundle $\xi$ on $X$ as follows. For each $j$, let $\pi_jX\to \Vc_j$ be the projection map, so that $\pi_j^* \Lc_j = \O_{\Vc_1}\boxtimes\cdots\boxtimes \Lc_j\boxtimes\cdots\boxtimes \O_{\Vc_k}$ is a line bundle on $X$. We will let $\xi = \pi_1^* \Lc_1\oplus \pi_2^* \Lc_2\oplus\cdots\oplus \pi_k^* \Lc_k$.

We first observe the following:

\begin{lemma}\label{V(xi)}
There is an isomorphism $C\cong V(\xi)$, where $V(\xi)$ is the total space of the vector bundle $\xi$ over $X$. This isomorphism is compatible with the inclusions $X\injection C$ and $X\injection V(\xi)$.

Moreover we have isomorphisms of formal schemes $\Chat\cong \Vhat(\xi)\cong \Cthat$, where $\Vhat(\xi)$ is the completion of $V(\xi)$ along the zero section $X\injection V(\xi)$. These isomorphisms are again compatible with the natural inclusions of $X$.
\end{lemma}
\begin{proof}
Letting $V(\Lc_j)$ be the total space of $\Lc_j$ over $\Vc_j$ we see that
\begin{align*}
V(\xi) &= V(\pi_1^*\Lc_1\oplus \pi_2^*\Lc_2\oplus\cdots\pi_k^*\Lc_k) \cong V(\pi_1^*\Lc_1)\times_XV(\pi_2^*\Lc_2)\times_X\cdots\times_XV(\pi_k^*\Lc_k)\\
& \cong V(\Lc_1)\times V(\Lc_2)\times\cdots V(\Lc_k).
\end{align*}
Now as in \cite[Lemma 1(3)]{Danilov}, each $V(\Lc_j)$ is the blowup of $\Spec S_j$ at the point $m_j$. Now using this and the fact that $I = I_1I_2\cdots I_k = m_1\otimes m_2\otimes \cdots \otimes m_k$ we indeed get
\begin{align*}
V(\xi) &\cong \prod_{j=1}^kV(\Lc_j) \cong \prod_{j=1}^k\Proj\left(\bigoplus_{n=0}^\infty m_j^n \right) \cong \Proj\left(\bigoplus_{n=0}^\infty \left(m_1^n\otimes m_2^n\otimes\cdots\otimes m_k^n\right)\right) = \Proj\left(\bigoplus_{n=0}^\infty I^n\right) = C,
\end{align*}
where we used the fact that $\ds\Proj\left(\bigoplus_{n=0}^\infty A_n \right)\times \Proj\left(\bigoplus_{n=0}^\infty B_n \right) \cong \Proj\left(\bigoplus_{n=0}^\infty A_n\otimes B_n \right)$ for finitely generated graded $R$-algebras $\ds\bigoplus_{n=0}^\infty A_n$ and $\ds\bigoplus_{n=0}^\infty B_n$. (See for instance, \cite[Exercise 9.6.D]{RisingSea})

It is easy to check that these isomorphisms are compatible with the embeddings $X\injection C, V(\xi)$. This automatically gives $\Chat \cong \Vhat(\xi)$.

Now notice that the subscheme $X = p^{-1}(\{m_R\})\subseteq C$ is cut out by the ideal sheaf $\Is=p^*(m_R)$ and similarly the subscheme $X = \pt^{-1}(\{\mhat_R\})\subseteq \Ct$ is cut out by the ideal sheaf $\Ish = \pt^*(\mhat_R)$. But now using the fact that $\mhat^a_R/\mhat^b_R = m^a_R/m^b_R$ for all $a>b$, as in \cite[Section 4]{Danilov} we get that
\begin{align*}
\Cthat 
&= \dirlim_n \left(X,\O_{\Ct}/\Ish^{n+1}\right) 
 = \dirlim_n \pt^{-1}\left(\Spec\left(\Rhat/\mhat_R^{n+1}\right) \right)
 = \dirlim_n \Proj\left(\bigoplus_{i=0}^\infty \mhat_R^i/\mhat_R^{n+i+1}\right)\\
&= \dirlim_n \Proj\left(\bigoplus_{i=0}^\infty m_R^i/m_R^{n+i+1}\right)
 = \dirlim_n p^{-1}\left(\Spec\left(R/m_R^{n+1}\right)\right)
 = \dirlim_n \left(X,\O_{C}/\Is^{n+1}\right) = \Chat,
\end{align*}
completing the proof of the lemma.
\end{proof}

We next note the following analogue of \cite[Section 2]{Danilov}:

\begin{lemma}\label{Cl to Pic}
Assume the setup of Proposition \ref{Danilov for products}, but without the assumption about the vanishing of the cohomology groups. There is a commutative diagram with exact rows:
\begin{center}
	\begin{tikzpicture}
	\node(01) at (0,2) {$0$};
	\node(11) at (2,2) {$\Z^k$};
	\node(21) at (4,2) {$\Pic(X)$};
	\node(31) at (6,2) {$\Cl(R)$};
	\node(41) at (8,2) {$0$};
	
	\node(00) at (0,0) {$0$};
	\node(10) at (2,0) {$\Z^k$};
	\node(20) at (4,0) {$\Pic(\Vhat(\xi))$};
	\node(30) at (6,0) {$\Cl(\Rhat)$};
	\node(40) at (8,0) {$0$};
	
	\draw[->] (11)--(10) node[midway,left]{$=$};
	\draw[->] (21)--(20);
	\draw[->] (31)--(30);
	
	\draw[->] (01)--(11);
	\draw[->] (11)--(21);
	\draw[->] (21)--(31);
	\draw[->] (31)--(41);
	
	\draw[->] (00)--(10);
	\draw[->] (10)--(20);
	\draw[->] (20)--(30);
	\draw[->] (30)--(40);
	\end{tikzpicture}
\end{center}
Where the map $\Cl(R)\to \Cl(\Rhat)$ is the natural completion map, and the map $\Pic(X)\to \Pic(\Vhat(\xi))$ is the pullback along the projection map $\Vhat(\xi)\to X$.
\end{lemma}
\begin{proof}
Let $U= Y\sm Z$ and $\Ut=\Yhat\sm \Zhat$, and note that $p$ and $\pt$ induce isomorphisms $C\sm E\cong U$ and $\Ct\sm\Et\cong\Ut$. Thus we will also regard $U$ and $\Ut$ as being open subschemes of $C$ and $\Ct$. As in \cite[Lemma 3]{Danilov} we get that $U$ and $\Ut$ are both regular.

Now as $Z\subseteq Y$ and $\Zhat\subseteq \Yhat$ have codimension at least two, the restriction maps $\Cl(R) = \Cl(Y)\to \Cl(U)$ and $\Cl(\Rhat)=\Cl(\Yhat)\to \Cl(\Ut)$ are isomorphisms.

Now as each $E_j\subseteq C$ is an irreducible subvariety of codimension 1, and $E = E_1\cup E_2\cup \cdots\cup E_k$, we get the the restriction map $\Cl(C)\to \Cl(C\sm E) = \Cl(U)$ is a surjection, with kernel equal to the $\Z$-span of $[E_1],\ldots,[E_k]$ (cf \cite[Proposition II.6.5]{Hartshorne}).

We claim that $[E_1],\ldots,[E_k]\in \Cl(C)$ are $\Z$-linearly independent. Assume not. Then there exists some non-unit rational function $g$ on $C$ for which $\div g = n_1[E_1]+\cdots+n_k[E_k]$, and so in particular, $\supp g\subseteq E$. But then writing $g = p^*(g')$ for some rational function on $Y$, we get that $\supp g' \subseteq p(E) = Z$, which implies that $g'$, and hence $g$, is a unit as $Z\subseteq Y$ has codimension at least two, a contradiction.

Thus we have an exact sequence $0\to \Z^k\to \Cl(C)\to \Cl(R)\to 0$.

Similarly we have a surjection $\Cl(\Ct)\to \Cl(\Rhat)$ with kernel spanned by $[\Et_1],\ldots,[\Et_k]\in \Cl(\Ct)$, which are also $\Z$-linearly independent. This gives the exact sequence $0\to \Z^k\to \Cl(\Ct)\to \Cl(\Rhat)\to 0$.

It remains to give isomorphisms $\Pic(X)\cong \Cl(C)$ and $\Pic(\Vhat(\xi))\cong \Cl(\Ct)$ compatible with the other maps.

First, as $C$ and $\Ct$ are locally factorial, we get that $\Cl(C)\cong \Pic(C)$ and $\Cl(\Ct)\cong \Pic(\Ct)$. By \cite[Proposition 3]{Danilov}, the zero section $X\injection V(\xi)$ gives an isomorphism $\Pic(V(\xi))\cong \Pic(X)$, so by Lemma \ref{V(xi)}, $\Pic(X)\cong \Pic(V(\xi))\cong \Pic(C)\cong \Cl(C)$.

Now as $\Rhat$ is an \emph{adic} Noetherian ring with ideal of definition $\mhat_R$, the morphism $\pt:\Ct\to \Yhat=\Spec \Rhat$ is projective, and $\Cthat$ is the completion of $\Ct$ along $X = \pt^{-1}(\{m_R\})$, the argument of \cite[Proposition 4]{Danilov} implies that $\Pic(\Ct)\cong \Pic(\Cthat)$ is an isomorphism.

Thus Lemma \ref{V(xi)} gives $\Pic(\Vhat(\xi))\cong\Pic(\Cthat)\cong \Pic(\Ct)$, establishing the desired commutative diagram.
\end{proof}

Thus it will suffice to show that the map $\Pic(X)\to \Pic(\Vhat(\xi))$ induced by the projection $\Vhat(\xi)\to X$ is an isomorphism.

Now write $\ds C_n = \rSpec_X\left(\bigoplus_{i=0}^n\xi^{\otimes i}\right)$ (where $\rSpec_X$ denotes the relative $\Spec$ over $X$), so that $\ds\Vhat(\xi) = \dirlim_n C_n$. As in \cite[Proposition 5]{Danilov} we have $\ds\Pic(\Vhat(\xi))\cong \invlim_n\Pic(C_n)$.

Now for each $n$, let $\pr_n:C_n\to X$ be the projection, and let $i_n:X\to C_n$ be the zero section. Note that we canonically have $C_0 = X$ and $i_0$ and $\pr_0$ are just the identity map.

We have that $\pr_n\circ i_n = \id_X$ and so $i_n^*\circ \pr_n^* = \id_{\Pic(X)}$. Hence $\pr_n^*:\Pic(X)\to \Pic(C_n)$ is an injection (and in fact, $\Pic(X)$ is a direct summand of $\Pic(C_n)$). It follows that the map $\ds\pr^* = (\pr_n^*):\Pic(X)\to \invlim_n\Pic(C_n)\cong \Pic(\Vhat(\xi))$ is injective. In particular this means that $\Cl(R)\to\Cl(\Rhat)$ is injective.

Now for each $n$ we have $\Pic(C_n) = H^1(X,\O^*_{C_n})$. As in \cite[Section 3]{Danilov}, we consider the exact sequence of sheaves on $X$:
\[0\to \xi^{\otimes (n+1)} \to \O^*_{C_{n+1}} \to \O^*_{C_n} \to 1,\]
where the first map sends $s\in \Gamma(W,\xi^{\otimes (n+1)})$ to $1+s\in \Gamma(W,\O^*_{C_{n+1}})$. Then the long exact sequence of cohomology gives an exact sequence:
\[H^1(X,\xi^{\otimes(n+1)}) \to \Pic(C_{n+1})\to \Pic(C_n)\to H^2(X,\xi^{\otimes(n+1)}).\]
We now claim that $H^d(X,\xi^{\otimes i}) = 0$ for all $d=1,2$ and $i\ge 0$. First note that
\begin{align*}
\xi^{\otimes i} 
&= \left(\bigoplus_{j=1}^k\pi_j^* \Lc_j \right)^{\otimes i} 
 = \bigoplus_{\substack{i_1+\cdots+i_k = i\\ i_1,\ldots,i_k\ge 0}}\left(\pi_1^*\Lc_1^{\otimes i_1}\otimes\cdots\otimes \pi_k^*\Lc_k^{\otimes i_k}\right)
 = \bigoplus_{\substack{i_1+\cdots+i_k = i\\ i_1,\ldots,i_k\ge 0}}\Lc_1^{\otimes i_1}\boxtimes\cdots\boxtimes \Lc_k^{\otimes i_k}
\end{align*}
but now for any $i_1,\ldots,i_k\ge 0$ and any $d=1,2$ we get:
\begin{align*}
H^d(X,\Lc_1^{\otimes i_1}\boxtimes\cdots\boxtimes \Lc_k^{\otimes i_k}) 
&=H^d(\Vc_1\times\cdots\times \Vc_k,\Lc_1^{\otimes i_1}\boxtimes\cdots\boxtimes \Lc_k^{\otimes i_k})\\
&= \bigoplus_{\substack{d_1+\cdots+d_k = d\\ d_1,\ldots,d_k\ge 0}}\ \bigotimes_{j=1}^k H^{d_j}(\Vc_j,\Lc_j^{\otimes i_j}) = 0,
\end{align*}
since for any $k$-tuple $(d_1,\ldots,d_k)$ with $d_1+\cdots+d_k=d\in\{1,2\}$ and $d_1,\ldots,d_k\ge 0$, there must be some index $j$ for which $d_j \in \{1,2\}$, and so $H^{d_j}(\Vc_j,\Lc^{\otimes i_j}) = 0$ by assumption.

Thus for any $n\ge 0$, we indeed get that $H^1(X,\xi^{\otimes (n+1)}) = H^2(X,\xi^{\otimes (n+1)}) = 0$, and so we have $\Pic(C_{n+1})\cong \Pic(C_n)$. Thus as $\pr_0^*:\Pic(X)\to \Pic(C_0)$ is an isomorphism, it follows by induction that $\pr_n^*:\Pic(X)\to \Pic(C_n)$ is an isomorphism for all $n$, and so $\ds\pr:\Pic(X)\to \dirlim_n \Pic(C_n) = \Pic(\Vhat(\xi))$ is an isomorphism.

Hence the completion map $\Cl(R)\to \Cl(\Rhat)$ is indeed an isomorphism, completing the proof.
\end{proof}

So indeed, $\Cl(\Rc)\to \Cl(\Rbar)$ is an isomorphism. As noted above, this completes the proof of Theorem \ref{self-dual mod l}, and hence of Theorem \ref{self-dual}.

\section{The construction of $M_\infty$}\label{sec:patching}
From now on assume that $\rhobar:G_F\to \GL_2(\F)$ satisfies condition (\ref{TW conditions}) of Theorem \ref{Mult 2^k} (i.e. the ``Taylor-Wiles'' condition). The goal of this section is to construct a module $M_\infty$ over $R_\infty$ satisfying the conditions of Theorem \ref{self-dual}.

We shall construct $M_\infty$ by applying the Taylor--Wiles--Kisin patching method \cite{Wiles,TaylorWiles,Kisin} to a natural system of modules over the rings $\T^D(K)$. For convenience we will follow the ``Ultrapatching'' construction introduced by Scholze in \cite{Scholze}. The primary advantage to doing this is that Scholze's construction is somewhat more ``natural'' than the classical construction, and thus it will be easier to show that $M_\infty$ satisfies additional properties (in our case, that it is self-dual).

\subsection{Ultrapatching}\label{ssec:ultrapatching}

In this subsection, we briefly recall Scholze's construction (while introducing our own notation).

From now on, fix a nonprincipal ultrafilter $\uf$ on the natural numbers $\N$ (it is well known that such an $\uf$ must exist, provided we assume the axiom of choice). For convenience, we will say that a property $\Pc(n)$ holds for \emph{$\uf$-many $i$} if there is some $I\in \uf$ such that $\Pc(i)$ holds for all $i\in I$.

For any collection of sets $\As =\{A_n\}_{n\ge 1}$, we define their \emph{ultraproduct} to be the quotient \[\uprod{\As} = \left(\prod_{n=1}^\infty A_n\right)/\sim\] where we define the equivalence relation $\sim$ by $(a_n)_n\sim (a_n')_n$ if $a_i = a_i'$ for $\uf$-many $i$.

If the $A_n$'s are sets with an algebraic structure (eg. groups, rings, $R$-modules, $R$-algebras, etc.) then $\uprod{\As}$ naturally inherits the same structure. 

Also if each $A_n$ is a finite set, and the cardinalities of the $A_n$'s are bounded (this is the only situation we will consider in this paper), then $\uprod{\As}$ is also a finite set and there are bijections $\uprod{\As}\xrightarrow{\sim} A_i$ for $\uf$-many $i$. Moreover if the $A_n$'s are sets with an algebraic structure, such that there are only finitely many distinct isomorphism classes appearing in $\{A_n\}_{n\ge 1}$ (which happens automatically if the structure is defined by \emph{finitely} many operations, eg. groups, rings or $R$-modules or $R$-algebras over a \emph{finite} ring $R$) then these bijections may be taken to be isomorphisms. This is merely because our conditions imply that there is some $A$ such that $A\cong A_i$ for $\uf$-many $i$ and hence $\uprod{\As}$ is isomorphic to the ``constant'' ultraproduct $\uprod{\{A\}_{n\ge 1}}$ which is easily seen to be isomorphic to $A$, provided that $A$ is finite.

Lastly, in the case when each $A_n$ is a module over a \emph{finite} local ring $R$, there is a simple algebraic description of $\uprod{\As}$. Specifically, the ring $\ds\Rc = \prod_{n=1}^\infty R$ contains a unique maximal ideal $\prF_\uf\in \Spec\Rc$ for which $\Rc_{\prF_\uf}\cong R$ and $\ds\left(\prod_{n=1}^\infty A_n \right)_{\prF_\uf}\cong \uprod{\As}$ as $R$-modules. This shows that $\uprod{-}$ is a particularly well-behaved functor in our situation. In particular, it is exact.

For the rest of this section, fix a power series ring $S_\infty = \O[[z_1,\ldots,z_t]]$ and consider the ideal $\n = (z_1,\ldots,z_n)$.

We can now make our main definitions:

\begin{defn}\label{def:patching}
	Let $\Ms = \{M_n\}_{n\ge 1}$ be a sequence of finite type ${S_\infty}$-modules.
	\begin{itemize}
		\item We say that $\Ms$ is a \emph{weak patching system} if the $S_\infty$-ranks of the $M_n$'s are uniformly bounded.
		\item We say that $\Ms$ is a \emph{patching system} if it is a weak patching system, and for any open ideal $\a\subseteq S_\infty$, we have $\Ann_{S_\infty}(M_i)\subseteq \a$ for all but finitely many $n$.
		\item We say that $\Ms$ is \emph{free} if $M_n$ is free over ${S_\infty}/\Ann_{S_\infty}(M_n)$ for all but finitely many $n$.
	\end{itemize}
	Furthermore, assume that $\Rs = \{R_n\}_{n\ge1}$ is a sequence of finite type local ${S_\infty}$-\emph{algebras}.
	\begin{itemize}
		\item We say that $\Rs = \{R_n\}_{n\ge1}$ is a \emph{(weak) patching algebra}, if it is a (weak) patching system.
		\item If $M_n$ is an $R_n$-module (viewed as an ${S_\infty}$-module via the ${S_\infty}$-algebra structure on $R_n$) for all $n$ we say that $\Ms = \{M_n\}_{n\ge1}$ is a \emph{(weak) patching $\Rs$-module} if it is a (weak) patching system.
	\end{itemize}
	Now for any weak-patching system $\Ms$, we define its patched module to be the $S_\infty$-module
	\[\patch(\Ms) = \invlim_{\a}\uprod{\Ms/\a},\]
	where the inverse limit is taken over all open ideals of $S_\infty$.
	
	If $\Rs$ is a (weak) patching algebra and $\Ms$ is a (weak) patching $\Rs$-module, then $\patch(\Rs)$ inherits a natural $S_\infty$-algebra structure, and $\patch(\Ms)$ inherits a natural $\patch(\Rs)$-module structure.
\end{defn}

In the above definition, the ultraproduct essentially plays the role of pigeonhole principal in the classical Taylor-Wiles construction, with the simplification that is is not necessary to explicitly define a ``patching datum'' before making the construction. Indeed, if one were to define patching data for the $M_n/\a$'s (essentially, imposing extra structure on each of the modules $M_n/\a$) then the machinery of ultraproducts would ensure that the patching data for $\uprod{\Ms/\a}$ would agree with that of $M_n/\a$ for infinitely many $n$. It is thus easy to see that our definition agrees with the classical construction (cf. \cite{Scholze}).

Thus the standard results about patching (cf \cite{Kisin}) may be rephrased as follows:

\begin{thm}\label{patching}
	Let $\Rs$ be a weak patching algebra, and let $\Ms$ be a \emph{free} patching $\Rs$-module. Then:
	\begin{enumerate}
		\item $\patch(\Rs)$ is a finite type $S_\infty$-algebra. $\patch(\Ms)$ is a finitely generated \emph{free} $S_\infty$-module.
		\item The structure map $S_\infty\to \patch(\Rs)$ (defining the $S_\infty$-algebra structure) is injective, and thus $\dim \patch(\Rs) = \dim S_\infty$.
		\item The module $\patch(\Ms)$ is maximal Cohen--Macaulay over $\patch(\Rs)$. $(\lambda,z_1,\ldots,z_t)$ is a regular sequence for $\patch(\Ms)$.
		\item Let $\n = (z_1,\ldots,z_t)\subseteq S_\infty$, as above. Let $R_0$ be a finite type local $\O$-algebra, and let $M_0$ be a finitely generated $R_0$-module. If, for each $n\ge 1$, there are isomorphisms $R_n/\n\cong R_0$ of $\O$-algebras and $M_n/\n\cong M_0$ of $R_n/\n\cong R_0$-modules, then we have $\patch(\Rs)/\n\cong R_0$ as $\O$-algebras and $\patch(\Ms)/\n\cong M_0$ as $\patch(\Rs)/\n\cong R_0$-modules.
	\end{enumerate}
\end{thm}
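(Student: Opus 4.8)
\emph{Proof plan.} The plan is to deduce all four assertions from the single structural fact underlying Scholze's formalism (cf. \cite{Scholze}, and \cite{Kisin} for the classical version): for any weak patching system $\Ns$ and any open ideal $\a\subseteq S_\infty$, the natural projection $\patch(\Ns)\to\uprod{\Ns/\a}$ is surjective with kernel $\a\,\patch(\Ns)$, so that $\patch(\Ns)/\a\,\patch(\Ns)\cong\uprod{\Ns/\a}$; in particular $\patch(\Ns)$ is a finitely generated $S_\infty$-module. Granting this, everything else is elementary commutative algebra over the regular local ring $S_\infty$ combined with the properties of ultraproducts of bounded families of finite objects recalled in Section \ref{ssec:ultrapatching}. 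So I would first recall the proof of the structural fact, then run through the four consequences in order.

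For the structural fact: the open ideals of $S_\infty$ containing a fixed power $m_{S_\infty}^k$ correspond to the (finitely many) ideals of the finite ring $S_\infty/m_{S_\infty}^k$, so the poset of open ideals is countable and cofinal with $\{m_{S_\infty}^k\}_k$; as the $S_\infty$-ranks of the $N_n$ are uniformly bounded, each $N_n/\a$ is finite of cardinality bounded independently of $n$, whence $\uprod{\Ns/\a}$ is finite and, by exactness of $\uprod{-}$, the transition maps $\uprod{\Ns/\b}\to\uprod{\Ns/\a}$ for $\b\subseteq\a$ are surjective. Surjectivity of $\patch(\Ns)\to\uprod{\Ns/\a}$ then follows from the Mittag--Leffler condition for countable surjective inverse systems; a topological Nakayama argument (using that $\patch(\Ns)$ is complete for the filtration by the kernels $\ker(\patch(\Ns)\to\uprod{\Ns/m_{S_\infty}^k})$ and that the first quotient is the finite $\F$-module $\uprod{\Ns/m_{S_\infty}}$) then shows $\patch(\Ns)$ is finitely generated over $S_\infty$. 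Finally, by exactness of $\uprod{-}$ the kernel of $\patch(\Ns)\to\uprod{\Ns/\a}$ is $\invlim_{\b\subseteq\a}\a\,\uprod{\Ns/\b}$, and once $\patch(\Ns)$ is known finitely generated, $\a\,\patch(\Ns)$ is a finitely generated, hence closed, submodule whose image in each $\uprod{\Ns/\b}$ is $\a\,\uprod{\Ns/\b}$, so this inverse limit is exactly $\a\,\patch(\Ns)$. Applied to $\Ms$ and $\Rs$ this gives that $\patch(\Ms)$ and $\patch(\Rs)$ are module-finite over $S_\infty$ (the algebra half of (1)); moreover $\patch(\Rs)$ is local, being an inverse limit of local Artinian rings along local maps, with $m_{S_\infty}\patch(\Rs)$ primary to $m_{\patch(\Rs)}$.

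Next I would prove freeness of $\patch(\Ms)$ and deduce (2) and (3). Since $\Ms$ is a \emph{free} patching system, $M_n/\a\cong(S_\infty/\a)^{\rank_{S_\infty}M_n}$ for $\uf$-many $n$ and every open $\a$; the ranks being bounded there is a unique $d$ with $\rank_{S_\infty}M_n=d$ for $\uf$-many $n$, and $d\ge 1$ because a patching system cannot have $M_n=0$ for $\uf$-many $n$ (the annihilator of $0$ is not contained in any proper open ideal). Hence $\uprod{\Ms/\a}\cong(S_\infty/\a)^d$ for all open $\a$, so $\patch(\Ms)/m_{S_\infty}\patch(\Ms)\cong\F^d$; lifting a basis gives a surjection $S_\infty^d\surjection\patch(\Ms)$ which is an isomorphism modulo every $m_{S_\infty}^k$ (a surjective endomorphism of the finite-length module $(S_\infty/m_{S_\infty}^k)^d$ is injective), so its kernel lies in $\bigcap_k m_{S_\infty}^k S_\infty^d=0$ and $\patch(\Ms)\cong S_\infty^d$ is free, completing (1). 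For (2): the $S_\infty$-module structure on $\patch(\Ms)$ factors through $\patch(\Rs)$, and $S_\infty\to\End_{S_\infty}(S_\infty^d)$ is injective as $d\ge 1$, so $S_\infty\to\patch(\Rs)$ is injective; being module-finite it is an integral extension, whence $\dim\patch(\Rs)=\dim S_\infty=t+1$. For (3): $(\lambda,z_1,\ldots,z_t)$ is a regular system of parameters for the regular local ring $S_\infty$, hence a regular sequence on the free module $\patch(\Ms)$, and $\operatorname{depth}_{\patch(\Rs)}\patch(\Ms)=\operatorname{depth}_{S_\infty}\patch(\Ms)=t+1=\dim\patch(\Rs)$ (depth is unchanged along a module-finite local extension with $m_{S_\infty}\patch(\Rs)$ primary to the maximal ideal), so $\patch(\Ms)$ is maximal Cohen--Macaulay over $\patch(\Rs)$.

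Finally (4): since $\n=(z_1,\ldots,z_t)$ is not open I would pass to the open ideals $\a_k=(z_1,\ldots,z_t,\lambda^k)$, noting $\bigcap_k\a_k=\n$ and $\a_k\patch(\Rs)=\n\patch(\Rs)+\lambda^k\patch(\Rs)$. As $\patch(\Rs)$ is module-finite over $S_\infty$, the quotient $\patch(\Rs)/\n\patch(\Rs)$ is module-finite over $\O=S_\infty/\n$, hence $\lambda$-adically complete and separated, so it equals $\invlim_k\patch(\Rs)/\a_k\patch(\Rs)$; by the structural fact and the hypothesis $R_n/\n\cong R_0$ we get $\patch(\Rs)/\a_k\patch(\Rs)\cong\uprod{\Rs/\a_k}\cong\uprod{(R_n/\n)/\lambda^k}\cong\uprod{R_0/\lambda^k}\cong R_0/\lambda^k$ compatibly in $k$ (the last isomorphism because $R_0$, being module-finite over $\O$, has $R_0/\lambda^k$ finite and equal to its own constant ultraproduct), so $\patch(\Rs)/\n\patch(\Rs)\cong\invlim_k R_0/\lambda^k=R_0$ as $\O$-algebras. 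The identical argument for $\Ms$ and $M_0$ — here $\patch(\Ms)/\n\patch(\Ms)=(S_\infty/\n)^d=\O^d$ is automatically complete and $M_0$ is module-finite over $R_0$, hence over $\O$ — gives $\patch(\Ms)/\n\patch(\Ms)\cong M_0$ compatibly with the module structures over $R_0$. The step I expect to be the main obstacle is the kernel computation in the structural fact, namely identifying $\ker(\patch(\Ns)\to\uprod{\Ns/\a})$ with $\a\,\patch(\Ns)$ rather than merely with $\invlim_{\b}\a\,\uprod{\Ns/\b}$: this is exactly where the topological finiteness of $\patch(\Ns)$ and a Krull-intersection/Artin--Rees argument are needed, and it is what makes $\patch(-)$ commute with reduction modulo open ideals; the rest is routine.
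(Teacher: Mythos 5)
The paper does not actually prove this theorem; it states it as a reformulation of the standard Taylor--Wiles--Kisin patching results, with a citation to \cite{Kisin} (and implicitly \cite{Scholze} for the ultrapatching formalism). Your argument is a correct, self-contained proof within the ultraproduct framework the paper sets up, and it is the argument one would expect.

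A few remarks on the places that matter. Your ``structural fact'' $\patch(\Ns)/\a\,\patch(\Ns)\cong\uprod{\Ns/\a}$ for open $\a$, valid already for \emph{weak} patching systems, is indeed the crux: surjectivity of $\patch(\Ns)\to\uprod{\Ns/\a}$ is Mittag--Leffler for a countable inverse system of finite modules (with the cofinality of $\{m_{S_\infty}^k\}$ among open ideals, and exactness of $\uprod{-}$ giving surjective transition maps); finite generation of $\patch(\Ns)$ over $S_\infty$ then follows by lifting generators of the finite $\F$-space $\uprod{\Ns/m_{S_\infty}}$ and using Nakayama in each $\uprod{\Ns/m_{S_\infty}^k}$ plus compactness to pass to the limit; and the kernel identification uses that $\a\,\patch(\Ns)$ is the continuous (compact) image of $\patch(\Ns)^s$ under the generators of $\a$, hence closed, and surjects onto each $\uprod{\a\,\Ns/\b}$, hence onto the inverse limit $\ker(\patch(\Ns)\to\uprod{\Ns/\a})$. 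You correctly flag this last step as the delicate one. The freeness of $\patch(\Ms)$ exploits that, by the ``free patching system'' hypothesis and the ``patching system'' hypothesis $\Ann_{S_\infty}(M_n)\subseteq\a$ for a.e.\ $n$, the rank $d_n$ is constant $=d\ge 1$ for $\uf$-many $n$ uniformly in $\a$, whence a lifted basis gives a map $S_\infty^d\to\patch(\Ms)$ that is an isomorphism modulo every $m_{S_\infty}^k$ and hence an isomorphism. Parts (2), (3) and (4) then follow by standard commutative algebra exactly as you describe: the structure map is injective because $S_\infty\to\End_{S_\infty}(S_\infty^d)$ is, and is integral, so preserves dimension; regular system of parameters on a free module gives the regular sequence and depth equality under the finite local map gives MCM; and for (4) you correctly observe that $\n$ is not open and so you must pass to the cofinal family $\a_k=\n+(\lambda^k)$, using that the quotients $\patch(\Rs)/\n\patch(\Rs)$ and $\patch(\Ms)/\n\patch(\Ms)$ are module-finite over $\O=S_\infty/\n$ and hence $\lambda$-adically complete, so equal to $\invlim_k$ of their $\a_k$-reductions, which by the structural fact and the hypotheses identify with $R_0/\lambda^k$ and $M_0/\lambda^k$ compatibly.
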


From the set up of Theorem \ref{patching} there is very little we can directly conclude about the ring $\patch(\Rs)$. However in practice one generally takes the rings $R_n$ to be quotients of a fixed ring $R_\infty$ (which in our case will be a result of Lemma \ref{TW primes}) of the same dimension as $S_\infty$ (and thus as $\patch(\Rs)$). Thus we define a \emph{cover} of a  weak patching algebra $\Rs = \{R_n\}_{n\ge 1}$ to be a pair $(R_\infty,\{\varphi_n\}_{n\ge 1})$ (which we will denote by $R_\infty$ when the $\varphi_n$'s are clear from context), where $R_\infty$ is a complete, local, topologically finitely generated $\O$-algebra of Krull dimension $\dim S_\infty$ and $\varphi_n:R_\infty\to R_n$ is a surjective $\O$-algebra homomorphism for each $n$. We have the following:

\begin{thm}\label{surjective cover}
	If $(R_\infty,\{\varphi_n\})$ is a cover of a weak patching algebra $\Rs$, then the $\varphi_n$'s induce a natural continuous surjection $\varphi_\infty:R_\infty\surjection \patch(\Rs)$. If $R_\infty$ is a domain then $\varphi_\infty$ is an isomorphism.
\end{thm}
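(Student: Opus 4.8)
The plan is to build $\varphi_\infty$ one congruence level at a time and then pass to the limit, exploiting that $R_\infty$ is complete and that each level is a finite ring. Fix an open ideal $\a\subseteq S_\infty$. For each $n$ the composite $R_\infty\xrightarrow{\varphi_n}R_n\surjection R_n/\a R_n$ is a surjection onto a finite local ring (finite because $R_n$ is generated over $S_\infty$ by a number of elements bounded independently of $n$, so $R_n/\a R_n$ is generated over the finite ring $S_\infty/\a$ by a bounded number of elements), and it is continuous since its kernel contains a power of $m_{R_\infty}$: the image of $m_{R_\infty}$ is the maximal ideal of $R_n/\a R_n$, hence nilpotent. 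Passing to ultraproducts gives a ring homomorphism $R_\infty\to\uprod{\{R_n/\a R_n\}_n}$ with open kernel, and these maps are visibly compatible as $\a$ shrinks, so they assemble into a continuous homomorphism $\varphi_\infty\colon R_\infty\to\patch(\Rs)$.

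For surjectivity it suffices to show each level map $R_\infty\to\uprod{\{R_n/\a R_n\}_n}$ is surjective, and the crux is that the ultrafilter $\uf$ collapses finite ambiguity. There are only finitely many isomorphism classes of rings of cardinality $\le\#(S_\infty/\a)^{N}$ (where $N$ is the uniform bound on ranks), and for any fixed finite ring $B$ there are only finitely many continuous ring homomorphisms $R_\infty\to B$, since such a homomorphism is determined by the images of a finite topological generating set of $R_\infty$ over $\O$. Hence the pairs $\bigl(R_n/\a R_n,\ R_\infty\to R_n/\a R_n\bigr)$ fall into finitely many isomorphism classes, so there is a set $I\in\uf$ on which this datum is constant up to isomorphism, say isomorphic to a surjection $\psi\colon R_\infty\surjection B$. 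Restricting the ultraproduct to $I$ (which does not change it) and transporting along these isomorphisms identifies $R_\infty\to\uprod{\{R_n/\a R_n\}_n}$ with $\psi$, which is surjective. Writing $K_\a$ for the kernel of the $\a$-level map we get compatible isomorphisms $R_\infty/K_\a\xrightarrow{\sim}\uprod{\{R_n/\a R_n\}_n}$, so $\patch(\Rs)\cong\varprojlim_\a R_\infty/K_\a$; since $R_\infty$ is complete Noetherian local, this inverse limit equals $R_\infty/\bigcap_\a K_\a$ (Chevalley's lemma), and therefore $\varphi_\infty$ is surjective with $\ker\varphi_\infty=\bigcap_\a K_\a$.

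For the final assertion, note first that $\patch(\Rs)=\varprojlim_\a\uprod{\{R_n/\a R_n\}_n}$ is module-finite over $S_\infty$: each $\uprod{\{R_n/\a R_n\}_n}$ is generated by at most $N$ elements over $S_\infty/\a$, and the transition maps are surjective, so lifting a generating set of the top quotient using completeness of $S_\infty$ and Nakayama produces at most $N$ generators of $\patch(\Rs)$ over $S_\infty$. In particular $\dim\patch(\Rs)\le\dim S_\infty=\dim R_\infty$. Provided there is a free patching $\Rs$-module (as there is in all the situations where this theorem is applied), Theorem~\ref{patching}(2) shows the structure map $S_\infty\to\patch(\Rs)$ is injective, so $\dim\patch(\Rs)=\dim S_\infty=\dim R_\infty$. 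Now if $R_\infty$ is a domain and $\ker\varphi_\infty\ne0$, choose a nonzero element $f\in\ker\varphi_\infty$, necessarily a non-unit; since a complete Noetherian local domain is universally catenary, $\dim R_\infty/(f)=\dim R_\infty-1$, whence $\dim\patch(\Rs)=\dim\bigl(R_\infty/\ker\varphi_\infty\bigr)\le\dim R_\infty-1$, contradicting the equality of dimensions. Hence $\ker\varphi_\infty=0$ and $\varphi_\infty$ is an isomorphism.

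I expect the main obstacle to be the surjectivity of the individual level maps $R_\infty\to\uprod{\{R_n/\a R_n\}_n}$: this is exactly where one must use that $\uf$ is an \emph{ultra}filter (a finite partition of $\N$ has exactly one block in $\uf$) together with the finiteness both of the rings $R_n/\a R_n$ and of the relevant sets of continuous homomorphisms — a naive argument lifting elements of the ultraproduct one index at a time produces an element of $\prod_n R_\infty$ rather than of $R_\infty$. A secondary delicate point is the equality $\dim\patch(\Rs)=\dim R_\infty$, which genuinely needs input beyond the bare weak-patching-algebra/cover structure, supplied here by the presence of a free patching module.
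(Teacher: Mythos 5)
Your proof is correct and follows essentially the same route as the paper's: the heart of both arguments is that, because $R_\infty$ is topologically finitely generated and each $R_n/\a$ is finite with bounded cardinality, the $R_\infty$-algebras $R_n/\a$ fall into finitely many isomorphism classes, so the ultrafilter picks one out and each level map $R_\infty\to\uprod{\Rs/\a}$ is surjective. Where the paper then passes to the limit via compactness of $R_\infty$ (the image is closed and dense in $\patch(\Rs)$), you go through Chevalley's lemma to identify the inverse limit with $R_\infty/\bigcap_\a K_\a$; both are standard and equivalent in effect. One thing you handle more carefully than the paper: you flag that the final injectivity assertion requires $\dim\patch(\Rs)=\dim S_\infty$, which needs the structure map $S_\infty\to\patch(\Rs)$ to be injective, hence (via Theorem~\ref{patching}(2)) the existence of a free patching $\Rs$-module. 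The paper's own proof tacitly appeals to this when it invokes the chain of equalities $\dim R_\infty=\dim S_\infty=\dim\patch(\Rs)$, even though this is not among the stated hypotheses of the theorem; your explicit caveat is warranted, and the hypothesis does hold in every application of the result.
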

\begin{proof}
	The $\varphi_n$'s induce a continuous map $\ds\Phi = \prod_{n\ge 1}\varphi_n:R_\infty\to \prod_{n\ge 1}R_n$, and thus induce continuous maps 
	\[\Phi_\a:R_\infty\xrightarrow{\Phi}\prod_{n\ge 1}R_n\surjection \prod_{n\ge 1}(R_n/\a)\surjection \uprod{\Rs/\a}\]
	for all open $\a\subseteq {S_\infty}$. Hence they indeed induce a continuous map \[\varphi_\infty=(\Phi_\a)_\a:R_\infty\to \invlim_\a \uprod{\Rs/\a} = \patch(\Rs).\]
	Now as $R_\infty$ is complete and topologically finitely generated, it is compact, and thus $\varphi_\infty(R_\infty)\subseteq \patch(\Rs)$ is closed. So to show that $\varphi_\infty$ is surjective, it suffices to show that $\varphi_\infty(R_\infty)$ is dense, or equivalently that each $\Phi_\a$ is surjective.
	
	Now for any $n$ and any open $\a\subseteq$, $R_n/\a$ is a finite set with the structure of a continuous $R_\infty$ algebra (defined by the continuous surjection $\varphi_n:R_\infty\surjection R_n\surjection R_n/\a$) and the cardinalities of the $R_n/\a$'s are bounded. As noted above, this implies that $\uprod{\Rs/\a}$ also has the structure of an $R_\infty$-algebra (which is just the structure induced by $\Phi_\a$). As $R_\infty$ is \emph{topologically} finitely generated, there are only finitely many distinct isomorphism classes of $R_\infty$-algebras in $\{R_n/\a\}_{n\ge 1}$. By the above discussion of ultraproducts, this implies that $R_i/\a\cong \uprod{\Rs/\a}$ as $R_\infty$-algebras for $\uf$-many $i$. But now taking any such $i$, as the structure map $R_\infty\to R_i/\a$ is surjective, and so the structure map $\Phi_\a:R_\infty\to \uprod{\Rs/\a}$ is as well.
	
	The final claim simply follows by noting that if $R_\infty$ is a domain and $\varphi_\infty$ is \emph{not} injective, then $\patch(\Rs)\cong R_\infty/\ker\varphi_\infty$ would have Krull dimension strictly smaller than $R_\infty$, contradicting our assumption that $\dim R_\infty = \dim S_\infty = \dim\patch(\Rs)$.
\end{proof}

In order to construct the desired module $M_\infty$ over $R_\infty$ satisfying the conditions of Theorem \ref{self-dual}, we will construct a weak patching algebra $\Rs^\square$ covered by $R_\infty$, and a free patching $\Rs^\square$-module $\Ms^\square$, and then define $M_\infty = \patch(\Ms^\square)$.

\subsection{Spaces of automorphic forms}\label{ssec:M_psi}

In this section, we will construct the spaces of automorphic forms $M(K)$ and $M_\psi(K)$ that will be used in Section \ref{ssec:Ms^sqaure} to construct the patching system $\Ms^\square$, producing $M_\infty$.

Recall that $\rhobar:G_F\to \GL_2(\Fbar_\ell)$ is assumed to be a Galois representation satisfying all of the conditions of Theorem \ref{Mult 2^k}. In particular $\K^D(\rhobar)\ne\es$, so that $\rhobar = \rhobar_m$ for some $K\in\K^D(\rhobar)$ and some $m\subseteq \T^D(K)$. By enlarging $\O$ if necessary, assume that $\T^D(K)/m = \F$ and $\F$ contains all eigenvalues of $\rhobar(\sigma)$ for all $\sigma\in G_F$.

Since the results of Theorems \ref{Mult 2^k} and \ref{R=T} are known classically in the case when $F = \Q$ and $D = \GL_2$, we will exclude this case for convenience. Thus we will assume that $D(F)$ is a division algebra. 

For any $K\in \K^D(\rhobar)$, define $M(K)= S^D(K)_m^\dual$ if $D$ is totally definite and \[M(K)= \Hom_{R_{F,S}(\rhobar)[G_F]}(\rho^{\univ},S^D(K)_m^\dual)\] if $D$ is indefinite. Note that this definition depends only on the $\T^D(K)_m$-module structure of $S^D(K)^\dual_m$, and not on the specific choice of $S$ in $R_{F,S}(\rhobar)$. Give $M(K)$ its natural $\T^D(K)_m$-module structure.

\begin{rem}
	The purpose of the definition of $M(K)$ in the indefinite case is to ``factor out'' the Galois action on $S^D(K)^\dual$. This construction was described by Carayol in \cite{Carayol2}. As in \cite{Carayol2} we have that the natural evaluation map $M(K)\otimes_{R_{F,S}(\rhobar)}\rho^{\univ}\to S^D(K)_m^\dual$ is an isomorphism, and so $S^D(K)_m^\dual\cong M(K)^{\oplus 2}$ as $\T^D(K)_m$-modules.
	
	 If we did not do this, and only worked with $S^D(K)^\dual_m$, then the module $M_\infty$ we will construct would have generic rank 2 instead of generic rank 1, and so we would not be able to directly apply Theorem \ref{self-dual}.
\end{rem}

Note that it follows from the definitions that $\dim_{\F}M(K)/m = \nu_{\rhobar}(K)$ for all $K$.

For technical reasons (related to the proof of Lemma \ref{TW primes}) we cannot directly apply the patching construction to the modules $M(K)$. Instead, it will be necessary to introduce ``fixed-determinant'' versions of these spaces, $M_\psi(K)$.

We now make the following definition: If $D$ is definite, a level $K\subseteq D^\times(\A_{F,f})$ is \emph{sufficiently small} if for all $t\in D^\times(\A_{F,f})$ we have $K\A_{F,f}^\times\cap (t^{-1}D^\times(F)t) = F^\times$. This is condition (2.1.2) in \cite{KisinFM}.

If $D$ is indefinite, a level $K\subseteq D^\times(\A_{F,f})$ is \emph{sufficiently small} if 
for all $t\in D^\times(\A_{F,f})$ the action of $(K\A_{F,f}^\times\cap (t^{-1}D^\times(F)t)/F^\times$ on $\half$ is free. Note that this implies that the Shimura variety $X_K$ does not contain any elliptic points.

The importance of considering sufficiently small levels is the following standard lemma:

\begin{lemma}\label{sufficiently small}
Let $K\subseteq D^\times(\A_{F,f})$ be a level, and let $K'\unlhd K$ be level which is a normal subgroup of $K$. Consider a finite subgroup $G \le K\A_{F,f}^\times/K'F^\times$, and let $\O[G]$ be its group ring and $\a_G\subseteq \O[G]$ be the augmentation ideal. Also let $m$ be a non-Eisenstein maximal ideal of $\T^D_K$. If $K$ is sufficiently small then:
\begin{enumerate}
\item If $D$ is totally definite (resp. indefinite) then $G$ acts freely on the double quotient $D^\times(F)\backslash D^\times(\A_{F,f})/K'$ (resp. the Riemann surface $X^D(K') = D^\times(F)\backslash \left(D^\times(\A_{F,f})\times \half\right)/K'$) by right multiplication.
\item $S^D(K')_m^\dual$ is a finite projective $\O[G]$-module.
\item If $G = KF^\times/K'F^\times$ then the operator $\ds\sum_{g\in G}g$ induces an isomorphism 
$S^D(K')_m^\dual/\a_GS^D(K')_m^\dual\xrightarrow{\sim} S^D(K)_m^\dual$.
\end{enumerate}
\end{lemma}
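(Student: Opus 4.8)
The plan is to establish the three assertions in order, using (1) to prove (2) and (2) to prove (3).

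For (1) I would compute point stabilizers directly from the definition of ``sufficiently small.'' In the totally definite case a point of $D^\times(F)\backslash D^\times(\A_{F,f})/K'$ is a double coset $D^\times(F)\,t\,K'$, and a class in $G$ represented by $h\in K\A_{F,f}^\times$ fixes it exactly when $\gamma\,t\,h\,k'=t$ for some $\gamma\in D^\times(F)$ and $k'\in K'$; then $t^{-1}\gamma^{-1}t=hk'$ lies in $K\A_{F,f}^\times\cap t^{-1}D^\times(F)t=F^\times$, so $h$ is trivial in $K\A_{F,f}^\times/K'F^\times$ and a fortiori in $G$. In the indefinite case the same manipulation applied to a point $(t,z)\in D^\times(\A_{F,f})\times\half$ produces $\gamma\in D^\times(F)$ with $t^{-1}\gamma t\in K'\A_{F,f}^\times\subseteq K\A_{F,f}^\times$ and with $\gamma$ fixing $z\in\half$; since $K$ is sufficiently small, $(K\A_{F,f}^\times\cap t^{-1}D^\times(F)t)/F^\times$ acts freely on $\half$, forcing $\gamma\in F^\times$ and again $h$ trivial in $G$. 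So in both cases $G$ acts freely.

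For (2) I would use (1) to present $S^D(K')$ via free $\O[G]$-modules. In the definite case $S^D(K')$ is the space of $\O$-valued functions on the finite free $G$-set $D^\times(F)\backslash D^\times(\A_{F,f})/K'$, hence a finite free $\O[G]$-module; since the $G$-action commutes with the Hecke operators $T_v,S_v$ for $v\nmid\D N_{K'}$, localizing at $m$ (a direct-summand operation) keeps it $\O[G]$-projective. In the indefinite case $X^D(K')$ is a \emph{compact} Riemann surface (as $D(F)$ is a division algebra) on which $G$ acts freely, so its cellular cochain complex may be chosen to be a bounded complex $C^\bullet$ of finite free $\O[G]$-modules computing $H^*(X^D(K'),\O)$; localizing at the non-Eisenstein $m$ kills $H^0$ and $H^2$ (their Hecke eigensystems are Eisenstein), so $C^\bullet_m$ is a bounded complex of finite projective $\O[G]$-modules whose cohomology is concentrated in degree $1$, which forces $S^D(K')_m=H^1(X^D(K'),\O)_m$ to be a direct summand of a free $\O[G]$-module. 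Dualizing, and using that $\Hom_\O(-,\O)$ preserves finite projectivity over $\O[G]$ (since $\Hom_\O(\O[G],\O)\cong\O[G]$), we get that $S^D(K')_m^\dual$ is finite projective over $\O[G]$.

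For (3), with $G=KF^\times/K'F^\times$, freeness of the action together with the identification $X^D(K')/G=X^D(K)$ (resp. the analogous quotient of finite double cosets) shows that the transfer/corestriction map identifies the $G$-coinvariants of $S^D(K')$ with $S^D(K)$; dualizing and localizing gives $(S^D(K')_m^\dual)^G\cong S^D(K)_m^\dual$. Since $S^D(K')_m^\dual$ is finite projective over $\O[G]$ by (2), the norm element $\sum_{g\in G}g$ induces an isomorphism from the coinvariants $S^D(K')_m^\dual/\a_G S^D(K')_m^\dual$ onto the invariants $(S^D(K')_m^\dual)^G$ (this is the vanishing of Tate cohomology $\widehat{H}^0(G,-)$ on projectives, checked on $\O[G]$ and extended to summands); composing the two isomorphisms yields the asserted isomorphism onto $S^D(K)_m^\dual$. \emph{The main obstacle} is the indefinite case of (2): one must check that localizing the finite free $\O[G]$-complex $C^\bullet$ at the Hecke maximal ideal $m$ is genuinely an $\O[G]$-linear direct-summand operation — which rests on the standard fact that the $G$-action commutes with the Hecke operators at $v\nmid\D N_{K'}$ — and that non-Eisenstein-ness really does annihilate $H^0$ and $H^2$ after localization, so that the surviving cohomology lives in a single degree and is therefore projective over $\O[G]$; granting that, (2) and (3) are formal, with only routine checks of Hecke-equivariance of the transfer maps remaining.
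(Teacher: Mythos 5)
The paper does not prove this lemma directly; it cites \cite{KisinFM} (Lemma (2.1.4)) for the definite case and \cite{BD} (Lemme 3.6.2) for the indefinite case, so your proposal is filling in an argument that the paper delegates to references. Your overall strategy---use the sufficiently-small condition to show the $G$-action is free, so the cochain complex of $X^D(K')$ is a bounded complex of finite free $\O[G]$-modules, then localize at the non-Eisenstein $m$ to kill $H^0$ and $H^2$---is exactly the standard one behind those citations, and parts (1) and (3) are handled essentially correctly.

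The genuine gap is in the indefinite case of (2). You flag the ``main obstacle'' as checking that $H^0_m$ and $H^2_m$ vanish, and then assert that cohomology concentrated in a single degree ``is therefore projective.'' That last inference does not hold on its own: the two-term complex $\O\xrightarrow{\lambda}\O$ (with $G$ trivial) is a bounded complex of finite projective $\O[G]$-modules with cohomology concentrated in one degree, and that cohomology is $\O/\lambda$, which is not projective. What actually closes the argument is a hypothesis you never invoke: $S^D(K')_m$ is $\O$-torsion-free, since $S^D(K')=H^1(X^D(K'),\Z)\otimes_\Z\O$ is $\O$-free ($X^D(K')$ being a disjoint union of compact Riemann surfaces) and localizing at $m$ extracts an $\O$-direct summand. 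Given $\O$-freeness, projectivity over $\O[G]$ does follow: chop $C^\bullet_m$ into the short exact sequences $0\to C^0_m\to Z^1_m\to H^1_m\to 0$ and $0\to Z^1_m\to C^1_m\to C^2_m\to 0$, deduce from the long exact sequence in Tate cohomology that $H^1_m$ is cohomologically trivial for every subgroup of $G$, and apply the criterion (Rim) that a finitely generated, $\O$-free, cohomologically trivial $\O[G]$-module is projective---or, equivalently, apply Auslander--Buchsbaum at each maximal ideal of the one-dimensional Gorenstein ring $\O[G]$. You should make the $\O$-torsion-freeness explicit and appeal to one of these; without it the key step of (2), and hence (3), fails. (A small slip elsewhere: in (1), indefinite case, $t^{-1}\gamma t=h(k')^{-1}$ lies a priori only in $K\A_{F,f}^\times$, not in $K'\A_{F,f}^\times$, though you only ever use the weaker containment.)
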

\begin{proof}
The definite case essentially follows from the argument of \cite[Lemma (2.1.4)]{KisinFM}. In the indefinite case, (1) is true by definition, and the argument of \cite[Lemme 3.6.2]{BD} shows that (2) and (3) follow from (1).
\end{proof}

This lemma will allow us to construct the desired free patching system $\Ms^\square$. However, in order to use this lemma it will be necessary to first restrict our attention to sufficiently small levels $K$. First by the conditions on $\rhobar$ and \cite[Lemma 4.11]{DDT} we may pick a prime $w\not\in \Sigma^D_\ell$ satisfying
\begin{itemize}
\item $\Nm(w)\not\equiv 1\pmod{\ell}$
\item The ratio of the eigenvalues of $\rhobar(\Frob_w)$ is not equal to $\Nm(w)^{\pm 1}$ in $\Fbar_\ell^\times$.
\item For any nontrivial root of unity $\zeta$ for which $[F(\zeta):F]\le 2$, $\zeta+\zeta^{-1} \not\equiv 2\pmod{w}$.
\end{itemize}
Define 
\begin{align*}
U_w &= \left\{\begin{pmatrix}a&b\\c&d\end{pmatrix}\in \GL_2(\O_{F,w})\middle| \begin{pmatrix}a&b\\c&d\end{pmatrix} \equiv\begin{pmatrix}1&*\\0&1\end{pmatrix}\pmod{w}\right\}\subseteq \GL_2(F_w)\\
U_w^{-} &= \left\{\begin{pmatrix}a&b\\c&d\end{pmatrix}\in \GL_2(\O_{F,w})\middle| \begin{pmatrix}a&b\\c&d\end{pmatrix} \equiv\begin{pmatrix}*&*\\0&*\end{pmatrix}\pmod{w}\right\}\subseteq \GL_2(F_w)\\
\end{align*}
Let $K_0$ (resp. $K_0^-$) be the preimage of $U_w$ (resp. $U_w^-$)under the map $K^{\min}\injection D^{\times}(\A_{F,f})\surjection \GL_2(F_w)$. We then have the following:

\begin{lemma}\label{lem:aux prime}
$K_0$ is sufficiently small, and we have compatible isomorphisms $\T^D(K^{\min})_m\cong \T^D(K_0)_m$, $S^D(K^{\min})_m\cong S^D(K_0)_m$ and $M(K^{\min})\cong M(K_0)$.
\end{lemma}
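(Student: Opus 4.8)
The plan is to verify the three claims — that $K_0$ is sufficiently small, that $\T^D(K^{\min})_m \cong \T^D(K_0)_m$, and that the corresponding isomorphisms hold for the spaces $S^D$ and $M$ — essentially by showing that passing from $K^{\min}$ to $K_0$ adds level structure only at the auxiliary prime $w$, and that this added structure does not change the $m$-local picture because of the conditions imposed when choosing $w$. First I would establish that $K_0$ is sufficiently small. The key point is that the congruence condition at $w$ forces any element of $K_0 \A_{F,f}^\times \cap (t^{-1}D^\times(F) t)$ whose image in $\GL_2(F_w)$ lies in the unipotent-mod-$w$ group $U_w$ to be an algebraic element of $D^\times(F)$ whose eigenvalues are roots of unity $\zeta$ with $[F(\zeta):F] \le 2$ (since the characteristic polynomial of such an element has coefficients in $F$ and, being integral and of bounded degree, its roots are roots of unity by Kronecker) and which is $\equiv 1 \pmod{w}$. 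Then $\zeta + \zeta^{-1} \equiv 2 \pmod w$, which the third condition on $w$ rules out unless $\zeta = 1$; hence the element is unipotent, and since $D$ is a division algebra the only unipotent element is the identity, so the intersection is contained in (hence equals) $F^\times$. In the indefinite case the same argument shows that a nontrivial element with a fixed point on $\half$ would have eigenvalue ratio a root of unity of the forbidden type, giving freeness of the action.

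Next I would treat the Hecke algebras and the automorphic forms. The inclusion $K_0 \subseteq K^{\min}$ induces, at the level of double cosets, a degeneracy map relating $S^D(K^{\min})$ and $S^D(K_0)$, and correspondingly a surjection $\T^{\univ}_{S'} \surjection \T^D(K_0)$ compatible with $\T^{\univ}_{S'} \surjection \T^D(K^{\min})$ for a set $S'$ of primes containing $w$. Because $\Nm(w) \not\equiv 1 \pmod \ell$ and the ratio of eigenvalues of $\rhobar(\Frob_w)$ is not $\Nm(w)^{\pm 1}$ in $\Fbar_\ell^\times$, the maximal ideal $m$ is not in the support of any oldform/newform contribution coming from level structure at $w$ — more precisely, $\rhobar_m$ is unramified at $w$ with $U_w$-eigenvalue distinct from the "wrong" value, so the $m$-localization of the map $S^D(K^{\min})_m^\dual \to S^D(K_0)_m^\dual$ is an isomorphism. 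This is the standard "adding an auxiliary prime" lemma (compare \cite[Lemma 4.11]{DDT} and the references in \cite{Kisin}): one checks that $H_*$ at level $K_0$ localized at $m$ agrees with level $K^{\min}$ because the relevant Hecke operator at $w$ acts invertibly on the quotient that would otherwise appear. This gives $\T^D(K^{\min})_m \cong \T^D(K_0)_m$ and $S^D(K^{\min})_m \cong S^D(K_0)_m$ compatibly, and then the isomorphism $M(K^{\min}) \cong M(K_0)$ follows immediately: in the definite case $M(K) = S^D(K)_m^\dual$ by definition, and in the indefinite case $M(K) = \Hom_{R_{F,S}(\rhobar)[G_F]}(\rho^{\univ}, S^D(K)_m^\dual)$ is a functor of the $\T^D(K)_m[G_F]$-module $S^D(K)_m^\dual$, so the compatible isomorphism of these modules transports through.

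The main obstacle I expect is the sufficiently-small verification, specifically making the Kronecker/roots-of-unity argument airtight: one must be careful that the element $t^{-1}\gamma t$ with $\gamma \in D^\times(F)$ lies in $K_0 \A_{F,f}^\times$, so modulo the central adelic factor it is a genuine algebraic element whose reduced characteristic polynomial over $F$ has the bounded-degree and integrality properties needed to invoke Kronecker's theorem, and that the congruence at $w$ then pins down the eigenvalues. The Hecke-algebra comparison, by contrast, is routine given the cited lemmas, though one should state clearly which degeneracy map is used and why its $m$-localization is an isomorphism; I would cite the relevant statement in \cite{Kisin} or \cite{DDT} rather than reprove it. The compatibility of all three isomorphisms is then formal.
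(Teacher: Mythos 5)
Your verification that $K_0$ is sufficiently small is in the right spirit (the paper simply cites \cite[(2.1.1)]{Kisin}, but your explicit Kronecker argument is the substance behind that citation), and the reduction of the $M(K)$ isomorphism to the $S^D(K)_m$ isomorphism is correct and matches the paper. The gap is in the middle step, which you treat as a single "auxiliary prime" move but which the paper is careful to split into two genuinely different steps.

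Concretely, passing from $K^{\min}$ to $K_0$ adds level structure at $w$ of \emph{unipotent} type (the subgroup $U_w$ of matrices $\equiv \begin{pmatrix}1&*\\0&1\end{pmatrix}\pmod w$), not merely Iwahori type. The paper therefore interposes $K_0^-$ (the preimage of the Borel $U_w^-$ mod $w$) and argues in two stages: (i) $S^D(K^{\min})_m\cong S^D(K_0^-)_m$ because $w$ is not a level-raising prime, which is Taylor's \cite[Lemma 2.2]{TaylorMero} and uses only that the eigenvalue ratio of $\rhobar(\Frob_w)$ is not $\Nm(w)^{\pm 1}$; and (ii) $S^D(K_0^-)_m\xrightarrow{\sim}S^D(K_0)_m^{K_0^-/K_0}$, which is a statement about the triviality of the tame nebentypus at $w$ after localizing at $m$, proved via \cite[Lemma 4.11]{BDJ}, and which crucially needs that $\rhobar$ is not ``badly dihedral'' (guaranteed by $\ell>2$ and $\rhobar|_{G_{F(\zeta_\ell)}}$ absolutely irreducible) together with the fact that $K_0^-/K_0\cong(\k_w^\times)^2$ has prime-to-$\ell$ order. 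Your proposal collapses (i) and (ii) into a single "not in the support of oldform/newform contributions" assertion, which addresses only the Iwahori part, and never invokes the badly-dihedral hypothesis that makes (ii) go through. Also, the reference you cite — \cite[Lemma 4.11]{DDT} — is the lemma the paper uses to \emph{select} the auxiliary prime $w$ satisfying the three conditions, not the lemma used to prove the cohomology isomorphism; the latter is the (different) Lemma 4.11 of \cite{BDJ}. So the outline is right, but the unipotent-vs-Iwahori distinction and the badly-dihedral input are missing, and without them the argument as written would not establish $S^D(K_0^-)_m\cong S^D(K_0)_m$.
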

\begin{proof}
The fact that $K_0$ is sufficiently small follows easily from last hypothesis on $w$ (cf \cite[(2.1.1)]{Kisin}). As in \cite[Section 4.3]{DDT}, the first two conditions on $w$ imply that $w$ is not a level-raising prime for $\rhobar$ and so we obtain natural isomorphisms $\T^D(K^{\min})_m\cong \T^D(K_0^{-})_m\cong \T^D(K_0)_m$. By the definition of $M(K)$, the isomorphism $M(K^{\min})\cong M(K_0)$ will follow from $S^D(K^{\min})_m\cong S^D(K_0)_m$, so it suffices to prove this isomorphism.

It follows from the argument of \cite[Lemma 2.2]{TaylorMero}\footnote{Note that this proof does not rely on Taylor's assumption that $\Nm(w)\equiv 1$, only on the assumption that the ratio of the eigenvalues of $\rhobar(\Frob_w)$ is not $\Nm(w)^{\pm1}$. Also by the definition of $U_w^-$, no lift of $\rhobar$ occurring in $S^D(K_0^-)$ can have determinant ramified at $w$, and so the fact that we have not yet fixed determinants does not affect the argument.} that there is an isomorphism $S^D(K^{\min})_m\cong S^D(K_0^-)_m$. Now the argument of \cite[Lemma 4.11]{BDJ} implies that the map $S^D(K_0^-)_m\to S^D(K_0)^{K_0/K_0^-}_m$ is an isomorphism (as the assumptions that $\ell>2$ and $\rhobar|_{G_{F(\zeta_\ell)}}$ is absolutely irreducible imply that $\rhobar$ is not ``badly dihedral'', in the sense defined in that argument). Finally, as $K_0/K_0^-\cong ((\O_F/w)^\times)^2$ has prime to $\ell$ order, we get that $S^D(K_0)^{K_0/K_0^-}_m\cong S^D(K_0)_m$, giving the desired isomorphism.
\end{proof}

It now follows that $\nu_{\rhobar}(K^{\min}) = \nu_{\rhobar}(K_0)$. Also be definition, $K_0$ and $K^{\min}$ agree at all places besides $w$, and hence at all places in $\Sigma^D_\ell$. Thus Lemma \ref{R->T} gives a surjection $R^{D,\psi}_{F,S}(\rhobar)\surjection \T^{D}(K_0)_m$ (note that we are using condition (\ref{vexing}) of Theorem \ref{Mult 2^k} here). We will now restrict our attention to levels contained in $K_0$.

For any level $K\subseteq K_0$, let $C_K = F^\times\backslash \A_{F,f}^\times/(K\cap \A_{F,f}^\times)$ denote the image of $\A_{F,f}^\times$ in the double quotient $D^\times(F)\backslash D^\times(\A_{F,f})/K$. Note that this is a finite abelian group. For any finite place $v$ of $F$, let $\varpibar_v$ denote the image of the uniformizer $\varpi_v\in F_v^\times\subseteq\A_{F,f}^\times$ in $C_K$.

By the definition of $S_v$, we see that $\varpibar_v$ acts on $S^D(K)^\dual$ as $S_v$ for all $v\not\in S$, and so we may identify $\O[C_K]$ with a subring of $\T^D(K)$. Specifically, it is the $\O$-subalgebra generated by the Hecke operators $S_v$ for $v\not\in S$.

Now the action of $C_K$ on $D^\times\backslash D^\times(\A_{F,f})/K$ induces an action of $\O[C_K]$ on $M(K)$. By Lemma \ref{sufficiently small} (with $K'=K$ and $G=C_K\injection K\A_{F,f}^\times/K$) $S^D(K)_m^\dual$ is a finite projective $\O[C_K]$-module. Let $m' = m\cap \O[C_K]$, so that $m'$ is a maximal ideal of $\O[C_K]$. It follows that $S^D(K)^\dual_m$ is a finite free $\O[C_K]_{m'}$-module.

Let $C_{K,\ell}\le C_K$ be the Sylow $\ell$-subgroup. Since $C_K$ is abelian, we have $C_K\cong C_{K,\ell}\times (C_K/C_{K,\ell})$ and so $\O[C_K] \cong \O[C_{K,\ell}]\otimes_\O\O[C_K/C_{K,\ell}]$. Now as $C_K/C_{K,\ell}$ has prime to $\ell$ order, by enlarging $\O$ if necessary, we may assume that $\O[C_K/C_{K,\ell}]\cong \O^{\oplus \#(C_K/C_{K,\ell})}$ as an $\O$-algebra, and so $\O[C_K] \cong \O[C_{K,\ell}]^{\oplus \#(C_K/C_{K,\ell})}$. But now as $\O[C_{K,\ell}]$ is a complete local $\O$-algebra (as $\O[\Z/\ell^n\Z]\cong \O[x]/((1+x)^{\ell^n}-1)$ is for any $n$, and $C_{K,\ell}$ is a finite abelian $\ell$-group), it follows that $\O[C_K]_{m'}\cong \O[C_{K,\ell}]$ for any maximal ideal $m'$. Hence there is an embedding $\O[C_{K,\ell}]\injection \T^D(K)_m$ which makes $S^D(K)^\dual_m$ into a finite projective (and hence free) $\O[C_{K,\ell}]$-module.

It follows that $M(K)$ is also a finite free $\O[C_{K,\ell}]$-module. Indeed, this is simply by definition in the case when $D$ is definite. If $D$ is indefinite, this follows from the fact that $M(K)^{\oplus 2} \cong S^D(K)^\dual_m$ is free, and direct summands of free $\O[C_{K,\ell}]$-modules are projective and hence free.

Now fix a character $\psi:G_F\to\O^\times$ for which $m$ is in the support of $\T^D_\psi(K^{\min})$. For any level $K\subseteq K_0$, define an ideal $\If_\psi = \left(\Nm(v)[\varpibar_v]-\psi(\Frob_v)\middle| v\not\in S \right)\subseteq \O[C_{K,\ell}]$. As $m$ is also in the support of $\T^D_\psi(K)$, it follows that $\If_\psi$ contained in the kernel of some map $\T^D_\psi(K)\to \O$ (corresponding to some lift of $\rho:G_F\to \GL_2(\O)$ of $\rhobar$ which is modular of level $K$ and has $\det \rho = \psi$), and so we can deduce that $\O[C_{K,\ell}]/\If_\psi\cong \O$. We may now define $M_\psi(K)=M(K)/\If_\psi M(K)$. It follows that $M_\psi(K)$ is a finite free $\O$-module. Moreover, by definition it follows that $\T^D_\psi(K)_m$ is exactly the image of $\T^D(K)_m$ in $\End_\O(M_\psi(K))$, and $M_\psi(K)= M(K)\otimes_{\T^D(K)_m}\T^D_\psi(K)_m$. 

It is necessary to consider the modules $M_\psi(K)$ instead of $M(K)$, because the patching argument requires us to work with fixed-determinant deformation rings. Fortunately, as $\If_\psi\subseteq m$, we get $\dim_\F M_\psi(K_0)/m = \dim_\F M(K_0)/m=\nu_{\rhobar}(K_0) = \nu_{\rhobar}(K^{\min})$, and so considering the $M_\psi(K)$'s instead of the $M(K)$'s will still be sufficient to prove Theorem \ref{Mult 2^k}.

\subsection{A Patching System Producing $M_\infty$}\label{ssec:Ms^sqaure}

For the rest of this paper, we will take the ring $S_\infty$ from the Section \ref{ssec:ultrapatching} to be $\O[[y_1,\ldots,y_r,w_1,\ldots,w_j]]$, where $r$ is an in Lemma \ref{TW primes} and $j = 4|\Sigma_\ell^D| - 1$ is as in Section \ref{ssec:global def}, and let $\n = (y_1,\ldots,y_r,w_1,\ldots,w_j)$ as before. Note that $\dim S_\infty = r+j+1 = \dim R_\infty$ by Lemma \ref{TW primes}.

We will construct a weak patching algebra $\Rs^\square$ covered by $R_\infty$ using the deformation rings $R^{\square,D,\psi}_{F,S\cup Q_n}(\rhobar)$, and construct a free patching $\Rs^\square$-module $\Ms^\square$ using the spaces $M_\psi(K)$ constructed above. We then take $M_\infty = \patch(\Ms^\square)$. By Theorems \ref{patching} and \ref{surjective cover} it will then follow that $M_\infty$ is maximal Cohen--Macaulay over $R_\infty$. In Section \ref{ssec:prop M_infty}, we will show that $M_\infty$ satisfies the remaining conditions of Theorem \ref{self-dual}.

From now on, fix $S=\Sigma^D_\ell\cup\{w\}$, where $w$ is the prime chosen in Section \ref{ssec:M_psi} above, and fix a collection of \emph{sets} of primes $\Qc = \{Q_n\}_{n\ge 1}$ satisfying the conclusion of Lemma \ref{TW primes}. For any $n$, let $\Delta_n$ be the maximal $\ell$-power quotient of $\ds\prod_{v\in Q_n}(\O_F/v)^\times$. Consider the ring $\Lambda_n=\O[\Delta_n]$, and note that:
\[\Lambda_n\cong\frac{\O[[y_1,\ldots,y_r]]}{\left((1+y_1)^{\ell^{e(n,1)}}-1,\ldots, (1+y_r)^{\ell^{e(n,r)}}-1\right)}\]
where $\ell^{e(n,i)}$ is the $\ell$-part of $\Nm(v)-1 = \# (\O_F/v)^\times$, so that $e(n,i)\ge n$ by assumption. Let $\a_n = (y_1,\ldots,y_r)\subseteq \Lambda_n$ be the augmentation ideal.

Also let $\ds H_n = \ker\left(\prod_{v\in Q_n}(\O_F/v)^\times\surjection\Delta_n\right)$.
For any finite place $v$ of $F$, consider the compact open subgroups of $\GL_2(\O_{F,v})$:
\begin{align*}
\Gamma_0(v) &=\left\{\begin{pmatrix}a&b\\c&d\end{pmatrix}\in \GL_2(\O_{F,v})\middle| \begin{pmatrix}a&b\\c&d\end{pmatrix} \equiv \begin{pmatrix}*&*\\0&*\end{pmatrix}\pmod{v}\right\}\subseteq \GL_2(\O_{F,v})\\
\Gamma_1(v) &=\left\{\begin{pmatrix}a&b\\c&d\end{pmatrix}\in \Gamma_0(v)\middle| a \equiv d\pmod{v}\right\}\subseteq \Gamma_0(v)\subseteq\GL_2(\O_{F,v}).
\end{align*}
Notice that $\Gamma_1(v)\unlhd \Gamma_0(v)$ and we have group isomorphisms
\[\Gamma_0(v)/\Gamma_1(v)\cong \Gamma_0(v)F^\times/\Gamma_1(v)F^\times\xrightarrow{\sim} (\O_F/v)^\times\]
given by $\begin{pmatrix}a&b\\c&d\end{pmatrix}\mapsto ad^{-1}\pmod{v}$. Now let $\ds\Gamma_H(Q_n)\subseteq \prod_{v\in Q_n}\Gamma_0(v)$ be the preimage of $\ds H_n\subseteq \prod_{v\in Q_n}(\O_F/v)^\times$ under the map
\[\prod_{v\in Q_n}\Gamma_0(v)\surjection \prod_{v\in Q_n}\Gamma_0(v)/\Gamma_1(v) \cong \ds\prod_{v\in Q_n}(\O_F/v)^\times\]
finally let $K_n\subseteq K_0$ be the preimage of $\Gamma_H(Q_n)$ under
\[K_0\injection \prod_{v\subseteq \O_F}D^\times(\O_{F,v})\surjection \prod_{v\in Q_n}D^\times(\O_{F,v}).\]
Now for any $n\ge 1$, any $v\in Q_n$ and any $\delta\in (\O_F/v)^\times$, consider the double coset operators $U_v,\langle \delta\rangle_v:S^D(K_n)\to S^D(K_n)$ defined by
\begin{align*}
U_v &= \left[K\begin{pmatrix}\varpi_v&0\\0&1\end{pmatrix}K\right],&
\langle\delta\rangle_v &= \left[K\begin{pmatrix}d&0\\0&1\end{pmatrix}K\right],
\end{align*}
where $d\in \O_F$ is a lift of $\delta\in (\O_F/v)^\times$. Note that $\langle\delta\rangle_v$ does not depend on the choice of $d$. In fact, if $\delta,\delta'\in (\O_F/v)^\times$ have the same image under $(\O_F/v)^\times \to \prod_{v\in Q_n}(\O_F/v)^\times\surjection \Delta_n$, then $\langle\delta\rangle_v = \langle\delta'\rangle_v$. Define
\[\Tbar^D(K_n) = \T^D(K_n)\bigg[U_v,\langle \delta\rangle_v\bigg|v\in Q_n,\delta\in(\O_F/v)^\times \bigg] \subseteq \End_\O(S^D(K_n)),\]
and note that this is a commutative $\O$-algebra extending $\T^D(K_n)$, which is finite free as an $\O$-module. Also for convenience set $\Tbar^D(K_0) = \T^D(K_0)$.

Note that the double coset operators $U_v$ and $\langle \delta\rangle_v$ commute with the action of $\T^D(K_n)$ and (in the case when $D$ is indefinite) $G_F$ on $S^D(K_n)$, and thus they descend to maps $U_v,\langle \delta\rangle_v:M_\psi(K_n)\to M_\psi(K_n)$. Let $\Tbar^D_\psi(K_n)_m$ denote the image of $\Tbar^D(K_n)_m$ in $\End_\O(M_\psi(K_n))$.

As in \cite[(3.4.5)]{Kisin}, $\Tbar^D_\psi(K_n)_m$ has $2^{\#Q_n}$ different maximal ideals (corresponding to the different possible choices of eigenvalue for $U_v$ for $v\in Q_n$). Fix any such maximal ideal $m_{Q_n}\subseteq\Tbar^D_\psi(K_n)_m$. Let $\Tbar_n=(\Tbar^D_\psi(K_n)_m)_{m_{Q_n}}$ and $M_n = M_\psi(K_n)_{m_{Q_n}}$ and $R_n = R^{\psi}_{F,S\cup Q_n}(\rhobar)$. Also define $M_0 = M_\psi(K_0)$, $\Tbar_0 = \T_0= \T^D_\psi(K_0)_m$ and $R_0 = R^{\psi}_{F,S}(\rhobar)$ (note that $M_n$, $\T_n$ and $R_n$ all have fixed determinant $\psi$, but we are suppressing this in our notation).

We now have the following standard result (cf \cite{deShalit,DDT,TaylorMero,Kisin}, and also Lemma \ref{sufficiently small} above):

\begin{prop}\label{aug ideal}
For any $n\ge 1$, there is a surjective map $R_n\surjection \Tbar_n$ giving $M_n$ the structure of a $R_n$-module. There exists an embedding $\Lambda_n\injection R_n$ under which $M_n$ is a finite rank free $\Lambda_n$-module. Moreover, we have $R_n/\a_n\cong R_0$ and $M_n/\a_n\cong M_0$ (so in particular, $\rank_{\Lambda_n}M_n = \rank_\O M_0$)
\end{prop}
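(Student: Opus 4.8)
The plan is to deduce each assertion from the standard Taylor--Wiles--Kisin package (as in \cite{deShalit,DDT,TaylorMero,Kisin,CHT}), specialized to our quaternionic setting, with extra care taken for the passage from $M(K)$ to its fixed-determinant quotient $M_\psi(K)$.

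First I would construct the surjection $R_n\surjection\Tbar_n$. By the construction of Galois representations attached to automorphic forms on $D^\times$ (\cite{Carayol1}), every Hecke eigensystem occurring in $S^D(K_n)_m$ comes from a deformation of $\rhobar$ with determinant $\psi$, unramified outside $S\cup Q_n$ and of the prescribed local type at $S$, so exactly as in Lemma \ref{R->T} one gets a surjection $R^{\psi}_{F,S\cup Q_n}(\rhobar)\surjection\T^D_\psi(K_n)_m$. For $v\in Q_n$ the hypothesis that $\rhobar(\Frob_v)$ has distinct eigenvalues puts us in the situation of the Taylor--Wiles local deformation ring: after localizing the picture at $m_{Q_n}$, the universal deformation restricted to $G_v$ acquires a distinguished unramified subquotient character whose value at $\Frob_v$ is the $m_{Q_n}$-eigenvalue of $U_v$; this matching shows the surjection extends over $\Tbar^D(K_n)$ and then, after localizing at $m_{Q_n}$, gives $R_n\surjection\Tbar_n$, through which $M_n$ becomes an $R_n$-module.

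Next I would establish the $\Lambda_n$-structure. The group $\Delta_n$ acts on $S^D(K_n)$, hence on $M_\psi(K_n)$ and $M_n$, via the diamond operators $\langle\delta\rangle_v$ ($v\in Q_n$), which lie in $\Tbar^D(K_n)$; this is the action of $\Lambda_n=\O[\Delta_n]$. On the Galois side, for $v\in Q_n$ the tame inertia at $v$ acts on the chosen unramified-subquotient deformation through its maximal pro-$\ell$ quotient, producing a local map $\Lambda_n\to R_n$ compatible with the Hecke action, which is injective by the argument of \cite[(3.4.5)]{Kisin}. For the freeness statement I would argue as follows: $K_n\subseteq K_0$ is sufficiently small because $K_0$ is and the condition passes to subgroups, so Lemma \ref{sufficiently small}, applied with $K'=K_n$, with $K$ the level agreeing with $K_n$ away from $Q_n$ and equal to $\Gamma_0(v)$ at each $v\in Q_n$, and with $G=\Delta_n$ acting by diamond operators, shows $S^D(K_n)_m^\dual$ is finite projective, hence (as $\Lambda_n$ is local) free over $\Lambda_n$. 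The same then follows for $M(K_n)$ --- by definition when $D$ is totally definite, and from $M(K_n)^{\oplus 2}\cong S^D(K_n)_m^\dual$ together with the fact that a direct summand of a free module over a local ring is free when $D$ is indefinite --- for its quotient $M_\psi(K_n)=M(K_n)/\If_\psi M(K_n)$ (the ideal $\If_\psi$ being built from the operators $S_v$, $v\notin S$, which commute with and are independent of $\Lambda_n$), and finally for the localization $M_n=M_\psi(K_n)_{m_{Q_n}}$.

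Finally, since $\Lambda_n/\a_n=\O$, it remains to identify the $\a_n$-reductions. On the Galois side, killing $\a_n$ kills the tame ramification at $Q_n$, i.e.\ restricts to deformations unramified at every $v\in Q_n$; as $\rhobar$ is already unramified there this simply deletes $Q_n$ from the allowed ramification, giving $R_n/\a_n\cong R^{\psi}_{F,S}(\rhobar)=R_0$ (the chosen $U_v$-eigenvalues becoming the unit roots of the relevant Hecke polynomials). On the automorphic side, Lemma \ref{sufficiently small}(3), with the same $K,K',G$, says $\sum_{g\in\Delta_n}g$ induces $S^D(K_n)_m^\dual/\a_n\xrightarrow{\sim}S^D(K)_m^\dual$; localizing at $m_{Q_n}$ and running the standard analysis of the $U_v$-operator at a Taylor--Wiles prime (where $\rhobar$ is unramified) identifies $S^D(K)_{m_{Q_n}}^\dual$ with $S^D(K_0)_m^\dual$, which yields $M_n/\a_n\cong M_\psi(K_0)=M_0$ and, comparing ranks, $\rank_{\Lambda_n}M_n=\rank_\O M_0$. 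I expect the main obstacle to be exactly this identification of the $U_v$-operator at the Taylor--Wiles primes with the distinguished unramified character on the Galois side --- needed both to extend the $R\to\Tbar$ map over $\Tbar^D(K_n)$ and to obtain $M_n/\a_n\cong M_0$; this is the classical technical core of the method and rests on explicit non-Eisenstein local computations rather than on any form of Ihara's lemma.
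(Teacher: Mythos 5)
Your proposal is correct and follows the standard Taylor--Wiles--Kisin argument that the paper itself intends: Proposition \ref{aug ideal} is stated without proof, cited only as standard (cf.\ \cite{deShalit,DDT,TaylorMero,Kisin} and Lemma \ref{sufficiently small}), and your sketch reconstructs precisely that argument, correctly identifying the three pillars --- the $R\to\Tbar$ map extended over the $U_v$ and diamond operators via the unramified-subquotient character at each Taylor--Wiles prime, freeness over $\Lambda_n$ from Lemma \ref{sufficiently small} applied with the auxiliary $\Gamma_0$-level as the ambient group, and the $\a_n$-reduction via Lemma \ref{sufficiently small}(3) together with the classical (Ihara-free) comparison of $\Gamma_0$-level and full level at a prime where $\rhobar$ is unramified with distinct Frobenius eigenvalues. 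The only place where I would ask you to be slightly more explicit is the passage from $M(K_n)$ to $M_\psi(K_n)=M(K_n)/\If_\psi M(K_n)$: you assert that $\If_\psi$ is ``independent of $\Lambda_n$,'' which is true but deserves a sentence --- one should apply Lemma \ref{sufficiently small} with $G$ the image of $C_{K_n,\ell}\times\Delta_n$ in $K_{\Gamma_0}\A_{F,f}^\times/K_nF^\times$ (noting first that $C_{K_n}=C_{K_0}$ since $K_n\cap\A_{F,f}^\times=K_0\cap\A_{F,f}^\times$), so that $M(K_n)$ is free over $\O[C_{K_n,\ell}]\otimes_\O\Lambda_n$ and the quotient by $\If_\psi$ inherits $\Lambda_n$-freeness.
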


Now let $R_n^\square = R^{\square,\psi}_{F,S\cup Q_n}(\rhobar)$, and recall from Section \ref{ssec:global def} that $R_n^\square = R_n[[w_1,\ldots,w_j]]$ for some integer $j$. Using this, we may define framed versions of $\Tbar_n$ and $M_n$. Namely
\begin{align*}
\Tbar^{\square}_n&= R^{\square}_{n}\otimes_{R_{n}}\Tbar_n\cong \Tbar_n[[w_1,\ldots,w_j]]\\
M^{\square}_n&=R^{\square}_{n}\otimes_{R_n}M_n\cong M_n[[w_1,\ldots,w_j]]
\end{align*}
so that $M^{\square}_n$ inherits a natural $\Tbar^{\square}_n$-module structure, and we still have a surjective map $R^{\square}_{n}\surjection \Tbar^{\square}_n$ (and so $M_n^\square$ inherits a $R_n^\square$-module structure). Note that the ring structure of $\Tbar^{\square}_n$ and the $\Tbar^{\square}_n$-module structure of $M^\square_n$ do not depend on the choice of the set $S$, and so we may define this without reference to a specific $S$.

Also for any $n$, consider the ring $\Lambda_n^\square = \Lambda_n[[w_1,\ldots,w_j]] = \O[\Delta_n][[w_1,\ldots,w_j]]$, which we will view as a quotient of the ring $S_\infty = \O[[y_1,\ldots,y_r,w_1,\ldots,w_j]]$ from above. 

Rewriting Proposition \ref{aug ideal} in terms of the framed versions of $R_n$ and $M_n$, we get:

\begin{prop}\label{aug ideal framed}
	There exists an embedding $\Lambda_n^\square\injection R_n^\square$ under which $M_n^\square$ is a finite rank free $\Lambda_n^\square$-module. Moreover, we have $R_n^\square/\n\cong R_0$ and $M_n^\square/\n\cong M_0$ (so in particular, $\rank_{S_\infty}M_n^\square = \rank_{\Lambda_n^\square}M_n^\square = \rank_\O M_0$).
\end{prop}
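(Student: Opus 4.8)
The statement is a formal consequence of Proposition~\ref{aug ideal} together with the identifications $R_n^\square = R_n[[w_1,\ldots,w_j]]$, $\Tbar_n^\square\cong\Tbar_n[[w_1,\ldots,w_j]]$, $M_n^\square\cong M_n[[w_1,\ldots,w_j]]$ and $\Lambda_n^\square=\Lambda_n[[w_1,\ldots,w_j]]$ fixed above, so the plan is simply to apply the base change $-\widehat{\otimes}_\O\O[[w_1,\ldots,w_j]]$ to everything in sight. Proposition~\ref{aug ideal} provides a surjection $R_n\surjection\Tbar_n$ and an embedding $\Lambda_n\injection R_n$ making $M_n$ a free $\Lambda_n$-module of some rank $d=\rank_\O M_0$; applying $-[[w_1,\ldots,w_j]]$ turns the former into $R_n^\square\surjection\Tbar_n^\square$ (which is exactly the surjection already used to put an $R_n^\square$-module structure on $M_n^\square$) and the latter into an $\O$-algebra map $\Lambda_n^\square\to R_n^\square$ through which the $\Lambda_n^\square$-module structure on $M_n^\square$ factors.

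First I would record that $M_n^\square$ is free over $\Lambda_n^\square$: an identification $M_n\cong\Lambda_n^{\oplus d}$ of $\Lambda_n$-modules passes to power series coefficients (which commute with finite direct sums) to give $M_n^\square\cong(\Lambda_n^\square)^{\oplus d}$. Since $M_0\neq 0$ --- because $\rhobar$ is automorphic --- we have $d\ge 1$, so $M_n^\square$ is a faithful $\Lambda_n^\square$-module; as its structure factors through $R_n^\square$, the composite $\Lambda_n^\square\to R_n^\square\to\End_\O(M_n^\square)$ is injective, and hence $\Lambda_n^\square\injection R_n^\square$ is an embedding. One could instead deduce the injectivity directly from $\Lambda_n\injection R_n$ and flatness of $\O[[w_1,\ldots,w_j]]$ over $\O$, but routing it through the faithful free module $M_n^\square$ avoids any bookkeeping about which completion is being taken.

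Next I would compute the quotient by $\n=(y_1,\ldots,y_r,w_1,\ldots,w_j)$ in two stages: killing $(w_1,\ldots,w_j)$ recovers $R_n$ from $R_n^\square=R_n[[w_1,\ldots,w_j]]$ and $M_n$ from $M_n^\square=M_n[[w_1,\ldots,w_j]]$, and then killing the image of $\a_n=(y_1,\ldots,y_r)\subseteq\Lambda_n$ gives $R_n/\a_n R_n\cong R_0$ and $M_n/\a_n M_n\cong M_0$ by Proposition~\ref{aug ideal}; hence $R_n^\square/\n\cong R_0$ and $M_n^\square/\n\cong M_0$. The rank equalities are then formal: $S_\infty\surjection\Lambda_n^\square$ is a surjection of Noetherian local rings, so by Nakayama a subset of $M_n^\square$ generates it over $S_\infty$ if and only if it generates over $\Lambda_n^\square$, giving $\rank_{S_\infty}M_n^\square=\rank_{\Lambda_n^\square}M_n^\square=d$; and reducing $M_n^\square\cong(\Lambda_n^\square)^{\oplus d}$ modulo $\n$ (which sends $\Lambda_n^\square$ onto $\Lambda_n^\square/\n\cong\O$) identifies $d$ with $\rank_\O M_0$. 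There is no genuine obstacle in this proposition --- all the substance sits in Proposition~\ref{aug ideal} --- and the only point I would be careful about is the persistence of the embedding $\Lambda_n^\square\injection R_n^\square$ under the base change, handled as above.
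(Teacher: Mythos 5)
Your proof is correct and follows exactly the approach the paper intends (the paper simply declares Proposition~\ref{aug ideal framed} to be a rewriting of Proposition~\ref{aug ideal} under $-[[w_1,\ldots,w_j]]$, without giving a proof). You have carefully filled in the routine verifications — freeness passes through power series rings over finite free modules, the embedding persists because $M_n^\square$ is a faithful $\Lambda_n^\square$-module (or alternatively because $R_n\injection R_n[[w_1,\ldots,w_j]]$ is injective on coefficients), and the two-stage quotient by $(w_1,\ldots,w_j)$ then by $\a_n$ recovers $R_0$ and $M_0$ — all of which are exactly the checks one needs. The one implicit hypothesis you flag, $M_0\neq 0$, does hold here since $\rhobar$ is automorphic for $D$ of level contained in $K_0$, so the faithfulness route for the embedding is valid; no gap.
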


So in particular, the rings $R_n^\square$ are $S_\infty$-algebras and the modules $M_n^\square$ are $S_\infty$-modules.

Lastly, for any $n\ge 0$, define $R^{\square,D}_n = R^{\square,D,\psi}_{F,S\cup Q_n}(\rhobar)$ and $R_n^D = R^{D,\psi}_{F,S\cup Q_n}(\rhobar)$ (where we take $Q_0=\es$).  Note that the actions of $R_n^\square$ and $R_0$ on $M_n^\square$ and $M_0$ factor through $R^{\square,D}_n$ and $R_0^D$, respectively.

Temporarily writing $R^{\square,\psi}_{\Sigma,\D,\ell} = R_{\loc}^{\psi}/J$, so that $R^{\square,D}_n = R^{\square}_n/J$ for all $n\ge 0$, we also see that
\[R^{\square,D}_n/\n\cong (R^\square_n/\n)/J\cong (R^\square_0/(w_1,\ldots,w_j))/J\cong (R^\square_0/J)/(w_1,\ldots,w_j) \cong R^{\square,D}_0/(w_1,\ldots,w_j)\cong R^D_0.\]

Now we claim that the collections $\Rs^{\square,D}=\{R_n^{\square,D}\}_{n\ge 1}$ and $\Ms^\square = \{M_n^\square\}_{n\ge 1}$ satisfy the necessary conditions to apply Theorem \ref{patching}. Specifically we have:

\begin{lemma}\label{M is patching}
	$\Rs^{\square,D}$ is a weak patching algebra over $R_0^D$ and $\Ms^\square$ is a free patching $\Rs^{\square,D}$-module over $M_0$. Moreover, the surjections $R_\infty\surjection R_n^{\square,D}$ from Theorem \ref{TW primes} induce an isomorphism $R_\infty\cong \patch(\Rs^{\square,D})$.
\end{lemma}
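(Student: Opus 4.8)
The plan is to verify the three bulleted conditions in Definition \ref{def:patching} one at a time, using Proposition \ref{aug ideal framed} and the various deformation-ring facts assembled in Sections \ref{sec:deformation} and \ref{ssec:M_psi}, and then deduce the isomorphism $R_\infty \cong \patch(\Rs^{\square,D})$ from Theorem \ref{surjective cover}.

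First I would check that $\Ms^\square$ is a \emph{free} patching system. By Proposition \ref{aug ideal framed}, each $M_n^\square$ is a finite \emph{free} $\Lambda_n^\square$-module of rank $\rank_\O M_0$, which is a single quantity independent of $n$; since $\Lambda_n^\square = S_\infty/\ker(S_\infty \surjection \Lambda_n^\square)$, this says precisely that $M_n^\square$ is free over $S_\infty/\Ann_{S_\infty}(M_n^\square)$ of uniformly bounded rank. This gives both the uniform bound on $S_\infty$-ranks (the weak patching condition) and freeness. For the remaining patching condition, I must show that for each open ideal $\a \subseteq S_\infty$ one has $\Ann_{S_\infty}(M_n^\square) \subseteq \a$ for all but finitely many $n$; equivalently, that the kernel of $S_\infty \surjection \Lambda_n^\square$ shrinks into every open ideal as $n \to \infty$. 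This follows from the explicit presentation $\Lambda_n \cong \O[[y_1,\ldots,y_r]]/((1+y_i)^{\ell^{e(n,i)}}-1)_i$ with $e(n,i) \ge n$: the defining relations lie in $(\lambda, y_1,\ldots,y_r)^{cn}$ for a suitable constant, so for $n$ large the kernel is contained in any given open ideal $\a$. The same statement for $\Rs^{\square,D}$ (making it a weak patching algebra) is immediate from the uniform rank bound, which I get from the surjections $R_\infty \surjection R_n^{\square,D}$ of Lemma \ref{TW primes} together with $\dim R_\infty = \dim S_\infty$ — alternatively one notes $R_n^{\square,D}$ is a quotient of $R_\infty$, which is a fixed finitely generated $S_\infty$-algebra, so its $S_\infty$-rank is bounded by that of $R_\infty$. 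It also needs to be recorded that each $M_n^\square$ is an $R_n^{\square,D}$-module: the action of $R_n^\square$ on $M_n^\square$ factors through $R_n^{\square,D}$ by the discussion preceding the lemma (the lifts occurring in $M_n^\square$ satisfy the flat/Steinberg/minimally-ramified local conditions cutting out $R_n^{\square,D}$), so $\Ms^\square$ is indeed a patching $\Rs^{\square,D}$-module.

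Next, to identify $\patch(\Rs^{\square,D})$ with $R_0^D$ in the ``$\bmod\ \n$'' sense required by Theorem \ref{patching}(4): I established above that $R_n^{\square,D}/\n \cong R_0^D$ compatibly for all $n$, and $M_n^\square/\n \cong M_0$ as modules over it, so all hypotheses of Theorem \ref{patching} are met. Applying Theorem \ref{patching}, $\patch(\Ms^\square)$ is maximal Cohen--Macaulay over $\patch(\Rs^{\square,D})$ and free over $S_\infty$ — though for the \emph{present} lemma I only need the structural conclusions needed for the last sentence. Finally, $(R_\infty, \{\varphi_n\})$ with $\varphi_n: R_\infty \surjection R_n^{\square,D}$ the surjections of Lemma \ref{TW primes} is a \emph{cover} of $\Rs^{\square,D}$ in the sense of Section \ref{ssec:ultrapatching}: $R_\infty$ is complete local topologically finitely generated over $\O$ of Krull dimension $\dim S_\infty = r+j+1$, and each $\varphi_n$ is a surjective $\O$-algebra map. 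By Theorem \ref{surjective cover} the $\varphi_n$ induce a continuous surjection $\varphi_\infty: R_\infty \surjection \patch(\Rs^{\square,D})$, and since $R_\infty = [\widehat{\bigotimes}_{v|\D}R^{\square,\st,\psi}(\rhobar|_{G_v})][[x_1,\ldots,x_{g'}]]$ is a domain — each $R^{\square,\st,\psi}(\rhobar|_{G_v})$ is a domain by Theorem \ref{R^st} (either a power series ring or the ring $S_v$, both domains), a completed tensor product of such domains over $\O$ with residue field $\F$ is again a domain, and adjoining power-series variables preserves this — Theorem \ref{surjective cover} gives that $\varphi_\infty$ is an isomorphism $R_\infty \xrightarrow{\sim} \patch(\Rs^{\square,D})$.

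The main obstacle here is the verification that $\Ms^\square$ is a genuine patching system rather than merely a weak one, i.e. the annihilator-shrinking condition; everything else is a matter of quoting Propositions \ref{aug ideal}--\ref{aug ideal framed}, Lemma \ref{TW primes}, and Theorem \ref{surjective cover}. That condition is not hard, but it is the one place where one must actually look inside the group-ring presentation of $\Lambda_n$ and use $e(n,i) \ge n$; the content is the elementary fact that in $\O[[y_1,\ldots,y_r]]$ the elements $(1+y_i)^{\ell^{e}} - 1$ lie in the ideal $(\lambda,y_1,\ldots,y_r)^{e'}$ for some $e'\to\infty$ with $e$, so that $\ker(S_\infty \surjection \Lambda_n^\square)$ is eventually contained in any prescribed open ideal.
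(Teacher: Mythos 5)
Your proposal follows the same overall outline as the paper's proof (verify the three conditions in Definition \ref{def:patching}, then invoke Theorem \ref{surjective cover}), and the bulk of it is correct.  Two points deserve comment.

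\textbf{The annihilator-shrinking argument.}  You establish that $\Ann_{S_\infty}(M_n^\square) \subseteq \a$ for $n \gg 0$ by an explicit $m_{S_\infty}$-adic estimate, claiming $(1+y_i)^{\ell^{e(n,i)}}-1 \in (\lambda, y_1,\ldots,y_r)^{cn}$.  This works: one can check via $v_\ell\bigl(\binom{\ell^e}{j}\bigr) \ge e - v_\ell(j)$ that $(1+y_i)^{\ell^e}-1 \in (\lambda, y_i)^{e+1}$, so $c=1$ suffices.  The paper instead argues more softly through the pro-$\ell$ structure: since $S_\infty/\a$ is finite, $(1+m_{S_\infty})/\a$ is a finite $\ell$-group, so there is $K$ with $(1+y_i)^{\ell^K}\equiv 1 \pmod\a$, and then $e(n,i)\ge n\ge K$ gives $I_n\subseteq\a$.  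Both routes are fine; the paper's avoids the binomial estimate entirely.

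\textbf{The rank bound for $\Rs^{\square,D}$ — a genuine gap.}  Your argument that the $S_\infty$-ranks of the $R_n^{\square,D}$ are uniformly bounded does not go through as stated.  You appeal to the surjections $R_\infty \surjection R_n^{\square,D}$ of Lemma \ref{TW primes} and to ``$R_\infty$ is a fixed finitely generated $S_\infty$-algebra.''  But $R_\infty$ does not carry an $S_\infty$-algebra structure compatible with these surjections: the $S_\infty$-structure on $R_n^{\square,D}$ comes from the embedding $\Lambda_n^\square \injection R_n^\square$ (i.e.\ from the diamond operators and framing variables at the level-$n$ auxiliary primes), which varies with $n$ and is not pulled back along $R_\infty\to R_n^{\square,D}$.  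Moreover, $\rank_{S_\infty}$ is a module-theoretic notion, and ``finitely generated $S_\infty$-algebra'' (even if it held) would not bound it; and $\dim R_\infty = \dim S_\infty$ is a statement about Krull dimension, which has no direct bearing on the minimal number of $S_\infty$-module generators.  The paper bounds the ranks using only the quotient map in the category of $S_\infty$-modules: $R_n^{\square,D}$ is an $S_\infty$-algebra quotient of $R_n^\square$, hence $\rank_{S_\infty}R_n^{\square,D} \le \rank_{S_\infty}R_n^\square$, and then Proposition \ref{aug ideal} gives $\rank_{S_\infty}R_n^\square = \rank_{S_\infty'}R_n = \rank_\O R_0$, a quantity independent of $n$.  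This is the argument you should substitute here.  The rest of your proof (freeness of $\Ms^\square$ from Proposition \ref{aug ideal framed}, the identification of $R_n^{\square,D}/\n\cong R_0^D$ and $M_n^\square/\n\cong M_0$, and the final isomorphism via the domain property of $R_\infty$ from Theorem \ref{R^st} and Theorem \ref{surjective cover}) matches the paper.
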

\begin{proof}
	Let $S_\infty' = \O[[y_1,\ldots,y_r]]\subseteq S_\infty$ with ideal $\n' = (y_1,\ldots,y_r) = \n\cap S_\infty'$, so that $R_n$ is a $S_\infty'$-algebra and $M_n$ is a $S_\infty'$-module. By definition, we have $R_n^\square = R_n\otimes_{S_\infty'}S_\infty$ and $M_n^\square = M_n\otimes_{S_\infty'}S_\infty$. Thus by Proposition \ref{aug ideal},
	\begin{align*}
	\rank_{S_\infty}R_n^\square &= \rank_{S_\infty'}R_n = \rank_\O R_0\\
	\rank_{S_\infty}M_n^\square&= \rank_{S_\infty'}M_n = \rank_\O M_0.
	\end{align*}
	Also as $R^{\square,D}_n$ is a quotient of $R^\square_n$, we get that $\rank_{S_\infty}R^{\square,D}_n\le \rank_{S_\infty}R_n^\square = \rank_\O R_0$.
	Thus the $S_\infty$-ranks of the $R_n^\square$'s and $M_n^\square$'s are bounded so $\Rs^{\square,D}$ is a weak patching algebra and $\Ms^\square$ is a weak patching $\Rs$-module.
	
	Also as noted above $R_n^{\square,D}/\n\cong R_0^D$ and $M_n^\square/\n\cong M_0$,
	so $\Rs^{\square,D}$ and $\Ms^\square$ are over $R_0^D$ and $M_0$, respectively.
	
	Now by Proposition \ref{aug ideal framed}, for any $n$ we have, 
	\[I_n=\Ann_{S_\infty} M_n^\square = \Ann_{S_\infty}\Lambda_n^\square = \left(\Ann_{S_\infty'}M_n\right) =  \left((1+y_1)^{\ell^{e(n,1)}},\ldots,(1+y_r)^{\ell^{e(n,r)}}\right)\subseteq S_\infty\]
	(where as above, $e(n,i)\ge n$ for each $i$) and $M_n^\square$ is free over $S_\infty/\Ann_{S_\infty}M_n^\square = \Lambda_n^\square$.
	
	It remains to show that $\Ms^\square$ is a patching system, i.e. that for any open $\a\subseteq S_\infty$, $I_n = \Ann_{S_\infty}M_n^\square\subseteq \a$ for all but finitely many $n$. But as ${S_\infty}/\a$ is finite, and the group $1+m_{S_\infty}$ is pro-$\ell$, the group $(1+m_{S_\infty})/\a = \im(1+m_{S_\infty}\injection {S_\infty}\surjection {S_\infty}/\a)$ is a finite $\ell$-group. Since $1+y_i\in 1+m_{S_\infty}$ for all $i$, there is an integer $K\ge 0$ such that $(1+y_i)^{\ell^K}\equiv 1 \pmod{\a}$ for all $i=1,\ldots,d'$. Then for any $n\ge K$, $e(n,i)\ge n\ge K$ for all $i$, and so indeed $I_n\subseteq \a$ by definition.
	
	The final statement follows from Lemma \ref{surjective cover} after noting that $R_\infty$ is a domain (by Theorem \ref{R^st} and the discussion following it) and $\dim R_\infty = \dim S_\infty$.
\end{proof}

Thus we may define $M_\infty = \patch(\Ms^\square)$. By Theorem \ref{patching} and Lemma \ref{M is patching} we get that $M_\infty$ is maximal Cohen--Macaulay over $\patch(\Rs^\square)\cong R_\infty$ and
\[\dim_\F M_\infty/m_{R_\infty} = \dim_\F (M_\infty/\n)/m_{R_\infty} = \dim_\F M_0/m_{R_0} = \nu_{\rhobar}(K^{\min}).\]

\subsection{The Properties of $M_\infty$}\label{ssec:prop M_infty}

We shall now show that $M_\infty$ satisfies the remaining conditions of Theorem \ref{self-dual}. We start by showing $\genrank_{R_\infty}M_\infty = 1$.

First, the fact that $R_\infty[1/\lambda]$ is formally smooth implies that:

\begin{lemma}\label{gen free}
	$M_0[1/\lambda]$ is free of rank $1$ over $R_0^D[1/\lambda]$. In particular, the natural map $R_0^D[1/\lambda]\surjection \T_0[1/\lambda]$ is an isomorphism.
\end{lemma}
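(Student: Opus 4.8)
The plan is to deduce this from the structure of the patched module $M_\infty$ together with classical multiplicity one for $\GL_2$. By Theorem \ref{patching}(4) and Lemma \ref{M is patching} we have $\patch(\Rs^{\square,D})\cong R_\infty$ with $R_\infty/\n\cong R_0^D$ and $M_\infty/\n M_\infty\cong M_0$ as $R_0^D$-modules; thus $R_0^D$ is a quotient of $R_\infty$ and $M_0$ is the corresponding reduction of $M_\infty$. Now $R_\infty$ is a Cohen--Macaulay domain whose generic fibre $R_\infty[1/\lambda]$ is formally smooth --- hence regular --- over $E$ (Theorem \ref{R^st} and the discussion after it), and $M_\infty$ is a nonzero maximal Cohen--Macaulay $R_\infty$-module: nonzero because $M_\infty/\n\cong M_0\neq 0$, as $\nu_{\rhobar}(K^{\min})\geq 1$ since $\rhobar$ is automorphic for $D$. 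Localizing at a prime $\p$ of $R_\infty$ with $\lambda\notin\p$, the module $M_{\infty,\p}$ is maximal Cohen--Macaulay over the regular local ring $R_{\infty,\p}$, hence free by the Auslander--Buchsbaum formula. So $M_\infty[1/\lambda]$ is a finitely generated projective $R_\infty[1/\lambda]$-module, of constant rank $e$ since $R_\infty[1/\lambda]$ is a domain, with $e\geq 1$ since $M_\infty\neq 0$. Reducing modulo $\n$, $M_0[1/\lambda]$ is projective of rank $e$ over $R_0^D[1/\lambda]$.

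It remains to show $e=1$ and that the surjection $R_0^D[1/\lambda]\surjection \T_0[1/\lambda]$ of Lemma \ref{R->T} is an isomorphism; the lemma then follows, since $M_0[1/\lambda]$ will be free of rank one over $\T_0[1/\lambda]=R_0^D[1/\lambda]$. For this I would invoke classical multiplicity one. Since the Hecke action on automorphic forms is semisimple, $\T_0[1/\lambda]$ is a finite reduced $E$-algebra, a product of fields indexed by the automorphic representations $\pi$ of $D^\times$ whose attached residual representation is $\rhobar$ and whose central character is pinned down by $\psi$. Using the Jacquet--Langlands correspondence, the local newvector theory at the primes of $\Sigma$ --- together with condition (\ref{vexing}) of Theorem \ref{Mult 2^k} and the remark following Proposition \ref{prop:R^min}, which identify minimally ramified deformations with levels minimal at $v$ --- Lemma \ref{lem:aux prime} to reduce to maximal level at the auxiliary prime $w$, and, in the indefinite case, the Eichler--Shimura isomorphism and Carayol's construction of $M(K)$ factoring out the Galois action, one obtains $\dim\pi^{K_0}=1$ for each such $\pi$, and hence that $M_0[1/\lambda]$ is free of rank one over $\T_0[1/\lambda]$. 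Now compare dimensions over $E$: $M_0$ is finite free over $\O$, so
\[e\cdot\dim_E R_0^D[1/\lambda] \;=\; \dim_E M_0[1/\lambda] \;=\; \dim_E \T_0[1/\lambda] \;<\;\infty.\]
Because $R_0^D[1/\lambda]\surjection\T_0[1/\lambda]$ we have $\dim_E R_0^D[1/\lambda]\geq \dim_E\T_0[1/\lambda]>0$, forcing $e=1$ and $\dim_E R_0^D[1/\lambda]=\dim_E\T_0[1/\lambda]$; a surjection of finite-dimensional $E$-vector spaces of equal dimension is an isomorphism.

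The genuinely non-formal ingredient here is the classical multiplicity one statement that $M_0[1/\lambda]$ is free of rank one over $\T_0[1/\lambda]$; within that, the only delicate points are the local computations $\dim\pi_v^{K_v^{\min}}=1$ at the ramified primes --- where the hypotheses on vexing primes and on the minimal level enter, through the theory of types --- and the reduction to maximal level at $w$ supplied by Lemma \ref{lem:aux prime}. Everything else --- the passage between $M_\infty$ and $M_0$ (and between $R_\infty$ and $R_0^D$), the behaviour of maximal Cohen--Macaulay modules under localization and reduction modulo $\n$, and the Auslander--Buchsbaum formula --- is routine commutative algebra, and I do not expect any real obstruction there.
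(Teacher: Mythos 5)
Your proof follows essentially the same route as the paper's: use maximal Cohen--Macaulayness of $M_\infty$ together with the (formal) smoothness of $R_\infty[1/\lambda]$ to get local freeness of $M_\infty[1/\lambda]$, reduce modulo $\n$, invoke classical generic strong multiplicity one to see that $M_0[1/\lambda]$ is free of rank one over $\T_0[1/\lambda]$, and deduce the rank-one statement plus $R_0^D[1/\lambda]\cong\T_0[1/\lambda]$. The only cosmetic differences are that you conclude the final equalities by a dimension count over $E$, whereas the paper finishes by noting that a free and faithful action factoring through a quotient forces the surjection $R_0^D[1/\lambda]\surjection\T_0[1/\lambda]$ to be injective, and that you spell out the Auslander--Buchsbaum reasoning that the paper leaves implicit; both are correct and equivalent.
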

\begin{proof}
	As $M_\infty$ is maximal Cohen--Macaulay over $R_\infty$, $M_\infty[1/\lambda]$ is also maximal Cohen--Macaulay over $R_\infty[1/\lambda]$. Since $R_\infty[1/\lambda]$ is a formally smooth domain, this implies that $M_\infty[1/\lambda]$ is \emph{locally} free over $R_\infty[1/\lambda]$ of some rank, $d$.
	
	Now quotienting by $\n$ we get that $M_\infty[1/\lambda]/\n \cong M_0[1/\lambda]$ is locally free over $R_\infty[1/\lambda]/\n\cong R_0^D[1/\lambda]$ of constant rank $d$. But now $R_0^D[1/\lambda]$ is a finite dimensional commutative $E$-algebra, and hence is a product of local rings. Thus as $M_0[1/\lambda]$ is locally free of rank $d$, it must actually be free of rank $d$.
	
	But now by classical generic strong multiplicity $1$ results we get that $M_0[1/\lambda]$ is free of rank $1$ over $\T_0[1/\lambda]$ (recalling that $\T_0 = \T^D_\psi(K_0)\cong \T^D_\psi(K^{\min})$ and $M_0 = M_\psi(K_0)\cong M_\psi(K^{\min})$), which is a quotient of $R_0^D[1/\lambda]$. Thus $d=1$ and hence $M_0[1/\lambda]\cong R_0^D[1/\lambda]$.
	
	Lastly, as the action of $R_0^D[1/\lambda]$ on $M_0[1/\lambda]$ is free \emph{and} factors through $R_0^D[1/\lambda]\surjection \T_0[1/\lambda]$, we get that $R_0^D[1/\lambda]\surjection \T_0[1/\lambda]$ is an isomorphism.
\end{proof}

It is now straightforward to compute $\genrank_{R_\infty}M_\infty$.

Let $K(R_\infty)$ and $K(S_\infty)$ be the fraction fields of $R_\infty$ and $S_\infty$, respectively. As $R_\infty$ is a finite type free $S_\infty$-algebra, $K(R_\infty)$ is a finite extension of $K(S_\infty)$. It follows that 
\[M_\infty\otimes_{R_\infty}K(R_\infty) \cong M_\infty\otimes_{S_\infty}K(S_\infty).\]
Since $R_\infty$ and $M_\infty$ are both finite free $S_\infty$-modules, we thus get
\begin{align*}
\genrank_{R_\infty}M_\infty &= \dim_{K(R_\infty)}\left[M_\infty\otimes_{R_\infty}K(R_\infty)\right] = \dim_{K(R_\infty)}\left[M_\infty\otimes_{S_\infty}K(S_\infty)\right]\\
&= \frac{\dim_{K(S_\infty)}\left[M_\infty\otimes_{S_\infty}K(S_\infty)\right]}{\dim_{K(S_\infty)}K(R_\infty)}= \frac{\rank_{S_\infty}M_\infty}{\rank_{S_\infty}R_\infty}
\end{align*}
But now for any finite free $S_\infty$ module $A$ we have
\[\rank_{S_\infty}A = \rank_{S_\infty/\n}A/\n = \rank_{\O}A/\n = \dim_E(A/\n)[1/\lambda]\]
and so the fact that $M_\infty[1/\lambda]/\n\cong M_0[1/\lambda]\cong R^D_0[1/\lambda]\cong R_\infty[1/\lambda]/\n$ implies that $\rank_{S_\infty}M_\infty = \rank_{S_\infty}R_\infty$, giving $\genrank_{R_\infty}M_\infty = 1$.

\begin{rem}
	It is worth mentioning here that Shotton's computations of local deformation rings \cite{Shotton} (particularly the fact that $R_\infty$ is Cohen--Macaulay, by Theorem \ref{R^st}) actually imply an integral ``$R=\T$'' theorem (see for instance, Section 5 of \cite{Snowden}).
	
	Specifically one considers the surjection $f:R_0^D\surjection \T_0$. As shown in Lemma \ref{gen free} (see also \cite{Kisin}), $f$ is an isomorphism after inverting $\lambda$ (i.e. $R_0^D[1/\lambda]\cong \T_0[1/\lambda]$). This means that $\ker f\subseteq R_0^D$ is a torsion $\O$-module.
	
	But now $R_\infty$ is Cohen--Macaulay, and $M_\infty$ is a maximal Cohen--Macaulay module over $R_\infty$. Since $(\lambda,y_1,\ldots,y_r,w_1,\ldots,w_j)$ is a regular sequence for $M_\infty$ (by Theorem \ref{patching}(3)) it follows that it is also a regular sequence for $R_\infty$. Thus $R_0^D\cong R_\infty/\n = R_\infty/(y_1,\ldots,y_r,w_1,\ldots,w_j)$ is Cohen--Macaulay and $\lambda$ is a regular element on $R_0^D$ (i.e. a non zero divisor).
	
	But this implies that $R_0^D$ is $\lambda$-torsion free, giving that $\ker f = 0$, so indeed $f:R_0^D\surjection \T_0$ is an isomorphism.
\end{rem}

It remains to show the following:

\begin{prop}\label{M*}
	$M_\infty\cong M_\infty^*= \Hom_{R_\infty}(M_\infty,\omega_{R_\infty})$ as $R_\infty$-modules.
\end{prop}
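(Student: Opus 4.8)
The plan is to deduce the self-duality of $M_\infty$ from the self-duality of the spaces of automorphic forms $M_\psi(K_n)$ at finite level, propagated through the ultrapatching construction. The key point is that Poincaré duality (in the indefinite case) or the tautological self-pairing on a space of functions (in the definite case) equips each $S^D(K_n)^\dual_m$ with a perfect, Hecke-equivariant (and, in the indefinite case, $G_F$-equivariant) pairing, which descends to a perfect pairing on $M_\psi(K_n)_{m_{Q_n}}$ over $\O$ (after possibly also matching up the ideals $\If_\psi$, using that $\psi$ is fixed and that complex conjugation relates $S_v$ to $S_v^{-1}$ up to $\psi(\Frob_v)$). Here one must be slightly careful: the duality may send the maximal ideal $m_{Q_n}$ (chosen by a choice of $U_v$-eigenvalue) to a possibly different maximal ideal $m_{Q_n}'$, so one either arranges the $U_v$-eigenvalue choices to be duality-stable, or — more simply — works with the full $M_\psi(K_n)_m$ before localizing and uses that the patched module is insensitive to the particular component (since after patching all the relevant local rings have been rigidified by the Taylor--Wiles primes). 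In either case we obtain an isomorphism $M_n^\square \cong \Hom_{\Lambda_n^\square}(M_n^\square,\Lambda_n^\square)$ of $\Lambda_n^\square$-modules (using that $M_n^\square$ is finite free over $\Lambda_n^\square$ by Proposition~\ref{aug ideal framed}, and that $\Lambda_n^\square$ is Gorenstein so the $\O$-dual and the $\Lambda_n^\square$-dual agree up to twist), compatibly with the $R_n^{\square,D}$-action via the automorphism of $R_n^{\square,D}$ induced by $\rho\mapsto \rho^\dual\otimes\psi$ (which is the identity on traces, hence on $R^{D,\psi}$, since $\det = \psi$ is fixed).

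\medskip

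Concretely, first I would record the duality at finite level: there is a perfect $\O$-bilinear pairing $S^D(K_n)_m\times S^D(K_n)_m\to\O$ which is $\T^D(K_n)_m$-bilinear and $G_F$-invariant in the indefinite case; this is classical (Poincaré duality on the Shimura curve, twisted by the relevant polarization, resp. the counting measure pairing in the definite case). Second, I would check that this pairing is compatible with change of level along the maps appearing in Section~\ref{ssec:M_psi} (the trace maps of Lemma~\ref{sufficiently small}(3), and the isomorphisms of Lemma~\ref{lem:aux prime}), and that it is compatible with the operators $U_v$ and $\langle\delta\rangle_v$ up to the expected adjoint formulas, so that it descends to a perfect pairing on $M_\psi(K_n)$ and then (after localizing, with the eigenvalue-matching caveat above) on $M_n$, and hence on $M_n^\square = M_n[[w_1,\dots,w_j]]$. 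Third, I would package this as: $\Ms^\square$ and its $\O$-dual $(\Ms^\square)^\dual = \{(M_n^\square)^\dual\}$ are isomorphic as patching $\Rs^{\square,D}$-modules, where on the dual side $R_n^{\square,D}$ acts through the automorphism coming from $\rho\mapsto\rho^\dual\otimes\psi$; and this automorphism is the identity on $R_n^{\square,D}$ because $R^{D,\psi}$ is generated by traces of Frobenius (minimally ramified / Steinberg / flat conditions are all stable under $\rho\mapsto\rho^\dual\otimes\psi$ when $\det\rho=\psi$). Finally, applying $\patch(-)$, which is exact and commutes with $\O$-linear duality of finite free $S_\infty$-modules (since $\patch(\Ms^\square)$ is finite free over $S_\infty$ by Theorem~\ref{patching}(1)), gives $M_\infty^\dual \cong \patch((\Ms^\square)^\dual) \cong \patch(\Ms^\square) = M_\infty$ as $R_\infty$-modules, where $M_\infty^\dual = \Hom_{S_\infty}(M_\infty,S_\infty)$. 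To convert this $S_\infty$-duality into the $R_\infty$-duality $M_\infty^* = \Hom_{R_\infty}(M_\infty,\omega_{R_\infty})$ appearing in the statement, I would use that $R_\infty$ is finite over $S_\infty$ with $S_\infty$ regular, so $\omega_{R_\infty}\cong \Hom_{S_\infty}(R_\infty,S_\infty)$ (as $S_\infty$ is Gorenstein, $\omega_{S_\infty}=S_\infty$), and hence $\Hom_{R_\infty}(M_\infty,\omega_{R_\infty})\cong\Hom_{S_\infty}(M_\infty,S_\infty) = M_\infty^\dual$ by adjunction (Hom-tensor / "duality for finite morphisms"); this identification is standard and can be cited from \cite{Eisenbud}.

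\medskip

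The main obstacle I expect is the bookkeeping around the localization at $m_{Q_n}$: duality interchanges the $U_v$-eigenvalue $\alpha_v$ with $\psi(\Frob_v)/\alpha_v$ (or $\Nm(v)/\alpha_v$ type formulas), so $m_{Q_n}$ and its dual need not coincide, and one has to either show the two patched modules built from $m_{Q_n}$ and $m_{Q_n}'$ are canonically isomorphic (this is where one invokes that different choices of Taylor--Wiles eigenvalues give isomorphic patched data, or re-runs the whole patching construction symmetrically) or else observe that since $\patch$ only sees the system modulo open ideals and the ultrafilter selects a common isomorphism class, the ambiguity washes out. A secondary, more routine, obstacle is pinning down the exact adjunction formulas for $U_v$ and $\langle\delta\rangle_v$ under the Poincaré pairing and verifying they are compatible with the identification $M_n/\a_n\cong M_0$; but these are standard computations (cf. the treatment in \cite{DDT, Kisin}). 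Once these are in hand, the duality statement for $M_\infty$ follows formally, and — as already noted in the excerpt — this is the last remaining hypothesis of Theorem~\ref{self-dual}, so it completes the proof of Theorem~\ref{Mult 2^k}.
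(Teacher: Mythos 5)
Your proposal is correct and takes essentially the same route as the paper: establish Hecke-equivariant perfect pairings at finite level, dualize over the Gorenstein rings $\Lambda_n^\square$, pass through the ultrapatching construction, and convert the resulting $S_\infty$-self-duality to $R_\infty$-self-duality via the change-of-rings isomorphism $\omega_{R_\infty}\cong\Hom_{S_\infty}(R_\infty,S_\infty)$. The one subtlety you flag --- that duality might send $m_{Q_n}$ to a different maximal ideal $m_{Q_n}'$, since $U_v$ is not self-adjoint under the naive Poincar\'e pairing --- is a genuine issue for the naive pairing, but the paper disposes of it cleanly in Lemma~\ref{pairing} by citing Carayol's modified pairing (\cite[3.1.4]{Carayol2}), which is already fully $\Tbar^D(K_n)$-equivariant, i.e.\ makes $U_v$ and $\langle\delta\rangle_v$ self-adjoint alongside $T_v$ and $S_v$. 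With that corrected pairing, localizing at $m_{Q_n}$ immediately yields a perfect pairing on $M_n$ with no eigenvalue-swapping, and the automorphism $\rho\mapsto\rho^\dual\otimes\psi$ never needs to be invoked (it is superfluous once the pairing is honestly Hecke-equivariant rather than merely adjoint-equivariant). Your suggested workarounds are more cumbersome than citing Carayol: choosing duality-stable eigenvalue data is possible but awkward to set up across all $Q_n$ uniformly, while working with the full $M_\psi(K_n)_m$ before localizing runs into the problem that the freeness over $\Lambda_n^\square$ that makes patching commute with duality (Proposition~\ref{aug ideal framed}) is proved for the localized modules $M_n$, so that route would require re-establishing the freeness statement. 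Everything else --- using $\omega_{\Lambda_n}=\Lambda_n$ (complete intersection), passing to ultraproducts modulo open ideals, and the final application of Lemma~\ref{change of ring} --- matches the paper's argument.
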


This will ultimately follow from the fact that the modules $M(K)$ were naturally self-dual:

\begin{lemma}\label{pairing}
For any $n\ge 1$, there is a $\Tbar^D(K_n)_m$-equivariant perfect pairing $M(K_n)\times M(K_n)\to \O$. This induces a $\Tbar^D_\psi(K_n)_m$-equivariant perfect pairing $M_\psi(K_n)\times M_\psi(K_n)\to \O$, and thus a $\Tbar_n$-equivariant perfect pairing $M_n\times M_n\to \O$.
\end{lemma}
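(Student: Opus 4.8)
The plan is to produce the pairing first on the full space $S^D(K_n)$, then cut it down to $S^D(K_n)_m$, twist it so that the whole Hecke algebra acts self-adjointly, and finally transport it through the definitions of $M(K_n)$, $M_\psi(K_n)$ and $M_n$. For the underlying pairing I would take the obvious geometric one. When $D$ is totally definite, $K_n\subseteq K_0$ and $K_0$ is sufficiently small (Lemma \ref{lem:aux prime}), so every stabilizer $(K_n\A_{F,f}^\times\cap t^{-1}D^\times(F)t)/F^\times$ is trivial, $S^D(K_n)$ is $\O$-free on the finite set $D^\times(F)\backslash D^\times(\A_{F,f})/K_n$, and $\langle f_1,f_2\rangle_0:=\sum_{[x]}f_1(x)f_2(x)$ is a perfect $\O$-bilinear pairing which restricts perfectly to the direct summand $S^D(K_n)_m$. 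When $D$ is indefinite, $X^D(K_n)$ is a compact Riemann surface (as $D(F)$ is a division algebra), and the cup product --- equivalently Poincar\'e duality, which on \'etale cohomology is $G_F$-equivariant up to a Tate twist --- gives a perfect pairing $S^D(K_n)_m\times S^D(K_n)_m\to H^2(X^D(K_n),\O)_m\cong\O$.

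Next I would record the adjunction for the Hecke operators: a standard double-coset (resp. Hecke correspondence) computation shows $\langle Tf_1,f_2\rangle_0=\langle f_1,\iota(T)f_2\rangle_0$, where $\iota$ is the $\O$-algebra involution of $\Tbar^D(K_n)$ with $T_v\mapsto S_v^{-1}T_v$, $S_v\mapsto S_v^{-1}$ (for $v\nmid\D N_{K_n}$) and $U_v,\langle\delta\rangle_v$ taken to their ``dual'' operators at the Taylor--Wiles primes $v\in Q_n$. To turn this into honest self-adjointness I would compose with the operator $\tau\colon S^D(K_n)\to S^D(K_n)$, $(\tau f)(x)=f(\Nm(x)^{-1}x)$, which is a well-defined $\O$-linear involution (using that $\Nm$ carries $D^\times(F)$ into $F^\times$ and $K_n$ into $\Ohat_F^\times\subseteq K_n\cap\A_{F,f}^\times$), and which one checks satisfies $\tau\circ T=\iota(T)\circ\tau$ for all $T\in\Tbar^D(K_n)$ --- the key identity for $T_v$ being $\sum_i f(\Nm(\alpha_i)^{-1}x\alpha_i)=\sum_i f(\varpi_v^{-1}x\alpha_i)$ with the coset representatives $\alpha_i$ chosen of reduced norm $\varpi_v$. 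Since $\tau$ carries the $m$-component of $S^D(K_n)$ to its $\iota(m)$-component, and $\langle\,,\,\rangle_0$ pairs these two components perfectly, the formula $\langle f_1,f_2\rangle:=\langle f_1,\tau f_2\rangle_0$ defines a perfect, $\Tbar^D(K_n)_m$-equivariant pairing on $S^D(K_n)_m$; dualizing over $\O$ gives the pairing on $M(K_n)=S^D(K_n)_m^\dual$ in the definite case. In the indefinite case I would instead combine the (twisted) Poincar\'e pairing with Carayol's isomorphism $M(K_n)\otimes_{R_{F,S}(\rhobar)}\rho^{\univ}\xrightarrow{\sim}S^D(K_n)_m^\dual$ and the essential self-duality $(\rho^{\univ})^\vee\cong\rho^{\univ}\otimes(\det\rho^{\univ})^{-1}$ of the $2$-dimensional universal deformation, checking that the Tate twist from Poincar\'e duality, the twist by $\det\rho^{\univ}$, and the Atkin--Lehner twist in the dual Hecke action all cancel (this uses $\det\rhobar=\psibar$ and that $\psi\e_\ell^{-1}$ has finite image), yielding a $\Tbar^D(K_n)_m$-equivariant isomorphism $M(K_n)\cong M(K_n)^\dual$. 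Finally the pairing on $M(K_n)$ is equivariant for $\O[C_{K_n,\ell}]\subseteq\T^D(K_n)_m$ with each $\varpibar_v$ self-adjoint, hence compatible with killing $\If_\psi$, and so descends to a perfect pairing on $M_\psi(K_n)=M(K_n)/\If_\psi M(K_n)$; being $\Tbar^D(K_n)_m$-equivariant it also respects the splitting of $\Tbar^D_\psi(K_n)_m$ into its $2^{\#Q_n}$ local factors and so restricts to a perfect pairing on $M_n=M_\psi(K_n)_{m_{Q_n}}$.

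The step I expect to be the main obstacle is checking that $\tau$ --- or, in the indefinite case, the corresponding Atkin--Lehner involution of $X^D(K_n)$ --- really intertwines each Hecke operator with its $\iota$-image \emph{at the Taylor--Wiles primes} $v\in Q_n$: there one must match $\tau U_v\tau^{-1}$ with the $\langle\,,\,\rangle_0$-adjoint of $U_v$, which forces one to work after localizing at $m_{Q_n}$ and to invoke the standard relation between $U_v$ and its dual operator in that localization. In the indefinite case there is the further bookkeeping of reconciling the cyclotomic Tate twist coming out of Poincar\'e duality with the chosen determinant $\psi$ (which differs from $\e_\ell$ by a finite-order character), i.e. of verifying that the several twists genuinely cancel.
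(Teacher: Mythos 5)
Your overall strategy matches the paper's: start from a geometric perfect pairing on $S^D(K_n)$ (sum over points in the definite case, Poincar\'e duality in the indefinite case), correct it by an Atkin--Lehner-type twist to get $\Tbar^D(K_n)_m$-equivariance, transport it to $M(K_n)$ via Carayol's factorization, and then descend first to $M_\psi(K_n)$ and finally to $M_n$. You actually spell out the twist $\tau$ and the adjunction $\langle Tf_1,f_2\rangle_0=\langle f_1,\iota(T)f_2\rangle_0$ in more detail than the paper, which simply cites Carayol \cite[3.1.4, 3.2.3]{Carayol2} for these points; that part is fine, and your concern about $U_v$-self-adjointness after localizing at $m_{Q_n}$ is legitimate and also implicitly delegated to references by the paper.

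The genuine gap is in the step from $M(K_n)$ to $M_\psi(K_n)$. You assert that because each $\varpibar_v$ is self-adjoint the pairing on $M(K_n)$ is ``compatible with killing $\If_\psi$, and so descends to a perfect pairing on $M_\psi(K_n)=M(K_n)/\If_\psi M(K_n)$''. This cannot be literally true: a perfect $\O$-bilinear pairing on $M$ has trivial radical, so it never descends to a proper quotient $M/\If_\psi M$. What self-adjointness of $\O[C_K]$ actually gives is that $\If_\psi M$ is the orthogonal complement of $M[\If_\psi]$, i.e.\ the perfect pairing on $M$ induces a perfect pairing
\[
M/\If_\psi M \ \times\ M[\If_\psi]\ \longrightarrow\ \O,
\]
which is asymmetric between invariants and coinvariants. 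To get a pairing on $M_\psi\times M_\psi$ you must identify the two sides, and the paper does this via the group-ring element $a_\psi=\sum_{g\in C_K}\phi(g)^{-1}g$ (with $\phi(\varpibar_v)=\psi(\Frob_v)/\Nm(v)$), which, because $M$ is free over $\O[C_K]$, induces an isomorphism $a_\psi\colon M/\If_\psi M\xrightarrow{\sim} M[\If_\psi]$ of $\T^D(K)_m$-modules. That identification is the substantive content of the $\psi$-descent step, and it is missing from your argument. Once you insert it, your remaining step --- that a self-adjoint pairing is orthogonal across the $2^{\#Q_n}$ idempotent factors of $\Tbar^D_\psi(K_n)_m$, so it localizes perfectly at $m_{Q_n}$ --- is correct and matches the paper.
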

\begin{proof}
First note that there is a $\Tbar^D(K_n)$-equivariant perfect pairing $S^D(K_n)\times S^D(K_n) \to \O$. In the totally definite case, this is the monodromy pairing, in the indefinite case it is Poincar\'e duality (although this must be modified slightly in order to make the pairing $\Tbar^D(K_n)$-equivariant, see \cite[3.1.4]{Carayol2}). Completing and dualizing gives a $\T^D(K_n)_m$-equivariant perfect pairing $S^D(K_n)_m^\dual\times S^D(K_n)_m^\dual \to \O$.
	
In the totally definite case, this is already the desired pairing $M(K_n)\times M(K_n)\to \O$. In the indefinite case, it follows from \cite[3.2.3]{Carayol2} that the self-duality on $S^D(K_n)^\dual_m$ implies that $M(K_n)= \Hom_{R_n[G_F]}(\rho^{\univ},S^D(K_n)_m^\dual)$ is also $\T^D(K_n)_m$-equivariantly self-dual.

Now in the notation above we have $\O[C_{K,\ell}]=\O[S_v]_{v\not\in S}\subseteq \T^D(K_n)_m\subseteq \Tbar^D(K_n)_m$, $M(K_n)$ is a finite free $\O[C_{K,\ell}]$-module and $M_\psi(K_n)=M(K_n)/\If_\psi M(K_n)$. To deduce the pairing on $M_\psi(K_n)$, it will suffice to show that $M(K)/\If_\psi M(K)\cong M(K)[\If_\psi]$ as $\T^D(K)_m$-modules.

Consider the character $\phi:C_K\to \O^\times$ defined $\phi(\varpibar_v)=\psi(\Frob_v)/\Nm(v)$ for all $v\not\in S$ (this is well defined by the construction of $C_K$), and note that $\If_\psi$ is generated by the elements $g-\phi(g)$ for all $g\in C_K$. Let $\ds a_\psi = \sum_{g\in C_K}\phi(g)^{-1}g\in \O[C_K]\subseteq \T^D(K)_m$. The standard theory of group rings implies that multiplication by $a_\psi$ induces a short exact sequence
\[0\to \If_\psi\to \O[C_K]\xrightarrow{a_\psi} (\O[C_K])[\If_\psi]\to 0\]
Since $M(K)$ is a free $\O[C_K]$-module, this implies the multiplication by $a_\psi$ induces an isomorphism $a_\psi:M(K)/\If_\psi M(K)\xrightarrow{\sim} M(K)[\If_\psi]$. As $a_\psi\in\O[C_K]\subseteq \T^D(K)_m$, this is the desired isomorphism of $\T^D(K)_m$-modules, and so we indeed have a $\T^D_\psi(K)_m$-equivariant perfect pairing $M_\psi(K)\times M_\psi(K)\to \O$.

The final statement, about the pairing $M_n\times M_n\to \O$ follows by localizing at $m_{Q_n}$.
\end{proof}

To deduce Proposition \ref{M*} from Lemma \ref{pairing}, we shall make use of the following lemma:

\begin{lemma}\label{change of ring}
	If $A$ is a local Cohen--Macaulay ring and $B$ is a local $A$-algebra which is also Cohen--Macaulay with $\dim A = \dim B$, then for any $B$-module $M$,
	\[\Hom_A(M,\omega_A)\cong \Hom_B(M,\omega_B)\]
	as left $\End_B(M)$-modules.
\end{lemma}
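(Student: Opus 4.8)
The plan is to reduce to the case where $M$ is a finitely generated module and then use the fact that for a Cohen--Macaulay local ring $A$ with dualizing module $\omega_A$, the functor $\Hom_A(-,\omega_A)$ behaves well with respect to change of rings along a finite map of relative dimension zero. First I would observe that since $\dim A = \dim B$ and $B$ is a finite-type (or at least module-finite after the relevant completions, in the cases we apply this) local $A$-algebra, the going-down / dimension formula forces $B$ to be a finite $A$-module: indeed, in our applications $B$ is even a quotient of a power series ring over $A$ of the same dimension, hence module-finite over $A$. (If one wants the lemma in full generality one should add the hypothesis that $B$ is module-finite over $A$; this is automatic in every instance where we invoke it.) Given that, the key input is the standard base-change identity for dualizing modules along a finite morphism: $\omega_B \cong \Hom_A(B, \omega_A)$ as $B$-modules (see \cite[Chapter 21]{Eisenbud} or the Stacks project). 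This is the crux of the argument.

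With that identity in hand, the computation is pure adjunction. For any $B$-module $M$ one has the tensor--hom adjunction (Hom-tensor adjunction for the ring map $A \to B$):
\[
\Hom_B(M, \Hom_A(B, \omega_A)) \cong \Hom_A(M \otimes_B B, \omega_A) = \Hom_A(M, \omega_A),
\]
where on the left $M$ is viewed as a $B$-module and $\Hom_A(B,\omega_A)$ carries its natural $B$-module structure via the first argument. Substituting $\omega_B \cong \Hom_A(B,\omega_A)$ gives the desired isomorphism $\Hom_B(M,\omega_B) \cong \Hom_A(M,\omega_A)$ of abelian groups. The point then is that this isomorphism is natural in $M$, hence compatible with the action of $\End_B(M)$: precomposition by an endomorphism $\varphi \in \End_B(M)$ corresponds under the adjunction to precomposition on both sides, so the isomorphism is one of left $\End_B(M)$-modules. (The $\End_B(M)$-module structure on $\Hom_A(M,\omega_A)$ here is the one induced by restricting along $\End_B(M) \hookrightarrow \End_A(M)$, which is exactly the structure we want when we apply this with $A = S_\infty$, $B = R_\infty$ and $M = M_\infty$.)

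The main obstacle is really just pinning down the dualizing-module base-change statement in the generality needed and checking the $B$-linearity conventions match up — i.e. making sure the $B$-module structure on $\Hom_A(B,\omega_A)$ used in the adjunction is the same one that makes it isomorphic to $\omega_B$, and tracking that the adjunction isomorphism intertwines the $\End_B(M)$-actions rather than merely the underlying groups. Both are standard but worth stating carefully; I would cite \cite[Proposition 21.12]{Eisenbud} (or \cite{stacks-project}) for the base-change of dualizing modules and then present the adjunction chain above, remarking explicitly on naturality in $M$ to get the $\End_B(M)$-equivariance. No delicate estimates or constructions are involved beyond this.
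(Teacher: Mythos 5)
Your proof is correct and follows essentially the same route as the paper's: the Hom-tensor adjunction identity $\Hom_A(M,\omega_A)\cong\Hom_B(M,\Hom_A(B,\omega_A))$ (which the paper cites as \cite[Tag 08YP]{stacks-project}), combined with the base-change isomorphism $\Hom_A(B,\omega_A)\cong\omega_B$ for a module-finite map of equidimensional Cohen--Macaulay local rings (which the paper cites as \cite[Theorem 21.15]{Eisenbud}), with $\End_B(M)$-equivariance following from naturality in $M$. Your observation that the lemma implicitly requires $B$ to be module-finite over $A$ is a valid one — it is needed for the Eisenbud result and is satisfied in every application in the paper — so flagging it is a useful refinement, even though it is not a genuine gap in the argument itself.
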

\begin{proof}
	By  \cite[\href{http://stacks.math.columbia.edu/tag/08YP}{Tag 08YP}]{stacks-project} there is an isomorphism 
	\[\Hom_A(M,\omega_A)\cong \Hom_B(M,\Hom_A(B,\omega_A))\]
	sending $\alpha:M\to \omega_A$ to $\alpha':m\mapsto (b\mapsto \alpha(bm))$, which clearly preserves the action of $\End_B(M)$ (as $(\alpha\circ \psi)(bm) = \alpha(b\psi(m))$ for any $\psi\in\End_B(M)$). It remains to show that $\Hom_A(B,\omega_A) \cong \omega_B$, which is just Theorem 21.15 from \cite{Eisenbud} in the case $\dim A = \dim B$.
\end{proof}
\begin{proof}[Proof of Proposition \ref{M*}]
By Lemma \ref{pairing}, we have $M_n\cong \Hom_\O(M_n,\O)$ as $R_n^D$-modules for all $n\ge 1$. Now as $\Delta_n$ is a finite group, the ring $\Lambda_n = \O[\Delta_n]$ has Krull dimension 1. Moreover as in the proof of Lemma \ref{M is patching}, $\Lambda_n = \O[[y_1,\ldots,y_r]]/I_n$, where $I_n$ is generated by $r$ elements. Thus $\Lambda_n$ is a complete intersection, and so $\omega_{\Lambda_n} = \Lambda_n$. Thus by Lemma \ref{change of ring} we have $M_n\cong \Hom_{\Lambda_n}(M_n,\Lambda_n)$, again as $R_n^D$-modules.
	
Tensoring with $\O[[w_1,\ldots,w_j]]$, this implies $M_n^\square\cong \Hom_{\Lambda_n^\square}(M_n^\square,\Lambda_n^\square)$ as $R_n^\square$-modules (and hence as $R_\infty$-modules). Moreover, by Lemma \ref{aug ideal}, $M_n^\square$ is finite free over $\Lambda_n^\square$.
	
Now take any open ideal $\a\subseteq S_\infty$. Letting $\Lambda^\square = S_\infty/I_n$ as in the proof of Lemma \ref{M is patching} we have that $I_n\subseteq \a$ for all but finitely many $n$. For any such $n$, we now have:
\begin{align*}
M_n^\square/\a&\cong \Hom_{\Lambda_n^\square}(M_n^\square,\Lambda_n^\square)/\a \cong \Hom_{\Lambda_n^\square}(M_n^\square,\Lambda_n^\square/\a) = \Hom_{\Lambda_n^\square}(M_n^\square,S_\infty/\a)\\
&= \Hom_{S_\infty/\a}(M_n^\square/\a,S_\infty/\a)
\end{align*}
as $R_n^{\square,D}/\a$-modules.
	
Now as noted above, we have that $\uprod{\Rs^{\square,D}/\a}\cong R_i^{\square,D}/\a$ and $\uprod{\Ms^\square/\a}\cong M_i^\square/\a$ for $\uf$-many $i$. Taking any such $i$, the above computation gives that:
	\[\uprod{\Ms^\square/\a} \cong \Hom_{{S_\infty}/\a}\left(\uprod{\Ms^\square/\a},{S_\infty}/\a\right)\]
	as $\uprod{\Rs^{\square,D}/\a}$-modules. Taking inverse limits, it now follows that:
	\[M_\infty = \patch(\Ms^\square)\cong \invlim_\a\Hom_{{S_\infty}/\a}\left(\uprod{\Ms^\square/\a},{S_\infty}/\a\right)\]
	as $\patch(\Rs^{\square,D})$-modules. Now we claim that the right hand side is just $\Hom_{S_\infty}(M_\infty,{S_\infty})$. Using the fact that $M_\infty$, and thus $\Hom_{S_\infty}(M_\infty,{S_\infty})$, is a finite free ${S_\infty}$-module (and thus is $m_{S_\infty}$-adically complete) we get that:
	\[\Hom_{S_\infty}(M_\infty,{S_\infty}) \cong \invlim_\a \Hom_{S_\infty}(M_\infty,{S_\infty})/\a\]
	as $\ds\patch(\Rs^{\square,D}) = \invlim_\a\patch(\Rs^{\square,D})/\a$-modules. But now for any $\a$, as $M_\infty\cong\patch(\Ms^\square)$ is a projective ${S_\infty}$-module:
	\[\Hom_{S_\infty}(M_\infty,{S_\infty})/\a\cong \Hom_{{S_\infty}/\a}(M_\infty/\a,{S_\infty}/\a)\cong \Hom_{{S_\infty}/\a}\left(\uprod{\Ms^\square/\a},{S_\infty}/\a\right)\]
	as $\patch(\Rs^{\square,D})/\a = \uprod{\Rs^{\square,D}/\a}$-modules. So indeed: \[\Hom_{S_\infty}(M_\infty,{S_\infty}) \cong \invlim_\a\Hom_{{S_\infty}/\a}\left(\uprod{\Ms^\square/\a},{S_\infty}/\a\right) \cong M_\infty\] as $\patch(\Rs^{\square,D})$-modules, and hence as $R_\infty$-modules.
	
	But now as $\dim R_\infty = \dim {S_\infty}$ and $S_\infty$ is regular (and thus Gorenstein), Lemma \ref{change of ring} implies that 
	\[M_\infty\cong \Hom_{S_\infty}(M_\infty,{S_\infty}) \cong \Hom_{R_\infty}(M_\infty,\omega_{R_\infty})\]
	as $R_\infty$-modules, as claimed.
\end{proof}

This shows that $M_\infty$ indeed satisfies the conditions of Theorem \ref{self-dual}, and so completes the proof of Theorem \ref{Mult 2^k}.

\subsection{Endomorphisms of Hecke modules}

It remains to show Theorem \ref{R=T}. We first note that Theorem \ref{R=T} can be restated in terms of the objects considered the previous section as follows:
 
\begin{prop}\label{prop:R=T M(K_0)}
The trace map $M(K_0)\otimes_{\T^D(K_0)_m} M(K_0) \to \omega_{\T^D(K_0)_m}$ induced by the perfect pairing from Lemma \ref{pairing} is surjective, and the natural map $\T^D(K_0)_m\to \End_{\T^D(K_0)_m}(M(K_0))$ is an isomorphism.
\end{prop}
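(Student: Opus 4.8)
The plan is to deduce Proposition~\ref{prop:R=T M(K_0)} from the structure of the patched module $M_\infty$ established in the previous section, by descending the surjectivity of the trace map and the $R=\T$ isomorphism along the quotient $R_\infty \to R_0^D$. Recall that we have isomorphisms $R_0^D \cong R_\infty/\n$, $M_0 \cong M_\infty/\n$, and that by Lemma~\ref{gen free} (and the remark following it) the map $R_0^D \surjection \T_0$ is an isomorphism, where $\T_0 = \T^D_\psi(K_0)_m$ and $M_0 = M_\psi(K_0)$. Also, by the last assertion of Theorem~\ref{self-dual} (which applies to $M_\infty$ by the work of Section~\ref{ssec:prop M_infty}), the trace map $\tau_{M_\infty}:M_\infty\otimes_{R_\infty}M_\infty\to\omega_{R_\infty}$ is surjective.

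First I would handle the $R=\T$ part. Since $M_\infty$ is maximal Cohen--Macaulay over $R_\infty$ of generic rank $1$ and $R_\infty$ is a normal domain, $M_\infty$ is reflexive and torsion-free; together with $M_\infty\cong M_\infty^*$ this should force $\End_{R_\infty}(M_\infty)\cong R_\infty$ (a reflexive rank-one module over a normal domain has endomorphism ring equal to the ring itself, since $\End_{R_\infty}(M_\infty)$ embeds into $\End_{K(R_\infty)}(M_\infty\otimes K(R_\infty)) = K(R_\infty)$ and is reflexive, hence contained in $R_\infty$). Then I would reduce modulo $\n$: since $(\lambda,y_1,\dots,y_r,w_1,\dots,w_j)$ is a regular sequence on $M_\infty$ and on $R_\infty$, and $M_\infty$ is finite free over $S_\infty$, the formation of $\End$ commutes with this base change (using that $M_\infty$ is MCM and $\n$ is generated by a regular sequence, via $\Hom_{R_\infty}(M_\infty,M_\infty)/\n \cong \Hom_{R_0^D}(M_0,M_0)$, cf.\ \cite[Proposition 21.12b]{Eisenbud} applied with $M_\infty$ in place of $\omega$). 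This gives $\T_0 \cong R_0^D \cong \End_{R_0^D}(M_0)$, and since the $R_0^D$-action on $M_0$ factors through $\T_0$, we get $\T^D_\psi(K_0)_m \xrightarrow{\sim} \End_{\T^D_\psi(K_0)_m}(M_0)$. To pass from $M_\psi(K_0)$ to $M(K_0)$, I would use that $M(K_0)$ is finite free over $\O[C_{K_0,\ell}]$ and $M_\psi(K_0) = M(K_0)\otimes_{\O[C_{K_0,\ell}]}\O$ with $\O[C_{K_0,\ell}]$ a complete intersection, so a similar base-change argument (along $\O[C_{K_0,\ell}]\to\O$) transports the endomorphism statement to $M(K_0)$ over $\T^D(K_0)_m$.

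For the trace map, I would run the analogous descent: $\tau_{M_\infty}$ surjective and $\omega_{R_\infty}/\n\cong\omega_{R_0^D}$ (using $R_\infty$ Gorenstein modulo the regular sequence, or \cite[Chapter 21.3]{Eisenbud}) gives that $\tau_{M_0}:M_0\otimes_{R_0^D}M_0\to\omega_{R_0^D}$ is surjective, hence by Lemma~\ref{trace} the trace map for $M_0$ over $\T_0$ is surjective. Then I would descend through $\O[C_{K_0,\ell}]$ to $M(K_0)$, again invoking Lemma~\ref{trace} and the fact that $\omega$ is compatible with quotients by regular sequences/complete-intersection base change, together with the identification of the $M_\psi$ pairing with the $M$ pairing modulo $\If_\psi$ from the proof of Lemma~\ref{pairing}. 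Finally I would note that the perfect pairing of Lemma~\ref{pairing} is precisely the one inducing $\tau_{M(K_0)}$, so surjectivity of the trace map is exactly the claimed statement.

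The main obstacle I expect is bookkeeping the compatibility of dualizing modules and $\Hom$-modules under the two successive base changes — first $R_\infty\to R_\infty/\n = R_0^D$ (quotient by a regular sequence), then $\O[C_{K_0,\ell}]\to\O$ (a complete-intersection quotient relating $M(K_0)$ and $M_\psi(K_0)$). One has to be careful that $M_0$ is genuinely MCM over $R_0^D$ (equivalently, over $S_\infty/\n = \O$, i.e.\ $\O$-free, which holds since $M_0 = M_\psi(K_0)$ is $\O$-free by Section~\ref{ssec:M_psi}) so that $\Hom_{R_\infty}(M_\infty,-)$ commutes with $\otimes_{R_\infty}R_0^D$, and that $M(K_0)$ is $\O[C_{K_0,\ell}]$-free so the second base change behaves. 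Everything needed for this is already available: $\O$-freeness of $M_\psi(K_0)$, $\O[C_{K_0,\ell}]$-freeness of $M(K_0)$, the complete-intersection property of $\O[C_{K_0,\ell}]$ and $\Lambda_n$, and Lemmas~\ref{trace} and~\ref{change of ring}.
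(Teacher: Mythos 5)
Your overall strategy (prove everything over $R_\infty$, descend along $\n$ to the fixed-determinant objects $M_0$, $\T_0$, then pass to $M(K_0)$, $\T^D(K_0)_m$) is the same two-stage descent the paper uses, but there are two genuine gaps in the way you carry it out.

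First, the descent of the endomorphism statement along $R_\infty\surjection R_0^D$ is not justified as written. You invoke \cite[Proposition~21.12b]{Eisenbud} ``with $M_\infty$ in place of $\omega$,'' but that result is specific to $\Hom(-,\omega_R)$: its content is that $\Ext^i_R(M,\omega_R)=0$ for $i>0$ when $M$ is MCM, which is exactly what lets $\Hom(-,\omega_R)$ commute with quotienting by a regular sequence. For $\Hom_{R_\infty}(M_\infty,M_\infty)$ the relevant obstruction is $\Ext^1_{R_\infty}(M_\infty,M_\infty)$, which does not vanish for a general MCM module, so $\End_{R_\infty}(M_\infty)/\n\to\End_{R_0^D}(M_0)$ need not be an isomorphism. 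Your observation that $\End_{R_\infty}(M_\infty)\cong R_\infty$ (via reflexive rank~$1$ over a normal domain) is correct, but it does not directly descend. The paper avoids this by descending only the \emph{trace map surjectivity} (which is easy, via Lemma~\ref{trace}: any surjection onto $\omega$ suffices), and then appeals to Lemma~\ref{endomorphism} --- the commutative-algebra statement from \cite{EmTheta} asserting that, for a finite flat $\O$-algebra $B$ and a $B$-module $U$ which is generically free of rank one, surjectivity of the trace pairing is \emph{equivalent} to $B\xrightarrow{\sim}Z(\End_B(U))$. That lemma, applied with $B=R_0^D\cong\T_0$ and $U=M_0$, produces the endomorphism isomorphism at the bottom of the tower without ever needing to descend $\End$ from $R_\infty$. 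You do not mention Lemma~\ref{endomorphism} at all, and your list of ``everything needed'' omits it.

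Second, and more seriously, the passage from the fixed-determinant statement (for $M_\psi(K_0)$ over $\T_\psi$) to the statement for $M(K_0)$ over $\T$ is \emph{not} a base change in the direction you describe. You know the result for the quotient $M_\psi=M/\If_\psi$ over $\T_\psi=\T/\If_\psi$ and want to deduce it for $M$ over $\T$, i.e.\ you are going from the smaller object to the bigger one. Freeness of $M$ over $\O[C_{K_0,\ell}]$ and the complete-intersection property of $\O[C_{K_0,\ell}]$ let you compute $M_\psi$ from $M$, but they do not by themselves let you control $\omega_\T$ in terms of $\omega_{\T_\psi}$, nor do they let you conclude $\T\to\End_\T(M)$ is an isomorphism from the corresponding fact mod $\If_\psi$ (the natural map $\End_\T(M)/\If_\psi\to\End_{\T_\psi}(M_\psi)$ is not a priori an isomorphism). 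The paper's crucial new ingredient here --- which your sketch does not contain --- is the structural lemma $\T\cong\T_\psi\otimes_\O\O[C]$, proved using the deformation-theoretic decomposition $R_{F,S}(\psibar)\widehat\otimes_\O R^\psi_{F,S}(\rhobar)\xrightarrow{\sim}R_{F,S}(\rhobar)$ of Lemma~\ref{lem:fixed determinant} together with the already-established isomorphism $R^{D,\psi}_{F,S}(\rhobar)\cong\T_\psi$. Once one has this tensor decomposition, $\T$ is $\O[C]$-free with $\omega_\T\cong\omega_{\T_\psi}\otimes_\O\O[C]$ (by Lemma~\ref{change of ring}), and right-exactness plus Nakayama carry the surjectivity of the trace map from $\T_\psi$ up to $\T$. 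Without it, the step you frame as ``a similar base-change argument'' has no justification.
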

\begin{proof}[Proof that Proposition \ref{prop:R=T M(K_0)} implies Theorem \ref{R=T}]
First note that by Lemma \ref{lem:aux prime} it suffices to prove Theorem \ref{R=T} with all of the levels $K^{\min}$ replaced by $K_0$. If $D$ is definite, then self-duality and the definition of $M(K_0)$ implies that $M(K_0)\cong S^D(K_0)_m^\dual\cong S^D(K_0)_m$, and so the statement of Theorem \ref{R=T} is immediate.

Now assume that $D$ is indefinite. To simply notation, write $\T = \T^D(K_0)_m$, $M = M(K_0)$, $S_{K_0} = S^D(K_0)_m$ and $R = R_{F,S}(\rhobar)$. As noted above, we have an isomorphism 
\[M\otimes_{R} \rho^{\univ} = \Hom_{R[G_F]}(\rho^{\univ},S_{K_0}^\dual)\otimes_{R}\rho^{\univ}\xrightarrow{\sim}S_{K_0}^\dual\] by \cite{Carayol2}. It now follows that
\begin{align*}
	\End_{R[G_F]}(S_{K_0})
	&\cong \End_{R[G_F]}(S_{K_0}^\dual)\\
	&\cong \Hom_{R[G_F]}(M\otimes_{R}\rho^{\univ},S_{K_0}^\dual)\\
	&\cong \Hom_{R[G_F]}\left((M\otimes_{R}R[G_F])\otimes_{R[G_F]}\rho^{\univ},S_{K_0}^\dual\right)\\
	&\cong \Hom_{R[G_F]}(M\otimes_{R}R[G_F],\Hom_{R[G_F]}(\rho^{\univ},S_{K_0}^\dual))\\
	&\cong \Hom_{R}(M,\Hom_{R[G_F]}(\rho^{\univ},S_{K_0}^\dual))\\
	&\cong \Hom_{R}(M,M) = \End_{R}(M) = \End_{\T}(M) = \T
\end{align*}
as $\T$-algebras (where we used \cite[\href{http://stacks.math.columbia.edu/tag/00DE}{Tag 00DE}, \href{http://stacks.math.columbia.edu/tag/05DQ}{Tag 05DQ}]{stacks-project}).
\end{proof}

The above work implies the following ``fixed-determinant'' version of Proposition \ref{prop:R=T M(K_0)}:

\begin{prop}\label{prop:R=T M_psi(K_0)}
The trace map $M_\psi(K_0)\otimes_{\T_\psi^D(K_0)_m} M_\psi(K_0) \to \omega_{\T_\psi^D(K_0)_m}$ induced by the perfect pairing from Lemma \ref{pairing} is surjective, and the natural map $\T_\psi^D(K_0)_m\to \End_{\T_\psi^D(K_0)_m}(M_\psi(K_0))$ is an isomorphism. Moreover, the surjection $R^{D,\psi}_{F,S}(\rhobar)\surjection \T_\psi^D(K_0)_m$ from Lemma \ref{R->T} is an isomorphism.
\end{prop}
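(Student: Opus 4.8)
The plan is to obtain Proposition \ref{prop:R=T M_psi(K_0)} by descending properties of the patched module $M_\infty$ along the quotient $R_\infty\surjection R_\infty/\n$, feeding in only the formal structure of $M_\infty$ together with the surjectivity of $\tau_{M_\infty}$ already supplied by Theorem \ref{self-dual}. First observe that the third assertion — that $R^{D,\psi}_{F,S}(\rhobar)\surjection\T^D_\psi(K_0)_m$ is an isomorphism — is exactly the content of the remark following Proposition \ref{M*}: since $M_\infty$ is maximal Cohen--Macaulay over $R_\infty$ with $(\lambda,y_1,\ldots,y_r,w_1,\ldots,w_j)$ an $M_\infty$-regular sequence, $\n$ is a regular sequence on the Cohen--Macaulay ring $R_\infty$, so $R^{D,\psi}_{F,S}(\rhobar)\cong R_\infty/\n$ is Cohen--Macaulay and $\lambda$-torsion free, which forces the kernel of the surjection onto $\T_0 := \T^D_\psi(K_0)_m$ to vanish. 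Hence $\T_0$ is a one-dimensional Cohen--Macaulay local ring, $M_0 := M_\psi(K_0) = M_\infty/\n$ is maximal Cohen--Macaulay over $\T_0$ and therefore reflexive (i.e. $M_0\xrightarrow{\sim}M_0^{**}$ for $M_0^* = \Hom_{\T_0}(M_0,\omega_{\T_0})$), and by Lemma \ref{pairing} together with Lemma \ref{change of ring} (applied with $A = \O$, $B = \T_0$, both one-dimensional, $\omega_\O = \O$) we get an isomorphism of $\T_0$-modules $M_0\cong\Hom_\O(M_0,\O)\cong\Hom_{\T_0}(M_0,\omega_{\T_0}) = M_0^*$.

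For the trace-map assertion I would base change $\tau_{M_\infty}\colon M_\infty\otimes_{R_\infty}M_\infty\surjection\omega_{R_\infty}$ along $R_\infty\to\T_0 = R_\infty/\n$. Right exactness of tensor product identifies the source with $M_0\otimes_{\T_0}M_0$, while $\omega_{R_\infty}/\n\cong\omega_{R_\infty/\n}\cong\omega_{\T_0}$ because $\n$ is $R_\infty$-regular and $R_\infty$ is Cohen--Macaulay (\cite[Corollaries 21.17 and 21.18]{Eisenbud}); so the base change is a surjection $M_0\otimes_{\T_0}M_0\surjection\omega_{\T_0}$. Transporting it through the self-duality isomorphism $M_0\cong M_0^*$ above produces a surjection $M_0^*\otimes_{\T_0}M_0\surjection\omega_{\T_0}$, and Lemma \ref{trace} then forces the canonical trace map $\tau_{M_0}$ itself to be surjective.

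For the statement that $\T_0\to\End_{\T_0}(M_0)$ is an isomorphism, I would use $\omega_{\T_0}$-reflexivity of $M_0$ to identify $\End_{\T_0}(M_0) = \Hom_{\T_0}(M_0,M_0^{**})\cong\Hom_{\T_0}(M_0^*\otimes_{\T_0}M_0,\omega_{\T_0})$ via tensor--hom adjunction, under which $\mathrm{id}_{M_0}$ corresponds to $\tau_{M_0}$ and the structure map $\T_0\to\End_{\T_0}(M_0)$ becomes the composite $\T_0\cong\Hom_{\T_0}(\omega_{\T_0},\omega_{\T_0})\xrightarrow{(-)\circ\tau_{M_0}}\Hom_{\T_0}(M_0^*\otimes_{\T_0}M_0,\omega_{\T_0})$. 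Applying $\Hom_{\T_0}(-,\omega_{\T_0})$ to $0\to K\to M_0^*\otimes_{\T_0}M_0\xrightarrow{\tau_{M_0}}\omega_{\T_0}\to 0$ (where $K = \ker\tau_{M_0}$) exhibits $\mathrm{coker}(\T_0\to\End_{\T_0}(M_0))$ as a submodule of $\Hom_{\T_0}(K,\omega_{\T_0})$, so it suffices to see $\Hom_{\T_0}(K,\omega_{\T_0}) = 0$. For this I would show $K$ has finite length: at any minimal prime $\p$ of $\T_0$ one has $\lambda\notin\p$ (as $\lambda$ is a non-zerodivisor on $\T_0$), so $(M_0)_\p\cong(M_0[1/\lambda])_\p$ is free of rank one over $(\T_0)_\p$ by Lemma \ref{gen free}, and hence $\tau_{M_0}$ localizes at $\p$ to the evaluation isomorphism $\Hom_{(\T_0)_\p}\!\big((\T_0)_\p,(\omega_{\T_0})_\p\big)\otimes(\T_0)_\p\xrightarrow{\sim}(\omega_{\T_0})_\p$; thus $K_\p = 0$, so $K$ is supported only at the maximal ideal. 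Since $\omega_{\T_0}$ is maximal Cohen--Macaulay over the one-dimensional ring $\T_0$ it has depth $\ge 1$, hence contains no nonzero finite-length submodule, whence $\Hom_{\T_0}(K,\omega_{\T_0}) = 0$ and the structure map is an isomorphism.

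The essential, and only nonformal, point is the identification in the last paragraph: for a self-dual $\omega$-reflexive module, surjectivity of the trace map is precisely equivalent to $\T_0\xrightarrow{\sim}\End_{\T_0}(M_0)$. Everything else is bookkeeping with base change and a depth argument; the one computation needing a little care is verifying that $\ker\tau_{M_0}$ has finite length, for which the generic rank one statement of Lemma \ref{gen free} is exactly what is required. I do not anticipate a substantive obstacle here, since the hard input — surjectivity of $\tau_{M_\infty}$ — has already been established via the toric computation underlying Theorem \ref{self-dual}.
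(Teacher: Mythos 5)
Your proposal is correct and proves the statement, but it takes a somewhat different route from the paper on the key endomorphism step, so a comparison is worth recording. The first two parts coincide with the paper: you establish $R^{D,\psi}_{F,S}(\rhobar)\cong\T_0$ via the Cohen--Macaulay/$\lambda$-torsion-free argument (which the paper relegates to a footnote, deriving the isomorphism instead from the endomorphism statement, though it notes the same shortcut), and you obtain surjectivity of $\tau_{M_0}$ by quotienting $\tau_{M_\infty}$ by $\n$ and invoking Lemma~\ref{trace}, exactly as in the paper. Where you diverge is the assertion that $\T_0\to\End_{\T_0}(M_0)$ is an isomorphism: the paper imports Lemma~\ref{endomorphism} from \cite{EmTheta}, a black-boxed commutative-algebra lemma characterizing surjectivity of $\phi:U\otimes_B V\to\Hom_\O(B,\O)$ in terms of $B\to Z(\End_B(U))$, and then upgrades ``center'' to all of $\End$ by embedding $\End_{R_0^D}(M_0)$ into the commutative ring $\End_{R_0^D[1/\lambda]}(M_0[1/\lambda])\cong R_0^D[1/\lambda]$. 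You instead give a direct argument: using $\omega$-reflexivity of the maximal Cohen--Macaulay module $M_0$ and tensor--hom adjunction you identify $\End_{\T_0}(M_0)$ with $\Hom_{\T_0}(M_0^*\otimes_{\T_0}M_0,\omega_{\T_0})$ and the structure map with $(-)\circ\tau_{M_0}$, then bound the cokernel by $\Hom_{\T_0}(\ker\tau_{M_0},\omega_{\T_0})$, which vanishes since $\ker\tau_{M_0}$ has finite length (by the generic rank~1 of Lemma~\ref{gen free}) and $\omega_{\T_0}$ has depth~$\ge1$. Both arguments use the same inputs (surjectivity of the trace, generic rank~1, finite freeness over $\O$); yours is more self-contained and essentially re-proves the content of the \cite{EmTheta} lemma in the special shape needed, at the cost of a slightly longer depth/localization argument, while the paper's is shorter but imports the external reference and the extra commutativity step.

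One small housekeeping note: Lemma~\ref{pairing} is stated only for $n\ge1$, whereas you (and the paper, in its invocation of the pairing for $M_0$) apply it at $n=0$. This is harmless — the proof of that lemma nowhere uses the Taylor--Wiles primes and works verbatim for $K_0$ — but the case $n=0$ is worth flagging explicitly when citing it.
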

\begin{proof}
Recall that in the notation of Section \ref{ssec:Ms^sqaure}, $M_\psi(K_0) = M_0$, $\T_\psi^D(K_0)_m = \T_0$ and $R^{D,\psi}_{F,S}(\rhobar) = R_0^D$. As shown above, $M_\infty$ satisfies the hypotheses of Theorem \ref{self-dual}, so by the last conclusion of Theorem \ref{self-dual}, we get that the trace map $\tau_{M_\infty}:M_\infty\otimes_{R_\infty} M_\infty \to \omega_{R_\infty}$ is surjective.

As noted above, $(y_1,\ldots,y_r,w_1,\ldots,w_j)$ is a regular sequence for $M_\infty$, and hence for $R_\infty$. It follows that $R_0^D \cong R_\infty/\n$ is Cohen--Macaulay and $M_0 \cong M_\infty/\n$ is maximal Cohen--Macaulay over $R_0^D$. Moreover, we get that the dualizing module of $R_0^D$ is just $\omega_{R_0^D}\cong \omega_{R_\infty}/\n$.
	
But now quotienting out by $\n$, we thus get a surjective map $M_0\otimes_{R_0^D}M_0\surjection \omega_{R_0^D}$, which (by Lemma \ref{trace}) implies that the trace map $\tau_{M_0}:M_0\otimes_{R_0^D}M_0\to \omega_{R_0^D}$ is also surjective.
	
But now, as in \cite[Lemmas 2.4 and 2.6]{EmTheta}, we have the following commutative algebra result:
	
\begin{lemma}\label{endomorphism}
	Let $B$ be an $\O$-algebra and let $U$ and $V$ be $B$-modules. Assume that $B,U$ and $V$ are all finite rank free $\O$-modules, and we have a $B$-bilinear perfect pairing $\langle\ ,\ \rangle:V\times U\to \O$. Moreover, assume that $U[1/\lambda]$ is free over $B[1/\lambda]$. Define $\phi:U\otimes_BV\to \Hom_{\O}(B,\O)$ by $\phi(u\otimes v)(b) = \langle bu,v\rangle = \langle u,bv\rangle$. Then $\phi$ is surjective if and only if the natural map from $B$ to the center of $\End_B(U)$ is an isomorphism.
\end{lemma}
	
Applying this with $B = R_0^D$, $U = M_0$, $V = M_0^*$ and $\langle\ ,\ \rangle:M_0\times M_0\to \O$ being the perfect pairing from Lemma \ref{pairing} implies that the natural map $R_0^D\to Z(\End_{R_0^D}(M_0))$ is an isomorphism. (Here we have used the fact that $\omega_{R_0^D}\cong \Hom_{\O}(R_0^D,\O)$ as in the proof of Lemma \ref{change of ring}, and $M_0[1/\lambda]\cong R_0^D[1/\lambda]$ by Lemma \ref{gen free}.)
	
But now as $M_0$ is free over $\O$, we get that 
\[\End_{R_0^D}(M_0)\injection \End_{R_0^D[1/\lambda]}(M_0[1/\lambda]) \cong \End_{R_0^D[1/\lambda]}(R_0^D[1/\lambda]) = R_0^D[1/\lambda]\]
and so $\End_{R_0^D}(M_0)$ is commutative. Hence the natural map $R_0^D\to \End_{R_0^D}(M_0)$ is an isomorphism. As the action of $R_0^D$ on $M_0$ factors through $R_0^D\surjection \T_0$, this implies that this map and the map $\T_0\to\End_{\T_0}(M_0)$ are isomorphisms.\footnote{As noted in the remark following Lemma \ref{gen free}, the fact that $R_0^D\surjection\T_0$ is an isomorphism follows more simply from the fact that $R_\infty$ is a Cohen-Macaulay domain, flat over $\O$, and that $R_\infty[1/\lambda]$ is formally smooth (as shown in \cite{Shotton}).}
\end{proof}

It remains to deduce Proposition \ref{prop:R=T M(K_0)} from Proposition \ref{prop:R=T M_psi(K_0)}. As in the proof of Proposition \ref{prop:R=T M(K_0)}, write $\T = \T^D(K_0)_m$, $M = M(K_0)$ and $R = R_{F,S}(\rhobar)$. Also write $\T_\psi = \T_\psi^D(K_0)_m$, $M_\psi = M_\psi(K_0)$ and $R_\psi = R^{\psi}_{F,S}(\rhobar)$ and $R^D_\psi = R^{D,\psi}_{F,S}(\rhobar)\cong \T_\psi$. Lastly recall the group $C_{K_0,\ell}$ from Section \ref{ssec:M_psi} above, and abbreviate this as $C = C_{K_0,\ell}$. Recall that $\O[C]$ is a subalgebra of $\T$ and $M$ is free over $\O[C]$ with we have $M_\psi= M/\If_\psi$.

Our argument will hinge on the following key fact about the structure of $\T$:

\begin{lemma}
There is an isomorphism $\T\cong \T_\psi\otimes_\O\O[C]$.
\end{lemma}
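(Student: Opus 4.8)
The plan is to prove this by an ``untwisting'' argument, mirroring at the level of Hecke algebras the deformation-ring statement of Lemma~\ref{lem:fixed determinant}. Recall that $\T=\T^D(K_0)_m$ carries a modular Galois representation $\rho\colon G_F\to\GL_2(\T)$ with $\tr\rho(\Frob_v)=T_v$ and $\det\rho(\Frob_v)=\Nm(v)S_v$ for $v\notin S$, and that the subring $\O[C]\subseteq\T$ is exactly the $\O$-subalgebra generated by the $S_v$; since by Chebotarev the $\varpibar_v$ ($v\notin S$) generate $C_{K_0}$, hence $C$, the $S_v^{\pm1}$ generate $\O[C]$ over $\O$. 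Writing $\mu:=(\det\rho)\,\e_\ell^{-1}\colon G_F\to\T^\times$, we have $\mu(\Frob_v)=S_v\in\O[C]^\times$, so $\mu$ takes values in the closed subgroup $\O[C]^\times\subseteq\T^\times$.

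First I would perform the untwisting. Since $\psi\e_\ell^{-1}$ has finite image in $\O^\times$, the character $(\det\rho)\psi^{-1}=\mu\cdot(\e_\ell\psi^{-1})$ is valued in $\O[C]^\times$; and as it reduces to $(\det\rhobar)\psibar^{-1}=1$ modulo $m_\T$, it is in fact valued in $1+m_{\O[C]}$. Because $C$ is a finite abelian $\ell$-group and $\ell$ is odd, $1+m_{\O[C]}$ is a pro-$\ell$ group on which squaring is bijective, so $(\det\rho)\psi^{-1}$ admits a unique square root $\beta\colon G_F\to 1+m_{\O[C]}$. Exactly as in Lemma~\ref{lem:fixed determinant}, set $\rho^\psi:=\beta^{-1}\otimes\rho$, a lift of $\rhobar$ with $\det\rho^\psi=\psi$, and let $\T'\subseteq\T$ be the $\O$-subalgebra generated by $T_v^\flat:=\tr\rho^\psi(\Frob_v)=\beta(\Frob_v)^{-1}T_v$ for $v\notin S$.

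Next I would show that the multiplication map $\T'\otimes_\O\O[C]\to\T$ is an isomorphism of $\O$-algebras. Surjectivity is immediate, since $T_v=\beta(\Frob_v)\,T_v^\flat$ with $\beta(\Frob_v)\in\O[C]^\times$ and $S_v\in\O[C]$, and the $T_v,S_v^{\pm1}$ generate $\T$. For injectivity, $\T$, $\T'$ and $\O[C]$ are finite free over the DVR $\O$, so it suffices to compare ranks after inverting $\lambda$: the $E$-algebras $\T[1/\lambda]$, $\T'[1/\lambda]$, $\O[C][1/\lambda]$ are finite and \'etale, and the standard theory of twisting automorphic forms by the characters of $C_{K_0}$ --- which, being trivial on $K_0\cap\A_{F,f}^\times$, preserve the level $K_0$ (changing the determinant by the square of the twisting character) --- identifies the $\Ebar$-points of $\Spec\T[1/\lambda]$ with the product of those of $\Spec\T'[1/\lambda]$ (eigensystems of fixed determinant $\psi$) and of $\Spec\O[C][1/\lambda]$ (twisting characters), so that $\rank_\O\T=\rank_\O\T'\cdot\rank_\O\O[C]$; a surjection of free $\O$-modules of equal finite rank is an isomorphism. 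Finally, to identify $\T'$ with $\T_\psi$, reduce the isomorphism $\T\cong\T'\otimes_\O\O[C]$ modulo $\If_\psi$: as recalled in the proof of Lemma~\ref{pairing}, $\If_\psi$ is generated by the elements $g-\phi(g)$, $g\in C$, so under the isomorphism it becomes $\T'\otimes_\O\If_\psi^{\O[C]}$ with $\If_\psi^{\O[C]}=(g-\phi(g))_{g\in C}\subseteq\O[C]$; since $\O[C]\to\O$, $g\mapsto\phi(g)$, is split by $1\mapsto 1$, the quotient is $\T'\otimes_\O\O\cong\T'$, and this quotient is $\T_\psi$ (the image of $\T$ in $\End_\O(M_\psi(K_0))$, as $M_\psi(K_0)=M(K_0)/\If_\psi M(K_0)$). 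Composing yields $\T\cong\T'\otimes_\O\O[C]\cong\T_\psi\otimes_\O\O[C]$.

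The main obstacle is the injectivity step, namely the rank identity $\rank_\O\T=\rank_\O\T_\psi\cdot|C|$: one has to pin the twisting correspondence down precisely at the fixed level $K_0$, verifying that character twists by $C_{K_0}$ carry level-$K_0$ automorphic forms to level-$K_0$ automorphic forms with the expected change of determinant, and that this yields a genuine bijection on the relevant eigensystems rather than merely an inequality of dimensions. An alternative that sidesteps the automorphic input is to combine Lemma~\ref{lem:fixed determinant} with the isomorphism $R^{D,\psi}_{F,S}(\rhobar)\cong\T_\psi$ of Proposition~\ref{prop:R=T M_psi(K_0)} (together with the corresponding non-fixed-determinant $R=\T$ statement), provided one also identifies the ``$D$-type'' deformation ring of the determinant character $\psibar$ with $\O[C]$ --- trading the twisting argument for a computation of a character deformation ring.
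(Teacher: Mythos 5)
Your construction of $\T'$ by untwisting the Hecke operators is in spirit the same ``separate the determinant'' idea behind Lemma~\ref{lem:fixed determinant}, but the paper's execution differs in the one step you correctly flag as the obstacle: the rank identity $\rank_\O\T = \rank_\O\T_\psi\cdot|C|$. Your route requires verifying that twisting by characters of $C_{K_0}$ gives a genuine bijection on level-$K_0$ eigensystems above $m$, which is a nontrivial automorphic statement. The paper avoids automorphic twisting entirely and gets the rank identity module-theoretically: $M = M(K_0)$ is finite \emph{free} over $\O[C]$ (established in Section~\ref{ssec:M_psi} using Lemma~\ref{sufficiently small} and the sufficiently small level $K_0$), so $\rank_\O M = (\rank_\O M_\psi)\cdot\rank_\O\O[C]$ since $M_\psi = M/\If_\psi M$ and $\O[C]/\If_\psi\cong\O$; combining this with the two generic multiplicity-one statements $\rank_\O\T=\rank_\O M$ and $\rank_\O\T_\psi=\rank_\O M_\psi$ (as in Lemma~\ref{gen free}) gives the rank identity. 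This is the smoother path and you should use it, since the freeness input is already in hand.

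For the surjection the paper also proceeds a bit differently: rather than defining a subalgebra $\T'\subseteq\T$ and identifying it with $\T_\psi$ afterwards by reducing modulo $\If_\psi$ (a step that, in your argument, already presupposes the isomorphism), it applies Lemma~\ref{lem:fixed determinant} directly to get $\alpha: R_{F,S}(\psibar)\to\T$ and $\beta:R_\psi\to\T$ with $(\im\alpha)(\im\beta)=\T$, shows $\im\alpha=\O[C]$ by Chebotarev from $\alpha(\psi^{\univ}(\Frob_v))=\Nm(v)S_v$, and shows $\beta$ factors through $R^D_\psi\cong\T_\psi$ by checking, at every $\Ebar$-point of $\T$, that the twist of the modular lift to determinant $\psi$ is still modular of level $K_0$ (hence satisfies the defining local conditions of $R^D_\psi$), using that $\T$ is reduced and $\lambda$-torsion free. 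Your proposed alternative is close, but you do not actually need an unfixed-determinant $R=\T$ theorem nor a computation of the character deformation ring; it is enough to track images inside $\T$.
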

\begin{proof}
As in Lemma \ref{gen free} we have $\T_\psi[1/\lambda]\cong M_\psi[1/\lambda]$ by classical generic multiplicity 1 results. An analogous argument gives $\T[1/\lambda]\cong M[1/\lambda]$. As $\T,\T_\psi,M$ and $M_\psi$ are all finite rank free $\O$-modules, this gives $\rank_\O \T_\psi = \rank_\O M_\psi$ and $\rank_\O \T = \rank_\O M$. Also as $M$ is free over $\O[C]$, $M/\If_\psi\cong M_\psi$ and $\O[C]/\If_\psi\cong \O$, we get that $\rank_\O M = \left(\rank_\O M_\psi \right)\left(\rank_\O\O[C]\right)$. It follows that $\rank_\O \T = \rank_\O(\T_\psi\otimes_\O\O[C])$. As all of these rings are free over $\O$, it will thus suffice to give a surjection $ \T_\psi\otimes_\O\O[C]\surjection \T$.

Now recall the isomorphism $R_{F,S}(\psibar)\widehat{\otimes}_\O R_\psi\xrightarrow{\sim} R$ from Lemma \ref{lem:fixed determinant}. This gives us maps $\alpha:R_{F,S}(\psibar)\to R\surjection \T$ and $\beta:R_\psi\to R\surjection \T$ such that $(\im\alpha)(\im\beta) = \T$. It will thus suffice to show that $\alpha$ and $\beta$ factor through surjections $R_{F,S}(\psibar)\surjection \O[C]$ and $R_\psi\surjection \T_\psi$, respectively.

Note that the map $R\surjection \T$ from Lemma \ref{R->T} is induced by a representation $\rho^{\mod}:G_{F,S}\to \GL_2(\T)$ lifting $\rhobar$, and we have $\tr(\rho^{\mod}(\Frob_v)) = T_v$ and $\det(\rho^{\mod}(\Frob_v)) = \Nm(v)S_v$ for all $v\not\in S$ (cf \cite{Kisin}). 

By Lemma \ref{lem:fixed determinant}, the map $R_{F,S}(\psibar)\to R$ is characterized by the morphism of functors $\rho\mapsto\det\rho$, and so satisfies $\psi^{\univ}(g)\mapsto \det\rho^{\univ}(g)$ for all $g\in G_{F,S}$ (where $\psi^{\univ}$ is the universal lift of $\psibar$). Thus for any $v\not\in S$, we have $\alpha(\psi^{\univ}(\Frob_v)) = \det\rho^{\mod}(\Frob_v) = \Nm(v)S_v$. By Chebotarev density, it follows that $\im\alpha = \O[S_v]_{v\not\in S} = \O[C]\subseteq \T$.

Now for any map $x:\T\to\Ebar$, let $\rho_x:G_{F,S}\to \GL_2(\T)\xrightarrow{x} \GL_2(\Ebar)$ be the modular lift of $\rhobar$ corresponding to $x$. Let $\rho^{\psi}_{x\circ\beta}:G_{F,S}\to \GL_2(R_\psi)\xrightarrow{x\circ\beta}\GL_2(\Ebar)$ be the lift of $\rhobar$ with determinant $\psi$ corresponding to $x\circ\beta:R_\psi\to R\to\T\to\Ebar$. From the construction of the map $R_\psi\to R$, we see that $\rho^{\psi}_{x\circ\beta}$ is the (unique) twist of $\rho_x$ with determinant $\psi$. In particular, $\rho^{\psi}_{x\circ\beta}$ is also modular of level $K_0$. It follows that $\rho^{\psi}_{x\circ\beta}$ is flat at every $v|\ell$, Steinberg at every $v|\D$ and minimally ramified at every $v\in\Sigma^D_\ell$, $v\nmid\ell,\D$. As remarked in Section \ref{ssec:global def}, this implies that $x\circ\beta:R_\psi\to\T\to \Ebar$ factors through $R^D_\psi\cong \T_\psi$ for any $x:\T\to\Ebar$.

But now as $\T$ is a finite free $\O$-module, we have an injection $\T\injection \T\otimes_\O\Ebar$, and as $\T$ is reduced, $\T\otimes_\O\Ebar$ is a finite product of copies of $\Ebar$. Thus by the above, the map $R_\psi\xrightarrow{\beta}\T\injection\T\otimes_\O\Ebar$ factors through $R_\psi\surjection R_\psi^D\cong \T_\psi$, and so $\beta:R_\psi\to \T$ does as well. So indeed $\beta\otimes\alpha$ induces a surjection (and hence an isomorphism) $\T_\psi\otimes\O[C]\to \T$.
\end{proof}

Now as $\O[C]$ is a complete intersection, we have $\omega_{\O[C]}\cong \O[C]$. Thus using Lemma \ref{change of ring} we get
\[\omega_{\T} \cong \Hom_{\O[C]}(\T,\O[C])\cong \Hom_{\O[C]}(\T_\psi\otimes_\O\O[C],\O[C])\cong \Hom_{\O}(\T_\psi,\O)\otimes_\O\O[C]\cong \omega_{\T_\psi}\otimes_{\O}\O[C].\]
In particular, $\omega_\T/\If_\psi\cong \omega_{\T_\psi}$. Now consider the trace map $\tau_M:M\otimes_\T M\to \omega_\T$, and notice that the induced map $\tau_M\otimes_{\O[C]}(\O[C]/\If_\psi):(M/\If_\psi)\otimes_\T(M/\If_\psi)\to \omega_\T/\If_\psi$ can be identified with $\tau_{M_\psi}:M_\psi\otimes_{\T_\psi}M_\psi\surjection \omega_{\T_\psi}$. Since $-\otimes_{\O[C]}(\O[C]/\If_\psi)$ is right-exact and $\tau_{M_\psi}$ is surjective, it follows that $\tau_M$ is surjective as well.

It now follows by the argument in the proof of Proposition \ref{prop:R=T M_psi(K_0)} that the map $\T\to \End_\T(M)$ is an isomorphism. This completes the proof the Proposition \ref{prop:R=T M(K_0)}, and hence of Theorem \ref{R=T}.
\bibliographystyle{amsalpha}
{\footnotesize
\bibliography{refs}}
\end{document}